\numberwithin{equation}{section}
\theoremstyle{plain}
\newtheorem{thm}{Theorem}[section]
\newtheorem{defn}[thm]{Definition}
\newtheorem{prop}[thm]{Proposition}
\newtheorem{lem}[thm]{Lemma}
\newtheorem{cor}[thm]{Corollary}
\newtheorem{propdefn}[thm]{Proposition-Definition}
\newtheorem{notation}[thm]{Notation}
\theoremstyle{definition}
\newtheorem{rem}[thm]{Remark}
\renewcommand{\b}{\bullet}
\newcommand{\N}{{\Bbb N}}
\newcommand{\Z}{{\Bbb Z}}
\newcommand{\Q}{{\Bbb Q}} 
\newcommand{\C}{{\Bbb C}}
\newcommand{\G}{{\Bbb G}}
\newcommand{\F}{{\Bbb F}}
\newcommand{\Af}{{\Bbb A}}
\renewcommand{\Pr}{{\Bbb P}}
\newcommand{\Spec}{{\mathrm{Spec}}\,}
\newcommand{\Spf}{{\mathrm{Spf}}\,}
\newcommand{\lra}{\longrightarrow}
\newcommand{\ra}{\rightarrow}
\newcommand{\hra}{\hookrightarrow}
\newcommand{\lla}{\longleftarrow}
\newcommand{\isom}{\overset{\sim}{=}}
\newcommand{\wt}[1]{\widetilde{#1}}
\newcommand{\ul}[1]{\underline{#1}}
\newcommand{\ol}[1]{\overline{#1}}
\newcommand{\os}{\overset}
\newcommand{\dR}{{\mathrm{dR}}}
\newcommand{\et}{{\mathrm{et}}}
\newcommand{\crys}{{\mathrm{crys}}}
\newcommand{\Hom}{{\mathrm{Hom}}}
\newcommand{\Ker}{{\mathrm{Ker}}}
\newcommand{\Coker}{{\mathrm{Coker}}}
\newcommand{\triv}{{\mathrm{triv}}}
\newcommand{\Vector}{{\mathrm{Vec}}}
\newcommand{\Fil}{{\mathrm{Fil}}}
\newcommand{\id}{{\mathrm{id}}}
\newcommand{\Rep}{{\mathrm{Rep}}}
\newcommand{\End}{{\mathrm{End}}}
\newcommand{\gp}{{\mathrm{gp}}}
\newcommand{\Strat}{{\mathrm{Str}}}
\newcommand{\dlog}{{\mathrm{dlog}}\,}
\newcommand{\Ext}{{\mathrm{Ext}}}
\newcommand{\an}{{\mathrm{an}}}
\newcommand{\cA}{{\cal A}}
\newcommand{\cB}{{\cal B}}
\newcommand{\cC}{{\cal C}}
\newcommand{\cD}{{\cal D}}
\newcommand{\cE}{{\cal E}}
\newcommand{\cI}{{\cal I}}
\newcommand{\cJ}{{\cal J}}
\newcommand{\cM}{{\cal M}}
\newcommand{\cO}{{\cal O}}
\newcommand{\cS}{{\cal S}}
\newcommand{\cU}{{\cal U}}
\newcommand{\cX}{{\cal X}}
\newcommand{\wh}{\widehat}
\renewcommand{\wt}{\widetilde}
\newcommand{\e}{{\bold e}}
\renewcommand{\End}{{\mathrm{End}}}
\newcommand{\MC}{{\mathrm{MC}}}
\newcommand{\MIC}{{\mathrm{MIC}}}
\newcommand{\MICn}{{\mathrm{MIC}^{\rm n}}}
\newcommand{\Nf}{{\mathrm{N}_f}}
\newcommand{\NfMIC}{{\mathrm{N}_f\MIC}}
\newcommand{\NfMICn}{{\mathrm{N}_f\MICn}}
\newcommand{\Nfs}{{\mathrm{N}_{f_s}}}
\newcommand{\NfsMIC}{{\mathrm{N}_{f_s}\MIC}}
\newcommand{\Crys}{{\mathrm{Crys}}}
\renewcommand{\inf}{{\mathrm{inf}}}
\newcommand{\str}{{\mathrm{str}}}
\newcommand{\StrCrys}{\Strat\Crys}
\newcommand{\NfStrCrys}{\mathrm{N}_f\StrCrys}
\newcommand{\NfStrCrysn}{\mathrm{N}_f\StrCrys^{\rm n}}
\newcommand{\sTW}{{\bold s}_{\mathrm{TW}}}
\newcommand{\s}{{\bold s}}
\newcommand{\cEn}{{\cal E}^{\rm n}}
\newcommand{\equ}[1]{\equiv_{#1}}
\renewcommand{\thefootnote}{\fnsymbol{footnote}}
\begin{document}
\title{Comparison of relatively unipotent log de Rham fundamental groups}
\author{Bruno Chiarellotto\footnote{Dipartimento di Matematica 'Tullio Levi-Civita",  
Universit\`a degli Studi di Padova, Via Trieste, 63, 35121 Padova, Italy. 
E-mail address: chiarbru@math.unipd.it}, \,
Valentina Di Proietto\footnote{ 
College of Engineering, Mathematics
and Physical Sciences,
University of Exeter,
EX4 4RN, Exeter,
United Kingdom. E-mail address: V.Di-Proietto@exeter.ac.uk
} \, and 
Atsushi Shiho\footnote{
Graduate School of Mathematical Sciences, 
University of Tokyo, 3-8-1 Komaba, Meguro-ku, Tokyo 153-8914, Japan. 
E-mail address: shiho@ms.u-tokyo.ac.jp} 
\setcounter{footnote}{-1}
\footnote{Mathematics Subject Classification (2010): 14F35, 14F40, 14H10}}
\date{}
\maketitle

\begin{abstract}
In this paper, we prove compatibilities of various definitions of 
relatively unipotent log de Rham fundamental groups for certain 
proper log smooth integral 
morphisms of fine log schemes of characteristic zero. 
Our proofs are purely algebraic. As an application, we give 
a purely algebraic 
calculation of the monodromy action on the unipotent log de Rham fundamental 
group of a stable log curve. As a corollary we give a purely algebraic proof to 
the transcendental part of  Andreatta--Iovita--Kim's article: obtaining in this way a complete algebraic criterion for good reduction for curves. 
\end{abstract}

\tableofcontents

\section*{Introduction}

Unipotent fundamental groups have been used  to obtain several results  from number theory to algebraic geometry of hyperbolic curves: from finiteness of integral points (\cite{ki}, \cite{ha}) to the study of  good/bad reduction in families (\cite{oda2}, \cite{aik}). In all these studies, the motivic nature of unipotent fundamental groups plays an important role.   
Namely,  the  realizations (classical, \'etale, de Rham and crystalline)  of unipotent fundamental groups and comparison theorems between them appear in crucial way in the proofs. Moreover,  in the case of semistable reduction, the comparison theorems are `logarithmic and relative in nature'  in the sense that, in the formulation of comparison theorems, the log scheme defined by the reduction naturally appears 
as well as the monodromy action on unipotent fundamental group.

To explain the motivation of our study, let us recall the results in  
\cite{oda2}, \cite{aik}. In \cite{oda2}, Oda proved the following two results. 
\begin{enumerate}
\item For a family of proper hyperbolic complex curves $f: X \lra \Delta^*$ over a puntured disc $\Delta^*$ 
with semistable reduction at the center, the outer monodromy action of $\pi_1(\Delta^*) \cong \Z$ on 
the classical unipotent fundamental group of the generic fiber of $f$ is trivial if and only if 
$f$ has good reduction at the center. 
\item For a proper hyperbolic curve $f: X \lra \Spec K$ over a discrete valuation field $K$ of 
mixed characteristic $(0,p)$ 
with semistable reduction, the outer monodromy action of $\pi_1(\Spec K)$ on 
the $l$-adic unipotent fundamental group $(l \not= p)$ of the geometric generic fiber of $f$ is unramified if and only if 
$f$ has good reduction. 
\end{enumerate}
Also, in \cite{aik}, Andreatta--Iovita--Kim proved the following result. 
\begin{enumerate}
\item[(3)] For a proper hyperbolic curve $f: X \lra \Spec K$ over a discrete valuation field $K$ 
of mixed characteristic $(0,p)$ 
with semistable reduction and a section $\iota$, the monodromy action of $\pi_1(\Spec K)$ on 
the $p$-adic unipotent fundamental group of the geometric generic fiber of $f$ 
induced by $\iota$ is crystalline if and only if $f$ has good reduction. 
\end{enumerate}
Oda proved the claim (2) by reducing to (1). In fact, he
considers  a semistable family of hyperbolic curves 
over $R[[z]]$ (where $R$ is a certain discrete valuation ring) which contains the original 
semistable family as the locus $z = \pi$ (where $\pi$ is a uniformizer 
of the valuation ring $O_K$ of $K$) and then he compares  the $l$-adic and classical 
unipotent fundamental groups. As a matter of fact, 
Andreatta--Iovita--Kim proved  (3) again by reducing to (1):  
they use   relative $p$-adic Hodge theory 
(for the above semistable family over $R[[z]]$),  
to get the claim on the unipotent log de Rham  
fundamental group
and compare it with the classical unipotent fundamental group. 
Thus the main computation of the monodromy action 
is done in the situation given in (1), i.e. via  transcendental methods. 

The final purpose of this article, which will be completed in Section 6, 
is to show that Oda's proof of (1) is, in some sense, 
motivic.  Namely, (a suitably modified version of) Oda's proof can be done 
for unipotent log de Rham fundamental group in purely algebraic way. 
Thus, by combining with the $p$-adic Hodge part of \cite{aik}, we can give 
a purely algebraic proof of the result (3) of Andreatta--Iovita--Kim. 

In order to achieve such a result we have to study unipotent log de Rham fundamental group. There are several methods to define such a group, 
and each method has its own advantage. 
In the situation of (1), Oda described the fundamental group of 
the generic fiber of $f$ in terms of graph of groups associated to the dual graph and 
the fundamental groups of irreducible components of the central fiber, and he proved (1) 
by using this description and some concrete 
computation of  the monodromy action on certain paths. 
In order to do a similar argument for 
unipotent log de Rham fundamental groups, we need the notion of paths, and 
for this purpose, the definition of them  
via Tannakian category (\cite{Del89}, \cite[2.1]{laz}) is suitable. 
On the other hand, we need to calculate concretely  
the unipotent (log) de Rham fundamental group of curves in purely algebraic way, 
and for this purpose, the definition of them via minimal model (\cite{gm}, \cite{naho}, \cite{nagm}) is suitable. 
Also, to finish a purely algebraic proof of 
 the result (3) of Andreatta--Iovita--Kim, we need to use their 
definition of fundamental group, which is based on the construction of 
unipotent fundamental group  given by Hadian (\cite{ha}). 

So we need to compare the three definitions of 
unipotent log de Rham fundamental group above. Note also that we need to prove 
the comparison in relative setting because we need the compatibility of 
monodromy actions. This comparison results, which are interesting in themselves, 
are the subject  of the first five sections of this article. 
We prove them in much more general setting than  it will be needed in the last section.  
As a matter of fact,  in order to achieve the comparison of the three definitions as introduced before, we will need to introduce another one. Each of these  approaches  acquires its own value according to the point of view we want to take and no one seems to us more natural than the others.  For this reason we decided not to give ``{\it the definition}" of relatively unipotent log de Rham fundamental group and we did not favour one with respect to the others: we only proved the equivalence of all of them.

We give a more detailed description of each sections.  In Section 1, we give some preliminaries on modules with integrable log connection: we will introduce the   geometric setting which we consider in the first five sections and  we will define various categories related to the category of modules with integrable log connection which will be used later. In particular, we will show the relations between modules with integrable log connection, stratifications and crystals in this relative setting. 
 We will see later how this relative stratification structure will induce the monodromy action on 
 the unipotent log de Rham fundamental groups.  
In Section 2, we will use the categories defined in the previous section to give our first definition (via Tannakian formalism)  of the unipotent log de Rham fundamental group together with the monodromy action (as a log connection). 
We also  prove  that the definition of monodromy action is the same as 
that one coming from the unipotent version of homotopy exact sequence, although 
a part of the proof will be  postponed in the next section. This is essentially a log version of 
a result of Lazda \cite[Theorem 1.6]{laz}. 
In Section 3, we will introduce the second definition of unipotent log de Rham fundamental group: we will use   the notion of pointed universal object. This is a generalization to higher dimension of the definition of Hadian \cite{ha} and Andreatta--Iovita--Kim \cite{aik}, 
as well as a log version of the construction  which appeared in Lazda's paper \cite[pp.13--15]{laz}. 
We prove that this new  definition is equivalent to that one given in Section 2.
In Section 4, we will give the third definition of unipotent log de Rham fundamental group: we will
use a generalization of the relative minimal model theory of Navarro Aznar (\cite{nagm}). 
In Section 5, we will prove that this third definition is equivalent to the previous ones.  This will be given by introducing the fourth definition, which will be shown to be isomorphic to the third and the first in an independent way. This fourth definition is given by means of  relative bar construction. 
In the absolute case, the equivalence  between the construction via minimal model and that  one via bar construction has been proved by Bloch--Kriz \cite{bk} and the equivalence between the construction via bar constrution and that  one via Tannakian formalism has been proved by Terasoma \cite{terasoma}. The equivalence of monodromy actions follows from the fact that they are defined via relative stratification. 
Finally in Section 6, we restrict ourselves to the case of stable log curves over standard log point and we calculate the monodromy action on unipotent log de Rham fundamental groups of such curves.
We prove that this monodromy action is non-trivial (as an outer action) when the given 
stable log curve is not smooth in classical sense. Using the result of this type 
instead of Oda's one, 
we end  Section  6 with  a purely algebraic proof of the aforementioned result of Andreatta--Iovita--Kim. 

\section*{Acknowledgements}

First of all we would like to thank the two referees for their suggestions and remarks which helped us to improve the article. In particular, we would like to acknowledge the basic help of the first referee in the proof of the crucial result of Subsection 6.7: his suggestion considerably shortened the proof. We are also grateful to the first referee for the subject of Remark \ref{ref.abs}. We would like to thank the second referee for pointing out to us the need to write up the correspondence between nilpotent Lie groups and unipotent group schemes in our relative setting, which is important but missing in the first version. 

The first author is partly supported by MIUR-PRIN 2015 ``Number Theory and Arithmetic 
Algebraic Geometry'' and MIUR-PRIN 2017 "Geometric, algebraic and analytic methods in Arithmetic".  The second author would like to thank Sinan \"Unver for a useful discussion 
about tangential points. The third author is partly supported by 
JSPS KAKENHI (Grant Numbers 25400008, 17K05162, 15H02048, 18H03667 and 18H05233). 
For the preparation of this work, discussions and  revisions were done during various and reciprocal visits 
of the authors at the University of Padova, the Free University of Berlin and the University of Tokyo. 
We would like to thank these institutions for the hospitality.

\section*{Conventions}
We use freely the notions concerning log (formal) schemes which are written in 
\cite{ka}, \cite{il}, \cite[II 5]{agt}, \cite{ogu}, \cite{tsuji19}. In particular (formal) schemes and sheaves are considered in the 
etale site, unless otherwise stated. 
A (formal) scheme is naturally regarded as a fine log (formal) scheme endowed with trivial 
log structure. For example, for a ring $R$, `the fine log scheme $\Spec R$' means 
the fine log scheme $(\Spec R, \text{triv.~log str.})$. 
For a fine log (formal) scheme $X$, denote 
its underlying (formal) scheme by $X^{\circ}$, 
the log structure of $X$ by $\cM_X$ 
and the structure homomorphism of monoids $\cM_X \lra \cO_{X^{\circ}} =: \cO_X$ by 
$\alpha_X$. A log (formal) scheme is called Noetherian if so is its  
underlying scheme.  Throughout this paper, all the log (formal) schemes are 
assumed to be Noetherian, unless otherwise stated. 
A morphism of fine log schemes is called of finite type/proper  
if so is its underlying morphism of schemes. 
A morphism of fine log schemes $f:X \lra Y$ is called 
strict if the morphism of log structures $f^*\cM_Y \lra \cM_X$ induced by $f$ is an isomorphism. Cartesian diagrams of fine log (formal) schemes are considered in 
the category of fine log (formal) schemes, unless otherwise stated. \par 

For a scheme or a ring $T$ and $r,s \in \N$, we denote the log structure on 
$\Af^r_T = \ul{\Spec}_T \cO_T[t_1, ..., t_r]$ associated to 
the monoid homomorphism $\N^{s} \lra \cO_T[t_1, ..., t_r]$ sending 
$e_i \,(1 \leq i \leq \textrm{min}(r,s))$ to $t_i$ and 
sending $e_i \,(r+1 \leq i \leq s)$ to $0$ if $s > r$ 
(where $\{e_i\}_{i=1}^{s}$ is the canonical basis of $\N^{s}$) 
by $\cM_{r,s}$, and denote the log scheme $(\Af^r_T, \cM_{r,s})$ by 
$\Af_T^{r,s}$. For a field $k$, 
we call $\Af_k^{0,1}$ the standard log point over $k$. 

For a log scheme $T$ and $r \in \N$, 
we denote the log formal scheme consisting of the formal scheme 
$\ul{\Spf}_T \cO_T[[t_1, ..., t_r]]$
and the log structure associated to the monoid homomorphism 
$\cM_T \os{\alpha_T}{\lra} \cO_T \hra \cO_T[[t_1, ..., t_r]]$ 
by 
$\wh{\Af}_T^{r}$.

For a morphism $h: Y \lra Z$ of fine log schemes of finite type, we denote by $\Omega^1_{Y/Z}$ the sheaf of relative log differentials. 

\section{Preliminaries}

In this section, after giving some preliminaries on modules with integrable 
log connection, we give a geometric setting which we consider 
throughout this paper, and we define various categories related to 
the category of modules with integrable log connection
 which we use later. \par 
For a morphism $h: Y \lra Z$ of fine log schemes of finite type, 
we denote the category of locally free $\cO_Y$-modules of finite rank 
(resp. coherent $\cO_Y$-modules) with 
log connection on $Y/Z$ by $\MC(Y/Z)$ 
(resp. $\wt{\MC}(Y/Z)$), and denote the full subcategory 
consisting of modules with integrable connection by 
$\MIC(Y/Z)$ (resp. $\wt{\MIC}(Y/Z)$). 
When $Z = \Spec R$ for 
some ring $R$, we denote it simply by $\MIC(Y/R)$ (resp. $\wt{\MIC}(Y/R)$). 
Note that, when $Y = Z$, $\MIC(Y/Z)$ (resp. $\wt{\MIC}(Y/Z)$) is just 
the category of locally free $\cO_Y$-modules of finite rank 
(resp. coherent $\cO_Y$-modules). 
Note also that, even when $\Omega^1_{Y/Z}$ is a locally free $\cO_Y$-module 
of finite rank (which is the case 
we are interested in), $\MIC(Y/Z)$ is a rigid tensor cartegory but 
not necessarily an abelian category, 
due to the possible existence of non-trivial log structure on $Y$. 
On the other hand, under the same assumption on $Y/Z$, 
$\wt{\MIC}(Y/Z)$ is an abelian category, but not necessarily a 
rigid tensor category (\cite[Remark 2.10]{dps}). \par 
For a commutative diagram 
\begin{equation}\label{eq:comm}
\begin{CD}
Y' @>{a}>> Y \\ 
@V{h'}VV @VhVV \\ 
Z' @>{b}>> Z, 
\end{CD}
\end{equation}
the pull-back functors 
$$ a_{\dR}^*: \MIC(Y/Z) \lra \MIC(Y'/Z'), \quad 
a_{\dR}^*: \wt{\MIC}(Y/Z) \lra \wt{\MIC}(Y'/Z')$$ 
are defined in natural way. In particular, the functors 
$$ h_{\dR}^*: \MIC(Z/Z) \lra \MIC(Y/Z), \quad 
h_{\dR}^*: \wt{\MIC}(Z/Z) \lra \wt{\MIC}(Y/Z)$$ are defined. 

An object $E := (E,\nabla)$ in $\wt{\MIC}(Y/Z)$ gives rise to the 
de Rham complex $E \otimes_{\cO_Y} \Omega^{\b}_{Y/Z}$. 
Using it, we define 
the $i$-th relative de Rham cohomology by 
$R^ih_{\dR *}E := R^ih_*(E \otimes_{\cO_Y} \Omega^{\b}_{Y/Z})$. 
When $h$ is proper, it is a coherent $\cO_Z$-module 
(but not necessarily locally free), and so we have the functor 
$$ R^ih_{\dR *}: \wt{\MIC}(Y/Z) \lra \wt{\MIC}(Z/Z). $$
In the following, $R^0h_{\dR *}E$ is denoted simply by $h_{\dR *}E$. 

The adjointness of the functors 
$h^*$, $h_{*}$ for $\cO_?$-modules ($? = Y$ or $Z$) induces the 
morphisms of functors $h_{\dR}^* h_{\dR *} \lra \id, 
\id \lra h_{\dR *}h_{\dR}^*$. 
For $E \in \MIC(Z/Z)$ and $(E',\nabla') \in \wt{\MIC}(Y/Z)$, 
we have 
the projection formula 
\begin{equation*}
R^ih_{\dR *}(h_{\dR}^*E \otimes (E',\nabla')) \cong 
E \otimes_{\cO_Z} R^ih_{\dR *}(E',\nabla'). 
\end{equation*} 
Also, when we have a Cartesian diagram 
\begin{equation*}
\begin{CD}
Y' @>{a}>> Y \\ 
@V{h'}VV @VhVV \\ 
Z' @>b>> Z 
\end{CD}
\end{equation*} 
and $E := (E,\nabla) \in \wt{\MIC}(Y/Z)$, $h$ is separated and integral (so the diagram is a cartesian diagram also for the underling schemes) and when $b$ is flat or 
when $E \otimes_{\cO_Y} \Omega^i_{Y/Z} \,(i \in \N)$ and 
$R^ih_{\dR *}E \,(i \in \N)$ are flat over $\cO_Z$, then we have the base change isomorphism 
$$ b^*R^ih_{\dR *}E \os{\cong}{\lra} R^i{h'}_{\dR *}a_{\dR}^*E. $$
Indeed, we have the base change quasi-isomorphism in derived category 
$$ Lb^*Rh_*(E \otimes_{\cO_Y} \Omega^{\bullet}_{Y/Z} ) \os{\cong}{\lra} 
Rh'_*(a_{\dR}^*E \otimes_{\cO_{Y'}} \Omega^{\bullet}_{Y'/Z'}) $$
by the proof similar to \cite[Theorem 7.8]{berthelotogus}, namely, 
by the base change result in affine case and cohomological descent. 
Then, by taking the $i$-th cohomology on both hand sides, 
we obtain the required base change isomorphism.

For an object $(E,\nabla)$ in $\MIC(Y/Z)$ and a geometric point $y$ of $Y$, 
the map $\nabla: E \lra E \otimes_{\cO_Y} \Omega^1_{Y/Z}$ induces 
a $k(y)$-linear map 
$\nabla_y: E|_y \lra E|_y \otimes_{\Z} \ol{\cM}_{Y,y}^{\rm gp}/
\ol{\cM}_{Z,h(y)}^{\rm gp}$, 
where $E|_y := E \otimes_{\cO_Y} k(y)$, 
$\ol{\cM}_{Y,y}^{\rm gp} := \cM_{Y,y}^{\rm gp}/\cO_{Y,y}^{\times}$, 
$\ol{\cM}_{Z,h(y)}^{\rm gp} := \cM_{Z,h(y)}^{\rm gp}/\cO_{Z,h(y)}^{\times}$. 
For a $\Z$-linear map 
$\psi: \ol{\cM}_{Y,y}^{\rm gp}/\ol{\cM}_{Z,h(y)}^{\rm gp} \lra \Z$,  
the composite 
$(\id \otimes \psi) \circ \nabla_y: E|_y \lra E|_y$ is called the residue map of 
$(E,\nabla)$ at $y$ along $\psi$. 
We say that $(E,\nabla)$ has nilpotent residues at $y$ 
if, for 
any $\Z$-linear map 
$\psi$ as above, the residue map  
$\psi \circ \nabla_y$ is nilpotent as endomorphism on $E|_y$ \cite[Definition 2.11]{dps}. 
Also, in this article, we say that $(E,\nabla)$ has nilpotent residues if 
it has nilpotent residues at any geometric point $y$ of $Y$.
We denote the full subcategory of $\MIC(Y/Z)$ which consists of 
objects having nilpotent residues by $\MICn(Y/Z)$. 

Note that, when we are given the commutative diagram \eqref{eq:comm}, 
the pull-back functor 
$a_{\dR}^*$ induces the functor $\MICn(Y/Z) \lra \MICn(Y'/Z')$, which 
is also denoted by $a_{\dR}^*$. Note also that, when the log structures of 
$Y$ and $Z$ are defined in Zariski topology, the residue map can be defined 
at closed points $y$ (not geometric points) and so 
the nilpotence of residues
can be checked at closed points. 

\begin{rem}\label{rem:nilpnonclosed1}
In \cite[Definition 2.11]{dps}, the definition of 
`having nilpotent residues' is different from the one given above: 
In fact, $(E,\nabla)$ has nilpotent residues 
in the sense in \cite{dps} if it has nilpotent residues 
at any geometric point $y$ over any closed point of $Y$. 
In this article, we denote the full subcategory of $\MIC(Y/Z)$ which consists of 
objects having nilpotent residues in the sense of \cite{dps} by $\MICn(Y/Z)'$. 

The definition in this article has an advantage that the functoriality 
for pullbacks is valid for any commutative diagram \eqref{eq:comm}. 
On the other hand, the definition in \cite{dps} has an advantage that 
it would be easier to prove that an object has nilpotent residues. 

By definition, there exists the canonical inclusion 
$\MICn(Y/Z) \subseteq \MICn(Y/Z)'$, and it is an interesting 
problem if this inclusion is an equality. 
We do not know the answer to this problem in general case, but in some cases, 
the above question has the affirmative answer. 
See \cite[Remark 2.12]{dps} and Remark \ref{rem:nilpnonclosed2} below.
\end{rem}
 
So far, we worked in the category of fine log schemes, but sometimes we need 
to work with fine log formal schemes. 
For a morphism $h: Y \lra Z$ topologically of finite type between 
Noetherian fine log formal schemes, 
we can define the category $\MIC(Y/Z)$ (resp. $\wt{\MIC}(Y/Z)$) 
of locally free $\cO_Y$-modules of finite rank 
(resp. coherent $\cO_Y$-modules) with 
integrable log connection on $Y/Z$. (In the definition of integrable log 
connection, we use the continuous log differential module $\Omega^1_{Y/Z}$.) 
We can define the pull-back functor and 
the $i$-th relative de Rham cohomology 
$R^ih_{\dR *}E := R^ih_*(E \otimes_{\cO_Y} \Omega^{\b}_{Y/Z})$ 
\, $(E \in \wt{\MIC}(Y/Z))$ also in this situation. 
We often use the following proposition later. 

\begin{prop}\label{prop:fib}
Let $Y, Z$ be fine log schemes over $\Q$, 
$Y'$ a fine log formal scheme over $\Q$ and 
let $Y' \os{h'}{\lra} Y \os{h}{\lra} Z$ be morphisms over $\Q$ 
such that $h$ is 
of finite type with $\Omega^1_{Y/Z}$ locally free 
and that, etale locally on $Y$, 
$h'$ is isomorphic to the projection 
$\wh{\Af}^d_{Y} \lra Y$ for some 
$d$. $($For the definition of $\wh{\Af}^d_{Y}$, see Convention.$)$
Then, for any $E \in \MIC(Y/Z)$, we have the canonical isomorphism 
$$ R^ih_{\dR *}E \os{=}{\lra} R^i(h \circ h')_{\dR *}({h'}^*_{\dR} E) \quad (i \in \N). $$
\end{prop}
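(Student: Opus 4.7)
The plan is to produce a derived-category isomorphism
\[
E \otimes_{\cO_Y} \Omega^{\b}_{Y/Z} \;\os{\sim}{\lra}\; Rh'_*\bigl(h'^*_{\dR} E \otimes_{\cO_{Y'}} \Omega^{\b}_{Y'/Z}\bigr)
\]
on $Y$; applying $Rh_*$ and invoking $R(h \circ h')_* \cong Rh_* \circ Rh'_*$ then yields the proposition after taking $i$-th cohomology. Since the assertion is local on $Y$ for the \'etale topology and $R^i h'_*$ commutes with \'etale localization, I may assume that $h'$ is literally the projection $\wh{\Af}^d_Y \lra Y$. In that case the underlying topological space of $Y'$ coincides with that of $Y$, so $Rh'_* = h'_*$ on quasi-coherent sheaves, and it suffices to show that the canonical morphism of complexes
\[
E \otimes_{\cO_Y} \Omega^{\b}_{Y/Z} \lra h'_*\bigl(h'^*_{\dR} E \otimes_{\cO_{Y'}} \Omega^{\b}_{Y'/Z}\bigr),
\]
induced by $\cO_Y \to h'_*\cO_{Y'}$ together with $h'^*\Omega^{\b}_{Y/Z} \to \Omega^{\b}_{Y'/Z}$, is a quasi-isomorphism.

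I would then introduce the decreasing Hodge-type filtration $F^p$ on $h'^*_{\dR}E \otimes \Omega^{\b}_{Y'/Z}$ associated to the locally split short exact sequence
\[
0 \lra h'^*\Omega^1_{Y/Z} \lra \Omega^1_{Y'/Z} \lra \Omega^1_{Y'/Y} \lra 0,
\]
so that
\[
\mathrm{gr}^p_F \;\cong\; h'^*(E \otimes \Omega^p_{Y/Z}) \otimes_{\cO_{Y'}} \Omega^{\b-p}_{Y'/Y},
\]
with induced differential $1 \otimes d_{Y'/Y}$. Since everything lies over $\Q$, the formal-polydisc Poincar\'e lemma applies: for any locally free $\cO_Y$-module $F$ of finite rank, the Koszul complex for $\partial/\partial t_1, \dots, \partial/\partial t_d$ on $h'^*F \cong F[[t_1,\dots,t_d]]$ is a resolution of $F$, the contracting homotopy being the termwise antiderivative, which relies on inverting positive integers. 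Consequently $h'_*\mathrm{gr}^p_F$ is quasi-isomorphic to $E \otimes \Omega^p_{Y/Z}$ placed in degree $p$.

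The filtration $h'_*F^{\b}$ on the target complex therefore gives rise to a convergent spectral sequence
\[
E_1^{p,q} = \begin{cases} E \otimes \Omega^p_{Y/Z}, & q = 0, \\ 0, & q \neq 0, \end{cases} \;\Longrightarrow\; \cH^{p+q}\bigl(h'_*(h'^*_{\dR}E \otimes \Omega^{\b}_{Y'/Z})\bigr).
\]
Concentration in the row $q = 0$ forces degeneration at $E_2$, and unwinding the filtration identifies $d_1$ with the de Rham differential on $E \otimes \Omega^{\b}_{Y/Z}$. The canonical map of complexes above is strictly compatible with the filtrations (the source carrying its stupid filtration) and induces the identity on $E_1$-terms, hence is a quasi-isomorphism, as required. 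The principal obstacle I expect is the formal Poincar\'e lemma on the graded pieces, which crucially uses the characteristic-zero assumption; a secondary technicality is the explicit identification of $d_1$ with the de Rham differential, which is routine but calls for a careful unwinding of the filtration.
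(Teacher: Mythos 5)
Your proposal is correct and follows essentially the same route as the paper: after the identical \'etale-local reduction to the projection $\wh{\Af}^d_Y \lra Y$, everything comes down to the fibrewise formal Poincar\'e lemma over $\Q$, with characteristic zero entering exactly where you say it does. The only difference is bookkeeping: the paper writes the target as the total complex of ${h'}^*(E\otimes_{\cO_Y}\Omega^{\b}_{Y/Z})\otimes_{\cO_{\wh{\Af}^d_Y}}\Omega^{\b}_{\wh{\Af}^d_Y/Y}$, uses the projection formula to reduce to the quasi-isomorphism $\cO_Y \lra h'_*\Omega^{\b}_{\wh{\Af}^d_Y/Y}$ proved by an explicit contracting homotopy, whereas you filter by the Hodge-type filtration and run the resulting spectral sequence --- the two arguments are interchangeable.
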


\begin{proof}
Let us denote by 
$E \otimes_{\cO_Y} \Omega^{\b}_{Y/Z}$ 
(resp. ${h'}^*E \otimes_{\cO_{Y'}} \Omega^{\b}_{Y'/Z}$)  
the de Rham complex associated to $E$ (resp. ${h'}^*_{\dR} E$). 
Then it suffices to prove that the canonical morphism 
$$ E \otimes_{\cO_Y} \Omega^{\b}_{Y/Z} \lra 
Rh'_*({h'}^*E \otimes_{\cO_{Y'}} \Omega^{\b}_{Y'/Z}) 
$$
is a quasi-isomorphism. Since we can work etale locally on $Y$, we may assume that $h'$ is the projection 
$\wh{\Af}^d_{Y} \lra Y$. 
Then we have the equality 
$$ {h'}^*E \otimes_{\cO_{Y'}} \Omega^{\b}_{Y'/Z} = 
{\rm Tot}({h'}^*(E \otimes_{\cO_Y} \Omega^{\b}_{Y/Z}) 
\otimes_{\cO_{\wh{\Af}^d_Y}} 
\Omega^{\b}_{\wh{\Af}^d_Y/Y}), $$
and by using the projection formula, we see that it suffices to prove 
that the canonical injective homomorphism 
$$ \iota: \cO_{Y} \lra {h'}_*
\Omega^{\b}_{\wh{\Af}^d_Y/Y} $$
is a quasi-isomorphism. 
This is standard, but we give a proof for the convenience 
of the reader. Let $\pi: \Omega^{\b}_{\wh{\Af}^d_Y/Y} \lra 
\cO_{Y}$ be the homomorphism of `taking constant term'. Then 
$\pi \circ \iota$ is the identity, and so it suffices to prove that 
$\iota \circ \pi$ is homotopic to the identity map on $\Omega^{\b}_{\wh{\Af}^d_Y/Y}$. 
A homotopy is given by 
$$ 
t_1^{i_1} \cdots t_d^{i_d} dt_{j_1} \wedge \cdots \wedge dt_{j_m} 
\mapsto 
\begin{cases}
(i_1+1)^{-1}t_1^{i_1+1} \cdots t_d^{i_d} dt_{j_2} \wedge \cdots \wedge dt_{j_m} &
\text{(if $j_1=1$)}, \\ 
0 & \text{(otherwise)}, 
\end{cases}
$$
where $t_1, \dots, t_d$ are the coordinate of $\wh{\Af}^d_Y$ and 
$i_1, \dots, i_d \geq 0$, $1 \leq j_1 < \cdots < j_m \leq d$. 
Note that the first element on the right hand side is well-defined 
because the characteristic of $Y$ is zero. So the proof of the proposition is finished. 
\end{proof}

Next we introduce the notion of stratifications and crystals, and 
relate them to modules with integrable connections. 
To do so, first we recall the definition of 
log infinitesimal neighborhoods.

\begin{propdefn}[{\cite[Remark 5.8]{ka}}]\label{propdef:loginfnbd}
Let $m \in \N$. 
Let $\cC$ be the category of closed immersions 
{\rm \cite[(3.1)]{ka}} $X \hra Y$ of log schemes such that 
$\cM_X$ is fine and $\cM_Y$ is coherent {\rm \cite[(2.1)]{ka}}, and let 
$\cC_m$ be the category of exact closed immersions of fine log 
schemes $X \hra Y$ such that $X^{\circ}$ is defined in $Y^{\circ}$ 
by an ideal $J$ with $J^{m+1} = 0$. Then, the canonical inclusion 
functor $\cC_m \hra \cC$ has a right adjoint $\alpha: \cC \lra \cC_m$ 
such that $\alpha(X \hra Y)$ has the form $X \hra Y'$. We call the 
fine log scheme $Y'$ the $m$-th log infinitesimal neighborhood 
of $X$ in $Y$. 
\end{propdefn}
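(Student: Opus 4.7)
The plan is to construct $\alpha(X \hra Y) = (X \hra Y')$ explicitly and verify the universal property. On underlying schemes, étale-locally I would set $Y'^\circ := \Spec(\cO_Y / I^{m+1})$ where $I \subseteq \cO_Y$ is the defining ideal sheaf of $X^\circ \hra Y^\circ$; this is the classical $m$-th infinitesimal neighborhood and is intrinsic. The real content is to equip $Y'^\circ$ with a fine log structure $\cM_{Y'}$ making $X \hra Y'$ exact, admitting a log morphism $Y' \to Y$ extending $X \to Y$.

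For the log structure, I would work étale-locally and use the existence of compatible charts for morphisms of log schemes. Choose a chart $Q \to \cM_Y$ by a finitely generated monoid $Q$ (possible by coherence of $\cM_Y$) and a compatible fine chart $P \to \cM_X$ (possible by fineness of $\cM_X$), together with a monoid homomorphism $Q \to P$ inducing the given morphism of log structures. Then define $\cM_{Y'}$ as the log structure on $Y'^\circ$ associated to a pre-log structure $P \to \cO_{Y'}$ obtained by choosing a set-theoretic lift of the composite $P \to \cM_X \to \cO_X$ along the surjection $\cO_{Y'} \twoheadrightarrow \cO_X$. Different lifts differ by a map valued in $1 + I \cdot \cO_{Y'} \subseteq \cO_{Y'}^\times$, hence yield canonically isomorphic log structures; this is what makes $\cM_{Y'}$ glue to a globally defined log structure on $Y'^\circ$. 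The canonical map $Y' \to Y$ of log schemes is induced by $\cO_Y \twoheadrightarrow \cO_{Y'}$ together with $Q \to P$.

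Next I would verify: (i) $\cM_{Y'}$ is fine, because its chart $P$ is fine; (ii) $X \hra Y'$ is exact, because étale-locally both $\cM_X$ and the pullback $i^*\cM_{Y'}$ are associated to the same pre-log structure $P \to \cO_X$ obtained by reducing modulo $I/I^{m+1}$; (iii) the ideal $J$ of $X^\circ$ in $Y'^\circ$ is $I/I^{m+1}$, which satisfies $J^{m+1}=0$ by construction. For the universal property, given $(X \hra Z) \in \cC_m$ with a morphism $Z \to Y$ under $X$, on underlying schemes the ideal $J_Z$ of $X^\circ$ in $Z^\circ$ satisfies $J_Z^{m+1}=0$ and contains the image of $I$, so $Z^\circ \to Y^\circ$ factors uniquely through $Y'^\circ = V(I^{m+1})$. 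For log structures, exactness of $X \hra Z$ allows the chart $P \to \cM_X$ to lift (étale-locally) to a chart $P \to \cM_Z$, and the ambiguity of such a lift lies in $1 + J_Z \subseteq \cO_Z^\times$, which again does not affect the associated log structure. Comparing this lift with the one defining $\cM_{Y'}$ and composing with $\cO_{Y'} \to \cO_Z$ produces the desired morphism $Z \to Y'$, unique as a morphism over $Y$ compatible with the exact immersion of $X$.

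The main obstacle is the gluing of the local construction of $\cM_{Y'}$ into a globally defined, fine log structure on $Y'^\circ$ which is independent of the chosen charts and lifts. The key input is the standard fact that any two fine charts for $\cM_X$ are étale-locally related by an isomorphism of monoids, and that any two lifts $P \to \cO_{Y'}$ of a fixed $P \to \cO_X$ differ multiplicatively by an element of $1 + I\cdot\cO_{Y'}$, yielding canonically isomorphic log structures. Equally delicate is verifying fineness—not merely coherence—of $\cM_{Y'}$, which reflects the essential role of the fineness hypothesis on $\cM_X$ in the definition of $\cC$ and justifies why the construction lands in $\cC_m$.
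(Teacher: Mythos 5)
There is a genuine gap, and it is in the very first step: you take the underlying scheme of $Y'$ to be the classical infinitesimal neighborhood $\Spec(\cO_Y/I^{m+1})$ and only adjust the log structure. That object is not the right adjoint. The construction the paper invokes (Kato's Remark 5.8; the paper itself gives no proof but cites it, and recalls the mechanism in Remark \ref{rem:loginfnbd} and in the proof of Proposition \ref{prop:trinity}) first \emph{exactifies} the immersion: étale locally one factors $X \hra Y^{\mathrm{ex}} \lra Y$ with the first arrow an exact closed immersion and the second log étale (on charts, replace $Q$ by $(Q^{\rm gp}\to P^{\rm gp})^{-1}(P)$ and base change), and only then takes the classical $m$-th infinitesimal neighborhood of $X$ inside $Y^{\mathrm{ex}}$. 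The underlying scheme genuinely changes. Concrete counterexample: let $X=\Af^{0,1}_k$ be the standard log point and $Y=(\Spec k,\ \text{log str.\ associated to }\N^2\to k,\ e_i\mapsto 0)$, with $X\hra Y$ the identity on schemes and the sum map $\N^2\to\N$ on charts; this is a non-exact closed immersion with $I=0$, so your $Y'$ is $X$ itself for every $m$. But $Y^{\mathrm{ex}}$ has underlying scheme $\Spec k[u,u^{-1}]$ with $X$ cut out by $(u-1)$, so the true $m$-th log infinitesimal neighborhood is $Z_m=\Spec k[u]/(u-1)^{m+1}$ with its log structure, and $(X\hra Z_m)\in\cC_m$ admits a morphism to $(X\hra Y)$ in $\cC$ sending the two generators of $\N^2$ to $u\tilde t$ and $\tilde t$ in $\cM_{Z_m}$. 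Any factorization through your $Y'$ would force these two images to coincide (since both generators of $Q$ map to the single element $e$ of $\cM_{Y'}$ via $Q\to P$), which fails because $u\neq 1$ in $\cO_{Z_m}^\times$ for $m\geq 1$. The same phenomenon is exactly what the paper needs later: for the log diagonal the conormal of the log infinitesimal neighborhood is $\Omega^1_{T/Z}$ (log differentials, spanned by $\xi_i=u_i-1\leftrightarrow\dlog x_i$), whereas your construction would produce the ordinary Kähler differentials and the units $u_i$ would not even exist in $\cO_{Y'}$.

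A second, related defect: your ``canonical map $Y'\to Y$'' is not a morphism of log schemes. You define $\cM_{Y'}$ from an arbitrary lift $P\to\cO_{Y'}$ of $P\to\cO_X$ and take the log part of the map to $Y$ to be induced by $Q\to P$; but then each $q\in Q$ must have the same image in $\cO_{Y'}$ under $Q\to\cO_Y\to\cO_{Y'}$ and under $Q\to P\to\cO_{Y'}$, and this already fails for the diagonal $\Af^{1,1}_k\hra \Af^{1,1}_k\times_k\Af^{1,1}_k$ (the generator $(0,1)$ goes to $y$, respectively to $x$, in $k[x,y]/(x-y)^{m+1}$). The way to repair the argument is to follow Kato: exactify chart-wise, check that the exactification is unique up to the étale-local ambiguities you already handle (so it glues), take the classical neighborhood inside it, and then your verification of exactness, nilpotence of the ideal, and the universal property (via lifting charts along exact nilpotent thickenings) can be run essentially as you wrote it — but for the exactified ambient log scheme, not for $V(I^{m+1})\subseteq Y^{\circ}$.
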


For the concrete construction of log infinitesimal neighborhoods, 
see \cite[Remark 5.8]{ka}. 

\begin{rem}\label{rem:loginfnbd}
In this remark, we keep the notation of Proposition-Definition \ref{propdef:loginfnbd}. 
Also, we denote the fiber product in the category of log schemes 
by $\times'$, and keep to denote the fiber product in the category of fine 
log schemes by $\times$. It is known that the category of coherent log schemes 
is closed under finite inverse limits of log schemes \cite[Proposition 2.6]{ka}, and 
the canonical inclusion functor 
$$ \text{(fine log schemes)} \hra \text{(coherent log schemes)} $$
has a right adjoint 
$$ \beta: \text{(coherent log schemes)} \lra \text{(fine log schemes)} $$
(\cite[Proposition 2.7]{ka}). So, if $\{X_{\lambda}\}_{\lambda}$ is 
a finite diagram of fine log schemes and if $X$ is its inverse limit in the category of 
log schemes, the inverse limit of the diagram $\{X_{\lambda}\}_{\lambda}$ 
in the category of fine log schemes is given by $\beta(X)$. Also, we see from 
the construction of the functor $\beta$ in \cite[Proposition 2.7]{ka} that, 
if we define $\cC'$ to be the category of closed immersions of fine log schemes, 
the canonical inclusion functor $\cC' \hra \cC$ has a right adjoint 
(denoted also by $\beta$) given by $(X \hra Y) \mapsto (X \hra \beta(Y))$. 
Moreover, it is known by the construction given in \cite[(5.6), Remark 5.8]{ka} that 
the functor $\alpha: \cC \lra \cC_m$ factors as 
$\cC \os{\beta}{\lra} \cC' \os{\alpha'}{\lra} \cC_m$. 

From these facts, we see the following: for a morphism 
$X \lra Y$ of fine log schemes, we have the equalities 
\begin{align*}
\alpha (X \hra Y \times'_X \cdots \times'_X Y) 
& = 
\alpha' \circ \beta (X \hra Y \times'_X \cdots \times'_X Y) \\ & 
= \alpha'(X \hra \beta (Y \times'_X \cdots \times'_X Y) ) \\ 
& = \alpha'(X \hra Y \times_X \cdots \times_X Y) \\ & 
= \alpha(X \hra Y \times_X \cdots \times_X Y). 
\end{align*}
So, when we consider the $m$-th log infinitesimal neighborhood of 
a diagonal embedding for a morphism of fine log schemes, 
it does not matter in which category we take the fiber product. 
\end{rem}

Now we define the notion of stratifications and crystals.

\begin{defn}\label{def:strat}
Let $k$ be a field of characteristic zero and 
let $h:Y \lra Z$ be a morphism of fine log schemes of finite type over $k$. 
For $m,j \in \N$, let $Y^m(j)$ be the $m$-th log infinitesimal 
neighborhood of $Y$ in $Y \times_Z \cdots \times_Z Y$ $((j+1)$ times$)$ and 
let $p^m_j: Y^m(1) \lra Y \,(j=1,2), p^m_{j,j'}: Y^m(2) \lra Y^m(1) \,
(1 \leq j < j' \leq 3)$ be projections. 
Then a stratification on a coherent $\cO_Y$-module $E$ 
$($with respect to $h)$ is 
a compatible family of $\cO_{Y^m(1)}$-linear isomorphisms 
$\{\epsilon^m: {p^m_2}^*E \lra 
{p^m_1}^*E \}_m$ with $\epsilon_0 = \id$ 
and ${p^m_{1,3}}^*(\epsilon^m) = 
{p^m_{1,2}}^*(\epsilon^m) \circ {p^m_{2,3}}^*(\epsilon^m)$. 
We denote the category of locally free $\cO_Y$-modules of finite rank 
$($resp. coherent $\cO_Y$-modules$)$ endowed with stratification 
$($with respect to $h)$ by $\Strat(Y/Z)$ $($resp. $\wt{\Strat}(Y/Z))$. 
\end{defn}

\begin{defn}\label{def:inf-strat}
Let $k$ and $h:Y \lra Z$ be as in Definition \ref{def:strat}. \\
$(1)$ \, We define the infinitesimal site $(Y/Z)_{\inf}$ of $Y/Z$ in 
the following way$:$ An object is given by a triple $(U, T, i)$, 
where $U$ is a $($Noetherian$)$ fine log scheme over $Y$, 
$T$ is a $($Noetherian$)$ fine log scheme of finite type over $Z$ and 
$i$ is a nilpotent exact closed immersion 
$U \lra T$ over $Z$. 
A morphism is defined in natural way and a covering is the one naturally induced 
from the etale topology on $T$. \\
$(2)$ \, We define the stratifying site $(Y/Z)_{\str}$ of $Y/Z$ 
as the full subcategory of $(Y/Z)_{\inf}$ consisting of triples 
$(U, T, i)$ which admit etale locally a morphism $T \lra Y$ over $Z$ 
extending $U \lra Y$. \\
$(3)$ \, We define the sheaf $\cO_{Y/Z}$ on $(Y/Z)_{\inf}$ or on $(Y/Z)_{\str}$
by $\cO_{Y/Z}((U,T,i)) := \Gamma(T,\cO_T)$. \\ 
$(4)$ \, For a sheaf $E$ on $(Y/Z)_{\inf}$ $($resp. $(Y/Z)_{\str})$ and 
an object $(U,T,i)$ in $(Y/Z)_{\inf}$ $($resp. $(Y/Z)_{\str})$, we denote 
the sheaf on $T_{\et}$ induced by $E$ by $E_T$. \\ 
$(5)$ \, We call a sheaf $E$ of $\cO_{Y/Z}$-modules 
on $(Y/Z)_{\inf}$ $($resp. $(Y/Z)_{\str})$ a crystal 
if, for any morphism $\varphi: (U',T',i) \lra (U,T,i)$ in 
$(Y/Z)_{\inf}$ $($resp. $(Y/Z)_{\str})$, the morphism 
$\varphi^*E_T := \cO_{T'} \otimes_{\varphi^{-1}\cO_T} \varphi^{-1}E_T 
\lra E_{T'}$ induced by the definition of $E$ is an isomorphism. \\
$(6)$ \, We call a crystal $E$ on $(Y/Z)_{*} \, (* \in \{\inf, \str\})$ 
locally free $($resp, coherent$)$ if $E_T$ is a locally free $\cO_T$-module 
of finite rank $($resp. coherent $\cO_T$-module$)$ for any $(U,T,i)$. 
We denote the category of locally free $($resp. coherent$)$ crystals on 
$(Y/Z)_*$ by $\Crys((Y/Z)_*)$ $($resp. $\wt{\Crys}((Y/Z)_*))$. 
\end{defn}

When $h: Y \lra Z$ is log smooth, $(Y/Z)_{\str} = (Y/Z)_{\inf}$ by 
the infinitesimal lifting property of log smooth morphisms, and so 
we have an equality $\Crys((Y/Z)_{\str}) = \Crys((Y/Z)_{\inf})$ (resp. $\wt{\Crys}((Y/Z)_{\str}) = \wt{\Crys}((Y/Z)_{\inf})$). In general, 
the inclusion $(Y/Z)_{\str} \hra (Y/Z)_{\inf}$ induces  
the natural functor $\Crys((Y/Z)_{\inf}) \lra \Crys((Y/Z)_{\str})$ (resp. $\wt{\Crys}((Y/Z)_{\inf}) \lra \wt{\Crys}((Y/Z)_{\str})$). 
The categories $\Crys((Y/Z)_*)$, $\wt{\Crys}((Y/Z)_*)$ ($* \in \{\inf, \str\}$) are contravariantly 
functorial with respect to $h: Y \lra Z$: Indeed, if we are given 
a commutative diagram \eqref{eq:comm} and 
an object $(U, T, i)$ in $(Y'/Z')_{*}$, it is also regarded as an object in 
$(Y/Z)_{*}$. Hence, if we are given an object $E$ in $\Crys((Y/Z)_*)$ (resp. 
$\wt{\Crys}((Y/Z)_*)$), we can define a sheaf $h^*E$ on 
 $(Y'/Z')_{*}$ by putting 
$(h^*E)((U,T,i)) := E((U,T,i))$, and we can check that 
$h^*E$ is in fact an object in $\Crys((Y'/Z')_*)$ (resp. $\wt{\Crys}((Y'/Z')_*)$). 

We explain the relation among 
modules with integrable log connection, stratifications and crystals. 
Let $k, h:Y \lra Z$ be as in as in Definition \ref{def:strat} and 
fix the symbol $* \in \{\str, \inf\}$. Let us take an object 
$(U,T,i)$ in $(Y/Z)_{*}$. If we denote the 
the $m$-th log infinitesimal 
neighborhood of $T$ in $T \times_Z \cdots \times_Z T$ ($(j+1)$ times) 
by $T^m(j)$, the triple $(U,T^m(j), U \os{i}{\ra} T \os{\Delta}{\ra} T^m(j))$ 
(where $\Delta$ is the morphism induced by the diagonal morphism 
$T \hra T \times_Z \cdots \times_Z T$) is an object in $(Y/Z)_{*}$. 
Therefore, if we are given an object $E$ in $\wt{\Crys}((Y/Z)_{*})$, 
we obtain naturally an object of the form $(E_T, \{\epsilon^m\}_m)$ 
in $\wt{\Strat}(T/Z)$ by defining $\epsilon^m$ to be the isomorphism 
$$ {p_2^m}^*E_T \os{\cong}{\lra} E_{T^m(1)} \os{\cong}{\lla} {p_1^m}^*E_T $$
induced by the structure of the crystal $E$. 
(Here $p_j^m: T^m(1) \lra T$ are the projections.) When $E$ is locally free, 
$E_T$ is locally free. Thus we obtain functors 
\begin{equation}\label{eq:trinity1}
\Crys((Y/Z)_{*}) \lra \Strat(T/Z), \qquad 
\wt{\Crys}((Y/Z)_{*}) \lra  \wt{\Strat}(T/Z). 
\end{equation}
Also, when we are given an object $(E, \{\epsilon^m\}_m)$ 
in $\wt{\Strat}(T/Z)$, we can define a structure of a log connection $\nabla: 
E \lra E \otimes_{\cO_T} \Omega^1_{T/Z}$ by putting 
$$ 
\nabla(e) := \epsilon^1({p_2^1}^*(e)) - {p_1^1}^*(e)
\in \Ker({p_1^1}^*E \lra E) \cong E \otimes_{\cO_T} \Omega^1_{T/Z}, 
$$
where the last isomorphism is induced by the canonical isomorphism 
$\Ker(\cO_{T^1(1)} \lra \cO_T) \cong \Omega^1_{T/Z}$ 
\cite[Proposition 3.2.5]{shihocrysI}.  
(One can prove the Leibniz rule 
for $\nabla$ in the same way as the proof of \cite[Proposition 2.9]{berthelotogus}.) 
Thus we obtain functors 
\begin{align}\label{eq:trinity2} 
& \Strat(T/Z) \lra \MC(T/Z), \qquad   
\wt{\Strat}(T/Z) \lra \wt{\MC}(T/Z). 
\end{align}
The functors we defined above are contravariantly functorial with respect to 
$h:Y \lra Z$ and $(U,T,i)$. Concerning the above functors 
in the case $U = Y$, we have the following proposition. 

\begin{prop}\label{prop:trinity}
Let $k$, $h:Y \lra Z$,  be as in Definition \ref{def:strat} and $(U,T,i)\in (Y/Z)_*$. Assume that $U=Y$ 
and that $\Omega^1_{T/Z}$ is a locally free $\cO_T$-module of finite rank. 
Then the functors \eqref{eq:trinity1} for $* = \str$ and the functors \eqref{eq:trinity2} induce the equivalences of categories 
\begin{align}
& \Crys((Y/Z)_{\str}) \os{\cong}{\lra}  \Strat(T/Z) \os{\cong}{\lra} 
\MIC(T/Z), \label{eq:tri1} \\ 
& \wt{\Crys}((Y/Z)_{\str}) \os{\cong}{\lra} \wt{\Strat}(T/Z) 
\os{\cong}{\lra} \wt{\MIC}(T/Z).  \label{eq:tri2}
\end{align}
Furthermore, when $T \lra Z$ is log smooth, 
the functors \eqref{eq:trinity1} for $* = \inf$ induce 
the equivalences of categories 
\begin{equation}\label{eq:tri3}
\Crys((Y/Z)_{\inf}) \os{\cong}{\lra} \Strat(T/Z), \quad  
\wt{\Crys}((Y/Z)_{\inf}) \os{\cong}{\lra} \wt{\Strat}(T/Z).  
\end{equation}
\end{prop}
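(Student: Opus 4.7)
The plan is a two-step cascade: first prove $\Strat(T/Z) \cong \MIC(T/Z)$ and its coherent counterpart, then prove $\Crys((Y/Z)_{\str}) \cong \Strat(T/Z)$, and finally promote the latter to $\Crys((Y/Z)_{\inf})$ under the log smoothness hypothesis on $T/Z$.

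For $\Strat(T/Z) \cong \MIC(T/Z)$, the forward functor is the one constructed in \eqref{eq:trinity2}. Integrability of the associated connection follows from the cocycle condition ${p^2_{1,3}}^*\epsilon^2 = {p^2_{1,2}}^*\epsilon^2 \circ {p^2_{2,3}}^*\epsilon^2$ on $T^2(2)$, interpreted modulo higher-order terms in the diagonal ideal. For the quasi-inverse, work \'etale locally and choose a basis $\omega_1,\dots,\omega_d$ of $\Omega^1_{T/Z}$ with dual derivations $\partial_1,\dots,\partial_d$; by integrability, an iterated operator $\partial^{\alpha} = \partial_1^{\alpha_1}\cdots\partial_d^{\alpha_d}$ acts on $E$ independently of the order of composition. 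Using the standard \'etale-local description of $\Ker(\cO_{T^m(1)} \to \cO_T)$ as generated by elements $\tau_1,\dots,\tau_d$ with $\tau_i \!\pmod{\tau^2}$ corresponding to $\omega_i$, define
$$\epsilon^m({p^m_2}^*e) \;=\; \sum_{|\alpha|\leq m}\frac{\tau^{\alpha}}{\alpha!}\,\partial^{\alpha}(e).$$
Division by $\alpha!$ is legitimate because $k$ has characteristic zero. Standard verifications yield $\epsilon^0=\id$, the cocycle condition, compatibility in $m$, and coordinate-independence, and the two assignments are mutually inverse. The coherent counterpart $\wt{\Strat}(T/Z) \cong \wt{\MIC}(T/Z)$ follows by the identical argument.

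For $\Crys((Y/Z)_{\str}) \cong \Strat(T/Z)$, the forward functor is \eqref{eq:trinity1}; note that the triples $(Y, T^m(j), \cdot)$ lie in $(Y/Z)_{\str}$ because $Y \hookrightarrow T^m(j)$ is a nilpotent exact closed immersion by construction of the log infinitesimal neighborhood (Remark 1.2 ensuring that the fiber-product computation in the fine-log category is unambiguous). For the quasi-inverse, start with $(E,\{\epsilon^m\}) \in \Strat(T/Z)$ and build a candidate crystal $\wt E$: for $(U',T',i') \in (Y/Z)_{\str}$, \'etale locally choose a lift $f\colon T' \to T$ over $Z$ extending $U' \to Y \hookrightarrow T$, and declare $\wt E_{T'} := f^*E$ locally. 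The crucial independence-of-lift step proceeds as follows: two lifts $f_0, f_1$ give a morphism $(f_0,f_1)\colon T' \to T \times_Z T$, and since the ideal of $U'$ in $T'$ is nilpotent of some bounded order $m+1$, this morphism factors through $T^m(1)$; then $\epsilon^m$ furnishes a canonical isomorphism $f_1^*E \cong f_0^*E$, and the cocycle condition on $\{\epsilon^m\}$ makes these identifications glue compatibly over triple overlaps. The resulting $\wt E$ is then a genuine crystal, and the two functors are inverse by construction. The coherent variant is identical.

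Finally, when $T/Z$ is log smooth, the infinitesimal lifting property applied to the exact nilpotent closed immersion $U' \hookrightarrow T'$ appearing in any $(U',T',i') \in (Y/Z)_{\inf}$, together with the composite $U' \to Y \hookrightarrow T$, yields \'etale locally on $T'$ a lift $T' \to T$ over $Z$. Hence the quasi-inverse construction of the preceding paragraph applies verbatim to all of $(Y/Z)_{\inf}$, giving $\Crys((Y/Z)_{\inf}) \cong \Strat(T/Z)$ and its coherent analogue. The main obstacle throughout is the independence-of-lift verification: one must control the factorization $(f_0,f_1)\colon T' \to T^m(1)$ within the fine-log category (for which Remark 1.2 is essential) and check that the isomorphisms supplied by $\epsilon^m$ are coherent across refinements of lifts and across triple overlaps. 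All remaining verifications are formal manipulations once this descent is in place.
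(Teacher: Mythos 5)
Your overall architecture is the same as the paper's: the same three equivalences, quasi-inverses to \eqref{eq:trinity1} built by choosing \'etale-local lifts (to $Y$, resp.\ to $T$ via log smoothness), factoring pairs of lifts through $T^m(1)$ by the universal property of the log infinitesimal neighborhood, gluing by the cocycle condition and \'etale descent, and the Taylor-series formula $\epsilon^m(1\otimes e)=\sum_{|q|\le m}(\partial^q(e)/q!)\otimes\xi^q$ for the quasi-inverse of $\wt{\Strat}(T/Z)\to\wt{\MIC}(T/Z)$. The genuine gap is the step you dismiss as ``the standard \'etale-local description of $\Ker(\cO_{T^m(1)}\to\cO_T)$'': in this log setting it is not standard, and it is precisely the technical heart of the paper's proof. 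Note that $T\lra Z$ is \emph{not} assumed log smooth in the first part of the statement (only $\Omega^1_{T/Z}$ is locally free), and $T^m(1)$ is obtained by exactification, so one must prove that, \'etale locally where $\Omega^1_{T/Z}$ has a basis $\dlog x_1,\dots,\dlog x_r$ with $x_i\in\cM_T$, there are units $u_i$ with $({p_2^m}^*x_i)=({p_1^m}^*x_i)\cdot u_i$, that $\xi_i:=\log u_i$ makes sense (characteristic zero), and that the monomials $\xi^q$ with $|q|\le m$ form an $\cO_T$-basis of $\cO_{T^m(1)}$. The paper proves generation from the exactified construction of $T^m(1)$ and linear independence by induction using the map $\ol{\delta}^m\colon T^m(1)\times_T T^m(1)\lra T^{2m}(1)$ together with the group-like identity ${\ol{\delta}^m}{}^*(u_i)=u_i\otimes u_i$, i.e.\ primitivity of the $\xi_i$. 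These same structural facts are what make your remaining ``standard verifications'' work: integrability of the induced connection (comparison of ${\ol{\delta}^1}{}^*(\epsilon^2)$ with $(\epsilon^1\otimes\id)\circ(\id\otimes\epsilon^1)$ needs the basis $\{1,\xi_i,\xi_i\xi_j\}$ of $\cO_{T^2(1)}$), the cocycle condition for the Taylor-series stratification (via the isomorphism of inductive systems $\{\delta^m\}$), and the coordinate-independence/gluing of the local quasi-inverse, which the paper gets from uniqueness of the stratification attached to a connection, resting on ${\ol{\delta}^m}{}^*$ being injective onto a direct summand. Without this analysis your quasi-inverse is neither well defined nor verifiably a stratification, so the proposal is incomplete exactly where the log-geometric difficulty lies.

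A minor further point: to apply \eqref{eq:trinity1} for $*=\str$ you must check that $(Y,T^m(j),\cdot)$ is an object of $(Y/Z)_{\str}$; the reason you give (that $Y\hra T^m(j)$ is a nilpotent exact closed immersion) only places it in $(Y/Z)_{\inf}$. One also needs the \'etale-local morphism $T^m(j)\lra Y$ over $Z$ extending the identity of $Y$, which is obtained by composing a projection $T^m(j)\lra T$ with the \'etale-local lift $T\lra Y$ guaranteed by $(Y,T,i)\in(Y/Z)_{\str}$.
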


\begin{proof}
We see that \eqref{eq:tri1} follows from \eqref{eq:tri2} and that 
the first equivalence in \eqref{eq:tri3} follows from the second equivalence 
by restricting to the subcategories of locally free objects. 
So it suffices to prove the equivalences \eqref{eq:tri2} 
and the second equivalence in \eqref{eq:tri3}. 
The strategy of the proof is similar to that in \cite[\S 2]{berthelotogus}, 
\cite[\S 3.2]{shihocrysI} and \cite[\S 1.2]{shihocrysII}. 

First we prove the first equivalence in \eqref{eq:tri2}. 
To do so, it suffices to construct a quasi-inverse 
\begin{equation}\label{eq:tri4}
\wt{\Strat}(T/Z) \lra \wt{\Crys}((Y/Z)_{\str})
\end{equation}
of the second functor in \eqref{eq:trinity1} for $* = \str$. 
Let $(E,\{\epsilon^m\}_m)$ be an object in $\wt{\Strat}(T/Z)$ and take 
an object $(U',T',i')$ in $(Y/Z)_{\str}$. By definition, etale locally on $T'$, 
there exists a morphism $\varphi_1: T' \lra Y$ over $Z$ extending 
$U' \lra Y$. Then we define the etale sheaf $E_{T'}$ on $T'$ by 
$E_{T'} := (i \circ \varphi_1)^*E$. This definition works only etale locally on $T'$
and depends on the choice of $\varphi_1$. 
If there exists another morphism 
$\varphi_2: T' \lra Y$ as $\varphi_1$, we see that the morphism 
$(i \circ \varphi_1) \times (i \circ \varphi_2):  T' \lra T \times_Z T$ factors as 
$$ T' \os{\varphi_{12}}{\lra} T^m(1) \lra T \times_Z T $$
for some $m$, where the second morphism is induced by the diagonal map 
$T \hra T \times_Z T$. 
Thus we have the isomorphism 
$$ \varphi_{12}^*\epsilon^m: (i \circ \varphi_2)^*E \os{\cong}{\lra} 
(i \circ \varphi_1)^*E, $$
and we see that it satisfies the cocycle condition. Hence we can define 
the etale sheaf $E_{T'}$ globally on $T'$ by gluing the local definition 
by etale descent. Also, we see that the definition of $E_{T'}$ does not depend on 
any choice, and that the sheaf on $(Y/Z)_{\str}$ defined by 
$(U',T',i') \mapsto E_{T'}(T')$ is a crystal. Thus we have defined the functor 
\eqref{eq:tri4}, and we can check that this is in fact a quasi-inverse of 
\eqref{eq:trinity1} for $* = \str$. 

Next we prove the second equivalence in \eqref{eq:tri3}. 
We construct a quasi-inverse 
\begin{equation}\label{eq:tri01}
\wt{\Strat}(T/Z) \lra \wt{\Crys}((Y/Z)_{\inf})
\end{equation}
of the functor \eqref{eq:trinity1} for $* = \inf$. 
The construction is similar to that in the previous paragraph: 
For an object $(E,\{\epsilon^m\}_m)$ in $\wt{\Strat}(T/Z)$ and  
an object $(U',T',i')$ in $(Y/Z)_{\inf}$, there exists a morphism  
$\varphi_1: T' \lra T$ over $Z$ extending 
$U' \lra Y \os{i}{\lra} T$ etale locally on $T'$, by the log smoothness of $T$ over $Z$. 
Then we define the etale sheaf $E_{T'}$ on $T'$ by 
$E_{T'} := \varphi_1^*E$ locally on $T'$, and define $E_{T'}$ globally by gluing local definition 
using $\epsilon^m$ for some $m$ as above. 
Then the sheaf on $(Y/Z)_{\str}$ defined by 
$(U',T',i') \mapsto E_{T'}(T')$ is a crystal and thus 
we have defined the functor 
\eqref{eq:tri01}. We can check also that this is a quasi-inverse of 
\eqref{eq:trinity1} for $* = \inf$. 

Finally we prove the second equivalence in \eqref{eq:tri2}. 
To do so, first we prove that the essential image of the second functor in \eqref{eq:trinity2} 
is contained in $\wt{\MIC}(T/Z)$, thus induces the functor 
\begin{equation}\label{eq:tri5}
\wt{\Strat}(T/Z) \lra \wt{\MIC}(T/Z). 
\end{equation}
To prove this, we may work etale locally. So we assume that 
$\Omega^1_{T/Z}$ has a basis of the form $\{ \dlog x_i \}_{i=1}^r$ for some 
$x_1, \dots, x_r \in \cM_T$. Then there exists a unique element 
$u_i \in \varprojlim_m \Ker(\cO_{T^m(1)}^{\times} \lra \cO_T^{\times})$ for each  
$1 \leq i \leq r$ satisfying 
\begin{equation}\label{eq:20190128}
({p_2^m}^*(x_i))_m = ({p_1^m}^*(x_i))_m \cdot u_i \in \varprojlim_m \cM_{T^m(1)}. 
\end{equation}
We put $\xi_i := \sum_{n=1}^{\infty} (-1)^{n-1} (u_i-1)^n/n \in 
\varprojlim_m \cO_{T^m(1)}$ and we will denote the images of $u_i, \xi_i$ in 
$\cO_{T^m(1)}$ by the same letters, by abuse of notation. 
Recall that the canonical isomorphism 
$\Ker(\cO_{T^1(1)} \lra \cO_T) \cong \Omega^1_{T/Z}$ in 
\cite[Proposition 3.2.5]{shihocrysI} sends $\xi_i=u_i -1$ on the left hand side to 
$\dlog x_i$ on the right hand side. Hence $\{\xi_i\}_{i=1}^r$ is a basis of 
$\Ker(\cO_{T^1(1)} \lra \cO_T)$ and so $\{1, \xi_1, \dots, \xi_r\}$ is a basis of 
$\cO_{T^1(1)}$. 

We prove that, for any $m \geq 1$, the elements 
$\xi^q \,(|q| \leq m)$ form an $\cO_T$-basis of $\cO_{T^m(1)}$. 
(Here we use multi-index notation.)
To do so, first recall that the diagonal morphism 
$T \lra T \times_Z T$ factors etale locally as 
$$ T \lra (T \times_Z T)' \lra T \times_Z T $$
with the first arrow (resp. the second arrow) 
an exact closed immersion 
(resp. a log etale morphism) and that, if we put 
$\cI := \Ker(\cO_{(T \times_Z T)'} \lra \cO_T)$, 
$\cO_{T^m(1)}$ is defined as 
$\cO_{T^m(1)} := \cO_{(T \times_Z T)'}/\cI^{m+1}$. 
From this description, we see easily that the set 
$\{\xi^q\}_{|q| \leq m}$ generates $\cO_{T^m(1)}$ as $\cO_T$-module.
So it suffices to show that, assuming the linear independence of 
the elements $\xi^q \,(|q| \leq m)$ in $\cO_{T^m(1)}$ over $\cO_T$, 
the elements $\xi^q \,(|q| \leq 2m)$ in $\cO_{T^{2m}(1)}$ are also linearly independent 
over $\cO_T$. 
By the universality of log infinitesimal neighborhood, 
there exist canonical morphisms 
$\delta^m: T^m(1) \times_T T^m(1) \lra T^{2m}(2) \, (m \in \N)$ such that 
$\{\delta^m\}_m$ is an isomorphism as inductive system of log schemes. 
Denote the composite 
$T^m(1) \times_T T^m(1) \os{\delta^m}{\lra} T^{2m}(2) \os{p_{1,3}^{2m}}{\lra} T^{2m}(1)$ 
by $\ol{\delta}^m$. Thus $\ol{\delta}^m$ is the morphism induced by the composite 
$$ (T\times_Z T) \times_T (T \times_Z T) = T \times_Z T \times_Z T \longrightarrow T \times_Z T, $$
where the last morphism is the projection to the first and the third factors.

Then the morphism 
$$ {\ol{\delta}^m}^*: \cO_{T^{2m}(1)} \lra \cO_{T^m(1)} \otimes_{\cO_T} 
\cO_{T^m(1)} $$ 
sends $u_i$ to $u_i \otimes u_i$: This fact is proven in \cite[p.562]{shihocrysI} 
but we provide a proof here for the convenience of the reader. 
If we denote the map $\varprojlim_m \cM_{T^m(1)} \longrightarrow \varprojlim_m \cM_{T^m(1) \times_T T^m(1)}$ 
induced by the $i$-th projections $T^m(1) \times_T T^m(1) \lra T^m(1) \, (m \in \N)$ 
by $\pi_i$ (for $i=1,2$), we see from the characterization 
of $\ol{\delta}^m$ given above that the map of projective limit of log structures 
$$(\ol{\delta}^m)_m^*: \varprojlim_m\cM_{T^{2m}(1)} \lra \varprojlim_m\cM_{T^m(1) \times_T T^m(1)} $$
induced by $(\ol{\delta}^m)_m \, (m \in \N)$ 
sends $(p_1^{2m *}(x_i))_m$ to $\pi_1^*(p_1^{m *}(x_i))_m$ and 
$(p_2^{2m *}(x_i))_m$ to $\pi_2^*(p_2^{m *}(x_i))_m$. 
Then, by pulling back the equality \eqref{eq:20190128} via $(\ol{\delta}^m)_m$, we obtain the equality 
\begin{equation*}
\pi_2^*({p_2^m}^*(x_i))_m = \pi_1^*({p_1^m}^*(x_i))_m \cdot ({\ol{\delta}^m}^*(u_i))_m \in \varprojlim_m \cM_{T^m(1)}. 
\end{equation*}
On the other hand, by pulling back the equality \eqref{eq:20190128} via the first and the 
second projections $T^m(1) \times_T T^m(1) \lra T^m(1) \, (m \in \N)$, we obtain the 
equalities 
\begin{align*}
& \pi_1^*({p_2^m}^*(x_i))_m = \pi_1^*({p_1^m}^*(x_i))_m \cdot (u_i \otimes 1) \in \varprojlim_m \cM_{T^m(1)}, \\  
& \pi_2^*({p_2^m}^*(x_i))_m = \pi_2^*({p_1^m}^*(x_i))_m \cdot (1 \otimes u_i) \in \varprojlim_m \cM_{T^m(1)}. 
\end{align*}
Combining these three equalities, we obtain the equalities ${\ol{\delta}^m}^*(u_i) = u_i \otimes u_i \,(m \in \N)$, 
as required. 

Since ${\ol{\delta}^m}^*$ sends $u_i$ to $u_i \otimes u_i$, it sends 
$\xi_i$ to $1 \otimes \xi_i + \xi_i \otimes 1$. 
(This follows from the equality of formal power series 
$$ \sum_{n=1}^{\infty} (-1)^{n-1}(xy + x + y)^n/n 
= \sum_{n=1}^{\infty}(-1)^{n-1}x^n/n + 
\sum_{n=1}^{\infty}(-1)^{n-1}y^n/n. )$$
So, for $q =(q_1, ..., q_r) \in \N^r$ with $|q| \leq 2m$, 
the morphism ${\ol{\delta}^m}^*$
sends $\xi^q := \prod_{i=1}^r \xi_i^{q_i}$ to 
$(1 \otimes \xi + \xi \otimes 1)^q := \prod_{i=1}^r (1 \otimes \xi_i + \xi_i \otimes 1)^{q_i}$, 
which is an $\N_{>0}$-linear combination of elements of the form 
$\xi^a \otimes \xi^b \,(a,b \in \N^r, a+b = q, |a|, |b| \leq m)$. 
If we put $S_q := \{(a,b) \in \N^r \times \N^r \,|\, a+b = q, |a|, |b| \leq m\}$, 
it is non-empty and $S_q$'s ($q \in \N^r, |q| \leq 2m$) are all disjoint. 
Thus the elements $(1 \otimes \xi + \xi \otimes 1)^q$\,($q \in \N^r, |q| \leq 2m$) 
in $\cO_{T^m(1)} \otimes_{\cO_T} \cO_{T^m(1)}$ 
are linearly independent over $\cO_T$ and thus the elements 
$\xi^q \,(|q| \leq 2m)$ in $\cO_{T^{2m}(1)}$ are also linearly independent 
over $\cO_T$, as required. 

Note that the argument in the previous paragraph shows that 
the map ${\ol{\delta}^m}^*$ is an injection into a direct summand as $\cO_T$-module. 

Let $(E,\{\epsilon^m\}_m)$ be an object in $\wt{\Strat}(T/Z)$, let $e \in E$ and 
write the image of $1 \otimes e \in \cO_{T^2(1)} \otimes_{\cO_T} E = {p_2^2}^*E$ 
by $\epsilon^2$ as 
$$ \epsilon^2(1 \otimes e) = e \otimes 1 + 
\sum_{i=1}^r \partial_i(e) \otimes \xi_i + \sum_{1 \leq i \leq j \leq r} \partial_{ij}(e) \otimes \xi_i\xi_j 
\in E \otimes_{\cO_T} \cO_{T^2(1)} = {p_1^2}^*E. $$
The cocycle condition for 
$\{\epsilon^m\}_m$ implies that the morphism 
\begin{equation}\label{eq:tri6}
{\ol{\delta}^1}^*(\epsilon^2): \cO_{T^1(1)} \otimes_{\cO_T} \cO_{T^1(1)} \otimes_{\cO_T} E 
\lra E \otimes_{\cO_T} \cO_{T^1(1)} \otimes_{\cO_T} \cO_{T^1(1)} 
\end{equation}
is equal to the composite  
\begin{align}
\cO_{T^1(1)} \otimes_{\cO_T} \cO_{T^1(1)} \otimes_{\cO_T} E 
& \os{\id \otimes \epsilon^1}{\lra}  
\cO_{T^1(1)} \otimes_{\cO_T} E \otimes_{\cO_T} \cO_{T^1(1)} 
\label{eq:tri7} \\ 
& \os{\epsilon^1 \otimes \id}{\lra}
E \otimes_{\cO_T} \cO_{T^1(1)} \otimes_{\cO_T} \cO_{T^1(1)}. \nonumber 
\end{align}
The image of $1 \otimes e$ by the morphism 
${\ol{\delta}^1}^*(\epsilon^2)$ in \eqref{eq:tri6} is equal to  
$$ e \otimes 1 \otimes 1 + \sum_{i=1}^r \partial_i(e) \otimes (1 \otimes \xi_i + \xi_i \otimes 1) + 
\sum_{1 \leq i \leq j \leq r} \partial_{ij}(e) \otimes (\xi_i \otimes \xi_j + \xi_j \otimes \xi_i), $$
while that by the composite morphism in \eqref{eq:tri7} is calculated as 
\begin{align*}
&  (\epsilon^1 \otimes \id)\circ(\id \otimes \epsilon^1) 
(1 \otimes 1 \otimes e) \\ = \, & 
(\epsilon^1 \otimes \id) 
(1 \otimes e \otimes 1 + \sum_{i=1}^r 1 \otimes \partial_i(e) \otimes \xi_i) \\ 
= \, & 
e \otimes 1 \otimes 1 + 
\sum_{i=1}^r \partial_i(e) \otimes \xi_i \otimes 1 + 
\sum_{i=1}^r \partial_i(e) \otimes 1 \otimes \xi_i + 
\sum_{i,j=1}^r \partial_j(\partial_i(e)) \otimes \xi_j \otimes \xi_i. 
\end{align*}
Thus we have $\partial_i(\partial_j(e)) = \partial_{ij}(e) = \partial_j(\partial_i(e))$ for 
$1 \leq i < j \leq r$. On the other hand, if we denote the image of 
$(E,\{\epsilon^m\}_m)$ by the second functor in \eqref{eq:trinity2} 
by $(E,\nabla)$, we have 
$\nabla(e) = \sum_{i=1}^r \partial_i(e) \otimes \dlog x_i $. So we can calculate 
its curvature as 
\begin{align*}
\nabla \circ \nabla(e) & = 
\nabla(\sum_{i=1}^r \partial_i(e) \otimes \dlog x_i) = 
\sum_{i,j} \partial_j(\partial_i(e)) \otimes \dlog x_j \wedge \dlog x_i \\ 
& = 
\sum_{i<j} (\partial_i(\partial_j(e)) - 
\partial_j(\partial_i(e))) \otimes \dlog x_i \wedge \dlog x_j = 0. 
\end{align*}
So $(E,\nabla)$ is integrable and hence the functor 
\eqref{eq:tri5} is defined. 

Finally we prove that the functor \eqref{eq:tri5} is an equivalence. 
To do so, it suffices to construct its quasi-inverse etale locally. 
If we are given an object $(E,\nabla) \in \wt{\MIC}(T/Z)$ and write 
$\nabla$ locally as $\nabla(e) = \sum_{i=1}^r \partial_i(e) \otimes \dlog x_i$, 
we define the maps $\{\epsilon^m: \cO_{T^m(1)} \otimes_{\cO_T} E \lra E \otimes_{\cO_T} \cO_{T^m(1)}\}_m$ by 
$$ \epsilon^m(1 \otimes e) = \sum_{|q| \leq m} (\partial^q(e)/q!) \otimes \xi^q. $$
They are isomorphisms because the inverses are defined by 
$e \otimes 1 \mapsto \sum_q \xi^q \otimes ((-\partial)^q(e)/q!)$.   
Also, we can check the coincidence of the maps 
$$ {\ol{\delta}^m}^*(\epsilon^{2m})
: \cO_{T^m(1)} \otimes_{\cO_T} \cO_{T^m(1)} \otimes_{\cO_T} E 
\lra E \otimes_{\cO_T} \cO_{T^m(1)} \otimes_{\cO_T} \cO_{T^m(1)}, $$
\begin{align*} 
\cO_{T^m(1)} \otimes_{\cO_T} \cO_{T^m(1)} \otimes_{\cO_T} E 
& \os{\id \otimes \epsilon^m}{\lra}  
\cO_{T^m(1)} \otimes_{\cO_T} E \otimes_{\cO_T} \cO_{T^m(1)} \\ 
& \os{\epsilon^m \otimes \id}{\lra}
E \otimes_{\cO_T} \cO_{T^m(1)} \otimes_{\cO_T} \cO_{T^m(1)}. 
\end{align*}
Since $\{\delta^m: T^m(1) \times_T T^m(1) \lra T^{2m}(2)\}_m$ is an isomorphism 
as inductive system of log schemes, 
the coincidence of the above maps 
implies the cocycle condition for the maps $\{\epsilon^m\}_m$. 
Hence it defines a stratification on $E$. Moreover, we see from the coincidence of the 
above two maps that the map $\epsilon^{2m}$ is determined uniquely by the map 
$\epsilon^m$, because the map ${\ol{\delta}^m}^*$ is an injection into a 
direct summand as $\cO_T$-module. So we see that the above stratification  $\{\epsilon^m\}_m$ is the unique one on $E$ which gives rise to the connection $\nabla$. 
Hence the functor 
$(E,\nabla) \mapsto (E,\{\epsilon^m\}_m)$ gives a local quasi-inverse of 
\eqref{eq:tri5}. So \eqref{eq:tri5} is an equivalence and the proof of the proposition 
is now finished. 
\end{proof}
 
\begin{rem}\label{rem:trinity}
Let $k, h: Y \lra Z$ be as in Definition \ref{def:strat}, 
let $U$ be a fine log scheme over $Y$ and let 
$i: U \hra T$ be an exact closed immersion over $Z$ into a 
Noetherian fine log formal scheme $T$ such that $U$ is a scheme of 
definition of $T$. If we denote by $T_n$ the 
closed fine log scheme of $T$ defined by $(\Ker\,i^*)^n$ and by $i_n$
the canonical exact closed immersion $U \hra T_n$, 
$(U,T_n, i_n)$ is an object in $(Y/Z)_{\inf}$ and thus we have the functors 
\begin{align*}
& \Crys((Y/Z)_{\inf}) \lra \MC(T_n/Z), \quad 
\wt{\Crys}((Y/Z)_{\inf}) \lra \wt{\MC}(T_n/Z) 
\end{align*}
by \eqref{eq:trinity2} $\circ$ \eqref{eq:trinity1}.
Because this functor is compatible with respect to $n$, 
they induce the functors 
\begin{align*}
& \Crys((Y/Z)_{\inf}) \lra \varprojlim_n \MC(T_n/Z) \cong \MC(T/Z), \\ 
& \wt{\Crys}((Y/Z)_{\inf}) \lra \varprojlim_n \wt{\MC}(T_n/Z) \cong \wt{\MC}(T/Z).  
\end{align*}
If $T$ admits etale locally a morphism $T \lra Y$ over $Z$ compatible with 
$U \lra Y$, we can also define the functors 
\begin{align*}
& \Crys((Y/Z)_{\str}) \lra \varprojlim_n \MC(T_n/Z) \cong \MC(T/Z), \\  & 
\wt{\Crys}((Y/Z)_{\str}) \lra \varprojlim_n \wt{\MC}(T_n/Z) \cong \wt{\MC}(T/Z).  
\end{align*}
in the same way. 
The functors we constructed in this remark are contravariantly functorial with respect to 
$h:Y \lra Z$ and $(U,T,i)$. 
\end{rem}

We have the following topological invariance property for the 
categories $\Crys((Y/Z)_{\inf})$, $\wt{\Crys}((Y/Z)_{\inf})$. 

\begin{prop}\label{prop:topinv}
Let $k, h: Y \lra Z$ be as in Definition \ref{def:strat} such that 
$h$ is log smooth, and let 
$i: Y' \hra Y$ be a nilpotent exact closed immersion. Then the natural 
functors 
$$ \Crys((Y/Z)_{\inf}) \lra \Crys((Y'/Z)_{\inf}), \quad 
\wt{\Crys}((Y/Z)_{\inf}) \lra \wt{\Crys}((Y'/Z)_{\inf}) $$ 
are equivalences. 
\end{prop}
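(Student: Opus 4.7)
The plan is to construct an explicit quasi-inverse to the restriction functor $\rho : \wt{\Crys}((Y/Z)_{\inf}) \lra \wt{\Crys}((Y'/Z)_{\inf})$ (and similarly for $\Crys$), handling both the coherent and the locally free case uniformly. The key observation is that for any object $(U,T,j) \in (Y/Z)_{\inf}$, the base change $U' := U \times_Y Y'$ in fine log schemes produces an object of $(Y'/Z)_{\inf}$ living on the \emph{same} thickening $T$, so that crystal structures will transfer canonically via the crystal axiom applied to morphisms whose underlying map on $T$ is the identity.

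First I would define the candidate $R: \wt{\Crys}((Y'/Z)_{\inf}) \lra \wt{\Crys}((Y/Z)_{\inf})$. Since $i: Y' \hra Y$ is strict (being an exact closed immersion) and defined by a nilpotent ideal, the fiber product $U' := U \times_Y Y'$ in fine log schemes coincides with the scheme-theoretic fiber product equipped with the log structure pulled back from $U$, and the projection $U' \hra U$ is again a nilpotent exact closed immersion. Hence the composite $j \circ (U' \hra U) : U' \hra T$ is a nilpotent exact closed immersion over $Z$ and $(U', T, j \circ (U' \hra U))$ is an object of $(Y'/Z)_{\inf}$. For a crystal $E'$ on $(Y'/Z)_{\inf}$, set $(RE')_T := E'_T$, i.e.\ the same etale sheaf on $T$. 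Functoriality in $(U,T,j)$ follows from the functoriality of base change; coherence (resp.\ local freeness) is preserved since the sheaf on $T$ is unchanged; and the crystal property for $RE'$ reduces to that of $E'$ because all transition maps involve only sheaves on $T_{\et}$.

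Next I would verify that $R$ is quasi-inverse to $\rho$ on both sides. For $E' \in \wt{\Crys}((Y'/Z)_{\inf})$ and $(U,T,j) \in (Y'/Z)_{\inf}$, the structure map $U \to Y$ already factors through $Y'$, so $U \times_Y Y' = U$ canonically, giving $(\rho R E')_T = E'_T$ compatibly in $(U,T,j)$; thus $\rho R \cong \id$. For $E \in \wt{\Crys}((Y/Z)_{\inf})$ and $(U,T,j) \in (Y/Z)_{\inf}$, the inclusion $(U', T, j \circ (U' \hra U)) \lra (U,T,j)$ is a morphism in $(Y/Z)_{\inf}$ whose underlying map on $T$ is $\id_T$, so the crystal axiom for $E$ supplies a canonical identification $(R\rho E)_T = E_T$, yielding $R \rho \cong \id$.

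The main (mild) obstacle is verifying that the fiber product $U \times_Y Y'$ in fine log schemes truly gives a nilpotent exact closed immersion $U' \hra U$; this follows from the strictness of $i$ (so the fiber product is computed strictly with pullback log structure, as in Remark \ref{rem:loginfnbd}) together with the fact that a nilpotent defining ideal pulls back to a nilpotent ideal. Once this is in hand, the construction of $R$ and the two natural isomorphisms are essentially tautological from the crystal axiom applied to morphisms with identity underlying map on $T$. The log smoothness of $h$ does not enter the core argument but matches the setting in which this topological invariance is later combined with Proposition \ref{prop:trinity} to transport equivalences between crystal categories and $\wt{\MIC}$'s along nilpotent thickenings of $Y$.
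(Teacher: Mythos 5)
Your proof is correct, but it takes a genuinely different route from the paper. The paper disposes of this proposition in two lines by invoking Proposition \ref{prop:trinity}: since $h$ is log smooth, both $(Y',Y,i)$ and $(Y,Y,\id)$ are objects of the respective infinitesimal sites with $T=Y$ log smooth over $Z$, so $\Crys((Y'/Z)_{\inf})$ and $\Crys((Y/Z)_{\inf})$ are each identified with $\Strat(Y/Z)$ via \eqref{eq:tri3}, and hence with each other (and likewise for the categories with tilde). Your argument instead builds an explicit quasi-inverse $R$ to the restriction functor by sending $(U,T,j)$ to $(U\times_Y Y',T,j')$ and using the crystal axiom for morphisms whose underlying map on $T$ is the identity; the verifications you give (strictness of $i$ so that the fine-log fiber product is the strict one, nilpotence of the composite ideal, $U\times_Y Y'=U$ when $U$ already lies over $Y'$, compatibility with coverings since $U_i'=U'\times_T T_i$) are all sound. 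What each approach buys: the paper's proof is shorter given that Proposition \ref{prop:trinity} is already in place and it simultaneously exhibits both categories as $\Strat(Y/Z)$, which is the form actually used later; your proof is more elementary and strictly more general, since—as you correctly observe—it never uses log smoothness of $h$, reflecting the fact that topological invariance of the infinitesimal site holds without any smoothness hypothesis. The only thing worth adding on the paper's side is the (easy, but implicit there) remark that the composite equivalence through $\Strat(Y/Z)$ agrees with the natural restriction functor, which follows from the crystal axiom applied to the morphism $(Y',Y,i)\to(Y,Y,\id)$; your construction makes this point unnecessary because you work with the natural functor from the start.
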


\begin{proof}
The triple $(Y',Y,i)$ is an object in $(Y'/Z)_{\inf}$ 
and the triple $(Y,Y,\id)$ is an object in $(Y/Z)_{\inf}$. 
So, by Proposition \ref{prop:trinity}, 
$\Crys((Y'/Z)_{\inf})$ is equivalent to $\Strat(Y/Z)$ and it is 
equivalent to $\Crys((Y/Z)_{\inf})$. 
The same argument works for the categories with tilde. 
\end{proof}

Now, let $k$ be a field of characteristic zero and 
consider the diagram of fine log schemes 
\begin{equation}\label{0}
\xymatrix {
X \ar[r]^f & S \ar[r]^(0.36){g} \ar @/^4mm/[0,-1]^{\iota} & \Spec k
} 
\end{equation}
satisfying the following conditions. 
\begin{enumerate}
\item[(A)] 
$f$ is a proper log smooth integral morphism and 
$g$ is a separated 
morphism of finite type. $\iota$ is a section of $f$. 
\item[(B)] 
$\Omega^1_{S/k}$ is a locally free $\cO_S$-module of finite rank. 
\end{enumerate}

Then, by assumption (B) and Proposition \ref{prop:trinity}, 
we have the equivalences 
\begin{equation}\label{s}
\MIC(S/k) \cong \Strat(S/k), \quad 
\wt{\MIC}(S/k) \cong \wt{\Strat}(S/k). 
\end{equation}
The same equivalences hold also for $X/k$, because $\Omega^1_{X/k}$ is locally free and there exist 
equivalences
\begin{equation}\label{x}
\MIC(X/k) \cong \Crys((X/k)_{\str}), \quad 
\wt{\MIC}(X/k) \cong \wt{\Crys}((X/k)_{\str}), 
\end{equation}
again by Proposition \ref{prop:trinity}. 
Also, since $f$ is log smooth, there exist  
equivalences
\begin{equation}\label{xs}
\MIC(X/S) \cong \Crys((X/S)_{\inf}), \quad 
\wt{\MIC}(X/S) \cong \wt{\Crys}((X/S)_{\inf}). 
\end{equation}

For $m,j \in \N$, let $S^m(j)$ be the $m$-th log infinitesimal 
neighborhood of $S$ in $S \times_k \cdots \times_k S$ ($(j+1)$ times). 
Then we have the category $\Crys((X/S^m(j))_{\inf})$ (resp. $\wt{\Crys}((X/S^m(j))_{\inf})$)
of locally free crystals (resp. coherent) on $(X/S^m(j))_{\inf}$. 
Let $p^m_j: S^m(1) \lra S \,(j=1,2)$, 
$p^m_{j,j'}: S^m(2) \lra S^m(1) \,(1 \leq j < j' \leq 3)$ be 
the projections and 
let 
\begin{align*}
&{p^{m,*}_{j,\inf}}: \Crys((X/S)_{\inf}) \lra \Crys((X/S^m(1))_{\inf}), \\
&{p^{m,*}_{j,j',\inf}}:  \Crys((X/S^m(1))_{\inf}) \lra \Crys((X/S^m(2))_{\inf})\\
(\textrm{resp.} \;&{p^{m,*}_{j,\inf}}: \wt{\Crys}((X/S)_{\inf}) \lra \wt{\Crys}((X/S^m(1))_{\inf}), \\
&{p^{m,*}_{j,j',\inf}}:  \wt{\Crys}((X/S^m(1))_{\inf}) \lra \wt{\Crys}((X/S^m(2))_{\inf})\;)
\end{align*}
be the associated pull-back functors. Using these, 
we define the categories $\Strat\Crys(X/k)$, 
$\wt{\Strat\Crys}(X/k)$ as follows: 

\begin{defn}\label{strcrys}
Let the notations be as above. Then we define 
$\Strat\Crys(X/k)$ $($resp. $\wt{\Strat\Crys}(X/k))$ 
to be the category of pairs $(E,\{\epsilon^m\}_m)$, 
where $E \in \Crys((X/S)_{\inf})$ 
$($resp. $E \in \wt{\Crys}((X/S)_{\inf}))$ 
and 
$\{\epsilon^m: {p^{m,*}_{2,\inf}}E \lra {p^{m,*}_{1,\inf}}E\}_m$ is a compatible family of 
isomorphisms in $\Crys((X/S^m(1))_{\inf}) \, (m \in \N)$ $($resp. $\wt{\Crys}((X/S^m(1))_{\inf}) \, (m \in \N))$ with $\epsilon^0 = \id$ 
and ${p^{m,*}_{1,3,\inf}}(\epsilon^m) = 
{p^{m,*}_{1,2,\inf}}(\epsilon^m) \circ {p^{m,*}_{2,3,\inf}}(\epsilon^m)$. 
\end{defn}

Then we have the following. 

\begin{prop}\label{crysprop1}
Let the notations be as above. Then 
we have equivalences of categories 
$\MIC(X/k) \cong \Strat\Crys(X/k)$, 
$\wt{\MIC}(X/k) \cong \wt{\Strat\Crys}(X/k)$. 
\end{prop}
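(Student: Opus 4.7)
My plan is to apply Proposition \ref{prop:trinity} at two levels, first to identify $\MIC(X/k)$ with a stratification category, then to exhibit the latter as essentially the same data as a $\Strat\Crys$-object.

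First, $\Omega^1_{X/k}$ is a locally free $\cO_X$-module of finite rank: this follows from the short exact sequence $0 \to f^*\Omega^1_{S/k} \to \Omega^1_{X/k} \to \Omega^1_{X/S} \to 0$ (exact and locally split because $f$ is log smooth and integral), together with the local freeness of $\Omega^1_{S/k}$ (assumption (B)) and of $\Omega^1_{X/S}$ (log smoothness of $f$). Proposition \ref{prop:trinity} applied with $(Y,Z,T) = (X, \Spec k, X)$ then yields $\MIC(X/k) \cong \Strat(X/k)$ and $\wt{\MIC}(X/k) \cong \wt{\Strat}(X/k)$. It therefore suffices to establish equivalences $\Strat(X/k) \cong \Strat\Crys(X/k)$ and $\wt{\Strat}(X/k) \cong \wt{\Strat\Crys}(X/k)$.

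For this I analyze the log infinitesimal neighborhoods involved. Write $X^m(j)$ for the $m$-th log infinitesimal neighborhood of $X$ in $X \times_k \cdots \times_k X$ ($j+1$ factors), and $X^m_S(j)$ for the analogous neighborhood relative to $S$. The closed immersion $X \times_S X \hookrightarrow X \times_k X$ induces a closed immersion $X^m_S(j) \hookrightarrow X^m(j)$, and the natural map $X \times_k X \to S \times_k S$ carries the diagonal $X$ into the diagonal $S$, producing a morphism $X^m(j) \to S^m(j)$. In particular, $(X, X^m(j), \Delta)$ (with $\Delta$ the diagonal embedding) is an object of $(X/S^m(j))_{\inf}$. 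Applying Proposition \ref{prop:trinity} with $Y = X$, $Z = S^m(j)$ and $T = X \times_{S, p_1^m} S^m(j)$ — which is log smooth over $S^m(j)$ with locally free differentials, being the base change of the log smooth integral $f$ — produces equivalences $\Crys((X/S^m(j))_{\inf}) \cong \Strat(X \times_{S,p_1^m} S^m(j)/S^m(j))$, and similarly for the coherent analogues.

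Given $(E, \{\epsilon^m\}) \in \Strat(X/k)$, restricting each $\epsilon^m$ along $X^m_S(1) \hookrightarrow X^m(1)$ gives a stratification on $X/S$, corresponding via Proposition \ref{prop:trinity} to a crystal $\cE \in \Crys((X/S)_{\inf})$. To supply the $\Strat\Crys$-isomorphisms, I transport the $\epsilon^m$'s through the equivalence $\Crys((X/S^m(1))_{\inf}) \cong \Strat(X \times_{S,p_1^m} S^m(1) /S^m(1))$, using the canonical morphism $(X, X^m(1), \Delta) \to (X, X \times_{S,p_1^m} S^m(1), \text{incl})$ in $(X/S^m(1))_{\inf}$. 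The converse functor evaluates the $\Strat\Crys$-data on the objects $(X, X^m(j), \Delta)$ to recover the $\Strat(X/k)$-data. The main obstacle will be to verify the two-way compatibility — notably that the cocycle on $X^m(2)$ matches the cocycle for $\{\epsilon^m\}$ on $(X/S^m(2))_{\inf}$, and that the two constructions are quasi-inverse — which reduces to a careful tracking of functoriality under the projections $p^m_j$, $p^m_{j,j'}$ and the various base-change maps, and can ultimately be checked in log coordinates where stratifications on both sides are encoded as Taylor expansions in the $X/S$- and $S/k$-directions. The coherent version is proved identically.
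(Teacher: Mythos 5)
Your reductions are fine and run parallel to the paper's: the local freeness of $\Omega^1_{X/k}$, the equivalence $\MIC(X/k)\cong\Strat(X/k)$ from Proposition \ref{prop:trinity}, the passage from the absolute stratification to the underlying object of $\Crys((X/S)_{\inf})$ by restricting to the neighborhoods inside $X\times_S X$, and the converse functor obtained by evaluating a $\Strat\Crys$-object at the triples $(X, X^m(j), \Delta)$ are all sound. The genuine gap is in the step you present as mere ``transport'': producing, from $\{\epsilon^m\}$, isomorphisms of \emph{crystals} ${p^{m,*}_{2,\inf}}\cE \lra {p^{m,*}_{1,\inf}}\cE$ on $(X/S^m(1))_{\inf}$. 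The site morphism $(X, X^m(1), \Delta) \lra (X, X\times_{S,p_1^m}S^m(1), \text{incl})$ goes the wrong way for that purpose: for a crystal it only \emph{restricts} data from $X\times_{S,p_1^m}S^m(1)$ down to $X^m(1)$, whereas your $\epsilon^m$ is an isomorphism between the values of the two pullback crystals at the single object $(X, X^m(1), \Delta)$. An isomorphism of crystals is a compatible family of isomorphisms at \emph{every} object $(U,T,i)$ of $(X/S^m(1))_{\inf}$ (equivalently, by Proposition \ref{prop:trinity}, a horizontal isomorphism over $X\times_{S,p_1^m}S^m(1)$), and prescribing it at one object does not yield it; you must prove that $\epsilon^m$ extends. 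That existence statement is the heart of the proposition: etale locally on a test object $T$ one chooses a morphism $T \lra X$ over $S$ (via $p_2^m$) extending $U \lra X$ --- this is where the log smoothness of $f$ enters --- factors the resulting pair of maps to $X$ through some $X^{m'}(1)$ by the universal property of log infinitesimal neighborhoods, pulls back $\epsilon^{m'}$, and then uses the cocycle condition to show independence of the choice, so that the local definitions glue and are compatible with all morphisms of the site. Dismissing this as ``careful tracking of functoriality'' or a coordinate verification underestimates it; as written, your functor $\Strat(X/k) \lra \Strat\Crys(X/k)$ is not constructed.

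For comparison, the paper sidesteps this asymmetry by working with $\wt{\Crys}((X/k)_{\str})$ (via the equivalence \eqref{x}) instead of $\Strat(X/k)$: for an absolute crystal $E$, the values of ${p^{m,*}_{2,\inf}}(E/S)$ and ${p^{m,*}_{1,\inf}}(E/S)$ at any test object are literally the value of $E$ at the same triple viewed in $(X/k)_{\str}$, so the comparison isomorphisms and their cocycle condition are tautological, and the local-lifting-and-gluing argument is needed only once, for the quasi-inverse $\wt{\Strat\Crys}(X/k) \lra \wt{\Crys}((X/k)_{\str})$. If you keep your formulation, the coordinate computation you allude to (splitting the Taylor expansion into the $S/k$- and $X/S$-directions and using integrability to get horizontality over $X\times_{S,p_1^m}S^m(1)$) can indeed close the hole, but it has to be carried out, together with the gluing and the cocycle verification on $(X/S^m(2))_{\inf}$.
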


\begin{proof}
We only prove the second equivalence 
$\wt{\MIC}(X/k) \cong \wt{\Strat\Crys}(X/k)$, 
because then we can deduce the first one by checking 
that the equivalence respects the local freeness. 
Also, by the equivalence \eqref{x}, it suffices to prove the equivalence 
$\wt{\Crys}((X/k)_{\str}) \cong \wt{\Strat\Crys}(X/k)$. \par 
First 
we construct the functor $\wt{\Crys}((X/k)_{\str}) \lra \wt{\Strat\Crys}(X/k)$. 
Let $E$ be an object of $\wt{\Crys}((X/k)_{\str})$. 
Then it naturally induces an object $E/S$ in $\wt{\Crys}((X/S)_{\inf})$
by the contravariant functoriality. 
(Note that $(X/S)_{\str} = (X/S)_{\inf}$ because $X$ is log smooth over $S$.) 
Let us take an object $T := (U \hra T \os{\varphi}{\ra} S^m(1))$ 
of $(X/S^m(1))_{\inf}$. Denote the object 
$(U \hra T \os{p_j^m \circ \varphi}{\ra} S)$ of $(X/S)_{\inf} = (X/S)_{\str}$ 
by $T_j$ and the object $(U \hra T \ra \Spec k)$ in 
$(X/k)_{\str}$ by $T_0$. Then the value of ${p_{j,\inf}^{m,*}}(E/S)$ at $T$ is 
equal to that of $E/S$ at $T_j$ and this is further equal to 
that of $E$ at $T_0$, which is independent of $j$. So we have the 
natural isomorphism ${p_{2,\inf}^{m,*}}(E/S) \os{\cong}{\lra} {p_{1,\inf}^{m,*}}(E/S)$. 
If we denote it by $\epsilon^m$, we see easily that 
$E \mapsto (E/S, \{\epsilon^m\}_m)$ defines the functor 
$\wt{\Crys}((X/k)_{\str}) \lra \wt{\Strat\Crys}(X/k)$. \par 
Next we construct the functor 
$\wt{\Strat\Crys}(X/k) \lra \wt{\Crys}((X/k)_{\str})$. 
Let $(E,\{\epsilon^m\}_m)$ be an object of 
$\wt{\Strat\Crys}(X/k)$ and let $T := 
(U \os{i}{\hra} T \os{\varphi}{\ra} \Spec k)$ be an object in 
$(X/k)_{\str}$. Then, there exists a morphism 
$\psi:T \lra S$ compatible with $i, \varphi$ in suitable sense, etale  
locally on $T$. So we can define 
the value $E_T$ of $E$ at $T$ locally. Also, 
if we have two morphisms $\psi, \psi':T \lra S$ as above, 
$\psi \times \psi': T \lra S \times_k S$ factors through 
$S^m(1)$ for some $m$, and so $\epsilon^m$ defines 
the isomorphism of the two definitions of $E_T$ 
(via $\psi$ and $\psi'$) 
which satisfies the 
cocycle condition. So we can define $E_T$ globally on $T$ by etale descent. 
This construction for objects $T$ in $(X/k)_{\str}$ induces 
the functor
$\wt{\Strat\Crys}(X/k) \lra \wt{\Crys}((X/k)_{\str})$. \par 
Finally, it is easy to see that the two functors we constructed 
are the inverses of each other. 
\end{proof}

Note that there exist the canonical pullback functors 
$$ 
f^*: \Strat(S/k) \lra \Strat\Crys(X/k), \quad 
\iota^*: \Strat\Crys(X/k) \lra \Strat(S/k) 
$$ 
which are equivalent to the functors 
$$ 
f^*_{\dR}: \MIC(S/k) \lra \MIC(X/k), \quad 
\iota^*_{\dR}: \MIC(X/k) \lra \MIC(S/k). 
$$

We give a definition of Gauss--Manin connection in our setting. 
For $j=1,2$, let us denote by $X^m_j$
the fiber product $X \times_{S,p_j^m}S^m(1)$ 
and denote by $q_j^m$, $f_j^m$ the projections 
$X_j^m \lra X$, $X_j^m \lra S^m(1)$, respectively. 
Also, let 
$\wh{X}^m$ be the log formal tube of $X$ in $X_1^m \times_{S^m(1)} X_2^m$, namely, 
the direct limit of the $l$-th log infinitesimal neighborhoods of 
$X$ in $X_1^m \times_{S^m(1)} X_2^m$ with respect to $l$ (cf. \cite[0.9]{cf}). 
This is a Noetherian fine log formal scheme. Let 
$\wh{f}^m: \wh{X}^m \lra S^m(1)$ be the map induced by $f_1^m, f_2^m$ and 
let $\wh{q}_j^m: \wh{X}^m \lra X$ be the composite of 
the projection $\wh{X}^m \lra X_j^m$ and $q_j^m$. 

Let $(E,\nabla)$ be an object in $\wt{\MIC}(X/k)$ and denote 
its image in $\wt{\MIC}(X/S)$ by $(\ol{E},\ol{\nabla})$. Thanks to hypothesis (B) and the proof of Proposition \ref{prop:trinity} we know that $p_j^m$ is flat, hence we have the diagram
\begin{align}\label{eq:gm}
{p_j^m}^*R^if_{\dR *}\ol{E} \lra 
R^if^m_{j,\dR *} {q_j^m}_{\dR}^*\ol{E} \lra 
R^i\wh{f}^m_{\dR *} (\wh{q}_j^m)_{\dR}^* \ol{E} 
\end{align}
in which the first map is the base change isomophism 
for $j=1,2$. Then we have the following: 

\begin{prop} \label{basechange}
Let the notations be as above. \\
$(1)$ \, We have the canonical isomorphism 
$(\wh{q}_{2}^m)_{\dR}^* \ol{E} \cong (\wh{q}_1^m)_{\dR}^* \ol{E}$. \\ 
$(2)$ \, The morphisms in the diagram \eqref{eq:gm} are isomorphisms. 
\end{prop}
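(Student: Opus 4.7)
The plan is to prove (1) by invoking the stratification interpretation of $(E,\nabla) \in \MIC(X/k)$ provided by Proposition \ref{crysprop1}, and to prove (2) by a base change argument followed by an application of Proposition \ref{prop:fib}.

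For (1), I would use Proposition \ref{crysprop1} to regard $(E,\nabla)$ as an object $(\ol{E},\{\epsilon^m\}_m)$ of $\Strat\Crys(X/k)$, where the $\epsilon^m \colon {p^{m,*}_{2,\inf}}\ol{E} \ra {p^{m,*}_{1,\inf}}\ol{E}$ form a compatible family of isomorphisms in $\Crys((X/S^m(1))_{\inf})$. Since $X \hra \wh{X}^m$ arises as a direct limit of log infinitesimal neighborhoods, it is an exact closed immersion with $X$ a scheme of definition, so the triple $(X,\wh{X}^m,\incl)$ defines, via Remark \ref{rem:trinity}, an evaluation functor $\wt{\Crys}((X/S^m(1))_{\inf}) \ra \wt{\MIC}(\wh{X}^m/S^m(1))$. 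The key identification is that the image of ${p^{m,*}_{j,\inf}}\ol{E}$ under this functor is $(\wh{q}_j^m)_{\dR}^*\ol{E}$: by the construction of $X_j^m = X\times_{S,p_j^m}S^m(1)$, the composite $p_j^m \circ \wh{f}^m \colon \wh{X}^m \ra S$ agrees with $f \circ \wh{q}_j^m$. Applying this evaluation to $\epsilon^m$ then yields the isomorphism required in (1).

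For (2), the first arrow in \eqref{eq:gm} is a base change isomorphism because $p_j^m$ is flat: the argument in the proof of Proposition \ref{prop:trinity} shows, using only the local freeness of $\Omega^1_{S/k}$, that $\cO_{S^m(1)}$ is a free $\cO_S$-module via $p_j^m$ with basis $\{\xi^q : |q| \le m\}$. The second arrow should follow from Proposition \ref{prop:fib} applied to $\wh{X}^m \ra X_j^m \os{f_j^m}{\ra} S^m(1)$ with input $(q_j^m)_{\dR}^*\ol{E}$, noting that $\Omega^1_{X_j^m/S^m(1)}$ is locally free since $f_j^m$ is log smooth as a fine base change of $f$. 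This reduces to checking that etale locally on $X_j^m$, the morphism $\wh{X}^m \ra X_j^m$ is isomorphic to the projection $\wh{\Af}^d_{X_j^m} \ra X_j^m$, with $d$ the log relative dimension of $f$.

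The main obstacle is this local description of $\wh{X}^m \ra X_j^m$. Since $X \hra X_j^m$ is an exact closed immersion whose underlying morphism is a homeomorphism (being a base change of $S \hra S^m(1)$), it suffices to work etale locally on $X$. Log smoothness of $f$ provides a chart realising $X \ra S$ etale locally as a base change of a standard toric morphism $\Af_S^{r,s} \ra S$, and a direct computation with the associated log coordinates—parallel to the one in the proof of Proposition \ref{prop:trinity} identifying $\cO_{T^m(1)}$ with a free $\cO_T$-module on the basis $\{\xi^q\}_{|q|\le m}$—shows that the log formal tube of the diagonal $X \hra X\times_S X$ is $\wh{\Af}^d_X$ etale locally. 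Base changing via $p_j^m$ then transfers this local model to $\wh{X}^m$ over $X_j^m$, since the coordinates in the $S^m(1)/S$-direction are already nilpotently truncated and contribute nothing further to the formal completion.
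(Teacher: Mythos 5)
Your treatment of (1) and of the first arrow in \eqref{eq:gm} is essentially the paper's own argument: part (1) is exactly the reinterpretation via $\Strat\Crys(X/k)$ and the evaluation functor of Remark \ref{rem:trinity}, and the flatness of $p_j^m$ (from the free basis $\{\xi^q\}_{|q|\leq m}$ in the proof of Proposition \ref{prop:trinity}) is precisely how the paper gets the first base change isomorphism. The reduction of the second arrow to Proposition \ref{prop:fib} is also the right strategy.

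The gap is in your justification of the \'etale-local description of $\wh{X}^m \lra X_j^m$. You propose to compute the log formal tube of the diagonal $X \hra X \times_S X$ (which is indeed \'etale locally $\wh{\Af}^d_X$) and then ``base change via $p_j^m$''. But base changing $X \times_S X$ along $p_j^m: S^m(1) \lra S$ yields $X_j^m \times_{S^m(1)} X_j^m$, so what you obtain is the tube of the diagonal of $X_j^m$ over $S^m(1)$ --- not $\wh{X}^m$, which is the tube of $X$ inside the \emph{mixed} product $X_1^m \times_{S^m(1)} X_2^m$, built from the two different pullbacks of $f$ along $p_1^m$ and $p_2^m$. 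These two ambient spaces are not canonically the same, and the phrase ``the coordinates in the $S^m(1)/S$-direction are already nilpotently truncated and contribute nothing further'' does not bridge them: the issue is not extra coordinates but the fact that the second factor of the product is $X_2^m$ rather than $X_1^m$. The missing ingredient is precisely a comparison of the two log smooth lifts $f_1^m, f_2^m$ of $f$ over the nilpotent exact closed immersion $S \hra S^m(1)$: by \cite[Proposition 3.14]{ka} they are \'etale locally isomorphic (compatibly with the embeddings of $X$), and since $X \hra X_j^m$ is nilpotent, the tube of $X$ in $X_1^m \times_{S^m(1)} X_2^m$ is \'etale locally identified with the tube of $X_j^m$ in $X_j^m \times_{S^m(1)} X_j^m$, which is \'etale locally $\wh{\Af}^d_{X_j^m}$. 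This deformation-theoretic step (or an equivalent use of the infinitesimal lifting property of $f_{3-j}^m$ to produce a local section of $\wh{X}^m \lra X_j^m$) is what the paper uses and what your argument omits; without it the base change claim fails to produce $\wh{X}^m$ and the application of Proposition \ref{prop:fib} is not yet justified.
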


\begin{proof}
(1) \, The functor 
$(\wh{q}_{2}^m)_{\dR}^*: \wt{\MIC}(X/S) \lra \wt{\MIC}(\wh{X}^m/S^m(1)) \hra 
\wt{\MC}(\wh{X}^m/S^m(1))$ is rewritten as the composite 
$$ \wt{\MIC}(X/S) \cong \wt{\Crys}((X/S)_{\inf}) \os{{p_{j,\inf}^{m,*}}}{\lra} 
\wt{\Crys}((X/S^m(1))_{\inf}) \lra \wt{\MC}(\wh{X}^m/S^m(1)), $$
where the last arrow 
is the one in Remark \ref{rem:trinity}. So it suffices to prove 
the existence of canonical isomorphism 
${p_{2,\inf}^{m,*}}^*\ol{E} \cong {p_{1,\inf}^{m,*}}\ol{E}$. It follows from the fact that 
the composite 
$$ \wt{\MIC}(X/k) \cong \wt{\Crys}((X/k)_{\str}) \lra 
\wt{\Crys}((X/S)_{\inf}) \os{{p_{j,\inf}^{m,*}}}{\lra} 
\wt{\Crys}((X/S^m(1))_{\inf})$$ is independent of 
$j$ and that ${p_{j,\inf}^{m,*}}\ol{E}$ is nothing but the image 
of $E$ by this composite. \\
(2) \, 
It sufficies to prove that the second map is an isomorphism. Since both $f_1^m, f_2^m$ are log smooth lift of $f$, they are locally 
isomorphic \cite[Proposition 3.14]{ka}. Also, $X \hra X_j^m$ is nilpotent. So 
$\wh{X}^m$ is locally isomorphic to the log formal tube of 
$X_j^m$ in $X_j^m \times_{S^m(1)} X_j^m$. Therefore, 
the projection $\wh{X}^m \lra X_j^m$ is etale locally isomorphic to 
the projection 
$\wh{\Af}^d_{X_j^m} \lra X_j^m$,  
where $d$ is the rank of $\Omega^1_{X/S}$. So, by 
Proposition \ref{prop:fib}, the second morphism in \eqref{eq:gm} 
is also an isomorphism. 
\end{proof}

If we denote the composite of the morphisms in \eqref{eq:gm} for $j=2$ and 
the inverse of morphisms in \eqref{eq:gm} for $j=1$ by 
$\eta^m: {p_2^m}^*R^if_{\dR *}\ol{E} \os{\cong}{\lra} 
{p_1^m}^*R^if_{\dR *}\ol{E}$, we see that the pair 
$R^if_{\dR *}E := (R^if_{\dR *}\ol{E}, \{\eta^m\}_m)$ defines 
an object in $\wt{\Strat}(S/k) \cong \wt{\MIC}(S/k)$. 
(To check the cocycle condition, we need to work on pullbacks of 
$f:X \lra S$ to $S^m(2)$, and we leave the reader to write out the detail.) 
This is our definition of the Gauss--Manin connection 
on relative de Rham cohomology. 

\begin{rem}\label{gmexseq}
In this remark, we explain the compatibility of several morphisms 
and the construction of Gauss--Manin connection. 

(1) \, Suppose that we are given a Cartesian diagram 
\begin{equation*}
\begin{CD}
X' @>{g'}>> X \\ 
@V{f'}VV @VfVV \\ 
S' @>g>> S 
\end{CD}
\end{equation*}
and let 
$E := (E,\nabla)$ be an object in $\wt{\MIC}(X/k)$, 
whose image in $\wt{\MIC}(X/S)$ we denote by 
$\ol{E} := (\ol{E},\ol{\nabla})$. If $g$ is flat or  if $\ol{E}$ and all $
R^if_{\dR *}\ol{E} \,(i \in \N)$ are flat over $\cO_S$, the base change isomorphism 
$$ 
g^*_{\dR} R^i f_{\dR *} \ol{E} \lra R^if'_{\dR *} {g'}_{\dR}^*\ol{E} 
$$ 
in $\wt{\MIC}(S'/S')$
is compatible with the maps as in \eqref{eq:gm} by functoriality. 
So the above morphism induces the isomorphism 
\begin{equation*}
\begin{CD}
g^*_{\dR} R^i f_{\dR *} E \lra R^if'_{\dR *} {g'}_{\dR}^*E 
\end{CD}
\end{equation*}
in $\wt{\MIC}(S'/k)$. 

(2) \, Let $E := (E,\nabla) \in \MIC(S/k)$, 
$E' := (E',\nabla') \in \wt{\MIC}(X/k)$ and denote the image 
of $E$ (resp. $E'$) in $\MIC(S/S)$ (resp. $\wt{\MIC}(X/S)$) by 
$\ol{E}$ (resp. $\ol{E}' := (\ol{E}',\ol{\nabla}')$). 
Then the isomorphism 
$$ 
R^if_{\dR *}(f^*_{\dR}\ol{E} \otimes \ol{E}') \cong \ol{E} \otimes 
R^if_{\dR *}\ol{E}' $$
of projection formula in $\wt{\MIC}(S/S)$ is compatible with the maps in \eqref{eq:gm} by functoriality. 
So the above morphism induces the isomorphism 
\begin{equation}\label{projk}
\begin{CD}
R^if_{\dR *}(f^*_{\dR}E \otimes E') \cong E \otimes 
R^if_{\dR *}E'
\end{CD}
\end{equation} 
in $\wt{\MIC}(S/k)$. 

(3) \, 
Let $0 \lra E' \lra E \lra E'' \lra 0$ be an exact sequence in 
$\wt{\MIC}(X/k)$ and denote its image in $\wt{\MIC}(X/S)$ by 
$0 \lra \ol{E}' \lra \ol{E} \lra \ol{E}'' \lra 0$. Then we have the 
associated long exact sequence 
$$ \cdots \lra R^if_{\dR *}\ol{E}' \lra 
 R^if_{\dR *}\ol{E} \lra  R^if_{\dR *}\ol{E}'' \lra  R^{i+1}f_{\dR *}\ol{E}' \lra \cdots 
$$ 
in $\wt{\MIC}(S/S)$ and it is compatible with the maps in 
\eqref{eq:gm} by functoriality. 
Thus the above exact sequence is enriched to 
the long exact sequence of Gauss--Manin connections 
$$ \cdots \lra R^if_{\dR *}E' \lra 
 R^if_{\dR *}E \lra  R^if_{\dR *}E'' \lra  R^{i+1}f_{\dR *}E' \lra \cdots 
$$ 
in $\wt{\MIC}(S/k)$. 
\end{rem}

\begin{rem}\label{gm}
Note that our definition of Gauss--Manin connection here, which has crystalline flavor, 
is not a priori the same as the usual definition given by the Katz--Oda spectral sequence. 
Berthelot proved in \cite[V Proposition 3.6.4]{BE} the equivalence of two definitions 
under certain assumption, when the base scheme is killed by a power of some prime number $p$. 
We will prove later the coincidence of two definitions in certain cases, 
partly using Berthelot's result. 
(See Remarks \ref{rem:gmcoin}, \ref{rem:wn-lazda}, \ref{rem:coin2}.) 
We expect that the two definitions coincide in general, but 
we will not prove it. 
\end{rem}

In the sequel, we assume the following conditions on the morphism $f$ in 
\eqref{0}. 
\begin{enumerate}
\item[(C)] 
For any $i$, $R^if_{\dR *}(\cO_X,d)$ (endowed with Gauss--Manin connection) 
belongs to $\MICn(S/k)$. 
\item[(D)] 
$f_{\dR *}(\cO_X,d) = (\cO_S, d)$, $g_{\dR *}(\cO_S,d) = k$. 
\end{enumerate}
Also, let $\NfMIC(X/k)$ (resp. $\NfMICn(X/k)$) 
be the full subcategory of $\MIC(X/k)$ 
consisting of the objects which are iterated 
extensions of objects in $f_{\dR}^*\MIC(S/k)$ (resp. $f_{\dR}^*\MICn(S/k)$). 
Note that, since $\MIC(S/k)$ is not necessarily an abelian category 
due to the possible existence of non-trivial log structure on $S$, 
we cannot expect that $\NfMIC(X/k)$ is an abelian category when 
the log structure on $S$ is non-trivial. 



For any 
$(E,\nabla) \in \NfMIC(X/k)$ (resp. $\NfMICn(X/k)$), 
the morphism of functors $f_{\dR}^*f_{\dR *} \lra \id$ induces 
the injection 
\begin{equation}\label{eq:f^*f_*->id}
f_{\dR}^*f_{\dR *}(E,\nabla) \hra (E,\nabla) 
\end{equation}
onto the maximal subobject of $(E,\nabla)$ belonging to the category 
$f_{\dR}^* \MIC(S/k)$ (resp. $f_{\dR}^*\MICn(S/k)$): 
Indeed, if $(E,\nabla)$ belongs to  
$f_{\dR}^* \MIC(S/k)$ (resp. $f_{\dR}^*\MICn(S/k)$), 
\eqref{eq:f^*f_*->id} is an isomorphism by 
\eqref{projk} for $i=0$ and the assumption (D), and 
the injectivity of the map \eqref{eq:f^*f_*->id} in general case is proven by 
induction on the number of iterations of extension. 
Moreover, if we have $f_{\dR}^*(F,\nabla_F) \hra (E,\nabla)$ for some 
$(F,\nabla_F) \in \MIC(S/k)$ (resp. $\MICn(S/k)$), 
we have 
$$ f_{\dR}^*(F,\nabla_F) = f^*_{\dR}f_{\dR *} f^*_{\dR}(F,\nabla_F) 
\hra f_{\dR}^*f_{\dR *}(E,\nabla). $$ 

The above result remains true if we replace $X/k$ by $X/S$: 
We define $\NfMIC(X/S)$ as the full subcategory of 
$\MIC(X/S)$ consisting of the objects which is an iterated 
extension of objects in $f_{\dR}^*\MIC(S/S)$. 
Then, for any $(E,\nabla) \in \NfMIC(X/S)$, we have the injection 
$$ f_{\dR}^*f_{\dR *}(E,\nabla) \hra (E,\nabla) $$
onto the maximal subobject of $(E,\nabla)$ belonging to the category 
$f_{\dR}^* \MIC(S/S)$.  
\\ \quad \par 

Next, we define the full subcategories 
$\NfStrCrys(X/k)$, $\NfStrCrysn(X/k)$ 
of $\Strat\Crys(X/k)$ as follows. 

\begin{defn}\label{def:1.11}
Let the notations be as above. \\
$(1)$ \, 
Let $\NfStrCrys(X/k)$ be the full subcategory of $\Strat\Crys(X/k)$ 
consisting of the objects which are iterated extensions of objects in 
$f^*\Strat(S/k)$. \\ 
$(2)$ \, 
Let $\NfStrCrysn(X/k)$ be the full subcategory of $\NfStrCrys(X/k)$ 
consisting of the objects which are iterated extension of 
objects in the essential image of 
$$ \MICn(S/k) \hra \MIC(S/k) \cong \Strat(S/k) \os{f^*}{\lra} 
\Strat\Crys(X/k). $$
\end{defn}

It is obvious from the definition that 
$\NfStrCrys(X/k)$ (resp. $\NfStrCrysn(X/k)$) corresponds to 
$\NfMIC(X/k)$ (resp. $\NfMICn(X/k)$) via the equivalence 
$\MIC(X/k) \cong \Strat\Crys(X/k)$
in Proposition \ref{crysprop1}.

\bigskip 

Now we introduce notation and setting which will be in force 
all along the rest of the paper. 

\begin{notation}\label{notation} 
 We fix a field $k$ of characteristic zero and 
the diagram of fine log schemes \eqref{0} satisfying the conditions 
${\rm (A), (B), (C), (D)}$ above and moreover the following condition ${\rm (E)}:$ 
\begin{enumerate}
\item[\rm (E)] $\MICn(S/k)$ is an abelian subcategory of $\wt{\MIC}(S/k)$. 
\end{enumerate}
Also, let $s \hra S$ be an exact closed immersion over $k$ 
from a fine log scheme $s$ such that the structure morphism 
$s \lra {\rm Spec}\,k$ 
at the level of underlying schemes is the identity. 
 We denote the morphism $s \hra S$ also by $s$ by abuse 
of notation, and denote the composite $\iota \circ s$ by $x:s \hra X$. 
We denote the fiber product $s \times_S X$ by $X_s$ and the projection 
$X_s \lra s$ by $f_s$. Then $x$ induces the closed immersion 
$s \hra X_s$, which we denote also by $x$. 
\end{notation}

Under the notation above, 
$R^if_{\dR *}$ defines the functor
\begin{equation}\label{locally_free_dR}
R^if_{\dR *}: \NfMICn(X/k) \lra \MICn(S/k). 
\end{equation}
Indeed, if $(E,\nabla)$ belongs to $f_{\dR}^*\MICn(S/k)$,  
$R^if_{\dR *} E$ belongs to $\MICn(S/k)$ by \eqref{projk} and the assumption (C), and 
one can prove it for general $(E,\nabla) \in \NfMICn(X/k)$ by  
induction on the number of iterations of 
extensions, using the exact sequence in Remark \ref{gmexseq}(3), 
the assumption (E) and the fact that $\MICn(S/k)$ is closed under extension 
in $\wt{\MIC}(S/k)$. In particular, for 
$E := (E,\nabla) \in \MICn(S/k)$ and 
$E' := (E',\nabla') \in \NfMICn(X/k)$, the isomorphism 
\eqref{projk} of projection formula is an isomorphism in 
$\MICn(S/k)$. 

Moreover, under the notation above, 
$\MICn(S/k)$ is actually a neutral 
Tannakian category 
with fiber functor $s_{\dR}^*: \MICn(S/k) \lra \MICn(s/s) = \Vector_k$. 
Also, we have $g_{\dR *}(\cO_S,d) = k$ by the assumption (D) and this 
and the assumption (E) imply, by a similar argument to 
\cite[Proposition 1.4.3]{dp}
that $\NfMICn(X/k)$ is a neutral Tannakian category with 
fiber functor $x_{\dR}^*: \NfMICn(X/k) \lra \MICn(s/s) = \Vector_k$. \par 
Let $\NfsMIC(X_s/s)$ be the subcategory of $\MIC(X_s/s)$ consisting of 
objects which are iterated extensions of objects in 
$f_{s,\dR}^*\MIC(s/s) = f_{s,\dR}^*\Vector_k$. 
Then we have 
$$ f_{s \dR *}(\cO_{X_s},d) = s_{\dR}^*f_{\dR *}(\cO_X,d) = k $$
by assumptions (C) and (D), and we see that it is also 
a neutral Tannakian category with 
fiber functor $x_{\dR}^*: \NfsMIC(X_s/s) \lra \MICn(s/s) = \Vector_k$. 

Finally, we give a useful sufficient condition for the diagram 
\eqref{0} to satisfy the conditions (A), (B), (C), (D) and (E). 

\begin{prop}\label{prop:abcde-rs}
Suppose that we are given a diagram 
\eqref{0} satisfying the following conditions: \\ 
$(1)$ \, $S$ is geometrically connected, 
$f$ is a proper log smooth integral saturated {\rm (\cite[II.5.18]{agt},  
\cite[Definition II.2.10]{tsuji19})}
morphism between fs log schemes with geometrically reduced, geometrically connected fibers 
and $g$ is a  separated morphism of finite type. 
$\iota$ is a section of $f$. \\
$(2)$ \, Etale locally around 
any closed point $t$ of $S$, there exists a strict etale 
morphism $S \lra \Af^{r,s}_k$ over $k$ 
for some $r,s \in \N$. \\
Then it satisfies the conditions {\rm (A), (B), (C), (D)} and {\rm (E)}. 
\end{prop}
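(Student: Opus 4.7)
The strategy is to verify conditions (A)--(E) in order. Condition (A) is contained verbatim in hypothesis (1). For (B), hypothesis (2) gives a strict etale morphism $S \to \Af^{r,s}_k$ etale locally around any closed point, and $\Omega^1_{\Af^{r,s}_k/k}$ is free with basis the $\dlog t_i$, $dt_j$ and $\dlog e_l$ in the appropriate index ranges; strictness ensures that $\Omega^1_{S/k}$ is the pullback of this free module and therefore locally free. The second equation in (D), $g_{\dR *}(\cO_S,d) = k$, is checked on the local model $\Af^{r,s}_k$ by a direct computation in characteristic zero showing that $\Ker(d : \cO_S \to \Omega^1_{S/k})$ is the constant sheaf $k$; combined with the geometric connectedness of $S$ this yields $\Gamma(S,\Ker(d)) = k$.

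The first half of (D), $f_{\dR *}(\cO_X,d) = (\cO_S,d)$, reduces to the underlying sheaf identity $f_*\cO_X = \cO_S$ together with the compatibility of the Gauss--Manin construction with the canonical differential on $\cO_S$. The sheaf identity is the logarithmic analogue of Zariski's connectedness theorem for proper log smooth integral saturated morphisms with geometrically reduced, geometrically connected fibers, which can be invoked from the fs log geometry literature (Tsuji, Nakayama--Ogus). The compatibility of the stratification coming from Gauss--Manin with the trivial one on $\cO_S$ is then formal from $(\cO_X,d) = f_{\dR}^*(\cO_S,d)$ and the functoriality of the constructions of Section 1.

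The substance lies in (C) and (E). For (C), local freeness of $R^if_{\dR *}(\cO_X,d)$ as an $\cO_S$-module follows from the degeneration of the relative log Hodge-to-de Rham spectral sequence for a proper log smooth integral saturated morphism in characteristic zero, a logarithmic version of the Deligne--Illusie theorem due to Kato--Illusie. Nilpotence of the residues along the log boundary of $S$ is a relative form of the local monodromy theorem: the integral saturated hypothesis makes $f$ locally toroidal/semistable, and the residue of the Gauss--Manin connection at a log point of $S$ acts as the local monodromy on the log de Rham cohomology of the corresponding fiber, hence is nilpotent by Steenbrink--Illusie--Kato-type results. For (E), $\wt{\MIC}(S/k)$ is abelian by (B), and nilpotence of residues under sub and quotient is direct from the definition at a geometric point; the remaining and main obstacle is showing that the kernel and cokernel (in $\wt{\MIC}(S/k)$) of a morphism in $\MICn(S/k)$ are again locally free. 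I would argue this etale locally on $\Af^{r,s}_k$ using the nilpotent-residue description of objects in $\MICn$ --- namely that such an object is locally determined by a locally free $\cO_S$-module together with commuting nilpotent operators along the log directions, in the spirit of Deligne's canonical extension --- and verifying that kernel and cokernel again fit into this framework.
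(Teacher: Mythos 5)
Your handling of (A), (B) and (D) matches the paper's (which disposes of them quickly: (A), (B) are immediate, the first half of (D) comes from properness with geometrically reduced, geometrically connected fibers, the second from the local model $\Af^{r,s}_k$ plus geometric connectedness of $S$), and your plan for (E) — reduce \'etale locally to $\Af^{r,s}_k$ and view objects of $\MICn(S/k)$ as locally free modules with commuting nilpotent operators along the log directions — is in substance the paper's argument, which for $s\le r$ invokes the G\'erard--Levelt type theorem that $\MICn$ of a normal crossing log smooth scheme is an abelian subcategory of $\wt{\MIC}$, and for $s>r$ observes that $\MICn(S/k)$ is the category of objects of the $s\le r$ case equipped with $s-r$ extra commuting nilpotent endomorphisms.

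The genuine gap is in (C), which is the substantial condition. First, degeneration of the relative log Hodge--de Rham spectral sequence does not by itself give local freeness of $R^if_{\dR *}(\cO_X,d)$: the abutment of a degenerating spectral sequence of coherent sheaves need not be locally free, and since the Gauss--Manin connection is only a \emph{log} connection, the usual characteristic-zero argument (coherent module with integrable connection over a smooth base is locally free) is unavailable. What is actually needed is constancy of the fiberwise log de Rham dimensions together with base change (Grauert-type arguments), and establishing that constancy is exactly where the difficulty sits; your sketch does not address it. Second, the nilpotence of residues via ``Steenbrink--Illusie--Kato-type'' local monodromy statements is invoked at a level of generality where it is not available: those results concern degenerations over a disc (the $r=s=1$ picture), whereas here one must treat the general local model $\Af^{r,s}_k$, in particular $s>r$, where the extra log directions are not cut out by any divisor of $S$ and there is no one-parameter degeneration whose monodromy the residue could be identified with; one must moreover check nilpotence at \emph{all} geometric points, not only those over closed points (the paper reduces to closed points via Remark \ref{rem:nilpnonclosed2}). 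The paper circumvents both problems at once by a purely algebraic device: it spreads the diagram out over a connected affine scheme smooth over $\Z$, passes to the $p$-adic completion over a Witt ring $W$, identifies $R^if_{\dR *}$ with log crystalline cohomology $R^if_{0,\crys *}\cO_{X_0/\wh{S}_W}\otimes\Q$ (Hyodo--Kato; the integral and saturated hypotheses guarantee Cartier type), and exploits the Frobenius isomorphism $F^*$ on this module: freeness at the relevant maximal ideal $\wh{\mathfrak{m}}$ follows from $(F^*)^n(\wh{\mathfrak{m}})\subseteq\wh{\mathfrak{m}}^{p^n}$, and nilpotence of any residue $N$ from the relation $N\circ F^*=pF^*\circ N$. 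To make your Hodge-theoretic route work you would have to supply the missing constancy/base-change input and a residue-equals-monodromy statement valid in this general log setting, which is precisely what the paper's argument is designed to avoid.
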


\begin{rem}\label{rem:nilpnonclosed2}
In this remark, we prove that, in the situation of Proposition \ref{prop:abcde-rs}, 
the inclusion of categories $\MICn(S/k) \subseteq \MICn(S/k)'$ is, actually, an  
equality ($\MICn(S/k)'$ is  defined in Remark \ref{rem:nilpnonclosed1}). 
The proof given here is a modification of the proof in \cite[Remark 2.12]{dps}. 

We need to prove that, for an object $(E,\nabla)$ in $\MICn(S/k)'$ and 
a geometric point $y$ of $S$ which is not necessarily over a closed point, 
$(E,\nabla)$ has nilpotent residues at $y$. 
Since the assertion is etale local, we may assume that 
there exists an strict etale morphism 
$\varphi: S \lra \Af^{r,s}_k$. 
 For $I \subseteq \{1, \dots, \min(r,s)\}$, let  
$\Af^{r,s}_I$ be the closed log subscheme $\bigcap_{i \in I}\{t_i = 0\}$ of $\Af^{r,s}$ 
where $t_1, \dots, t_r$ are the coordinates of $\Af^{r,s}$, and put  
$\Af_I^{r,s,\circ} := \Af^{r,s}_I \setminus \bigcup_{J \supsetneq I} \Af^{r,s}_J$, 
$S_I^{\circ} := \varphi^{-1}(\Af_I^{r,s,\circ})$. 
Then $S = \sqcup_{I \subseteq \{1, \dots, \min(r,s)\}} S_I^{\circ}$ set-theoretically.  
Also, we have the equality $\overline{\cM}^{\rm gp}_S|_{S_I^{\circ}} = \Z^{r_I}_{S^{\circ}_I}$, 
where $\overline{\cM}_S^{\rm gp} := \cM_S^{\rm gp}/\mathcal{O}_S^*$ 
and $r_I := |I| + \max(s-r,0)$. 

Now take any object $(E,\nabla)$ and $y$ as above. 
Let $I$ be the subset of $\{1, \dots, \min(r,s)\}$ such that the image of $y$ in $S$ belongs to 
$S_I^{\circ}$. Then $\nabla$ induces a linear map 
$$\rho_I: E|_{S^{\circ}_I} \rightarrow E|_{S^{\circ}_I} \otimes 
\overline{\cM}_S^{\rm gp}|_{S_I^{\circ}} = E|_{S^{\circ}_I} \otimes 
\Z^{r_I}_{S^{\circ}_I}.$$ 
For a geometric point $z$ of $S^{\circ}_I$, 
we denote the specialization of $\rho_I$ to $z$ by 
$$ \rho_z: E|_z \rightarrow E|_z \otimes 
\overline{\cM}^{\rm gp}_{S,z} = E|_z \otimes 
\Z^{r_I}. $$
To prove the assertion, we should prove that, for any $\Z$-linear map 
$\psi: \overline{\cM}_{S,y}^{\rm gp} = \Z^{r_I} \longrightarrow \Z$, the map 
$({\rm id} \otimes \psi) \circ \rho_y: E|_y \rightarrow E|_y$ is nilpotent. 
By replacing $S$ by a small affine open log subscheme containing the image of $y$ in $S$, 
we may assume that $E|_{S^{\circ}_I}$ is free of rank $c$. 

For $1 \leq i \leq r_I$, 
we denote the $i$-th projection 
$\overline{\cM}_S^{\rm gp}|_{S_I^{\circ}} = \Z^{r_I}_{S_I^{\circ}} \longrightarrow \Z_{S_I^{\circ}}$
by $\pi_i$ and denote its fiber 
$\overline{\cM}_{S,y}^{\rm gp} = \Z^{r_I} \longrightarrow \Z$ at $y$ by 
$\pi_{i,y}$. Then, by the integrability of $\nabla$, it suffices to prove the nilpotence of 
$\rho_{y,i} := ({\rm id} \otimes \pi_{i,y}) \circ \rho_y: E|_y \longrightarrow E|_y$ for any $i$. 
Since this map is the specialization of the map 
$$ \rho_{I,i}:= ({\rm id} \otimes \pi_i) \circ \rho_I: E|_{S^{\circ}_I} \longrightarrow 
E|_{S^{\circ}_I}, $$
we are reduced to proving the nilpotence of the map $\rho_{I,i}$. 
Choose for any closed point of $S^{\circ}_I$ a geometric point over it and 
let $T$ be the set of such geometric points. Then, by assumption
that $(E,\nabla)$ belongs to $\MICn(S/k)'$, 
for any $z \in T$ and for any $1 \leq i \leq r_I$, the map $\rho_{z,i}$ is nilpotent. 
Hence, as an element in ${\rm End}(E|_z)$, $\rho_{z,i}^{c+1}$ is equal to $0$. 
On the other hand, since $S^{\circ}_I$ is reduced and Jacobson and $E|_{S^{\circ}_I}$ is free, 
the natural map 
${\rm End}(E|_{S^{\circ}_I}) \to \prod_{z \in T} {\rm End}(E|_z)$ 
is injective. Hence $\rho_{I,i}^{c+1}$ is equal to $0$ in ${\rm End}(E|_{S^{\circ}_I})$. 
So the proof of the assertion is finished. 

By the same argument, we see the equality 
$\MICn(S/k) = \MICn(S/k)'$ when $S$ admits etale locally 
a set-theoretical decomposition $S = \sqcup_{i} S_i$ by reduced 
locally closed subschemes $S_i$ such that, for each $i$, the equality 
$\ol{\cM}^{\rm gp}_S |_{S_i} = \Z^{r_i}_{S_i}$ holds for some $r_i$.  
\end{rem}

\begin{proof} {\it (of Proposition  \ref{prop:abcde-rs})}
It is easy to see that the conditions (A), (B) are satisfied. 
The first condition in (D) follows from 
the assumption that $f$ is proper and has 
geometrically reduced, geometrically connected fibers. 
We can prove the second condition in (D) 
using the geometric connectedness of $S$ and the 
local description of $S$ in (2). 

We prove the condition (E). To do so, it suffices to work 
etale locally on $S$ and 
thus we may assume 
the existence of 
a strict etale morphism $\varphi:S \lra \Af^{r,s}_k$ over $k$ as in (2). 
Let $D := \bigcup_{i=1}^{\min(s,r)} \varphi^{-1}(\{t_i = 0\})$, 
where $t_1, ..., t_r$ is the canonical coordinate of $\Af^{r,s}_k$ and 
let $S'$ be the log scheme whose underlying scheme is the same as 
that of $S$ and whose log structure is associated to the normal 
crossing divisor $D$. 
Then, if $s \leq r$, we have $S' = S$ and it is well-known that 
$\MICn(S'/k)$ is an abelian subcategory of $\wt{\MIC}(S'/k)$. 
(This follows from G\'erard--Levelt theory \cite[\S 3]{gl}, as shown in \cite[Lemma 3.1.6, Corollary 3.1.7]{cailotto} for example. 
Also, one can prove it directly in the same way as \cite[Lemma 3.2.14]{kedlayaI}.)
When $s > r$, the log structure $\cM_S$ of $S$ is the one associated to 
the monoid homomorphism $\cM_{S'} \oplus \N^{s-r} \lra \cO_{S'}$ induced by 
$\alpha_{S'}: \cM_{S'} \lra \cO_{S'}$ and the monoid homomorphism  
$\N^{s-r} \lra \cO_{S'}$ sending the standard basis of $\N^{s-r}$ to zero. 
From this, we see that the category 
$\MICn(S/k)$ (resp. $\wt{\MIC}(S/k)$) is the category of objects in 
$\MICn(S'/k)$ (resp. $\wt{\MIC}(S'/k)$) endowed with 
$(s-r)$ commuting nilpotent endomorphisms (resp. commuting endomorphisms). 
Thus $\MICn(S/k)$ is an abelian subcategory of $\wt{\MIC}(S/k)$  
also in this case. So the condition (E) is satisfied. 

In the following, we prove the condition (C). 
By standard argument, we may reduce to the case where 
$k$ is finitely generated over $\Q$. 
Take a closed point $t$ of $S$ and we want to prove that 
$R^if_{\dR *}(\cO_X,d)$ is locally free around $t$ and has nilpotent residues 
at a geometric point over $t$. Since we may work etale locally around $t$ and enlarge $k$, we can assume 
that $S$ is affine, $t$ is $k$-rational and that 
$g$ factors as $S \os{\varphi}{\lra} \Af^{r,s}_k \lra \Spec\,k$, 
where $\varphi$ is a strict etale morphism such that the inverse image 
of the origin in $\Af^{r,s}_k$ by $\varphi$ is equal to $t$. Hence we have a diagram 
\begin{equation}\label{eq:ttdiag1}
X \os{f}{\lra} S \os{\varphi}{\lra} \Af^{r,s}_k \lra \Spec k. 
\end{equation}
Take an affine connected scheme $\Spec A'$ smooth of finite type over $\Z$ 
with ${\rm Frac}\,A' = k$ and a diagram 
\begin{equation}\label{eq:ttdiag2}
X_{A'} \os{f_{A'}}{\lra} S_{A'} \os{\varphi_{A'}}{\lra} \Af^{r,s}_{A'} \lra \Spec A'  
\end{equation}
which satisfy the following: 
\begin{itemize}
\item The pull-back of the diagram \eqref{eq:ttdiag2} by the map 
$\alpha: \Spec\,k \lra \Spec\, A'$ induced by the inclusion 
$A' \hra {\rm Frac}\,A' = k$ is the diagram \eqref{eq:ttdiag1}. 
\item 
The inverse image $t_{A'}$ 
of the origin in $\Af^{r,s}_{A'}$ by $\varphi_{A'}$ is isomorphic to $\Spec A'$.  
\item 
The morphism $f_{A'}$ is proper log smooth, integral and saturated.
\item
The morphism $\varphi_{A'}$ is strict etale.
\end{itemize}

Take a prime number $p$ such that 
$\Spec (A' \otimes_{\Z} \F_p)$ is connected and non-empty, 
let $A$ be the $p$-adic completion of $A' \otimes_{\Z} \Z_p$ and denote the pullback of 
the diagram \eqref{eq:ttdiag2} by the canonical morphism 
$\beta: \Spec A \lra \Spec A'$ by 
\begin{equation}\label{eq:ttdiag3}
X_{A} \os{f_{A}}{\lra} S_{A} \os{\varphi_{A}}{\lra} \Af^{r,s}_{A} \lra \Spec A.  
\end{equation}
Also, let us take a morphism $F: \Spec A \lra \Spec A$ over $\Spec \Z_p$ 
which is a lift of the absolute 
Frobenius on $\Spec (A \otimes_{\Z} \F_p)$. Let $k_0$ be the perfection 
of ${\rm Frac}\,(A \otimes_{\Z} \F_p)$ and put 
$W := W(k_0)$. Then there exists a morphism 
$\gamma: \Spec W \lra \Spec A$ 
which is compatible with Frobenius morphism on $\Spec W$ and the morphism $F$ 
on $\Spec A$. 
Denote the pullback of the diagram \eqref{eq:ttdiag3} by 
$\gamma$ by 
\begin{equation}\label{eq:ttdiag4}
X_{W} \os{f_{W}}{\lra} S_{W} \os{\varphi_{W}}{\lra} \Af^{r,s}_{W} \lra \Spec W.  
\end{equation}
Denote its special fiber by 
\begin{equation}\label{eq:ttdiag5}
X_{0} \os{f_{0}}{\lra} S_{0} \os{\varphi_{0}}{\lra} \Af^{r,s}_{k_0} \lra \Spec k_0,   
\end{equation}
generic fiber by 
\begin{equation}\label{eq:ttdiag6}
X_{k_W} \os{f_{k_W}}{\lra} S_{k_W} \os{\varphi_{k_W}}{\lra} \Af^{r,s}_{k_W} \lra \Spec k_W 
\end{equation}
and its $p$-adic completion by 
\begin{equation}\label{eq:ttdiag7}
\wh{X}_{W} \os{\wh{f}_{W}}{\lra} \wh{S}_{W} \os{\wh{\varphi}_{W}}{\lra} 
\wh{\Af}^{r,s}_{W} \lra \Spf W.  
\end{equation}
We denote the canonical open immersion $\Spec k_W \hra \Spec W$ by 
$\delta$. 

Since $k$ is the fraction field of $A'$, we have a morphism 
$\epsilon: \Spec k_W \lra \Spec k$ with 
$\alpha \circ \epsilon = \beta \circ \gamma \circ \delta$.

Denote the inverse image of the origin by 
$\varphi_A, \varphi_W, \varphi_{k_W}$ by 
$t_A, t_W, t_{k_W}$ respectively. Then 
we have the following: 
\begin{equation*}
\begin{CD}
t @>>> t_{A'} @<<< t_A @<<< t_W @<<< t_{k_W} \\ 
@| @| @| @| @| \\ 
\Spec k @>{\alpha}>> \Spec A' @<{\beta}<< \Spec A 
@<{\gamma}<< \Spec W @<{\delta}<< \Spec k_W. 
\end{CD}
\end{equation*}

Let 
$\psi: B := \Gamma(S_W,\cO_{S_W}) \lra W$ be the 
homomorphism induced by the inclusion $t_W \hra S_W$ and 
let $\mathfrak{m}$ be the kernel of $\psi_{\Q}: B_{\Q} \lra k_W$, where 
$_{\Q}$ stands for $\otimes_{\Z} \Q$. 
Then we can regard  $R^if_{k_W, \dR *} (\cO_{X_{k_W}},d)$ as 
a finitely generated $B_{\Q}$-module with integrable connection, 
and we should prove that it is free at $\mathfrak{m}$ and that it has nilpotent 
residues at the closed point of $S_{k_W}$ 
corresponding to $\mathfrak{m}$. (Indeed, by Remark \ref{rem:nilpnonclosed2}, 
the nilpotence of residues can be checked at geometric points over closed 
points. Also, noting that the log structure on $S_W$ 
is the pullback of that on $\Af^{r,s}_{W}$, the log structure of 
$S_W$ is defined in Zariski topology and so the nilpotence of residues 
can be checked 
at the closed point corresponding to $\mathfrak{m}$, not at a geometric point above it. 
See the paragraph before Remark \ref{rem:nilpnonclosed1}.) 

Put $\wh{B} := \Gamma(\wh{S}_W,\cO_{\wh{S}_W})$. Then 
$\psi: B = \Gamma(S_W,\cO_{S_W}) \lra W$ induces the 
homomorphism $\wh{\psi}: \wh{B} \lra W$. Let 
$\wh{\mathfrak{m}}$ be the kernel of $\wh{\psi}_{\Q}: \wh{B}_{\Q} \lra k_W$. 
Then, to show the claim in the previous paragraph, 
it suffices to prove that 
$E := R^if_{0, \crys *} \cO_{X_0/\wh{S}_W} \otimes \Q$ 
(where $f_{0, \crys}$ denotes 
the morphism of topoi $(X_0/\wh{S}_W)_{\crys}^{\sim} \lra \wh{S}_{W,\et}^{\sim}$), 
regarded as a finitely generated $\wh{B}_{\Q}$-module with 
integrable connection (in rigid analytic sense), 
is free at $\wh{\mathfrak{m}}$ and that it has nilpotent 
residues at the point corresponding to $\wh{\mathfrak{m}}$, 
by the comparison theorem of log de Rham and log crystalline cohomology \cite[Proposition 2.20]{hk}.

Let $F: \wh{S}_W \lra \wh{S}_W$ be a lift of Frobenius on 
$S_0$ compatible with the morphism $\Af^{r,s}_W \lra \Af^{r,s}_W$ 
over the Frobenius on $W$ which 
sends the coordinates of $\Af^{r,s}_W$ 
to their $p$-th powers. 
By the condition (1) on $f$, 
we see that $f_0$ is a proper log smooth integral morphism of Cartier 
type \cite[Proposition II.2.14]{tsuji19}. So, by \cite{hk}, 
the endomorphism $F^*$ on 
$E$ induced by $F$ is an 
isomorphism.  To prove the freeness at $\wh{\mathfrak{m}}$, 
it suffices to prove that 
$E/\wh{\mathfrak{m}}^nE$ is a free $\wh{B}_{\Q}/\wh{\mathfrak{m}}^n$-module 
for any $n$. Take an surjective homomorphism 
$$ (\wh{B}_{\Q}/\wh{\mathfrak{m}}^n)^{\oplus r} \lra E/\wh{\mathfrak{m}}^nE $$
which is isomorphic modulo $\wh{\mathfrak{m}}$, and denote the kernel 
of it by $K$. Then it induces the isomorphism 
\begin{align*}
((\wh{B}_{\Q}/\wh{\mathfrak{m}}^n)^{\oplus r}/K) \otimes_{\wh{B}_{\Q}/\wh{\mathfrak{m}}^n, (F^*)^n} 
(\wh{B}_{\Q}/\wh{\mathfrak{m}}^n) & \os{\cong}{\lra}   
(E/\wh{\mathfrak{m}}^nE) \otimes_{\wh{B}_{\Q}/\wh{\mathfrak{m}}^n, (F^*)^n}  
(\wh{B}_{\Q}/\wh{\mathfrak{m}}^n) \\ & \cong E/\wh{\mathfrak{m}}^nE, 
\end{align*}
where the last isomorphism is induced by the isomorphism $F^*$ on 
$E$. Because $K \subseteq (\wh{\mathfrak{m}}/\wh{\mathfrak{m}}^n)^{\oplus r}$ and 
$(F^*)^n(\wh{\mathfrak{m}}) \subseteq \wh{\mathfrak{m}}^{p^n} \subseteq \wh{\mathfrak{m}}^n$,  
the source of this map is isomorphic to 
$(\wh{B}_{\Q}/\wh{\mathfrak{m}}^n)^{\oplus r}$. Hence 
$E/\wh{\mathfrak{m}}^nE$ is a free $\wh{B}_{\Q}/\wh{\mathfrak{m}}^n$-module 
for any $n$, as required. 

Now we prove the nilpotence of residues 
of the connection $\nabla$ defined on $E$ at $\wh{\mathfrak{m}}$. 
Because of the functoriality of 
crystalline cohomologies, the morphism $F: \wh{S}_W \lra \wh{S}_W$ induces an endomorphism on 
$(E,\nabla)$. Thus, any residue map 
$N: E/\wh{\mathfrak{m}} E \lra E/\wh{\mathfrak{m}} E$ 
at $\wh{\mathfrak{m}}$ satisfies the equality 
$N \circ F^* = p F^* \circ N$, and this implies the nilpotence of $N$. 
Hence the proof of the proposition is finished. 
\end{proof}

\begin{rem}\label{rem:gmcoin}
In this remark, we give several sufficient conditions for the 
coincidence of the definition of Gauss--Manin connection 
in this paper and the usual definition of 
Gauss--Manin connection via Katz--Oda spectral sequence. 
(We will use the settings in this remark later in this paper, but 
we will not use the coincidence results.) 

(1) \, 
Let the situation be as in Proposition \ref{prop:abcde-rs} and 
assume that we can always take $s \leq r$ in the condition (2) there. 
(This implies that $S$ is log smooth over $k$.) Let $(E,\nabla)$ be an object in $\MIC(X/k)$. 
Also, assume the following condition $(*)$: \\ 
\quad \\ 
$(*)$ \,\,\, 
Etale locally on $S$, there exist diagrams 
\eqref{eq:ttdiag1}, \eqref{eq:ttdiag2}, \eqref{eq:ttdiag3}, \eqref{eq:ttdiag4}, \eqref{eq:ttdiag6} (with $s \leq r$) 
such that the restriction of $(E,\nabla)$ to 
$\MIC(X_{k_W}/k_W)$ by $\epsilon: \Spec k_W \lra \Spec k$ comes from an object 
$(E_W,\nabla_W)$ in $\MIC(X_W/W)$ whose mod $p^n$ reductions ($n \in \N$)
are quasi-nilpotent in the sense of \cite{berthelotogus}. \\
\quad \\ 
Then the definition of Gauss--Manin connection on 
$R^if_{\dR*}(E,\nabla)$ in this paper coincides with that via 
Katz--Oda spectral sequence. 

We prove this claim. If we denote the mod $p^n$ reduction of 
$f_W$ and $(E,\nabla)$ by $f_n:X_n \lra S_n$, $(E_n,\nabla_n)$ respectively, 
it suffices to prove the coincidence of two definitions of 
Gauss--Manin connection on $R^if_{n, \dR*}(E_n,\nabla_n)$. 
The condition $(*)$ implies that $(E_W,\nabla_W)$ comes from 
a locally free crystal $\cE$ on $(\wh{X}/\wh{S}_W)_{\crys}$ and so 
$(E_n,\nabla_n)$ comes from the mod $p^n$ reduction of $\cE$, 
which is a locally free crystal on $(X_n/S_n)_{\crys}$. Then 
the coincidence of two definitions of Gauss--Manin connection 
follows from \cite[V Proposition 3.6.4]{BE}, 
which works without any change in the case 
$S$ is log smooth over $k$. 

We expect that the argument in the proof of 
\cite[V Proposition 3.6.4]{BE} will work also in the case $s > r$, 
but we will not pursue this topic in this paper. 

(2) \, Let $f: X \lra S$ be as in Notation \ref{notation} and 
assume that we have a Cartesian diagram 
\begin{equation*}
\begin{CD}
X @>>> X' \\ 
@VfVV @V{f'}VV \\ 
S @>>> S',   
\end{CD}
\end{equation*}
with $f'$ proper log smooth integral. 
Also, let 
$(E,\nabla)$ be an object in $\MIC(X/k)$ and assume the following. \\ \quad \\ 
(a) \, $(E,\nabla)$ is the pullback of an object $(E',\nabla')$ in $\MIC(X'/k)$. \\ 
(b) \, For the object $(E',\nabla')$ in (a) and for any $i$, 
the underlying ${\mathcal{O}}_{S'}$-module of 
$R^if'_{\dR *}(E',\nabla')$ is locally free. \\ 
(c) \, The two definitions of Gauss--Manin connection on 
$R^if'_{\dR *}(E',\nabla')$ coincide. \\ \quad \\ 
Then two definitions of Gauss--Manin connection on 
$R^if_{\dR *}(E,\nabla)$ coincide, because it is the pullback of 
$R^if'_{\dR *}(E',\nabla') \in \MIC(S'/k)$ to $\MIC(S/k)$ by remark \ref{gmexseq} (1).

(3) \, 
We can apply (1) and (2) in the following setting. 
Let $\cS$ be the log scheme $(\Spec k[[t]], \cM_{\cS})$, where  
$\cM_{\cS}$ is the log structure associated to the monoid homomorphism 
$\N \lra k[[t]];\, 1 \mapsto t$. (Note that 
$\cM_{\cS} = \cO_{\cS^{\circ}} \setminus \{0\}$.) 
Let $\wt{f}: \cX \lra \cS$ be a regular semistable model 
of a proper smooth curve of genus $g \geq 2$ over $\Spec k((t))$, 
endowed with log structure associated to the special fiber. 
Let $f: X \lra S$ be the special fiber of 
$\wt{f}$ with pullback log structure. 

Let ${\cX'}^{\circ}$ be the stable model of $\cX^{\circ}$ with blow-down 
$\varphi^{\circ}: \cX^{\circ} \lra {\cX'}^{\circ}$ 
of schemes (without log structures). 
If we denote by $\ol{M}_g$ the moduli log stack of 
stable log curves of genus $g$ over $k$ and by $\ol{C}_g$ the universal stable log curve 
over $\ol{M}_g$, 
we have a morphism 
$\cS^{\circ} \lra \ol{M}_g^{\circ}$ such that $\ol{C}^{\circ}_g \times_{\ol{M}^{\circ}_g} 
\cS^{\circ} \cong {\cX'}^{\circ}$. 
By pulling back the log structures on $\ol{M}_g$ and $\ol{C}_g$, we obtain 
a morphism of log schemes $({\cX'}^{\circ}, {\cal N}_{\cX^{\circ}}) \lra 
(\cS^{\circ}, {\cal N}_{\cS^{\circ}})$. Because the generic fiber of 
${\cX'}^{\circ} \lra \cS^{\circ}$ is smooth, ${\cal N}_{\cS^{\circ}}$ is 
trivial on the generic point. Thus the structure morphism 
${\cal N}_{\cS^{\circ}} \lra \cO_{\cS^{\circ}}$ of the log structure 
${\cal N}_{\cS^{\circ}}$ factors through 
$\cO_{\cS^{\circ}} \setminus \{0\} = \cM_{\cS}$ and so 
we have a morphism 
$\cS \lra (\cS^{\circ}, {\cal N}_{\cS^{\circ}})$. 
By base change, we obtain the morphism 
$\wt{f}': \cX' := ({\cX'}^{\circ}, {\cal N}_{\cX^{\circ}}) 
\times_{(\cS^{\circ}, {\cal N}_{\cS^{\circ}})} \cS \lra \cS$. 
According to \cite[1.1]{fk}, the log scheme 
$\cX'$ has the form $(k[[t]][x,y]/(xy - t^n), \N^2 \oplus_{\Delta, \N,n} \N)$ 
(where $\Delta: \N \lra \N^2$ is the diagonal map and $n:\N \lra \N$ is the multiplication by $n$) 
etale locally around double points. 
Because $\varphi^{\circ}$ is the composition of blow-ups 
at double points, it is realized as the composition of log blow-ups 
(\cite[\S 4]{niziol}) $\varphi: \cX \lra \cX'$.   
Thus we have the following commutative diagram 
\begin{equation}\label{eq:moduli}
\begin{CD}
X @>>> \cX @>{\varphi}>> \cX' @>>> \ol{C}_g \\ 
@VfVV @V{\wt{f}}VV @VVV @VVV \\ 
S @>>> \cS @= \cS @>>> \ol{M}_g, 
\end{CD}
\end{equation}
where the left and the right squares are Cartesian. 

Let $(E,\nabla)$ be an object in $\MIC(X/k)$ which comes from 
an object in $(E', \nabla')$ in $\MIC(\ol{C}_g/k)$ which satisfies the 
condition (b) in (2). Moreover, assume that $(E',\nabla')$ satisfies the 
condition $(*)$ in (1) etale locally on $\ol{M}_g$. 
We denote the pullback of $(E',\nabla')$ to $\cX'/\cS$, $\cX/\cS$ by 
$(E_{\cX'}, \nabla_{\cX'}), (E_{\cX}, \nabla_{\cX})$, respectively. 
Then we can apply (1), (2) to see that the 
two definitions of Gauss--Manin connection on 
$R^i\wt{f}'_{\dR *}(E_{\cX'},\nabla_{\cX'})$ coincide. 
Then, using 
the quasi-isomorphism   
$$ R\varphi_*\Omega^{j}_{\cX/\cS} = \Omega^{j}_{\cX'/\cS} \quad (j \geq 0) $$
which we prove in Proposition \ref{prop:invariance} below 
and the projection formula, we obtain the isomorphism 
$$ R^i\wt{f}_*(E_{\cX} \otimes_{\cO_{\cX}} \Omega^j_{\cX/\cS}) 
\cong R^i\wt{f}'_*(E_{\cX'} \otimes_{\cO_{\cX'}} \Omega^j_{\cX/\cS}) 
\quad (i,j \geq 0) $$
and so we obtain the isomorphism 
$R^i\wt{f}_{\dR *}(E_{\cX},\nabla_{\cX}) \cong 
R^i\wt{f}'_{\dR *}(E_{\cX'},\nabla_{\cX'})$ for $i \geq 0$ by spectral sequence argument. 
Thus the two definitions of Gauss--Manin connection on 
$R^i\wt{f}_{\dR *}(E_{\cX},\nabla_{\cX})$ coincide.
Then, since $R^if_{\dR *}(E,\nabla)$ is the pullback of 
$R^i\wt{f}_{\dR *}(E_{\cX},\nabla_{\cX})$ to $S$ by the base change isomorphism, 
we conclude that the two definitions of Gauss--Manin connection on 
$R^if_{\dR *}(E,\nabla)$ coincide. We can apply this result to the case 
$(E,\nabla) = (\cO_X,d)$. 
\end{rem}

\begin{prop}\label{prop:invariance}
Suppose that we are given a diagram of fs log schemes 
\begin{equation*}
\begin{CD}
\cX @>{\varphi}>> \cX' \\ 
@VVV @VVV \\ 
\cS @= \cS 
\end{CD}
\end{equation*}
such that $\cX'$ is log regular, $\varphi$ is a log blow-up and 
$\Omega^{1}_{\cX'/\cS}$ is locally free. Then, for each $j \geq 0$, we have 
a quasi-isomorphism 
$$ R\varphi_*\Omega^{j}_{\cX/\cS} = \Omega^{j}_{\cX'/\cS}. $$
\end{prop}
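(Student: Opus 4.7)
\emph{Plan.} The plan is to reduce the statement first to the identity $R\varphi_*\cO_\cX = \cO_{\cX'}$ and then to a toric computation. Since $\cX'$ is log regular and $\varphi$ is a log blow-up, the results of Niziol \cite[\S 4]{niziol} (building on Kato's theory of log regular log schemes) imply that $\varphi$ is log étale and that $\cX$ is again log regular. The conormal sequence
\[ \varphi^*\Omega^1_{\cX'/\cS} \longrightarrow \Omega^1_{\cX/\cS} \longrightarrow \Omega^1_{\cX/\cX'} \longrightarrow 0 \]
combined with $\Omega^1_{\cX/\cX'} = 0$ (from log étaleness of $\varphi$) then yields $\varphi^*\Omega^1_{\cX'/\cS} \cong \Omega^1_{\cX/\cS}$. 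Since $\Omega^1_{\cX'/\cS}$ is locally free by hypothesis, taking $j$-th exterior powers gives $\varphi^*\Omega^j_{\cX'/\cS} \cong \Omega^j_{\cX/\cS}$ for every $j \geq 0$.

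With this isomorphism in hand, the projection formula (applicable because $\Omega^j_{\cX'/\cS}$ is locally free of finite rank) provides
\[ R\varphi_*\Omega^j_{\cX/\cS} \cong R\varphi_*\varphi^*\Omega^j_{\cX'/\cS} \cong \Omega^j_{\cX'/\cS} \otimes_{\cO_{\cX'}} R\varphi_*\cO_\cX, \]
so it suffices to establish $R\varphi_*\cO_\cX = \cO_{\cX'}$. This is étale local on $\cX'$. By Kato's structure theorem for log regular fs log schemes, $\cX'$ étale locally admits a strict smooth morphism to $\Spec \Z[P]$ endowed with the canonical log structure, for some fs sharp monoid $P$, and the log blow-up $\varphi$ then corresponds to a fan subdivision of the cone dual to $P$, with $\cX$ pulled back from the associated subdivided toric variety. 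One is thereby reduced to the classical toric fact that a proper birational morphism of toric varieties induced by a subdivision of fans satisfies $R\pi_*\cO = \cO$; this follows from the rational singularities of toric varieties, or alternatively from a direct \v{C}ech computation with respect to the subdivision.

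The main obstacle is really the log étaleness of $\varphi$: once this is granted, the rest is a combination of the projection formula with a standard toric vanishing. A secondary subtlety is verifying that the toric local model of the log blow-up is globally compatible with $\varphi$, but this follows from the universal property characterizing log blow-ups in \cite[\S 4]{niziol}.
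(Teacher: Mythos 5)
Your overall skeleton coincides with the paper's proof: deduce $\varphi^*\Omega^j_{\cX'/\cS} \cong \Omega^j_{\cX/\cS}$ from log \'etaleness of the log blow-up, apply the projection formula using the local freeness of $\Omega^1_{\cX'/\cS}$, and thereby reduce everything to the single vanishing statement $R\varphi_*\cO_{\cX} = \cO_{\cX'}$. The difference is in how that vanishing is obtained, and this is where your argument has a genuine gap. You claim that, by ``Kato's structure theorem,'' a log regular fs log scheme $\cX'$ admits \'etale locally a strict \emph{smooth} morphism to $\Spec \Z[P]$, and you then reduce to the classical toric fact $R\pi_*\cO = \cO$ for fan subdivisions. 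No such \'etale-local toric chart exists for general log regular schemes: Kato's structure theorem for log regular schemes only describes the \emph{completed} local rings (as $C[[P]][[t_1,\dots,t_r]]$ modulo one equation in the mixed-characteristic case), and log regularity covers exactly the non-toric, arithmetic/complete-local situations relevant here. For instance $\Spec \Z_p$ (log structure generated by $p$), or a semistable model over $\Spec k[[t]]$ --- which is the case actually used in Remark 1.16(3) of the paper --- is log regular but admits no strict smooth (finite type, flat) map to any $\Spec \Z[P]$; a dimension/flatness count already rules it out for $\Spec\Z_p \to \Spec\Z[\N]$. So the reduction ``to the classical toric fact'' as you state it would fail precisely in the intended application. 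The correct substitute, and what the paper cites, is Kato's fan formalism for log regular schemes and his vanishing theorem \cite[Theorem 11.3]{ka2}, combined with Niziol's description of log blow-ups \cite[Theorem 4.7]{niziol}; these give $R\varphi_*\cO_{\cX} = \cO_{\cX'}$ in the required generality without any global or \'etale-local toric chart. Your sketch could be repaired either by citing these results or by adding a nontrivial limit/approximation argument to pass from genuinely toric models to the complete local situation, but as written the step is not justified.

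A minor additional point: the conormal sequence you invoke is only right exact as stated, so together with $\Omega^1_{\cX/\cX'} = 0$ it gives only surjectivity of $\varphi^*\Omega^1_{\cX'/\cS} \lra \Omega^1_{\cX/\cS}$. To get the isomorphism you should either use the left exactness valid for log smooth (in particular log \'etale) $\varphi$, i.e. the exact locally split sequence of Kato \cite[Proposition 3.12]{ka}, or simply quote the standard fact that pullback of relative log differentials along a log \'etale morphism is an isomorphism, which is what the paper does.
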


\begin{proof}
Since $\cX'$ is log regular, we can apply 
\cite[Theorem 4.7]{niziol} and \cite[Theorem 11.3]{ka2} to have 
a quasi-isomorphism 
$$ R\varphi_*\cO_{\cX} = \cO_{\cX'}. $$
Because $\varphi$ is log etale, 
$\varphi^*\Omega^{j}_{\cX'/\cS} = \Omega^j_{\cX/\cS}$. 
From the fact that $\Omega^{1}_{\cX'/\cS}$ is locally free, 
by the projection formula, we have a quasi-isomorphism 
$$ R\varphi_*\Omega^{j}_{\cX/\cS} = \Omega^{j}_{\cX'/\cS} $$
for each $j$. 
\end{proof}

\begin{rem}\label{rem:2020Aug-1}
Suppose that we are in the situation of 
Proposition \ref{prop:abcde-rs} with $k=\C$ and 
suppose moreover that we have equality of categories 
$$ \MICn(X/\C) = \MICn(X/\C)'. $$ 
(See Remark \ref{rem:nilpnonclosed2} for a sufficient condition to have these 
equalities. Note also that we have the equality $\MICn(S/\C) = \MICn(S/\C)'.$) 
In this remark, we explain the topological analogue of 
the categories 
$$\MICn(S/\C), \quad \MICn(X/\C), \quad \NfMICn(X/\C)$$ 
in this situation. 

First, let $f_{\an}: X_{\an} \lra S_{\an}$ be the morphism of fs log analytic spaces 
associated to $f$. Then we can define the category 
$\MICn(S_{\an}/\C)$ (resp. $\MICn(X_{\an}/\C)$) of locally free 
$\cO_{S_{\an}}$-modules (resp. $\cO_{X_{\an}}$-modules) of finite rank with 
integrable connection on $S_{\an}/\C$ (resp. $X_{\an}/\C$) which have 
nilpotent residues at any point. This is the analytic analogue of the category 
$\MICn(S/\C) = \MICn(S/\C)'$ (resp. $\MICn(X/\C) = \MICn(X/\C)'$). 
In fact, if $S$ and $f$ are projective, we have equivalences 
$$ \MICn(S/\C) \cong \MICn(S_{\an}/\C), \quad 
\MICn(X/\C) \cong \MICn(X_{\an}/\C) $$
of GAGA. 

Next, let $S_{\an}^{\log}$ be the topological space associated to the fs log analytic space 
$S_{\an}$ defined in the following way by Kato--Nakayama \cite{kn}: 
$$ 
S_{\an}^{\log} := 
\left\{ (s,h) \,\left|\, 
\begin{aligned}
& s \in S, h \in \Hom(\cM^{\gp}_{S_{\an},s}, {\mathbb{S}}^1) \\ 
& \text{such that } h(f) = f/|f| \text{ for } f \in \cO^{\times}_{S_{\an},s}
\end{aligned} 
\right. \right\}. 
$$
(Here ${\mathbb{S}}^1$ is the unit circle $\{z \in \C \,|\, |z| = 1\}$.) 
We have the canonical map $\tau_S: S_{\an}^{\log} \lra S_{\an}$ defined by 
$(s,h) \mapsto s$ and the fiber $\tau_S^{-1}(s)$ of a point $s \in S_{\an}$ is 
homeomorphic to $({\mathbb{S}}^1)^{r(s)}$ with $r(s) := {\rm rank}\,\ol{\cM}^{\gp}_{S_{\an},s}$. 
We can define $X^{\log}_{\an}$ and $\tau_X: X^{\log}_{\an} \lra X_{\an}$ in the same way, 
and we have the canonical commutative diagram 
$$ 
\xymatrix{
X^{\log}_{\an} \ar[r]^{f^{\log}_{\an}} \ar[d]^{\tau_X} & 
S^{\log}_{\an} \ar[d]^{\tau_S} \\ 
X_{\an} \ar[r]^{f_{\an}} & S_{\an}. 
}
$$

Let ${\rm LS}^{\rm u}(S^{\log}_{\an})$ be the category of local systems $V$ 
of finite dimensional $\C$-vector spaces on $S^{\log}_{\an}$ such that
the monodromy action of $\pi_1^{\rm top}(\tau_S^{-1}(s)) \cong \Z^{r(s)}$ 
on $V_{s'}$ ($s' \in \tau_S^{-1}(s)$) is unipotent for any $s \in S_{\an}$, 
and define the category ${\rm LS}^{\rm u}(X^{\log}_{\an})$ in the same way. 
Then, by the log Riemann--Hilbert correspondence due to 
Kato--Nakayama \cite{kn} and Ogus \cite{ogus-rh}, we have equivalences of 
categories 
$$ \MICn(S_{\an}/\C) \cong {\rm LS}^{\rm u}(S^{\log}_{\an}), \quad 
\MICn(X_{\an}/\C) \cong {\rm LS}^{\rm u}(X^{\log}_{\an}). $$
Thus the categories ${\rm LS}^{\rm u}(S^{\log}_{\an})$, ${\rm LS}^{\rm u}(X^{\log}_{\an})$ 
are the topological analogues of the categories 
$\MICn(S/\C), \MICn(X/\C)$, respectively. We can also define 
the category ${\rm N}_{f^{\log}_{\an}}{\rm LS}^{\rm u}(X^{\log}_{\an})$, which is 
the topological analogue of the category $\NfMICn(X/\C)$. 

Put it in another way, we can say that our purely algebraic study of 
certain categories of modules with integrable connections on $S$ and on $X$ 
is the algebraic version of the study of certain category of local systems 
on $S^{\log}_{\an}$ and on $X^{\log}_{\an}$. 
\end{rem}

\section{Relative Tannakian theory}

In this section, we recall the definition of relatively unipotent 
log de Rham fundamental group of Lazda \cite{laz}, 
which is based on the theory of Tannakian categories 
due to Deligne \cite{Del89}, \cite{Del90}. 

First we remark that, as observed in \cite[\S 5]{Del89} and \cite[\S 7.5]{Del90}, one can imitate in a Tannakian category $\mathcal{T}$ several basic constructions of algebra and algebraic geometry. We recall here some of them. 
Let $\mathrm{Ind}(\mathcal{T})$ be the category on ind-objects of $\mathcal{T}$ (for definition, see \cite[\S 4]{Del89} and \cite[\S 7.5]{Del90}). A commutative ring with unit of $\mathrm{Ind}(\mathcal{T})$ is an object $A$ of 
$\mathrm{Ind}(\mathcal{T})$ equipped with an associative and commutative product $ A \otimes A \longrightarrow A$ and an unit $1_{\mathcal{T}} \longrightarrow A$ which satisfy the usual axioms. 
We define the category of affine schemes in $\mathcal{T}$ as the opposite category of the category of commutative rings (with unit) in $\mathrm{Ind}(\mathcal{T})$. We denote by $\Spec A$ the affine scheme in $\mathcal{T}$  associated to a commutative ring (with unit) $A$ of $\mathrm{Ind}(\mathcal{T})$. We call a group object in the category of affine schemes in $\mathcal{T}$ an affine group scheme in $\mathcal{T}$. The category of affine group schemes in 
${\mathcal{T}}$ can be regarded as the opposite category of the category of commutative Hopf algebras with antipode in $\mathrm{Ind}(\mathcal{T})$, more precisely, the category of objects $A$ in $\mathrm{Ind}(\mathcal{T})$ endowed with an associative, commutative product $A \otimes A \longrightarrow A$, a unit $1_{\mathcal{T}} \longrightarrow A$, an associative coproduct $A \longrightarrow A \otimes A$, a counit $A \longrightarrow 1_{\mathcal{T}}$ and 
an antipode $A \longrightarrow A$ with the usual axioms. For an affine group scheme $G= \Spec A$ in $\mathcal{T}$, 
a representation of $G$ in $\mathcal{T}$ is an object $V$ in $\mathcal{T}$ endowed with a comodule structure (a coaction $V \rightarrow A \otimes V$ satisfying the usual axioms). 

   

Let $\cC, \cD$ be Tannakian categories endowed with 
exact faithful $k$-linear tensor functors 
$t:\cC \lra \cD, \omega:\cD \lra \cC$ with $\omega \circ t = \id$. 
(We regard $\cD$ as a Tannakian category over $\cC$ (via $t$)
endowed with a fiber functor $\omega$ to $\cC$.) Denote $\pi(\cC) := \Spec \cO_{\pi(\cC)}, \pi(\cD) := \Spec \cO_{\pi(\cD)}$ 
be the fundamental group of $\cC, \cD$, 
which is an affine group scheme in $\cC, \cD$ respectively. 
$\cO_{\pi(\cC)}$ is defined as  
$$ 
\cO_{\pi(\cC)} := \Coker\left(\bigoplus_{\alpha: V \ra W \in \cC}V \otimes W^{\vee} 
\lra \bigoplus_{V \in \cC} V \otimes V^{\vee}\right), 
$$ 
where the arrow is the difference of the map induced by 
$V \otimes W^{\vee} \os{\alpha \otimes \id}{\lra} W \otimes W^{\vee}$ 
$(\alpha: V \lra W \in \cC)$ and the map induced by 
$V \otimes W^{\vee} \os{\id \otimes \alpha^{\vee}}{\lra} V \otimes V^{\vee}$ 
$(\alpha: V \lra W \in \cC)$,  
$\cO_{\pi(\cD)}$ is defined in the same way (see \cite[8.13]{Del90}). 
Then the map of  affine group schemes in $\cD$ 
\begin{equation}\label{del1}
t^*: \pi(\cD) \lra t(\pi(\cC))
\end{equation}
is defined by the map
$t(\cO_{\pi(\cC)}) \lra \cO_{\pi(\cD)}$, which is induced by the map 
$\bigoplus_{V \in \cC} t(V) \otimes t(V)^{\vee} \lra 
\bigoplus_{W \in \cD} W \otimes W^{\vee}$ 
of `the inclusion into $t(V)$-component ($V \in \cC$)'. 
By applying $\omega$, we obtain the map 
\begin{equation}\label{del2}
\omega(t^*): \omega(\pi(\cD)) \lra \pi(\cC), 
\end{equation}
defined by 
$\cO_{\pi(\cC)} \lra \omega(\cO_{\pi(\cD)})$, which is induced by the map 
$\bigoplus_{V \in \cC} V \otimes V^{\vee} \lra 
\bigoplus_{W \in \cD} \omega(W) \otimes \omega(W)^{\vee}$
of `the inclusion into $t(V)$-component ($V \in \cC$)'. 
We define the group scheme $G(\cD,\omega) := \Spec \cO_{G(\cD,\omega)}$ in 
$\cC$ by $G(\cD,\omega) := \Ker\, \omega(t^*)$. By definition,  
$\cO_{G(\cD,\omega)}$ is the pushout of the diagram 
$$ 1_{\cC} \lla \cO_{\pi(\cC)} \lra 
\omega(\cO_{\pi(\cD)}) $$
in the category of Hopf algebras in ${\mathrm{Ind}}(\cC)$. 

For a group scheme $G$ in $\cC$, we consider the category 
$\Rep_{\cC} G$ of representations 
of $G$ in $\cC$. For $V \in \cD$, the 
map $\omega(V) \lra \omega(V) \otimes \omega(\cO_{\pi(\cD)})$ 
induced by the map 
$V \otimes V^{\vee} \lra \bigoplus_{W \in \cD} W \otimes W^{\vee}$ 
of `the inclusion into $V$-component' gives a coaction of 
$\omega(\cO_{\pi(\cD)})$ on $\omega(V)$, hence a representation of 
$\omega(\pi(\cD))$ on $\omega(V)$. This induces a representation of 
$G(\cD,\omega)$ on $\omega(V)$ and so the functor 
$\omega$ induces the functor 
\begin{equation}\label{dd}
\cD \lra \Rep_{\cC} G(\cD,\omega). 
\end{equation}
On the other hand, for 
a group scheme $G = \Spec \cO_G$ in $\cC$, we have the functors 
$$ t: \cC \lra \Rep_{\cC}G, \quad \omega: \Rep_{\cC}G \lra \cC, $$
where $t$ is the functor that embeds $\cC$ in $ \Rep_{\cC}G$ as  trivial representations and 
$\omega$ is the forgetful functor. Using these, we can define 
the group scheme $G(\Rep_{\cC}G, \omega)$. 
The coactions $\omega(V) \lra \omega(V) \otimes \cO_G$
 ($V \in \Rep_{\cC}G$) induce the map
$\omega(\cO_{\pi(\Rep_{\cC}G})) \lra \cO_G$. 
Then we can see that it induces the map 
$\cO_{G(\Rep_{\cC}G, \omega)} \lra \cO_G$, thus the map 
\begin{equation}\label{gg}
G \lra G(\Rep_{\cC}G, \omega). 
\end{equation}

In \cite[Prop. 2.3]{laz}, Lazda proved the following. 

\begin{prop}\label{reltannaka}
The functor \eqref{dd} is an equivalence and the morphism 
\eqref{gg} is an isomorphism. 
\end{prop}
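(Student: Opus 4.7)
The plan is to reduce both assertions to Deligne's classical (absolute) Tannakian reconstruction by choosing a fiber functor to vector spaces. Assuming $\cC$ is neutral (as is the case in the applications), fix a $k$-linear fiber functor $\eta : \cC \lra \Vector_k$; then $\eta \circ \omega : \cD \lra \Vector_k$ is a fiber functor on $\cD$. Write $\pi_k(\cC) := \Aut^{\otimes}(\eta)$ and $\pi_k(\cD) := \Aut^{\otimes}(\eta\omega)$ for the resulting affine group schemes over $k$, and thereby identify $\cC \cong \Rep_k \pi_k(\cC)$ and $\cD \cong \Rep_k \pi_k(\cD)$.

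The hypothesis $\omega \circ t = \id$ yields morphisms $\pi_k(\cC) \xrightarrow{\omega^*} \pi_k(\cD) \xrightarrow{t^*} \pi_k(\cC)$ whose composition is the identity, so $1 \lra N \lra \pi_k(\cD) \xrightarrow{t^*} \pi_k(\cC) \lra 1$ is split exact, with semidirect product decomposition $\pi_k(\cD) \cong N \rtimes \pi_k(\cC)$. The conjugation action of $\pi_k(\cC)$ on $N$ makes $N$ into an affine group scheme in $\Rep_k \pi_k(\cC) \cong \cC$, and the standard representation theory of semidirect products yields a natural equivalence $\cD \cong \Rep_k \pi_k(\cD) \cong \Rep_\cC N$.

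Next I would identify $N$, as a group scheme in $\cC$, with $G(\cD,\omega)$. Applying $\eta$ to the coend presentations of $\cO_{\pi(\cC)}$ and of $\omega(\cO_{\pi(\cD)})$ gives canonical identifications $\eta(\cO_{\pi(\cC)}) \cong \cO_{\pi_k(\cC)}$ and $\eta\omega(\cO_{\pi(\cD)}) \cong \cO_{\pi_k(\cD)}$, under which $\omega(t^*)$ of \eqref{del2} corresponds to the absolute $t^*$. Since $\eta$ is exact and preserves colimits in $\mathrm{Ind}(\cC)$, the pushout presentation of $\cO_{G(\cD,\omega)}$ is sent to $\cO_N$, and the faithfulness of $\eta$ on affine group schemes in $\cC$ gives $G(\cD,\omega) \cong N$. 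Unwinding the coactions used to construct \eqref{dd} shows that this functor is precisely the canonical equivalence $\cD \cong \Rep_\cC N$, proving the first assertion.

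For the second assertion, apply the above to $\cD := \Rep_\cC G$ with $\omega$ the forgetful functor; the associated $N$ is computed through the absolute picture to equal $G$ (concretely, $\eta(G)$ with its built-in $\pi_k(\cC)$-action reconstructs $G$), and \eqref{gg} becomes precisely the resulting identification $G \cong N = G(\Rep_\cC G, \omega)$, hence an isomorphism. The main technical obstacle is the identification $G(\cD,\omega) \cong N$ in the previous paragraph: one must carefully match the abstract pushout presentation of $\cO_{G(\cD,\omega)}$ with $\cO_N$ compatibly with the Hopf algebra structure and with the $\pi_k(\cC)$-action. Granted that comparison, both claims become formal consequences of absolute Tannakian duality.
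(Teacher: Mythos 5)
Your strategy is sound, but be aware that the paper itself does not prove Proposition \ref{reltannaka}: it invokes Lazda \cite[Prop.\ 2.3]{laz}, whose argument (in the spirit of Deligne \cite{Del90}) is carried out intrinsically with the fundamental groups $\pi(\cC)$, $\pi(\cD)$ as affine group schemes \emph{in} the categories, and therefore never chooses a fiber functor to $\Vector_k$. Your neutralization argument is a genuinely different route, and it is correct as far as it goes: the split exact sequence $1 \lra N \lra \pi_k(\cD) \lra \pi_k(\cC) \lra 1$, the identification $\Rep_k(N \rtimes \pi_k(\cC)) \cong \Rep_{\cC}N$, classical Tannaka reconstruction for $\Rep_k$ of an affine group scheme, and the fact that $\eta$ (exact, tensor, colimit-preserving on ind-objects) carries the defining pushout of $\cO_{G(\cD,\omega)}$ to $\cO_N$ are all standard and correctly deployed. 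Two caveats, though. First, you only prove the statement when $\cC$ is neutral; the proposition is asserted for arbitrary Tannakian $\cC$, so strictly speaking you establish a special case --- harmless for this paper, since it is only applied to $\cC = \MICn(S/k)$ with fiber functor $s_{\dR}^*$, but worth stating explicitly. Second, the step you defer (``granted that comparison'') is where essentially all the content of the proposition lives: one must check that the $\pi_k(\cC)$-action on $\eta(\cO_{G(\cD,\omega)})$ coming from its ind-$\cC$ structure is the conjugation action inside $\pi_k(\cD)$, and that the \emph{specific} morphisms \eqref{dd} and \eqref{gg}, defined via the coend/coaction formulas, coincide with the canonical equivalence and identification produced by the semidirect-product picture --- an abstract isomorphism $\cD \simeq \Rep_{\cC}N$ and an abstract isomorphism $G \cong N$ are not enough. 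This unwinding is exactly the kind of computation the paper performs in the paragraphs following the proposition (where the action of $g \in G(R)$ on $\eta(\cO_{G(\cD,\omega)}) \otimes R$ is identified with conjugation), so it is doable, but your write-up should carry it out rather than treat it as a formality; by contrast, in the intrinsic approach of Lazda the maps \eqref{dd} and \eqref{gg} are compared directly, with no transport across a chosen $\eta$. What your route buys is elementariness --- everything reduces to classical affine group schemes over $k$ --- at the cost of generality and of the extra bookkeeping needed to match the given maps.
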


Assume that we are given group schemes $G,G'$ in $\cC$ and a 
$k$-linear exact tensor functor 
$$\alpha: \Rep_{\cC}G \lra \Rep_{\cC}G'$$ 
compatible with the 
forgetful functors $\omega_G, \omega_{G'}$ to $\cC$. Then we have 
a morphism $\alpha^*: \pi(\Rep_{\cC}G') \lra \alpha(\pi(\Rep_{\cC}G))$, 
and it is easy to see that this induces the morphism 
$$G' = G(\Rep_{\cC}G', \omega_{G'}) \lra G(\Rep_{\cC}G, \omega_G) = G, $$
where the first and the last equality are due to Proposition \ref{reltannaka}. 

Let $\cC$ be a Tannakian category and let $\eta:\cC \lra \Vector_k$ be a fiber functor. 
Then it is well-known that, if we put $G := \eta(\pi(\cC))$, 
we have the equivalence $\cC \os{\cong}{\lra} \Rep_k(G)$ by Tannaka duality. 
For $V \in \cC$, the $G$-action on $\eta(V)$ is given in the following way: 
Let $R$ be a $k$-algebra and take $g \in G(R)$ which is 
a ring homomorphism $g: \cO_G = \eta(\cO_{\pi(\cC)}) \lra R$. 
Then, by definition of $\cO_{\pi(\cC)}$, we have a natural map \cite[8.4]{Del90}: 
$$ \eta(V) \otimes \eta(V)^{\vee} \lra \eta(\cO_{\pi(\cC)}) \os{g}{\lra} R$$ 
and this induces the map 
$$g_V: \eta(V) \otimes R \lra \eta(V) \otimes R, $$
which is defined to be the action of $g \in G(R)$ on $\eta(V) \otimes R$. 

If we consider the previous definition of 
$\cO_{G(\cD,\omega)}$, which considers it as a  quotient of 
$\bigoplus_{V \in \cD} \omega(V) \otimes \omega(V)^{\vee}$, 
we see the following: for $g \in G(R)$ as above, the action of $g$ on 
$\eta(\cO_{G(\cD,\omega)}) \otimes R$ is defined to be the morphism induced by 
\begin{equation}\label{actg}
(\eta\omega(V) \otimes \eta\omega(V)^{\vee}) \otimes R 
\lra 
(\eta\omega(V) \otimes \eta\omega(V)^{\vee}) \otimes R \quad (V \in \cD), 
\end{equation}
which is the action  of $g$  on $\omega(V) \otimes \omega(V)^{\vee}$  where $g_{\omega(V) \otimes \omega(V)^{\vee}} 
= g_{\omega(V)} \otimes g_{\omega(V)}^{\vee}$. 
If we identify $(\eta\omega(V) \otimes \eta\omega(V)^{\vee}) \otimes R$ with 
$\End(\eta\omega(V) \otimes R)$, the action \eqref{actg} is just the 
conjugate action $g_{\omega(V)} \circ - \circ g_{\omega(V)}^{-1}$. 

Let us consider the following split exact sequence of group schemes 
$$ 
\xymatrix {
1 \ar[r] & \eta(G(\cD,\omega)) \ar[r] & 
\eta\omega(\pi(\cD)) \ar[r] &  \eta(\pi(\cC)) = G \ar[r] 
\ar @/^6mm/[0,-1]^{\eta(\omega^*)} & 1.
} 
$$
Then the right action of $g \in G(R)$ on $\eta(G(\cD,\omega))(R)$ 
induced by the action on $\eta(\cO_{G(\cD,\omega)}) \otimes R$ above 
is equal to the conjugate action $\eta(\omega^*)(R)(g)^{-1} \circ - \circ \eta(\omega^*)(R)(g)$. 

\bigskip 

Now, let the notation be as in Notation \ref{notation}. 
Then we have the functors between Tannakian categories 
$$ f_{\dR}^*: \MICn(S/k) \lra \NfMICn(X/k), \quad 
\iota_{\dR}^*: \NfMICn(X/k) \lra \MICn(S/k). $$
If we apply the construction above to the above functors, we obtain 
the first definition of relatively unipotent de Rham fundamental group, 
which is due to Lazda \cite[Remark 2.4]{laz}: 

\begin{defn}[\bf{First definition of relatively unipotent $\pi_1$}]\label{def1}
Let the notations be as above. We define the relatively unipotent de Rham fundamental 
group $\pi^{\dR}_1(X/S,\iota)$ by 
$$\pi^{\dR}_1(X/S,\iota) := G(\NfMICn(X/k), \iota_{\dR}^*).$$ 
This is an affine group scheme in $\MICn(S/k)$. 
\end{defn}

We define 
$\pi^{\dR}_1(S,s)$, $\pi^{\dR}_1(X,x)$, $\pi^{\dR}_1(X_s,x)$ 
as the Tannaka dual of $(\MICn(S/k), s_{\dR}^*)$, 
$(\NfMICn(X/k),x_{\dR}^*)$, $(\NfsMIC(X_s/s), x_{\dR}^*)$, 
respectively. Then, applying the previous construction, we have the split exact sequence 
$$ 
\xymatrix {
1 \ar[r] & s_{\dR}^*\pi_1^{\dR}(X/S,\iota) \ar[r] & 
\pi^{\dR}_1(X,x) \ar[r]^{f_*} & \pi^{\dR}_1(S,s) \ar[r] 
\ar @/^6mm/[0,-1]^{\iota_*} & 1,
} 
$$
where $f_*, \iota_*$ are the morphisms induced by $f, \iota$ respectively. 
Moreover, the morphism  $\pi^{\dR}_1(X_s,x) \lra \pi_1^{\dR}(X,x)$ induced by 
the exact closed immersion $X_s \hra X$ induces the morphism 
\begin{equation}\label{fiber}
\pi_1^{\dR}(X_s, x) \lra s_{\dR}^*\pi_1^{\dR}(X/S,\iota). 
\end{equation}
Then we have the following: 

\begin{prop}\label{fiberprop}
The morphism \eqref{fiber} is an isomorphism. So we have the 
split exact sequence 
\begin{equation}\label{eq:2020Aug-0}
\xymatrix {
1 \ar[r] & \pi_1^{\dR}(X_s,x) \ar[r] & 
\pi^{\dR}_1(X,x) \ar[r]^{f_*} & \pi^{\dR}_1(S,s) \ar[r] 
\ar @/^6mm/[0,-1]^{\iota_*} & 1
} 
\end{equation}
\end{prop}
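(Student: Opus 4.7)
The plan is to invoke Tannakian duality: the morphism \eqref{fiber} is an isomorphism of affine group schemes over $k$ if and only if its pullback of representations induces an equivalence of Tannakian categories. Since $\pi_1^{\dR}(X_s,x)$ and $s_{\dR}^*\pi_1^{\dR}(X/S,\iota)$ are both defined via fiber functors landing in $\Vector_k$, and the morphism \eqref{fiber} is compatible with the natural fiber functors (both sit above $x_{\dR}^*=s_{\dR}^*\iota_{\dR}^*$), the question reduces to a comparison of two tensor functors $\NfMICn(X/k) \to \NfsMIC(X_s/s)$: namely $j_{\dR}^*$, where $j : X_s \hra X$, and the functor obtained by composing the equivalence $\NfMICn(X/k)\simeq \Rep_{\MICn(S/k)}(\pi_1^{\dR}(X/S,\iota))$ of Proposition \ref{reltannaka} with $s_{\dR}^*$ and then identifying the image with $\NfsMIC(X_s/s)$.

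By the standard criteria of Deligne, the induced morphism $\pi_1^{\dR}(X_s,x)\to s_{\dR}^*\pi_1^{\dR}(X/S,\iota)$ is an isomorphism if and only if: (a) every object of $\NfsMIC(X_s/s)$ arises as a subquotient of $j_{\dR}^*W$ for some $W\in\NfMICn(X/k)$ (closed immersion), and (b) for every $W\in\NfMICn(X/k)$, any subobject of $j_{\dR}^*W$ in $\NfsMIC(X_s/s)$ that is stable under the residual action of the relative fundamental group descends to a subobject of $W$ in $\NfMICn(X/k)$ (faithful flatness).

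Condition (b) should follow from functoriality together with the base change isomorphism $s_{\dR}^*f_{\dR *}\cong f_{s,\dR *}j_{\dR}^*$ recorded in Remark \ref{gmexseq}(1): passage to $\pi_1^{\dR}(X/S,\iota)$-invariants is represented by $f_{\dR *}$ under the equivalence of Proposition \ref{reltannaka}, and its compatibility with base change to $s$ forces $\pi_1^{\dR}(X/S,\iota)$-stable subobjects of $j_{\dR}^*W$ to come from subobjects of $W$; one proceeds by induction on the length of the defining filtration of $W$, relying on the abelian-subcategory hypothesis (E). The main obstacle is condition (a). Each $V\in\NfsMIC(X_s/s)$ is by definition an iterated extension of trivial connections $f_{s,\dR}^*W_i$ with $W_i\in\Vector_k$, each of which is the pullback $j_{\dR}^*(f_{\dR}^*(W_i\otimes\cO_S))$ of a constant object in $\NfMICn(X/k)$; however, the extension classes computed as $\Ext^1$ on $X_s$ must be lifted to $\Ext^1$ classes on $X$. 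This lifting exploits the log smoothness of $f$ and the topological invariance of the log infinitesimal site (Proposition \ref{prop:topinv}) to extend connections from $X_s$ across an infinitesimal thickening, and is essentially the content of the pointed universal object construction that occupies Section 3 (cf.\ \cite{ha}, \cite{aik}, \cite{laz}); consistent with the paper's own announcement, this portion of the argument is postponed until that construction is in place.
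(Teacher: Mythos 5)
Your overall strategy coincides with the paper's: reduce to Tannakian criteria for the morphism \eqref{fiber}, treat the ``closed immersion'' half by showing every object of $\NfsMIC(X_s/s)$ is a subquotient (in fact a quotient) of a restriction from $\NfMICn(X/k)$, and defer that half to the universal pointed system $\{W_n\}$ of Section 3 --- this matches the paper's claim (1) and its postponement exactly. The problem is your condition (b). The correct criteria for the surjectivity of \eqref{fiber} (the ones used in the paper, following \cite[I Prop.\ 1.4]{w} and \cite[App.\ A]{ehs}, as in Lazda) are: (2) any $(E,\nabla)\in\NfMICn(X/k)$ whose restriction to $X_s$ is trivial is of the form $f_{\dR}^*(V,\nabla_V)$ with $(V,\nabla_V)\in\MICn(S/k)$; and (3) the \emph{maximal trivial} subobject of $(E,\nabla)|_{X_s}$ lifts to a subobject of $(E,\nabla)$ over $X$. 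Your (b) instead asserts that \emph{every} subobject of $j_{\dR}^*W$ stable under the relative fundamental group descends to a subobject of $W$ in $\NfMICn(X/k)$. That statement is false, and it is not what faithful flatness requires. Counterexample: take $W=f_{\dR}^*V$ with $V=(k^2,N)$, $N\neq 0$ nilpotent. Then $j_{\dR}^*W$ is the trivial rank-two object, the relative fundamental group acts trivially on its fiber, so every line $\ell\subset k^2$ gives a stable subobject of $j_{\dR}^*W$; but the subobjects of $W$ in $\NfMICn(X/k)$ are exactly the $f_{\dR}^*V'$ with $V'\subset V$ stable under $N$ (this follows from (2) applied to the subobject and from $f_{\dR *}f_{\dR}^*V=V$), so a line not preserved by $N$ does not descend. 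Descent of arbitrary fiberwise subobjects to the absolute category is neither true nor needed: the total space category encodes the full $\pi_1^{\dR}(X,x)$-action, not just that of the kernel.

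The good news is that the ingredients you invoke for (b) --- the adjunction $f_{\dR}^*f_{\dR *}(E,\nabla)\hra(E,\nabla)$ being an injection onto the maximal subobject in $f_{\dR}^*\MICn(S/k)$, the base change isomorphism of Remark \ref{gmexseq}(1)/\eqref{locally_free_dR}, and hypothesis (E) --- are precisely what proves the correct conditions (2) and (3): restricting the adjunction map to $X_s$ and using base change shows it is an isomorphism when $(E,\nabla)|_{X_s}$ is trivial (giving (2)), and identifies $f_{\dR}^*f_{\dR *}(E,\nabla)|_{X_s}$ with the maximal trivial subobject of the restriction (giving (3)). Note also that your sketch for (b) via ``passage to invariants is represented by $f_{\dR *}$'' only controls the maximal trivial subobject, i.e.\ it proves (3), not the stronger and false statement (b). So replace (b) by (2) and (3), keep your (a) with its deferral to Section 3, and the argument agrees with the paper's proof.
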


\begin{proof}
This is due to Lazda (\cite[Corollary 1.20]{laz}) 
when the log structures on $X,S$ are trivial and $S$ is smooth over $k$, 
and the same proof works also in our case. 
We provide a proof for the convenience of the reader. 

Denote the functors of restriction 
$\NfMICn(X/k) \lra \NfsMIC(X_s/s), \MICn(S/k) \allowbreak \lra \MIC(s/s)$ by 
$(E,\nabla_E) \mapsto (E,\nabla_E)|_{X_s}, 
(E,\nabla_E) \mapsto (E,\nabla_E)|_s$, respectively. 
By \cite[I Proposition 1.4]{w} and 
\cite[Appendix A]{ehs}, it suffices to prove the following three claims. 
\begin{enumerate}
\item Any object in $\NfsMIC(X_s/s)$ is a quotient of an object of the form 
$(E,\nabla_E)|_{X_s}$ \, $((E,\nabla_E) \in \NfMICn(X/k))$.  
\item For any object $(E,\nabla_E)$ in $\NfMICn(X/k)$ with $(E,\nabla_E)|_{X_s}$ trivial, 
there exists an object $(V,\nabla_V)$ in $\MICn(S/k)$ with $(E,\nabla_E) = 
f_{\dR}^*(V,\nabla_V)$. 
\item Let $(E,\nabla_E)$ be an object in $\NfMICn(X/k)$ and let 
$(F_0,\nabla_{F_0}) \in \NfsMIC(X_s/s)$ be 
the largest trivial subobject of $(E,\nabla_E)|_{X_s}$. Then there exists 
a subobject $(E_0,\nabla_{E_0})$ of $(E,\nabla_E)$ with 
$(F_0,\nabla_{F_0}) = (E_0,\nabla_{E_0})|_{X_s}$. 
\end{enumerate}
(In fact, the claim (1) implies the injectivity of 
the map \eqref{fiber} and the claims (2), (3) imply the surjectivity of the map 
\eqref{fiber}.) 

Note that we have seen before Definition \ref{def:1.11}
that, for an object $(E,\nabla_E)$ in $\NfMICn(X/k)$, the map 
\begin{equation}\label{eq:lazadj}
f_{\dR}^*f_{\dR *}(E,\nabla_E) \lra (E,\nabla_E)
\end{equation}
gives an injection onto the maximal subobject of $(E,\nabla_E)$ which 
belongs to the category $f_{\dR}^*\MICn(S/k)$. Using this, 
we prove the claim (2). Let $(E,\nabla_E)$ be an object in $\NfMICn(X/k)$ with 
$(E,\nabla_E)|_{X_s}$ trivial. By restricting \eqref{eq:lazadj}  to $X_s$ and using 
the base change property (see remark \ref{gmexseq} and \ref{locally_free_dR}), we obtain the morphism 
\begin{align*}
f_{s,\dR}^*f_{s,\dR *}((E, \nabla_E)|_{X_s}) 
\cong f_{s,\dR}^*((f_{\dR *}(E,\nabla_E))|_s) 
& = (f_{\dR}^*f_{\dR *}(E,\nabla_E))|_{X_s} \\ & \lra (E,\nabla_E)|_{X_s}, 
\end{align*}
which is an isomorphism due to the triviality of 
$(E,\nabla_E)|_{X_s}$. Thus the map \eqref{eq:lazadj} is an isomorphism  (the two objects have the same rank)
and so the claim holds if we put $(V,\nabla_V) := f_{\dR *}(E,\nabla_E)$. 

Next we prove the claim (3). Let $(E,\nabla_E)$ and $(F_0,\nabla_{F_0}) \subseteq 
(E,\nabla_E)|_{X_s}$ be as in the statement of the claim. 
Then we have $(F_0,\nabla_{F_0}) = f_{s,\dR}^*f_{s, \dR *}((E,\nabla_E)|_{X_s})$. 
So, by the base change property, we obtain the isomorphism 
\begin{align*}
(F_0,\nabla_{F_0}) = f_{s, \dR}^*f_{s, \dR *}((E,\nabla_E)|_{X_s}) 
\cong f_{s,\dR}^*((f_{\dR *}(E,\nabla_E))|_s)
= (f_{\dR}^*f_{\dR *}(E,\nabla_E))|_{X_s}. 
\end{align*}
So the claim holds if we put $(E_0,\nabla_{E_0}) = f_{\dR}^*f_{\dR *}(E,\nabla_E)$. 

Finally we explain the proof of the claim (1). 
We will construct a projective system of objects 
$\{W_n\}_n$ in $\NfMICn(X/k)$ in the next section (Remark \ref{rem:sect2}) such that, 
for any object $(F,\nabla_F)$ in $\NfsMIC(X_s/s)$, there exist 
$n, N \in \N$ and a surjection 
$W_n^{\oplus N}|_{X_s} \lra (F,\nabla_F)$. 
This implies (1). Hence the proof of proposition will be  finished modulo the construction of 
the projective system $\{W_n\}_n$. 
\end{proof}
\begin{rem} The above proposition can be seen as a log de Rham analogue of the homotopy exact sequence for the \'etale fundamental group of a fibration with a section (\cite[Proposition 4.3, Exposé XIII]{SGA1}).
\end{rem}
Since $\pi_1^{\dR}(X/S,\iota)$ is an affine group scheme in 
$\MICn(S/k)$, $s_{\dR}^*\pi_1^{\dR}(X/S,\iota)$ is endowed with 
the action of $\pi_1^{\dR}(S,s)$ (called the monodromy action) by Tannaka duality. 
By Proposition \ref{fiberprop} and by the results of this section we have the following corollary: 

\begin{cor}\label{fiberpropcor}
Let the notations be as above. Then the monodromy action of 
$\pi_1^{\dR}(S,s)$ on $s_{\dR}^*\pi_1^{\dR}(X/S,\iota)$ 
is equal to the action of $\pi_1^{\dR}(S,s)$ on $\pi_1^{\dR}(X_s,s)$
defined by the conjugate action $g \mapsto \iota_*(g)^{-1} \circ - \circ \iota_*(g) 
\, (g \in \pi_1^{\dR}(S,s))$ via the isomorphism 
$\pi_1^{\dR}(X_s,s) \cong s_{\dR}^*\pi_1^{\dR}(X/S,\iota)$ of 
Proposition \ref{fiberprop}. 
\end{cor}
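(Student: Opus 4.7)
The plan is to simply specialize the general Tannakian machinery already developed in this section to our relative geometric setting. Concretely, I would take
$$ \cC = \MICn(S/k), \quad \cD = \NfMICn(X/k), \quad t = f_{\dR}^*, \quad \omega = \iota_{\dR}^*, \quad \eta = s_{\dR}^*. $$
Since $\iota$ is a section of $f$, we have $\omega \circ t = \id$, so this fits the framework. By Definition \ref{def1}, $G(\cD,\omega)$ is precisely $\pi_1^{\dR}(X/S,\iota)$, and under the identifications
$$\eta(\pi(\cC)) = \pi_1^{\dR}(S,s), \quad \eta\omega(\pi(\cD)) = (s \circ \iota)_{\dR}^*\pi(\NfMICn(X/k)) = x_{\dR}^*\pi(\NfMICn(X/k)) = \pi_1^{\dR}(X,x),$$
the split exact sequence
$$ 1 \lra \eta(G(\cD,\omega)) \lra \eta\omega(\pi(\cD)) \lra \eta(\pi(\cC)) \lra 1 $$
with section $\eta(\omega^*)$ becomes the sequence \eqref{eq:2020Aug-0}, with the section $\iota_*$ coming from $\omega^* = \iota_{\dR}^{**}$.

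Next I would invoke the formula derived earlier in this section for the monodromy action. In the general framework, it was shown that for $g \in G(R) = \eta(\pi(\cC))(R)$, the induced right action on $\eta(G(\cD,\omega))(R)$ coming from the coaction of $\cO_{G(\cD,\omega)}$ on $\omega(V) \otimes \omega(V)^{\vee}$ (for $V \in \cD$) agrees termwise with the conjugate action $\eta(\omega^*)(R)(g)^{-1} \circ - \circ \eta(\omega^*)(R)(g)$ inside $\eta\omega(\pi(\cD))(R)$. Translating this into our setting yields exactly the statement that the monodromy action of $\pi_1^{\dR}(S,s)$ on $s_{\dR}^*\pi_1^{\dR}(X/S,\iota)$ is conjugation by $\iota_*(g)$ inside $\pi_1^{\dR}(X,x)$.

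Finally, I would use Proposition \ref{fiberprop} to identify $s_{\dR}^*\pi_1^{\dR}(X/S,\iota) \cong \pi_1^{\dR}(X_s,x)$ and observe that the inclusion into $\pi_1^{\dR}(X,x)$ used to define the conjugate action is, by construction of the isomorphism in Proposition \ref{fiberprop}, the morphism induced by the exact closed immersion $X_s \hra X$. Hence the conjugation on $s_{\dR}^*\pi_1^{\dR}(X/S,\iota)$ corresponds to conjugation on $\pi_1^{\dR}(X_s,x)$ by $\iota_*(g)$ inside $\pi_1^{\dR}(X,x)$, which is precisely the action described in the statement.

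Since every nontrivial ingredient has already been proven (the Tannakian formula and Proposition \ref{fiberprop}), there is no real obstacle here; the only thing to check carefully is that the two identifications---the Tannakian conjugate description and the identification with $\pi_1^{\dR}(X_s,x)$---are compatible with the natural morphism $\pi_1^{\dR}(X_s,x) \lra \pi_1^{\dR}(X,x)$ coming from the inclusion $X_s \hra X$. This compatibility follows by unwinding the construction in the proof of Proposition \ref{fiberprop}, but it is the one place where a small verification is required.
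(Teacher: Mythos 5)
Your proposal is correct and follows essentially the same route as the paper: the paper's own justification is precisely the specialization of the Tannakian computation earlier in Section 2 (the identification of the induced action on $\eta(G(\cD,\omega))$ with the conjugate action by $\eta(\omega^*)(g)$) to $\cC = \MICn(S/k)$, $\cD = \NfMICn(X/k)$, $t = f_{\dR}^*$, $\omega = \iota_{\dR}^*$, $\eta = s_{\dR}^*$, combined with the isomorphism of Proposition \ref{fiberprop}. The compatibility you flag at the end, namely that the map $\pi_1^{\dR}(X_s,x) \to \pi_1^{\dR}(X,x)$ underlying the isomorphism of Proposition \ref{fiberprop} is the one induced by $X_s \hookrightarrow X$, is exactly how that isomorphism is constructed, so nothing further is needed.
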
 

\begin{rem}\label{rem:dps} 
In \cite{dps}, 
the second and the third author
proved the homotopy exact sequence for maximal geometrically protrigonalizable 
quotients of de Rham fundamental groups, in the case where 
$S$ is the standard log point $\Af_k^{0,1}$ and 
$f: X \lra S$ is a quasi-projective normal crossing log scheme.
 
This implies the split exact sequence 
$$ 
\xymatrix {
1 \ar[r] & \pi_1^{\dR}(X_s,s) \ar[r] & 
\pi^{\dR}_1(X,x) \ar[r]^(0.4){f_*} & \pi^{\dR}_1(S,s) = \G_{a,k} \ar[r] 
\ar @/^6mm/[0,-1]^{\iota_*} & 1. 
} 
$$
In the introduction of \cite{dps}, 
they defined the monodromy action of $\pi^{\dR}_1(S,s) = \G_{a,k}$ 
on $\pi_1^{\dR}(X_s,s)$ by the conjugate 
$g \mapsto \iota_*(g)^{-1} \circ - \circ \iota_*(g) 
\, (g \in \pi_1(S,s))$. 
Hence, by Corollary \ref{fiberpropcor}, their definition of monodromy action 
is the same as the one given in this section. 
\end{rem}

%
%

\begin{rem}\label{rem:2020Aug-2} 
Let $f: X \lra S, f^{\log}_{\an}: X^{\log}_{\an} \lra S^{\log}_{\an}$ be as in 
Remark \ref{rem:2020Aug-1}. Then, by \cite[Theorem 5.1]{no}, 
$f^{\log}_{\an}$ is a topological fiber bundle. Moreover, it admits a section 
$\iota^{\log}_{\an}$ induced by $\iota$. Let $s$ be a point of $S^{\log}_{\an}$ 
and put $x := \iota^{\log}_{\an}(s)$, $X^{\log}_{\an, s} := (f^{\log}_{\an})^{-1}(s)$. 
Then we have the split homotopy exact sequence 
\begin{equation}\label{eq:2020Aug-2-1}
\xymatrix {
1 \ar[r] & \pi_1^{\rm top}(X^{\log}_{\an, s} ,s) \ar[r] & 
\pi^{\rm top}_1(X^{\log}_{\an} ,x) \ar[r]^-{f^{\log}_{\an *}} & \pi^{\rm top}_1(S^{\log}_{\an},s) \ar[r] 
\ar @/^6mm/[0,-1]^{\iota^{\log}_{\an *}} & 1. 
}
\end{equation}
of topological fundamental groups. This is the topological analogue of 
the split exact sequence \eqref{eq:2020Aug-0}. Using the categories 
${\rm LS}^{\rm u}(S^{\log}_{\an}), {\rm N}_{f^{\log}_{\an}}{\rm LS}^{\rm u}(X^{\log}_{\an})$ 
in Remark \ref{rem:2020Aug-1}, we can form a relatively unipotent algebraic group version 
of the diagram \eqref{eq:2020Aug-2-1}, which would be a close topological analogue of 
\eqref{eq:2020Aug-0}. 
\end{rem}

\section{Construction of Hadian, Andreatta--Iovita--Kim and Lazda}

Let the notations be as in Notation \ref{notation}. 
In this section, we give the second definition of 
relatively unipotent de Rham fundamental group along the lines of  
the construction of Hadian \cite{ha}, Andreatta--Iovita--Kim \cite{aik} 
and Lazda \cite{laz}, 
moreover we will prove  the coincidence of this new definition  with the one given in the 
previous section. 

The following theorem (construction) is the key ingredient of the definition.  The notations are those of the previous sections.

\begin{thm}\label{thm:wn}
There exists 
a projective system 
$(W,e) := \{(W_n, e_n)\}_{n \geq 1}$ consisting of 
\begin{itemize}
\item 
$W_n := (\ol{W}_{\mkern-4 mu n}, \{\epsilon_n^m\}_m) \in 
\NfStrCrysn(X/k) \cong \NfMICn(X/k)$, and 
\item 
A morphism $e_n: (\cO_S,d) \lra \iota_{\dR}^*W_n$ in $\MICn(S/k)$
\end{itemize}
which satisfies the following conditions: 
\begin{enumerate}
\item[{\rm (W0)}] For any $n \geq 2$, the morphism 
$f_{\dR *}(\ol{W}_{n-1}^{\vee}) \lra f_{\dR *}(\ol{W}_{\mkern-4 mu n}^{\vee})$ induced by 
the transition map $\ol{W}_{\mkern-4 mu n} \lra \ol{W}_{n-1}$ is an isomorphism. 
\item[{\rm (W1)}] For any $E \in \NfMIC(X/S)$ of index of unipotence $\leq n$ and 
a morphism $v: \cO_S \lra \iota_{\dR}^* E$ in $\MIC(S/S)$, there exists a unique 
morphism $\varphi: \ol{W}_{\mkern-4 mu n} \lra E$ in $\NfMIC(X/S)$ with 
$\iota_{\dR}^*(\varphi) \circ \ol{e}_n = v$, where 
$\ol{e}_n: \cO_S \lra \iota_{\dR}^*\ol{W}_{\mkern-4 mu n}$ is the underlying morphism 
of $e_n$ in $\NfMIC(S/S)$. Moreover, the same universality holds after base change 
by any morphism $S' \lra S$ of finite type. 
\item[{\rm (W2)}]
For any $n$, there exists an exact sequence 
\begin{equation}\label{wn-ext0}
0 \lra f_{\dR}^*R^1f_{\dR *} (W_n^{\vee})^{\vee} \lra W_{n+1} \lra 
W_n \lra 0 
\end{equation}
in $\NfMICn(X/k)$, where on $f_{\dR}^*R^1f_{\dR *} (W_n^{\vee})^{\vee}$ 
we put the log connection induced by the Gauss--Manin connection. 
\end{enumerate}
Here, we say that an object in $\NfMIC(X/S)$ is of index of unipotence $\leq n$ 
if it can be written as an iterated extension of at most $n$ objects in 
$f^*_{\dR}\MIC(S/S)$. 
\end{thm}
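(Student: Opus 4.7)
The plan is to construct $(W_n,e_n)$ by induction on $n$, with base case $W_1:=(\cO_X,d)$ equipped with its tautological stratification and $e_1:=\id_{\cO_S}:\cO_S\lra \iota_{\dR}^*\cO_X$. Given $(W_n,e_n)$, I will proceed in three stages: (i) build the relative object $(\ol{W}_{n+1},\ol{e}_{n+1})$ in $\NfMIC(X/S)$ as an extension of $\ol{W}_{\mkern-4 mu n}$, (ii) verify the relative universal property (W1) for it, and (iii) upgrade the stratification to produce $W_{n+1}\in\NfStrCrysn(X/k)\cong\NfMICn(X/k)$ and deduce (W0), (W2).

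For (i), by assumption (C), $R^1f_{\dR *}(\ol{W}_{\mkern-4 mu n}^{\vee})$ lies in $\MICn(S/k)$, hence so does $G_n:=R^1f_{\dR *}(\ol{W}_{\mkern-4 mu n}^{\vee})^{\vee}$. The projection formula \eqref{projk} and assumption (D) yield a natural edge map
$$ \End_{\MIC(S/S)}(R^1f_{\dR *}(\ol{W}_{\mkern-4 mu n}^{\vee})) \lra \Ext^1_{\NfMIC(X/S)}(\ol{W}_{\mkern-4 mu n}, f_{\dR}^*G_n), $$
and I define $\ol{W}_{n+1}$ to be the extension corresponding to the identity endomorphism, obtaining the sequence \eqref{wn-ext0} in its relative form. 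The pointing $\ol{e}_{n+1}$ is built from $\ol{e}_n$ using that $\iota_{\dR}^*$ of the sequence has a canonical splitting (since $\iota_{\dR}^*f_{\dR}^*=\id$ and the extension is classified by a pairing that trivializes under $\iota_{\dR}^*$).

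For (ii), (W1) is proved by induction on the index of unipotence of $E$. Given $E$ of index $\leq n+1$ with $v:\cO_S\lra \iota_{\dR}^*E$, pick a presentation $0\lra f_{\dR}^*F\lra E\lra E''\lra 0$ in $\NfMIC(X/S)$ with $E''$ of index $\leq n$; the inductive hypothesis provides a unique $\varphi'':\ol{W}_{\mkern-4 mu n}\lra E''$ lifting the induced pointing. The obstruction to lifting $\varphi''$ to $\varphi:\ol{W}_{n+1}\lra E$ lives in $\Ext^1(\ol{W}_{\mkern-4 mu n},f_{\dR}^*F)$ and, by the very choice of $\ol{W}_{n+1}$ as the universal extension, this obstruction is cancelled by $\varphi''$ itself; uniqueness follows from vanishing of the analogous $\Hom$ groups obtained by applying $f_{\dR *}$ and projection formula. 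The universal property after base change by any $S'\lra S$ of finite type follows from the compatibility of $R^1f_{\dR *}$ and the projection formula with base change (Remark \ref{gmexseq}(1)).

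For (iii), to equip $\ol{W}_{\mkern-4 mu n}$ with a stratification I apply the base-change form of (W1) to the projections $p_j^m:S^m(1)\lra S$: both pullbacks ${p_j^m}^*\ol{W}_{\mkern-4 mu n}$ ($j=1,2$) satisfy the same universal property over the pullback of $X$, and are canonically pointed by ${p_j^m}^*\ol{e}_n$, which agree via the diagonal. Universality produces a unique isomorphism $\epsilon_n^m:{p_2^m}^*\ol{W}_{\mkern-4 mu n}\os{\cong}{\lra}{p_1^m}^*\ol{W}_{\mkern-4 mu n}$, and the cocycle condition is forced by uniqueness applied on $S^m(2)$. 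This stratification makes $W_n$ an object of $\NfStrCrys(X/k)$; nilpotent residues of the induced $\cO_X$-linear connection over $k$ are inherited from those of $G_n\in\MICn(S/k)$ and assumption (E). The extension \eqref{wn-ext0} upgrades to $\NfMICn(X/k)$ since both its formation and the stratifications on its terms are natural in $S$. Finally, (W0) follows by applying $f_{\dR *}$ to the dual of \eqref{wn-ext0}, where projection formula and assumption (D) identify $f_{\dR *}(f_{\dR}^*G_n^{\vee})\cong G_n^{\vee}$ and the connecting map in the long exact sequence is by construction precisely the identity on $G_n^{\vee}=R^1f_{\dR *}(W_n^{\vee})$. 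The main obstacle will be making the extension-class construction rigorous in the category $\NfMIC(X/S)$, which is not abelian when $S$ carries a non-trivial log structure, forcing me to work with Yoneda extensions and verify additivity by hand; a secondary difficulty is checking the base-change compatibility needed for the stratification without having (W1) already for $W_{n+1}$, which requires threading the induction carefully between the relative and absolute categories.
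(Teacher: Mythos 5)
Your strategy is essentially the paper's: induct on $n$, define $\ol{W}_{n+1}$ (affine-locally) as the extension of $\ol{W}_{\mkern-4 mu n}$ by $f_{\dR}^*R^1f_{\dR *}(\ol{W}_{\mkern-4 mu n}^{\vee})^{\vee}$ with identity extension class, prove (W1) by induction on the index of unipotence, and then manufacture the stratification by applying the base-changed universal property over $S^m(1)$ and getting the cocycle condition from uniqueness over $S^m(2)$. Two vague spots you should be aware of: the two pullbacks of $\ol{W}_{n+1}$ along $p_1^m, p_2^m$ do not live on a single ``pullback of $X$'' --- they live on the two distinct liftings $X_1^m, X_2^m$ of $X$ over $S^m(1)$, and before uniqueness in (W1) can produce $\epsilon^m_{n+1}$ you must pass through the equivalences $\MIC(X_1^m/S^m(1)) \cong \Crys((X/S^m(1))_{\inf}) \cong \MIC(X_2^m/S^m(1))$ of Proposition \ref{prop:trinity}; and $\iota_{\dR}^*$ of your extension has no canonical splitting in general --- one only gets a non-canonical lift $\ol{e}_{n+1}$ of $\ol{e}_n$ after restricting to affine $S$, and the global pair is then obtained by gluing the affine-local constructions via the uniqueness in (W1).

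The genuine gap is in your treatment of (W2) and of the membership $W_{n+1}\in\NfMICn(X/k)$. Your justification --- that the extension ``upgrades since its formation and the stratifications on its terms are natural in $S$,'' together with an appeal to nilpotent residues and (E) --- does not establish this. The stratification $\epsilon^m_{n+1}$ was defined abstractly by the universal property, so there is no a priori reason that it preserves the subobject $f_{\dR}^*R^1f_{\dR *}(\ol{W}_{\mkern-4 mu n}^{\vee})^{\vee}$, nor that it induces on it the Gauss--Manin stratification and on the quotient the stratification $\epsilon^m_n$; and membership in $\NfMICn(X/k)$ is by definition the existence of an iterated extension structure compatible with the stratification, not a residue condition on the total object. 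The missing argument is the core of the paper's proof: over $S^m(1)$ (in the affine case) one identifies the two relevant extension groups via the projection formula and base change, checks that the identity class is carried to the identity class under $\delta^m_n\otimes(\delta^m_n)^{\vee}$, obtains an isomorphism $\phi^m_{n+1}$ respecting the filtration as in \eqref{wn-ext2}, normalizes it using (W1) so that it is compatible with the pointings, and only then concludes $\phi^m_{n+1}=\epsilon^m_{n+1}$ by the uniqueness in (W1). Without this step (or an equivalent horizontality argument for the extension class), your construction produces an object of $\StrCrys(X/k)$ but yields neither the exact sequence \eqref{wn-ext0} in $\NfMICn(X/k)$ nor the fact that $W_{n+1}$ lies in that category, and the induction cannot proceed.
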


\begin{proof}
For $n=1$, we put $W_1 := (\cO_X,d)$ and 
define $e_1$ to be the canonical isomorphism 
$(\cO_S,d) \os{=}{\lra} \iota_{\dR}^*(\cO_X,d)$. 
Then we can check the condition (W1) for $n=1$: 
Indeed, if we are given an object 
$E = f_{\dR}^*V \in f_{\dR}^*\MIC(S/S)$ and a morphism 
$v: \cO_S \lra \iota_{\dR}^*E = V$ in $\MIC(S/S)$, 
the map $\varphi: \cO_X \lra E$ in $\NfMIC(X/S)$ satisfies 
$\iota_{\dR}^*(\varphi) = v$ if $\varphi = f_{\dR}^*v$, and 
if we have a morphism $\varphi$ as above with 
$\iota_{\dR}^*(\varphi) = v$, we necessarily have 
$$ \varphi = f_{\dR}^*f_{\dR *}\varphi = 
f_{\dR}^*\iota_{\dR}^*f_{\dR}^* f_{\dR *} \varphi = 
f_{\dR}^*\iota_{\dR}^* \varphi = f_{\dR}^*v. $$

Now we construct $(W_{n+1}, e_{n+1})$ from $(W_i,e_i) \, (i \leq n)$. 
Since $W_n$ belongs to $\NfMICn(X/k)$, 
$R^1f_{\dR *}(W_n^{\vee})^{\vee}$ belongs to 
$\MICn(S/k)$. In particular, 
$R^1f_{\dR *}(\ol{W}_{\mkern-4 mu n}^{\vee})^{\vee}$ belongs to $\MIC(S/S)$. 
When $S$ is affine, we define 
$\ol{W}_{n+1} \in \NfMIC(X/S)$ as the extension 
\begin{equation}\label{wn-ext}
0 \lra f_{\dR}^*R^1f_{\dR *} (\ol{W}_{\mkern-4 mu n}^{\vee})^{\vee} \lra \ol{W}_{n+1} \xrightarrow{\pi} 
\ol{W}_{\mkern-4 mu n} \lra 0 
\end{equation}
whose extension class, which is regarded as an element in 
$$\Gamma(S,  R^1f_{\dR *} (\ol{W}_{\mkern-4 mu n}^{\vee}) \otimes R^1f_{\dR *} (\ol{W}_{\mkern-4 mu n}^{\vee})^{\vee})  = 
\End (R^1f_{\dR *} (\ol{W}_{\mkern-4 mu n}^{\vee})),$$ is the identity. 
Also, let $\ol{e}_{n+1}: \cO_S \lra \iota_{\dR}^*\ol{W}_{n+1}$ be a morphism 
which lifts the map $\ol{e}_n: \cO_S \lra \iota_{\dR}^*\ol{W}_{\mkern-4 mu n}$. 

We check the properties (W0) and (W1) 
for $(\ol{W}_{n+1},\ol{e}_{n+1})$ (assuming $S$ and $S'$ affine). 
The method is the same as \cite[Proposition 2.6]{ha}, \cite[Proposition 3.3]{aik}
and \cite[Proposition 1.17]{laz}, but we write down the argument 
for the convenience of the reader. Since the proof on $X \times_S S'/S'$ 
is the same as that on $X/S$ (by base change property (W1)), 
we prove the property only on $X/S$. In the proof, 
we denote the map $\ol{W}_{n+1} \lra \ol{W}_{\mkern-4 mu n}$ in \eqref{wn-ext} by $\pi$. 
One has the long exact sequence 
\begin{align*}
0 & \lra f_{\dR *}(\ol{W}_{\mkern-4 mu n}^{\vee}) \os{\gamma}{\lra} f_{\dR *}(\ol{W}_{n+1}^{\vee}) 
\lra R^1f_{\dR *}(\ol{W}_{\mkern-4 mu n}^{\vee}) \\ 
& \os{\delta}{\lra} R^1f_{\dR *}(\ol{W}_{\mkern-4 mu n}^{\vee}) \os{\epsilon}{\lra} 
R^1f_{\dR *}(\ol{W}_{n+1}^{\vee}) \lra \cdots  
\end{align*}
associated to the dual of the exact sequence \eqref{wn-ext}. 
By definition of $\ol{W}_{n+1}$ and the standard argument in homological algebra, 
we see that the map $\delta$ is the identity 
(see \cite[Proposition 1.15]{laz}). Thus $\gamma$ is an isomorphism  
and $\epsilon$ is zero. 
In particular, we have checked the property (W0). Also, 
any extension in $\NfMIC(X/S)$ of the form 
$$ 0 \lra f_{\dR}^*V \lra E \lra \ol{W}_{\mkern-4 mu n} \lra 0 \quad (V \in \MIC(S/S)) $$
splits if we pull it back by $\pi: \ol{W}_{n+1} \lra \ol{W}_{\mkern-4 mu n}$. 

We take an object $E \in \NfMIC(X/S)$ of index of unipotence $\leq n+1$ 
and a morphism $v: \cO_S \lra \iota_{\dR}^* E$. 
Then there exists an exact sequence of the form 
\begin{equation}\label{eq:univ-exact}
0 \lra f_{\dR}^*V \os{\alpha}{\lra} E \os{\beta}{\lra} E' \lra 0
\end{equation}
in $\NfMIC(X/S)$, 
where $E'$ is of index of unipotence $\leq n$. By induction hypothesis, 
there exists a unique morphism $\psi: \ol{W}_{n} \lra E'$ with 
$\iota_{\dR}^*(\psi) \circ \ol{e}_{n} = \iota_{\dR}^*(\beta) \circ v$. 
By considering the pullback of the above exact sequence by 
$\psi \circ \pi$, we obtain the following diagram 
$$ 
\xymatrix {
0 \ar[r] & f_{\dR}^* V \ar[r] \ar@{=}[d] & 
\wt{E} \ar[r] \ar[d] & \ol{W}_{n+1} \ar[d]^-{\psi \circ \pi} \ar[r] \ar @/_5mm/[0,-1]_{\varphi'_1} 
& 0 \\
0 \ar[r] & f_{\dR}^*V \ar[r]^-{\alpha} & E \ar[r]^-{\beta} & 
E' \ar[r] & 0, 
} 
$$
where $\varphi'_1$ is a section. If we denote the composite
 $\ol{W}_{n+1} \os{\varphi'_1}{\lra} \wt{E} \lra E$ by $\varphi_1$, 
we have the equality 
\begin{align*}
\iota_{\dR}^*(\beta) \circ (\iota_{\dR}^*(\varphi_1) \circ \ol{e}_{n+1} - v) 
& = 
\iota_{\dR}^*(\psi) \circ \iota_{\dR}^*(\pi) \circ \ol{e}_{n+1} - 
\iota_{\dR}^*(\beta) \circ v \\ 
& = 
\iota_{\dR}^*(\psi) \circ \ol{e}_{n} - 
\iota_{\dR}^*(\beta) \circ v = 0. 
\end{align*}
Hence there exists a map $w: \cO_S \lra \iota_{\dR}^*f_{\dR}^* V = V$ 
such that $\iota_{\dR}^*(\alpha) \circ w = \iota_{\dR}^*(\varphi_1) \circ \ol{e}_{n+1} - v$. 
On the other hand, there exists a map $\varphi_2: \ol{W}_{n+1} \lra f_{\dR}^*(V)$ 
with $\iota_{\dR}^*(\varphi_2) \circ \ol{e}_{n+1} = w$ by the property (W1) 
in the case $n=1$ (and the compatibility of $(\ol{W}_{n+1}, \ol{e}_{n+1})$ with $(\ol{W}_1,\ol{e}_1)$). 
Then, if we put $\varphi := \varphi_1 - \alpha \circ \varphi_2$, we obtain that 
$$ 
\iota_{\dR}^*(\varphi)\circ \ol{e}_{n+1}= \iota_{\dR}^*(\varphi_1 - \alpha \circ \varphi_2)\circ \ol{e}_{n+1} =
\iota_{\dR}^*(\varphi_1) \circ \ol{e}_{n+1} - \iota_{\dR}^*(\alpha) \circ w = v, 
$$
as required. 

We prove the uniqueness of $\varphi$ by induction on 
the index of unipotence of $E$. When the index of unipotence is $\leq 1$, 
$E$ has the form $f_{\dR}^*V$ and to give a map 
$\varphi: \ol{W}_{n+1} \lra f_{\dR}^*V$ is equivalent to 
give a map $V^{\vee} \lra f_{\dR *}(\ol{W}_{n+1}^{\vee})$, which we denote by 
$\psi$. Because the map $f_{\dR *}(\ol{W}_{1}^{\vee}) \lra  f_{\dR *}(\ol{W}_{n+1}^{\vee})$ 
induced by the transition map $\ol{W}_{n+1} \lra \ol{W}_1$ is an isomorphism 
by the property (W0) for $\ol{W}_i \,(2 \leq i \leq n+1)$ which we have already shown, 
$\psi$ factors through $f_{\dR *}(\ol{W}_{1}^{\vee})$, namely, 
$\varphi$ factors through $\ol{W}_{1}$. Then the uniqueness of 
$\varphi$ in this case follows from the property (W1) for $(\ol{W}_1,\ol{e}_1)$. 
In general case, we take an exact sequence \eqref{eq:univ-exact} 
and assume the existence of another map 
$\varphi': \ol{W}_{n+1} \lra E$ with 
$\iota_{\dR}^*(\varphi') \circ \ol{e}_{n+1} = v$. By the induction hypothesis applied to 
$E'$, we have 
$\beta \circ \varphi = \beta \circ \varphi'$ and so  
there exists a map 
$\psi: \ol{W}_{n+1} \lra f_{\dR}^*V$ with $\varphi - \varphi' = \alpha \circ \psi$. 
Also, we have equalities 
$$ 
0 = \iota_{\dR}^*(\varphi - \varphi') \circ \ol{e}_{n+1} = 
\iota_{\dR}^*(\alpha) \circ \iota_{\dR}^*(\psi) \circ \ol{e}_{n+1}. $$
Thus we see that $\iota_{\dR}^*(\psi) \circ \ol{e}_{n+1} = 0$, and this implies 
(again by induction hypothesis) that $\psi = 0$. 
So $\varphi = \varphi'$ and the uniquess is proved. 

So far, we defined $(\ol{W}_{n+1},\ol{e}_{n+1})$ when $S$ is affine and 
checked the properties (W0), (W1) for it when $S$ and $S'$ are affine. 
When $S$ is not necessarily affine, we take 
an affine open covering $\{S_{\alpha}\}_{\alpha}$ of $S$ and 
define $(\ol{W}_{n+1},\ol{e}_{n+1})$ on each $S_{\alpha}$, which 
we denote by $(\ol{W}_{n+1},\ol{e}_{n+1})_{\alpha}$. Then, on 
each $S_{\alpha} \cap S_{\beta}$, the universality in (W1) implies that 
there exists a unique isomorphism 
$(\ol{W}_{n+1},\ol{e}_{n+1})_{\alpha}|_{S_{\alpha} \cap S_{\beta}} \cong (\ol{W}_{n+1},\ol{e}_{n+1})_{\beta}|_{S_{\alpha} \cap S_{\beta}}.$ 
Hence $(\ol{W}_{n+1},\ol{e}_{n+1})_{\alpha}$'s glue and give the pair 
$(\ol{W}_{n+1}, \ol{e}_{n+1})$ on $S$.  One can check the properties (W0), (W1) of this pair 
for general $S$ and $S'$ by reducing to the affine case. 

In the next step we are  going to enhance the relative connection structure of $\ol{W}_{n+1}$ to an absolute one, defining in this way ${W}_{n+1}$. Let $f_j^m: X_j^m \lra S^m(1)$, 
$p_j^m: S^m(1) \lra S$, 
$q_j^m: X_j^m \lra X$, 
$\wh{f}^m: \wh{X}^m \lra S^m(1)$ and 
$\wh{q}_j^m: \wh{X}^m \lra X$ 
be as in Section 1, (before Definition \ref{strcrys} and Proposition \ref{basechange}), and let $\iota_j^m:S^m(1) \lra X_j^m$ be 
the base change of $\iota$, which is a section of 
$f_j^m$. Then, by the property (W1), there exists  a
unique isomorphism 
$\epsilon^m_{n+1}: ({q_2^m}_{\dR}^*\ol{W}_{n+1}, {p_2^m}_{\dR}^*\ol{e}_{n+1}) \os{\cong}{\lra} 
({q_1^m}_{\dR}^*\ol{W}_{n+1}, {p_1^m}_{\dR}^*\ol{e}_{n+1}) \, (m \in \N)$ 
in the categories (see Proposition \ref{prop:trinity}) 
$$ \MIC(X^m_1/S^m(1)) \os{\cong}{\lla} \Crys((X/S^m(1))_{\inf}) 
\os{\cong}{\lra} \MIC(X^m_2/S^m(1)) $$
which satisfies the 
cocycle condition. (To prove the cocycle condition, we need to work on 
pullbacks of $f:X \lra S$ to $S^m(2)$. We leave the reader to write the detailed argument.) 
So the object $W_{n+1} := (\ol{W}_{n+1}, \{\epsilon_{n+1}^m\}_m)$ in 
$\StrCrys(X/k) \cong \MIC(X/k)$ is defined, and 
the isomorphisms ${p_2^m}_{\dR}^*\ol{e}_{n+1} \cong {p_1^m}_{\dR}^*\ol{e}_{n+1} \, (m \in \N)$ 
induce the map $e_{n+1}: (\cO_S,d) \lra \iota_{\dR}^*W_{n+1}$. 

Let us prove that $W_{n+1}$ belongs to 
$\NfMICn(X/k)$ and it satisfies the property (W2). 
By the commutative diagram 
\[ 
\xymatrix{
\MIC(X^m_1/S^m(1)) 
\ar[rd] & 
\Crys((X/S^m(1))_{\inf}) \ar[l]_{\cong} \ar[r]^{\cong} \ar[d] & 
\MIC(X^m_2/S^m(1)) 
\ar[ld]
\\ & 
\MIC(\wh{X}^m/S^m(1)), 
}
\]
the isomorphisms 
$\epsilon^m_n: ({q_2^m}_{\dR}^*\ol{W}_{\mkern-4 mu n}, {p_2^m}_{\dR}^*\ol{e}_n) \os{\cong}{\lra} 
({q_1^m}_{\dR}^*\ol{W}_{\mkern-4 mu n}, {p_1^m}_{\dR}^*\ol{e}_n) \, (m \in \N)$
induce the isomorphisms 
$(\wh{q}_2^m)_{\dR}^*\ol{W}_{n}^{\vee} \os{\cong}{\lra} (\wh{q}_1^m)_{\dR}^*\ol{W}_{n}^{\vee} \, (m \in \N)$
in $\MIC(\wh{X}^m/S^m(1))$, and by the argument explained after 
Proposition \ref{basechange}, we obtain 
the isomorphisms  
$$ \delta^m_n: {p_2^m}_{\dR}^* R^1f_{\dR *}(\ol{W}_{\mkern-4 mu n}^{\vee}) \os{\cong}{\lra} 
 {p_1^m}_{\dR}^* R^1f_{\dR *}(\ol{W}_{\mkern-4 mu n}^{\vee}) \quad (m \in \N) $$
 in $\MIC(S^m(1)/S^m(1)) \, (m \in \N)$.  
The object $(R^1f_{\dR *}(\ol{W}_{\mkern-4 mu n}^{\vee}), \{ \delta^m_n \})$, 
regarded as an object in $\Strat(S/k) \cong \MIC(S/k)$, is nothing 
but the object endowed with the Gauss--Manin connection (which we have denoted by 
$R^1f_{\dR *}(W_n^{\vee})$).  If we pull them back to 
the categories 
$$ 
\MIC(X^m_1/S^m(1)) \os{\cong}{\lla} \Crys((X/S^m(1))_{\inf}) 
\os{\cong}{\lra} \MIC(X^m_2/S^m(1)) \quad (m \in \N),   
$$
we obtain the isomorphisms 
\begin{align*}
\wt{\delta}_n^m: {q_2^m}_{\dR}^* f_{\dR}^*R^1f_{\dR *}(\ol{W}_{\mkern-4 mu n}^{\vee}) 
& =  {f_2^m}_{\dR}^* {p_2^m}_{\dR}^*R^1f_{\dR *}(\ol{W}_{\mkern-4 mu n}^{\vee}) \\ 
& \hspace{-2cm} \os{\cong}{\lra} 
{f_1^m}_{\dR}^* {p_1^m}_{\dR}^*R^1f_{\dR *}(\ol{W}_{\mkern-4 mu n}^{\vee}) 
=  {q_1^m}_{\dR}^* f_{\dR}^*R^1f_{\dR *}(\ol{W}_{\mkern-4 mu n}^{\vee}) 
\quad (m \in \N).  
\end{align*}
Then the object $ (f_{\dR}^*R^1f_{\dR *}(W_n^{\vee}),\{\wt{\delta}_n^m\})$, 
regarded as an object in $\Strat\Crys(X/k) \cong \MIC(X/k)$, is 
$f_{\dR}^*R^1f_{\dR *}(W_n^{\vee})$. 

When $S$ is affine, 
the extension group for the exact sequence of the form 
${q_j^m}_{\dR}^*$\eqref{wn-ext} is calculated as 
the group of global sections of 
\begin{align*}
& R^1{f_j^m}_{\dR *}({q_j^m}_{\dR}^*(\ol{W}_{\mkern-4 mu n}^{\vee}) \otimes 
{q_j^m}_{\dR}^*f_{\dR}^*R^1f_{\dR *}(\ol{W}^{\vee}_n)^{\vee}) \\
= \,\, & 
R^1{f_j^m}_{\dR *}({q_j^m}_{\dR}^*(\ol{W}_{\mkern-4 mu n}^{\vee}) \otimes 
{f_j^m}_{\dR}^*{p_j^m}_{\dR}^*R^1f_{\dR *}(\ol{W}^{\vee}_n)^{\vee}) \\
\cong \,\, & 
{p_j^m}_{\dR}^*R^1f_{\dR *}(\ol{W}^{\vee}_n) \otimes 
{p_j^m}_{\dR}^*R^1f_{\dR *}(\ol{W}^{\vee}_n)^{\vee}, 
\end{align*}
where the last isomorphism follows from the projection formula and 
the base change isomorphism. If we identify 
${q_j^m}_{\dR}^*\ol{W}_{\mkern-4 mu n} \, (j=1,2)$ via $\epsilon_n^m$ and 
${q_j^m}_{\dR}^*f_{\dR}^*R^1f_{\dR *} (\ol{W}_{\mkern-4 mu n}^{\vee})^{\vee} \,(j=1,2)$ 
via the dual of $\wt{\delta}_n^m$, the sheaves 
${p_j^m}_{\dR}^*R^1f_{\dR *}(\ol{W}^{\vee}) \otimes 
{p_j^m}_{\dR}^*R^1f_{\dR *}(\ol{W}^{\vee})^{\vee} \, (j=1,2)$
are identified via $\delta_n^m \otimes {\delta_n^m}^{\vee}$. 
(Note that $\delta_n^m$ is the isomorphism induced by $\epsilon_n^m$, and 
$\wt{\delta}_n^m$ is just the pullback of $\delta_n^m$.) 
Therefore, the identification $\delta_n^m \otimes {\delta_n^m}^{\vee}$
induces the isomorphism of extension groups 
\begin{align*}
& \End({p_2^m}_{\dR}^*R^1f_{\dR *}(\ol{W}^{\vee})) = 
\Gamma(S, {p_2^m}_{\dR}^*R^1f_{\dR *}(\ol{W}^{\vee}) \otimes 
{p_2^m}_{\dR}^*R^1f_{\dR *}(\ol{W}^{\vee})^{\vee}) \\ 
\isom \,\, & 
\Gamma(S, {p_1^m}_{\dR}^*R^1f_{\dR *}(\ol{W}^{\vee}) \otimes 
{p_1^m}_{\dR}^*R^1f_{\dR *}(\ol{W}^{\vee})^{\vee})  
= 
\End({p_1^m}_{\dR}^*R^1f_{\dR *}(\ol{W}^{\vee})),  
\end{align*}
which sends the identity map in 
$\End({p_2^m}_{\dR}^*R^1f_{\dR *}(\ol{W}^{\vee}))$
to the 
identity map in $\End({p_1^m}_{\dR}^*R^1f_{\dR *}(\ol{W}^{\vee}))$. 
Hence the extension class of 
the exact sequence ${q_2^m}_{\dR}^*$\eqref{wn-ext} is identified with 
that of ${q_1^m}_{\dR}^*$\eqref{wn-ext} and so there exists 
an isomorphism $\phi_{n+1}^m:  {q_2^m}_{\dR}^*\ol{W}_{n+1} \lra 
 {q_1^m}_{\dR}^*\ol{W}_{n+1}$ which makes the following diagram commutative: 
\begin{equation}\label{wn-ext2}
\begin{CD}
0 @>>> {q_2^m}_{\dR}^*f_{\dR}^*R^1f_{\dR *} (\ol{W}_{\mkern-4 mu n}^{\vee})^{\vee} @>>> {q_2^m}_{\dR}^*\ol{W}_{n+1} @>>>  
{q_2^m}_{\dR}^*\ol{W}_{\mkern-4 mu n} @>>> 0 \\ 
@. @V{(\wt{\delta}_n^m)^{\vee}}VV @V{\phi_{n+1}^m}VV @V{\epsilon_n^m}VV \\ 
0 @>>> {q_1^m}_{\dR}^*f_{\dR}^*R^1f_{\dR *} (\ol{W}_{\mkern-4 mu n}^{\vee})^{\vee} @>>> {q_1^m}_{\dR}^*\ol{W}_{n+1} @>>>  
{q_1^m}_{\dR}^*\ol{W}_{\mkern-4 mu n} @>>> 0.  
\end{CD} 
\end{equation}
Moreover, by using the property (W1) for $\ol{W}_{\mkern-4 mu n}$, we can modify the isomorphism 
$\phi_{n+1}^m$ (by some morphism ${q_2^m}_{\dR}^*\ol{W}_{\mkern-4 mu n} \lra 
{q_1^m}_{\dR}^*f_{\dR}^*R^1f_{\dR *} (\ol{W}_{\mkern-4 mu n}^{\vee})^{\vee}$)
to the unique isomorphism 
such that ${\iota_2^m}_{\dR}^*(\phi_{n+1}^m) \circ {q_2^m}^*(\ol{e}_{n+1}) = 
 {q_1^m}^*(\ol{e}_{n+1})$. The uniqueness allows us to define 
the isomorphism $\phi_{n+1}^m$ globally in the non affine case too, and 
by the property (W1) for $\ol{W}_{n+1}$, we see that 
the map $\phi_{n+1}^m$ is equal to $\epsilon_{n+1}^m$. Hence the diagram  
\eqref{wn-ext2} tells us that $W_{n+1}$ fits in the exact sequence 
as in \eqref{wn-ext0}. 
Hence $W_{n+1}$ belongs to $\NfMICn(X/k)$ and we have the 
property (W2) for $W_{n+1}$, as required. 
\end{proof}

\begin{rem}\label{rem:sect2}
By  the base change property in (W1), for any object $E$ in $\NfsMIC(X_s/s)$ and 
any morphism $v: k \lra x_{\dR}^* E$ in $\MIC(s/s) = \Vector_k$, there exists a unique 
morphism $\varphi_{v}: W_n|_{X_s} \lra E$ in $\NfsMIC(X_s/s)$ with 
$x_{\dR}^*(\varphi_{v}) \circ (e_n|_{s}) = v$ for some $n$. 
By considering maps $v_1, ..., v_N: k \lra x_{\dR}^* E$ 
whose direct sum $k^{\oplus N} \lra x_{\dR}^*E$ is surjective, 
we obtain a surjective map 
$\oplus_{i=1}^N \varphi_{v_i}: W_n^{\oplus N}|_{X_s} \lra E$ for some $n$. 
This is the property we used in the proof of Proposition \ref{fiberprop} and so 
its proof is now finished. 
\end{rem}

\begin{rem}\label{rem:wn-compati}
The definition of $\{(W_n, e_n)\}_{n \geq 1}$ is functorial 
in the following sense: 
If we are given a commutative diagram 
\begin{equation}\label{eq:wn-compati}
\begin{CD}
S @>{\iota}>> X @>f>> S @>g>> \Spec k \\ 
@A{\varphi_S}AA @A{\varphi_X}AA @A{\varphi_S}AA @A{\varphi_k}AA \\ 
S' @>{\iota'}>> X' @>{f'}>> S' @>{g'}>> \Spec k' 
\end{CD}
\end{equation}
such that $f', g'$ and $\iota'$ also satisfy the conditions in 
Notation \ref{notation} and if we denote the objects 
$\{(W_n, e_n)\}_{n \geq 1}$ for $f',g',\iota'$ by 
$\{(W'_n, e'_n)\}_{n \geq 1}$, we have the canonical morphism  in $\NfMICn(X'/k)$
\begin{equation}\label{eq:wn-compati22}
\{(W'_n, e'_n)\}_{n \geq 1} \lra \{(\varphi_{X, \dR}^*W_n, \varphi_{S, \dR}^*e_n)\}_{n \geq 1}. 
\end{equation}
In fact, the underlying morphism 
\begin{equation}\label{eq:wn-compati3}
\{(\ol{W}'_n, \ol{e}'_n)\}_{n \geq 1} \lra 
\{(\varphi_{X, \dR}^*\ol{W}_{\mkern-4 mu n}, \varphi_{S, \dR}^*\ol{e}_n)\}_{n \geq 1}. 
\end{equation}
of 
\eqref{eq:wn-compati22} in $\NfMIC(X/S)$ is defined by the 
property (W1) and it is upgraded to a morphism 
\eqref{eq:wn-compati22} in $\NfMICn(X/k)$,
because the construction of $\{(W'_n, e'_n)\}_{n \geq 1}$ as 
a pointed projective system in $\NfMICn(X/k)$ is done again via  
the universal property (W1). 

Also, the definition of $\{(W_n, e_n)\}_{n \geq 1}$ is 
compatible with base change  
in the following sense: if we are given the commutative diagram 
\eqref{eq:wn-compati} as above and if the middle square of 
\eqref{eq:wn-compati} is Cartesian, then the morphism 
\eqref{eq:wn-compati22} is an isomorphism. 
In fact, in our construction  by induction of $W_{n+1}, W'_{n+1}$ (using the base change property  first and (W1)) we get  
 the isomorphisms 
$\varphi_{S, \dR}^*R^1f_{\dR *}(\ol{W}_{\mkern-4 mu n}^{\vee}) \cong 
R^1f'_{\dR *}(\varphi_{X, \dR}^*\ol{W}_{\mkern-4 mu n}^{\vee}) \cong 
R^1f'_{\dR *}({\ol{W}'}_n^{\vee})
$; moreover   the local definition of $\ol{W}_{n+1}, \ol{W}'_{n+1}$ as extension classes 
is compatible via $\varphi_{S, \dR}^*$. 
So we can define the isomorphism 
\eqref{eq:wn-compati3} locally, and this local definition glues by 
the property (W1).  Then the isomorphism 
\eqref{eq:wn-compati3}  can be upgraded to the isomorphism 
\eqref{eq:wn-compati22}  in  $\NfMICn(X/k)$ as before. 

Also, 
if we are given the commutative diagram 
\eqref{eq:wn-compati} as above 
(with the middle square not necessarily Cartesian)
with $\varphi_S = \id_S$ such that the induced morphism 
$R^if_{\dR *}(\cO_X, d) \lra R^if'_{\dR *}(\cO_{X'},d)$
is an isomorphism for $i=0,1$ and injective for $i=2$, 
 then the morphism \eqref{eq:wn-compati22} is an isomorphism. 
In fact, we see that 
$R^1f_{\dR *}(\ol{W}_{\mkern-4 mu n}^{\vee}) \cong  
R^1f'_{\dR *}\varphi^*_{X \dR}(\ol{W}_{\mkern-4 mu n}^{\vee}) \cong 
R^1f'_{\dR *}({\ol{W}'}_n^{\vee})$ in this case, 
and the local definitions of $\ol{W}_{n+1}, \ol{W}'_{n+1}$ as extension classes  
are compatible. 
So we can define the isomorphism 
\eqref{eq:wn-compati3} locally, which glues and which is upgraded 
to the isomorphism 
\eqref{eq:wn-compati22} as before. 
\end{rem}

\begin{rem}\label{rem:wn-strongw1}
The property (W1) implies that, for any $\ol{E} \in \NfMIC(X/S)$, 
the morphism 
\begin{equation}\label{eq:strongw1}
f_{\dR *}{\cal H}om(\ol{W},\ol{E}) \lra \iota_{\dR}^*\ol{E}; \quad 
\varphi \mapsto \iota_{\dR}^*(\varphi)(\ol{e})
\end{equation}
(where $\ol{e} := \{\ol{e}_n\}_n$) 
is an isomorphism in $\MIC(S/S)$, where ${\cal H}om(-,-)$ denotes the internal hom object 
in the ind-category. This is true also after a base change by any morphism 
$S' \lra S$. Hence we obtain by considering stratification that, for any 
$E \in \NfMICn(X/k)$, 
the morphism 
\begin{equation}\label{eq:strongw1-2}
f_{\dR *}{\cal H}om(W,E) \lra \iota_{\dR}^*E; \quad 
\varphi \mapsto \iota_{\dR}^*(\varphi)(e)
\end{equation}
is an isomorphism in $\MICn(S/k)$. 
This property characterizes $(W,e)$. 
\end{rem}

\begin{rem}\label{rem:wildeshaus}
Assume that the log structures on $X$ and $S$ are trivial and $S$ is smooth 
over $k$. In \cite[I Theorem 3.5]{w}, Wildeshaus constructed 
the variation of Hodge structure on $X$ called the generic pro-sheaf 
which satisfies the property analogous to the isomorphism 
\eqref{eq:strongw1-2}. Also, he remarked in \cite[I Remark in p.61]{w} that 
its underlying module with integrable connection
satisfies the isomorphism \eqref{eq:strongw1-2}. 
So the underlying module with integrable connection
of his generic pro-sheaf is the same as $W$ in this paper. 
Note that he starts the argument in the case $k=\C$ and uses the 
homotopy exact sequence 
$$ 1 \lra \pi_1^{\rm top}(X_{\an, s_{\an}}, x_{\an}) \lra \pi_1^{\rm top}(X_{\an},x_{\an}) \lra 
\pi_1^{\rm top}(S_{\an},s_{\an}) \lra 1 $$
of topological fundamental groups in the construction of 
generic pro-sheaf (see \cite[p.58]{w}). So his 
construction is not purely algebraic. 
\end{rem}

\begin{rem}\label{rem:e'}
By the property (W0), we have the isomorphism 
$$ \cO_S = f_{\dR *}(W_1^{\vee}) \os{\cong}{\lra} \cdots \os{\cong}{\lra} f_{\dR *}(W_{n}^{\vee}) $$
in $\MICn(S/k)$, which we denote by $e'_n$. 
We see that the composite 
\begin{equation}\label{ene'n}
\cO_S \os{e'_n}{\lra} 
f_{\dR *}(W_n^{\vee}) = \iota_{\dR}^* f_{\dR}^*f_{\dR *} (W_n^{\vee})  
\lra \iota_{\dR}^*(W_n^{\vee}) \os{e_n^{\vee}}{\lra} \cO_S
\end{equation}
is an isomorphism. 
\end{rem}

\begin{rem}\label{rem:wn-lazda}
Assume that the log structures on $X$ and $S$ are trivial and $S$ is smooth 
over $k$. In \cite{laz}, Lazda gives a different, purely algebraic construction of 
the projective system $\{(W_n, e_n)\}_{n \geq 1}$ in $\NfMICn(X/k)$, which 
we explain here. 

We denote by $f_{\dR *}^D(-), R^1f_{\dR *}^D(-)$ the relative 
zeroth and first de Rham cohomology endowed with the Gauss--Manin connection 
induced by Katz--Oda filtration. (It is the same as the higher 
direct image in the sense of $D$-modules \cite[Proposition 1.4]{dimca} 
up to shift of index.) 
Lazda constructs the projective system of triples $\{(W_n, e_n, e'_n)\}_{n \geq 1}$ 
with $e'_n$ is a map $\cO_S \lra f_{\dR *}^D(W_{n}^{\vee})$ in $\MICn(S/k)$  and $e_n$ is  as in  Theorem \ref{thm:wn} , such that the  
composite \eqref{ene'n} (with $f_{\dR *}$ replaced by $f_{\dR *}^D$) is an isomorphism. 
When $n=1$, $W_1 := (\cO_X,d)$, $e_1$ is the identity  
$\cO_S \lra \iota_{\dR}^*\cO_X = \cO_S$ and $e'_1$ is also the 
identity $\cO_S \lra f_{\dR *}^D\cO_X = f_{\dR *}\cO_X = \cO_S$. 

He constructs $(W_{n+1}, e_{n+1}, e'_{n+1})$ from 
$(W_n, e_n, e'_n)$ in the following way: he considers
the Leray spectral sequence for $W_n^{\vee} \otimes f_{\dR}^*R^1f_{\dR *}^D
(W_n^{\vee})^{\vee}$  
\begin{align}
0 & \lra 
H^1_{\dR}(S, R^1f_{\dR *}^D (W_n^{\vee})^{\vee}) 
\os{\alpha_1}{\lra} H^1_{\dR}(X, W_n^{\vee} \otimes f_{\dR}^*R^1f_{\dR *}^D(W_n^{\vee})^{\vee}) 
\label{leray} \\ 
& \os{\beta}{\lra} \End(R^1f_{\dR *}^D(W_n^{\vee})) \nonumber \\ 
& \lra H^2_{\dR}(S,R^1f_{\dR *}^D(W_n^{\vee})^{\vee}) 
\os{\alpha_2}{\lra} H^2_{\dR}(X, W_n^{\vee} \otimes f_{\dR}^*R^1f_{\dR *}^D(W_n^{\vee})^{\vee}).  
\nonumber  
\end{align}
For $i=1,2$, we have a section 
\begin{align*}
\gamma_i: 
H^i_{\dR}(X, W_n^{\vee} \otimes f_{\dR}^*R^1f_{\dR *}^D(W_n^{\vee})^{\vee})  
& \lra 
H^i_{\dR}(S, \iota_{\dR}^*W_n^{\vee} \otimes R^1f_{\dR *}^D(W_n^{\vee})^{\vee}) \\ 
& \lra 
H^i_{\dR}(S, R^1f_{\dR *}^D(W_n^{\vee})^{\vee})
\end{align*}
of $\alpha_i$ induced by $e_n^{\vee}$. Thus we see that there exists a unique element $\e$ in 
$H^1_{\dR}(X, W_n^{\vee} \otimes f_{\dR}^*R^1f_{\dR *}^D(W_n^{\vee})^{\vee})$ 
with $\beta(\e) = \id \in \End(R^1f_{\dR *}^D(W_n^{\vee}))$ 
and $\gamma_1(\e) = 0$. Then $W_{n+1}$ is defined to be the object 
in the extension 
\begin{equation}\label{wn-extD}
0 \lra f_{\dR}^*R^1f_{\dR *}^D (W_n^{\vee})^{\vee} \lra W_{n+1} \lra 
W_n \lra 0 
\end{equation}
whose extension class is $\e$. The condition $\beta(\e) = \id$ implies that 
the dual of the exact sequence \eqref{wn-extD} induces the 
isomorphism $f_{\dR *}^D(W_n^{\vee}) \os{\cong}{\lra} f_{\dR *}^D(W_{n+1}^{\vee})$ as we proved in the proof of Theorem \ref{thm:wn}, thus we define $ e'_{n+1}$ as the composition (isomorphism) 
$$ e'_{n+1}: \cO_S \os{e'_n}{\lra} f_{\dR *}^D(W_n^{\vee}) \os{\cong}{\lra} f_{\dR *}^D(W_{n+1}^{\vee}). $$
The condition $\gamma_1(\e) = 0$ implies that the dual of $\iota_{\dR}^*$\eqref{wn-extD} splits 
after we push it out by $e_n^{\vee}$. Thus we have a morphism 
$\iota_{\dR}^*W_{n+1}^{\vee} \lra \cO_S$ which extends $e_n^{\vee}$. 
We define $e_{n+1}$ to be the dual of this map. By construction, 
the property (W0) is satisfied.  

The projective system $\{(W_n, e_n)\}_{n \geq 1}$ Lazda constructed satisfies 
the property (W1) (the proof is the same as that in our case), and satisfies 
the exact sequence \eqref{wn-extD}. But it does not immediately imply the 
isomorphism \eqref{eq:strongw1-2} because the structure of 
$W_n$ as an object in $\NfMICn(X/k) = \NfStrCrysn(X/k)$ is not induced by 
stratification. 

Now we compare our definition of the projective system $\{(W_n, e_n)\}_{n \geq 1}$
with that by Ladza (which we denote by $\{(W_n^{\rm L}, e_n^{\rm L})\}_{n \geq 1}$ 
in the sequel). First, by  \eqref{eq:strongw1-2}, we have a unique projective system 
of morphisms
$\{\varphi_n: (W_n, e_n) \lra (W_n^{\rm L}, e_n^{\rm L})\}_{n \geq 1}$ in 
$\NfMICn(X/k)$. Because both $(W_n, e_n)|_{X_s}$, $(W_n^{\rm L}, e_n^{\rm L})|_{X_s}$ 
satisfies the property (W1) (after the pullback by $s \lra S$), the map 
$\varphi_n|_{X_s}$ is an isomorphism. Hence $\varphi_n$ is an isomorphism and so 
we have shown that our definition and Lazda's definition are the same. 

Moreover, we are going to prove that 
the isomorphism 
 $\ol{\varphi}'_n: R^1f_{\dR *} (\ol{W}_{\mkern-4 mu n}^{\vee})^{\vee} \os{\cong}{\lra} 
R^1f_{\dR *} ({\ol{W}_{\mkern-4 mu n}^{\rm L}}^{\vee})^{\vee}$ in $\MIC(S/S)$  
induced by the underlying isomorphism $\ol{\varphi}_n: \ol{W}_{\mkern-4 mu n} \lra \ol{W}^{\rm L}_n$
of $\varphi_n$ in $\NfMIC(X/S)$ can be enriched to 
an isomorphism 
$R^1f_{\dR *}(W_n^{\vee})^{\vee} \os{\cong}{\lra} R^1f_{\dR *}^D({W_n^{\rm L}}^{\vee})^{\vee}$ in 
$\MICn(S/k)$. In fact, consider the following diagram in $\MIC(X/S)$ 
\begin{equation}\label{wn-ext2D}
\begin{CD}
0 @>>> f_{\dR}^*R^1f_{\dR *} (\ol{W}_{\mkern-4 mu n}^{\vee})^{\vee} @>>> \ol{W}_{n+1} @>>>  
\ol{W}_{\mkern-4 mu n} @>>> 0 \\ 
@. @V{f_{\dR}^*(\ol{\varphi}'_n)}VV @. @V{\ol{\varphi}_n}VV \\ 
0 @>>> f_{\dR}^*R^1f_{\dR *} ({\ol{W}_{\mkern-4 mu n}^{\rm L}}^{\vee})^{\vee} @>>> \ol{W}_{n+1}^{\rm L} @>>>  
\ol{W}_{\mkern-4 mu n}^{\rm L} @>>> 0, 
\end{CD} 
\end{equation}
where the vertical arrows are isomorphisms. When $S$ is affine, 
the extension class defined by $\ol{W}_{n+1}$ is equal to 
that by $\ol{W}_{n+1}^{\rm L}$ via the above diagram by construction. 
Thus there exists a map $\phi_{n+1}: \ol{W}_{n+1} \lra \ol{W}^{\rm L}_{n+1}$ which fits into 
the diagram \eqref{wn-ext2D}, and we can adjust it uniquely using 
the property (W1) for $\ol{W}_{\mkern-4 mu n}$
so that $\iota_{\dR}^*(\phi_{n+1}) \circ \ol{e}_{n+1} = 
\ol{e}^{\rm L}_{n+1}$ (as in the last  lines of the proof of Theorem \ref{thm:wn}). So the isomorphism $\phi_{n+1}$ is defined globally and 
by the property (W1), we see that 
$\ol{\varphi}_{n+1} = \phi_{n+1}$. 

The isomorphisms $\varphi_n$, $\varphi_{n+1}$ and the exact sequences \eqref{wn-ext0}, \eqref{wn-extD} induce the isomorphism $R^1f_{\dR *}(W_n^{\vee})^{\vee} \os{\cong}{\lra} R^1f_{\dR *}^D({W_n^{\rm L}}^{\vee})^{\vee}$ 
in $\MICn(S/k)$. 
When we consider its restriction $R^1f_{\dR *} (\ol{W}_{\mkern-4 mu n}^{\vee})^{\vee} \os{\cong}{\lra} 
R^1f_{\dR *} ({\ol{W}_{\mkern-4 mu n}^{\rm L}}^{\vee})^{\vee}$ to $\MIC(S/S)$, 
we see from the diagram \eqref{wn-ext2D} together with the morphism 
$\phi_{n+1}$ and the equality $\ol{\varphi}_{n+1} = \phi_{n+1}$ that 
it coincides with $\ol{\varphi}'_n$. 
Hence we have the enriched structure as we wanted.
%

The claim we proved above can be rephrased to the fact that 
the two definitions of Gauss--Manin connections on 
$R^1f_{\dR *}(W_n^{\vee})$ are the same. 
\end{rem}

\begin{rem}\label{rem:coin2}
In Remark \ref{rem:wn-lazda}, we proved the coincidence of  the
two definitions of Gauss--Manin connections on 
$R^1f_{\dR *}(W_n^{\vee})$ when the log structures on $X$ and $S$ are 
trivial and $S$ is smooth over $k$. In this remark, we extend 
this result to some other cases. 

(1) \, 
Let the situation be as in Proposition \ref{prop:abcde-rs} and 
assume that we can always take $s \leq r$ in the condition (2) there. 
Assume moreover that $f^{-1}(S_{\rm triv}) = X_{\rm triv}$, where 
$S_{\triv}, X_{\triv}$ is the locus in $S, X$ on which the log structure is 
trivial. In this case, the coincidence holds
on $S_{\rm triv}$ by 
Remark \ref{rem:wn-lazda} and this implies 
the coincidence on $S$, because the canonical map $E \lra j_*j^*E$ is injective 
if $E$ is a locally free sheaf on $S_{\rm triv}$ and $j$ is the open immersion 
$S_{\rm triv} \hra S$. 

(2) \, Let the situation be as in Remark \ref{rem:gmcoin}(3). 
In this case, the coincidence holds on $\ol{M}_g$ by (1).  
This implies the coincidence on $\cS$ and on $S$ by 
Remark \ref{rem:wn-compati}, the base change property and 
the log blow-up invariance of Katz--Oda spectral sequence 
on the diagram in \eqref{eq:moduli}. 

\end{rem}

\begin{rem}\label{AIK1}
When we are given a projective system $\{(\ol{W}_{\mkern-4 mu n},\ol{e}_n)\}_{n}$ 
of objects $\ol{W}_{\mkern-4 mu n} \, (n \geq 1)$ in $\NfMIC(X/S)$ and morphisms 
$\ol{e}_n: \cO_S \lra \iota_{\dR}^*\ol{W}_{\mkern-4 mu n} \,(n \geq 1)$ in $\MIC(S/S)$ 
with the property (W1) and the exact sequences \eqref{wn-ext} (for $n \geq 1$), 
there is at most one way to upgrade it to a projective system 
$\{(W_n,e_n)\}_{n}$ of objects $W_n \, (n \geq 1)$ in $\NfMICn(X/k)$ and morphisms 
$e_n: \cO_S \lra \iota_{\dR}^*W_n \,(n \geq 1)$ in $\MICn(S/k)$ 
which satisfies one of the following conditions: 
\begin{enumerate}
\item For any $n \geq 1$, 
the exact sequence \eqref{wn-ext} in $\MIC(X/S)$ can be 
upgraded to an exact sequence \eqref{wn-ext0} 
in $\NfMICn(X/k)$, 
where we endow $R^1f_{\dR *}(W_n^{\vee})^{\vee}$ with 
the Gauss--Manin connection (in our sense). 
\item The same as (1) holds when 
we endow $R^1f_{\dR *}(W_n^{\vee})^{\vee}$ with 
the Gauss--Manin connection induced by Katz--Oda filtration. 
\end{enumerate}

The proof is essentially the same as \cite[Prop 6.1(2)]{aik}, but we give 
a detailed argument here. The uniqueness of $e_n \, (n \in \N)$ is clear, because 
$e_n$ is uniquely determined by $\ol{e}_n$. 
Assume we have the required uniqueness up to $W_n$ 
and denote by $\nabla_1, \nabla_2: \ol{W}_{n+1} \lra 
\ol{W}_{n+1} \otimes \Omega^1_{X/k}$ two integrable connections 
which upgrade the one on $\ol{W}_{n+1}$. 
Then their difference $\nabla_1 - \nabla_2$ induces a morphism 
$\phi: \ol{W}_{n} \lra f_{\dR}^*R^1f_{\dR *}(\ol{W}_{\mkern-4 mu n}^{\vee})^{\vee} \otimes_{\cO_S} 
\Omega^1_{S/k}$ as $\cO_S$-modules. 

We check that $\phi$ is a morphism in $\MIC(X/S)$ by local computation. 
Let $\dlog x_1, \allowbreak \dots, \dlog x_{r} \, (x_i \in \cM_S)$ be a basis of 
$\Omega^1_{S/k}$ and let $\dlog x_1, \dots, \dlog x_{s} \, (x_i \in \cM_X)$ be 
a basis of $\Omega^1_{X/k}$ extending it. We denote the image of 
$\dlog x_{r+1}, \dots, \dlog x_s$ in $\Omega^1_{X/S}$ by 
$\ol{\dlog} x_{r+1}, \dots, \ol{\dlog} x_s$, respectively. 
We define $\nabla_1^{(i)}, \nabla_2^{(i)}$ by 
$$ \nabla_l(m) = \sum_{i=1}^s\nabla_l^{(i)}(m) \dlog x_i \quad (l=1,2, \,\, m \in \ol{W}_{n+1}). $$
Note that $\nabla_1^{(i)} = \nabla_2^{(i)}$ for $r+1 \leq i \leq s$ because both 
$\nabla_1, \nabla_2$ upgrades the connection on $\ol{W}_{n+1}$.
Let $\nabla', \nabla''$ be the connection of $\ol{W}_{n}, 
f_{\dR}^*R^1f_{\dR *}(\ol{W}_{\mkern-4 mu n}^{\vee})^{\vee} \otimes_{\cO_S} 
\Omega^1_{S/k}$, respectively. 
For an element $m \in \ol{W}_{n}$, if we denote its local lift to $\ol{W}_{n+1}$ by $\wt{m}$, 
$\phi(m)$ is equal to $\sum_{i=1}^r (\nabla_1^{(i)} - \nabla_2^{(i)})(\wt{m}) \dlog x_i$ 
and so 
$$ (\nabla'' \circ \phi)(m) = \sum_{j=r+1}^s 
\sum_{i=1}^r \nabla_1^{(j)} \circ (\nabla_1^{(i)} - \nabla_2^{(i)})(\wt{m}) \dlog x_i \otimes 
\ol{\dlog} x_j. $$
On the other hand, $\sum_{j=r+1}^s \nabla_1^{(j)}(\wt{m}) \ol{\dlog} x_j$ 
is a lift of $\nabla'(m)$ to $\ol{W}_{n+1} \otimes \Omega^1_{X/S}$ and so 
$$ (\phi \circ \nabla')(m) = \sum_{i=1}^r \sum_{j=r+1}^s 
 (\nabla_1^{(i)} - \nabla_2^{(i)}) \circ \nabla_1^{(j)}(\wt{m}) \dlog x_i \otimes \ol{\dlog} x_j. $$
Because 
\begin{align*}
& \nabla_1^{(j)} \circ (\nabla_1^{(i)} - \nabla_2^{(i)}) 
=  \nabla_1^{(j)} \circ \nabla_1^{(i)} - \nabla_1^{(j)} \circ \nabla_2^{(i)} 
= \nabla_1^{(j)} \circ \nabla_1^{(i)} - \nabla_2^{(j)} \circ \nabla_2^{(i)} \\ 
= \,\, & \nabla_1^{(i)} \circ \nabla_1^{(j)} - \nabla_2^{(i)} \circ \nabla_2^{(j)}  
= \nabla_1^{(i)} \circ \nabla_1^{(j)} - \nabla_2^{(i)} \circ \nabla_1^{(j)}
= (\nabla_1^{(i)} - \nabla_2^{(i)}) \circ \nabla_1^{(j)}
\end{align*}
by the integrability of $\nabla_1, \nabla_2$ and the equality 
$\nabla_1^{(j)} = \nabla_2^{(j)} \, (r+1 \leq j \leq s)$, we see the equality 
$\nabla'' \circ \phi = \phi \circ \nabla'$. So $\phi$ is a morphism in 
$\MIC(X/S)$, as required. 

Since $\phi$ is a morphism in $\MIC(X/S)$, 
and by the property (W1), 
it is determined by the morphism 
$\iota_{\dR}^*(\nabla_1 - \nabla_2) \circ \ol{e}_{n}$. 
It is zero because it is equal to 
$\iota_{\dR}^*(\nabla_1) \circ \ol{e}_{n+1} - 
\iota_{\dR}^*(\nabla_2) \circ \ol{e}_{n+1}$ and 
the section $\ol{e}_{n+1}(1) = e_{n+1}(1)$ is horizontal 
with respect to both $\iota_{\dR}^*(\nabla_1)$ and $\iota_{\dR}^*(\nabla_2)$. 
Hence $\nabla_1 = \nabla_2$, as required. 
\end{rem}

\begin{rem}\label{AIK2}
In this remark, we compare our definition of 
the projective systems 
$\{(\ol{W}_{\mkern-4 mu n},\ol{e}_n)\}_n$, $\{(W_n,e_n)\}_n$ and that 
of Andreatta--Iovita--Kim \cite{aik}. 

Let the situation be as in Remark \ref{rem:gmcoin}(3) 
and we work on $X/S$ or on $\cX/\cS$. 
(To simplify the description, we  only deal  with
$X/S$: the same argument works also for $\cX/\cS$.)
We put $H^i := R^if_{\dR *}(\cO_X,d) \in \MIC(S/S)$ and 
define 
$$R^n, \quad \gamma_n: R^n \otimes H^1 \lra R^{n-1} \otimes H^2 \,(n \geq 1), 
\quad i_n: R^n \hra R^{n-1} \otimes H^1 \,(n \geq 2)$$ inductively in 
the following way: First, put 
$R^0 := \cO_S$, 
$R^1 := H^1$ and let $\gamma_1: R^1 \otimes H^1 = 
H^1 \otimes H^1 \os{\cup}{\lra} H^2$ be the 
cup product. If we have defined $R^n$ and $\gamma_n$, let 
$R^{n+1} := \Ker \gamma_n$, let $i_{n+1}$ be the 
canonical inclusion $R^{n+1} \hra R^n \otimes H^1$ and let 
$\gamma_{n+1}$ be the composite 
$$ R^{n+1} \otimes H^1 \os{i_{n+1} \otimes \id}{\lra} 
R^n \otimes H^1 \otimes H^1 \os{\id \otimes \cup}{\lra} R^n \otimes H^2. $$

Andreatta--Iovita--Kim gave a definition of the projective system 
$\{(\ol{W}_{\mkern-4 mu n}, \ol{e}_n)\}_n$ with exact sequences 
$$ 
(*)_n \quad 
0 \lra \ol{W}_{\mkern-4 mu n}^{\vee} \lra \ol{W}_{n+1}^{\vee} \lra f_{\dR}^*R^n \lra 0 $$
 for $n \geq 1$ (where the map $\ol{W}_{\mkern-4 mu n}^{\vee} \lra \ol{W}_{n+1}^{\vee}$ is the dual of 
the transition map $\ol{W}_{n+1} \lra \ol{W}_{\mkern-4 mu n}$) such that 
the connecting map $a_n: R^n \lra R^1f_{\dR *}(\ol{W}_{\mkern-4 mu n}^{\vee})$ associated 
to $(*)_n$ is an isomorphism and that the map 
$b_n: R^1f_{\dR *}(\ol{W}_{n+1}^{\vee}) \lra R^n \otimes H^1$ 
on the first cohomology groups associated 
to $(*)_n$ satisfies the equality $b_n \circ a_{n+1} = i_{n+1}$. 
Their method is the following: 
first, they put $(\ol{W}_1, \ol{e}_1) := (\cO_X, \cO_S \os{=}{\lra} \iota_{\dR}^*\cO_X)$
and define $a_1$ to be the identity map on $R^1 =  R^1f_{\dR *}\cO_X$. 
Next, once defined $(\ol{W}_{i}, \ol{e}_{i})$ and the isomorphisms $a_i$ 
for $1 \leq i \leq n$, they introduce $(\ol{W}_{n+1}, \ol{e}_{n+1})$ 
fitting in the exact sequence $(*)_n$ by identifying $(*)_n$ with 
the dual of \eqref{wn-ext} via $a_n$ and arguing 
in the same way as our definition, meaning that the extension corresponds to the identity as in \eqref{wn-ext}. Then $a_n$ becomes the connecting map 
of $(*)_n$. 
Then they consider the diagram 
$$ 
\xymatrix{ 
R^1f_{\dR *}(\ol{W}_{n+1}^{\vee}) \ar[r]^-{b_n}
& 
R^n \otimes H^1 \ar[r] \ar[d]^-{a_n \otimes \id}
& 
R^2f_{\dR *}(\ol{W}_{\mkern-4 mu n}^{\vee}) \ar[dd]
\\ 
& 
R^1f_{\dR *}(\ol{W}_{\mkern-4 mu n}^{\vee}) \otimes H^1 
\ar[d]^-{b_{n-1} \otimes \id} \\
& 
R^{n-1} \otimes H^1 \otimes H^1 
\ar[r]^-{\cup} & 
R^{n-1} \otimes H^2, 
}
$$
where the top horizontal line is a part of the long exact sequence 
associated to $(*)_n$,  
the long vertical arrow comes from $(*)_{n-1}$ and 
the lower horizontal arrow is the cup product. 
Then they prove that the rectangle is anti-commutative. 
Since we have $b_{n-1} \circ a_{n} = i_{n}$, then $ \cup\circ(b_{n-1}\otimes \mathrm{id})\circ (a_{n}\otimes \mathrm{id})= \gamma_n$; moreover the long vertical arrow of the above diagram composed with the top horizontal arrow is the zero map, because they come form a cohomological long exact sequence. Hence the image of $b_n$ should be in the kernel of $\gamma_n$ which is $R^{n+1}$; thus we obtain a map $R^1f_{\dR *}(\ol{W}_{n+1}^{\vee}) \lra R^{n+1}$. They prove that this map is an isomorphism and denote its inverse by $a_{n+1}$, and so the induction works.

So their construction is the same as ours except that 
they build isomorphisms $a_n \, (n \geq 1)$ in the   process of construction.
We expect the existence of the isomorphisms $a_n \, (n \geq 1)$
in more general case than in the framework of Remark \ref{rem:gmcoin}(3),  but we will not pursue this topic in this paper. 

After giving the definition of the projective system 
$\{(\ol{W}_{\mkern-4 mu n}, \ol{e}_n)\}_n$, Andreatta--Iovita--Kim upgrade 
it to the projective system $\{(W_n, e_n)\}_n$ satisfying 
the condition (2) in Remark \ref{AIK1}, by using 
$p$-adic Hodge theory. Because our projective system 
$\{(W_n, e_n)\}_n$ satisfies the condition (1) in Remark \ref{AIK1} 
and the conditions (1) and (2) are equivalent in the hypothesis of 
this remark by Remark \ref{rem:coin2}(2), we see that 
the projective system 
$\{(W_n, e_n)\}_n$ of Andreatta--Iovita--Kim is the same as ours. 
\end{rem}

We give the second definition of relatively unipotent de Rham 
fundamental group $\pi_1(X/S,\iota)$ by using $W = \{(W_n,e_n)\}_n$, 
following the method of \cite{ha} and \cite{aik}. 
By \eqref{eq:strongw1-2}, we have the isomorphism 
\begin{equation}\label{eq:strongw1-3}
f_{\dR *}{\cal H}om(W,W) \os{\cong}{\lra} \iota_{\dR}^*W, 
\end{equation}
and so $\iota_{\dR}^*W$ has a structure of a ring. 
Moreover, if we denote the projective system 
$\{W_m \otimes W_n\}_{m,n}$ by 
$W \wh{\otimes} W$, we have the isomorphism 
\begin{equation}\label{eq:strongw1-4}
f_{\dR *}{\cal H}om(W,W \wh{\otimes} W) \os{\cong}{\lra} 
\iota_{\dR}^*(W \wh{\otimes} W) 
\end{equation}
again by \eqref{eq:strongw1-2}, and the morphisms 
$$ e_m \otimes e_n: \cO_S \lra \iota_{\dR}^*(W_m \otimes W_n) $$ 
in $\MICn(S/k)$ induce the morphism 
$W \lra W \wh{\otimes} W$, thus the coproduct structure 
$\iota_{\dR}^* W \lra \iota_{\dR}^* W \wh{\otimes} \iota_{\dR}^* W$ 
on $\iota_{\dR}^*W$. 
We see that, with these structures, 
$(\iota_{\dR}^*W)^{\vee} := \varinjlim_{n} (\iota_{\dR}^*W_n)^{\vee}$ forms 
a commutative Hopf algebra object in the ind-category of $\MICn(S/k)$. 
So we can give the following definition: 

\begin{defn}[{\bf Second definition of relatively unipotent $\pi_1$}]\label{def2}
Let the notations be as above. We define the relatively unipotent de Rham fundamental 
group $\pi^{\dR}_1(X/S,\iota)$ by 
$$\pi^{\dR}_1(X/S,\iota) := \Spec (\iota_{\dR}^*W)^{\vee}, $$ 
which is an affine group scheme in $\MICn(S/k)$. 
\end{defn}

Then we have the following comparison theorem on 
two definitions of relatively unipotent de Rham fundamental 
groups: 

\begin{thm}\label{thm:1and2}
The first and the second definitions of $\pi_1^{\dR}(X/S,\iota)$ are 
canonically isomorphic. 
\end{thm}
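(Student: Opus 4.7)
The plan is to upgrade the pro-object $W$ to an equivalence of Tannakian categories
$$F\colon \NfMICn(X/k) \xrightarrow{\sim} \Rep_{\MICn(S/k)}\bigl(\Spec (\iota_{\dR}^*W)^{\vee}\bigr)$$
over $\MICn(S/k)$, compatible with $\iota_{\dR}^*$ on the source and the forgetful functor on the target. Once this is established, Proposition \ref{reltannaka} applied to $\NfMICn(X/k)$ as a Tannakian category over $\MICn(S/k)$ identifies $G(\NfMICn(X/k),\iota_{\dR}^*)$ (the first definition) with the Tannakian group of the representation category on the right-hand side, which is tautologically $\Spec(\iota_{\dR}^*W)^{\vee}$ (the second definition).

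To construct $F$, for each $E \in \NfMICn(X/k)$ I produce a coaction $\delta_E\colon \iota_{\dR}^*E \to \iota_{\dR}^*E \otimes (\iota_{\dR}^*W)^{\vee}$ in the ind-category of $\MICn(S/k)$. Applying $\iota_{\dR}^*$ to the adjunction morphism $f_{\dR}^*f_{\dR *}\Inthom(W,E) \to \Inthom(W,E)$ and using $\iota_{\dR}^*f_{\dR}^* = \id$ together with \eqref{eq:strongw1-2} yields a map $\iota_{\dR}^*E \to \Inthom(\iota_{\dR}^*W, \iota_{\dR}^*E)$, i.e.\ an action $\mu_E\colon \iota_{\dR}^*E \otimes \iota_{\dR}^*W \to \iota_{\dR}^*E$; dualizing gives $\delta_E$. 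Coassociativity reduces to applying the same construction to $E$ with target $W \wh\otimes W$ and using the description of the coproduct on $(\iota_{\dR}^*W)^{\vee}$ via \eqref{eq:strongw1-4}; counitality follows from the fact that $e$ is the unit of the ring structure on $\iota_{\dR}^*W$ encoded by \eqref{eq:strongw1-3}; naturality in $E$ is immediate. Tensor compatibility of $F$ is checked analogously, using the universal property (W1) for $E \otimes E'$. By construction, the underlying object of $F(E)$ is $\iota_{\dR}^*E$.

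It remains to show $F$ is an equivalence. Faithfulness follows from the faithfulness of $\iota_{\dR}^*$ as a fiber functor on $\NfMICn(X/k)$. Fullness: a comodule morphism $\iota_{\dR}^*E \to \iota_{\dR}^*E'$ is, by definition, compatible with the $\iota_{\dR}^*W$-actions, and translating via \eqref{eq:strongw1-2} together with the universal property (W1) produces a unique lift to a morphism $E \to E'$ in $\NfMICn(X/k)$ inducing it. For essential surjectivity, given a finite-rank comodule $(V,\delta)$ in $\MICn(S/k)$, the coaction factors through $(\iota_{\dR}^* W_n)^{\vee}$ for some $n$; using the exact sequences \eqref{wn-ext0} of (W2) together with the description of extensions in $\NfMICn(X/k)$ by objects pulled back from $S$, one constructs $E_V \in \NfMICn(X/k)$ by induction on $n$, along with an isomorphism $F(E_V) \cong (V,\delta)$.

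The main obstacle will be this essential surjectivity step: recovering an object of $\NfMICn(X/k)$ from a comodule over $(\iota_{\dR}^*W)^{\vee}$ in $\MICn(S/k)$. It amounts to showing that $W$ is a pro-generator of $\NfMICn(X/k)$ in a sense faithful to the Hopf-algebra structure, which is the higher-dimensional relative analogue of the arguments of Hadian \cite{ha} and Lazda \cite{laz} in the absolute (respectively smooth-base) setting. Carrying this out rigorously requires tracking the interaction between the coaction filtration on $V$ (given by the order of vanishing of $\delta$ modulo powers of the augmentation ideal of $(\iota_{\dR}^*W)^{\vee}$) and the extension structure of $W_n$ described by \eqref{wn-ext0} and the Gauss--Manin connection on $R^1f_{\dR *}(W_n^{\vee})$.
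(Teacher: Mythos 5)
Your construction of the functor $F$ via the coaction obtained from \eqref{eq:strongw1-2} is essentially the first half of the paper's proof, and up to that point the proposal is sound: one gets a functor $\NfMICn(X/k) \to \Rep_{\MICn(S/k)}(\Spec(\iota_{\dR}^*W)^{\vee})$ over $\MICn(S/k)$, hence by Proposition \ref{reltannaka} a morphism from the second definition to the first. The problem is the second half. You propose to verify fullness and essential surjectivity of $F$ directly in the relative setting, and for essential surjectivity you only indicate that one should induct on the $n$ for which the coaction factors through $(\iota_{\dR}^*W_n)^{\vee}$, using \eqref{wn-ext0}; you then concede this is ``the main obstacle'' and describe what a rigorous argument ``requires'' without supplying it. As written this is a genuine gap: the universal property (W1) governs morphisms out of $\ol{W}_{\mkern-4 mu n}$, not the reconstruction of an object of $\NfMICn(X/k)$ from a comodule in $\MICn(S/k)$, and the relative analogue of the presentation $W_m^{\oplus s}\to W_n^{\oplus r}\to E\to 0$ (which is what actually drives both fullness and essential surjectivity in the absolute case) is not established anywhere in your argument. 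The same vagueness affects your fullness step, where ``translating via \eqref{eq:strongw1-2} together with (W1)'' is not an argument that a comodule map lifts to a map in $\NfMICn(X/k)$.

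The paper avoids this difficulty entirely: since $s_{\dR}^*:\MICn(S/k)\to\MIC(s/s)=\Vector_k$ is exact and faithful, the morphism of affine group schemes in $\MICn(S/k)$ is an isomorphism if and only if its restriction to $s$ is, and that restriction is the morphism $\pi_1^{\dR}(X_s/s,x)'\to\pi_1^{\dR}(X_s,x)$ in the absolute setting. There one has, by Remark \ref{rem:sect2}, surjections $W_n^{\oplus N}|_{X_s}\to E$ for every $E\in\NfsMIC(X_s/s)$, so Hadian's argument gives full faithfulness (compute $\Hom$ out of $W_n^{\cC}$ on both sides, both equal $F|_x$) and essential surjectivity (present a $G$-representation by copies of $W^{\cC}|_x$ and take the cokernel of a lifted map). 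If you want to salvage your relative approach, you would have to prove a relative pro-generation statement for $W$ in $\NfMICn(X/k)$, which is substantially more than what you have written; the fiberwise reduction is the intended, and much shorter, route.
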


\begin{proof}
In this proof, we denote  
 the second definition of $\pi_1^{\dR}(X/S,\iota)$ by 
$\pi_1^{\dR}(X/S,\iota)'$. 
For an object $E$ in $\NfMICn(X/k)$, we have the canonical map 
$$ f_{\dR *}{\cal H}om(W,W) \otimes f_{\dR *}{\cal H}om(W,E^{\vee}) 
\lra  f_{\dR *}{\cal H}om(W,E^{\vee}) $$ 
and this corresponds to the map 
$$ \iota_{\dR}^*W \otimes \iota_{\dR}^*E^{\vee} \lra \iota_{\dR}^*E^{\vee} $$
by \eqref{eq:strongw1-2}. This map defines the representation of 
$\pi_1^{\dR}(X/S, \iota)'$ on $\iota_{\dR}^*E$ in $\MICn(S/k)$. 
Hence we have defined the functor 
$$ \NfMICn(X/k) \lra 
\Rep_{\MICn(S/k)}(\pi_1^{\dR}(X/S, \iota)'). $$
Since we have $\NfMICn(X/k) = \Rep_{\MICn(S/k)}(\pi_1^{\dR}(X/S, \iota))$ 
by Proposition \ref{reltannaka}, we obtain the morphism 
$$ \pi_1^{\dR}(X/S, \iota)' \lra \pi_1^{\dR}(X/S, \iota) $$
of affine group schemes in $\MICn(S/k)$ by the argument after Proposition 
\ref{reltannaka}. To prove that this is an isomorphism, it suffices to 
prove that the restriction of this morphism to $\MIC(s/s)$, 
which is the morphism 
\begin{equation}\label{eq:def1def2-1}
\pi_1^{\dR}(X_s/s, x)' \lra \pi_1^{\dR}(X_s, x), 
\end{equation}
is an isomorphism, because we can check the claim after applying the functor 
$\MICn(S/k) \lra \MIC(s/s)$ which is exact and faithful. 

The proof of the isomorphism \eqref{eq:def1def2-1} is the same as 
that in \cite[Theorem 2.9]{ha}, which we explain here for the convenience of the reader. 
To simplify the notation, 
we denote the category $\NfsMIC(X_s/s)$ by $\cC$, 
the fundamental group $\pi_1^{\dR}(X_s, x)'$ by $G$, 
the base change of $W = \{W_n\}$ to $X_s/s$ by $W^{\cC} = \{W^{\cC}_n\}$ 
and the fiber functor $x_{\dR}^*: \cC \lra \Vector_k$ by 
$E \mapsto E|_x$. Then $G$ is equal to $\Spec (W^{\cC}|_x)^{\vee}$
and the morphism \eqref{eq:def1def2-1} is induced by the functor 
\begin{equation}\label{eq:def1def2-2}
\cC \lra \Rep_k(G)
\end{equation}
defined by $E \mapsto E|_x$ with $E|_x$ endowed with 
a $G$-action. It suffices to prove that 
the functor \eqref{eq:def1def2-2} is an equivalence. 

First we prove the full-faithfulness by checking that the map 
$$ \Hom_{\cC}(E,F) \lra \Hom_G(E|_x, F|_x) $$
induced by \eqref{eq:def1def2-2} is an isomorphism. 
By Remark \ref{rem:sect2}, 
any $E \in \cC$ admits an exact sequence of the form 
	${W_m^{\cC}}^{\oplus s} \lra {W^{\cC}_n}^{\oplus r} \lra E \lra 0 $
for some $n,m,r,s \in \N$, and we can take $n,m$ arbitrarily large. 
Thus it suffices to prove that the map 
$$ \Hom_{\cC}(W^{\cC}_n ,F) \lra \Hom_G(W^{\cC}_n|_x, F|_x) $$
induced by \eqref{eq:def1def2-2} is an isomorphism for 
sufficiently large $n$. This is true because 
$\Hom_{\cC}(W^{\cC}_n ,F)$ is equal to $F|_x$ 
by Remark \ref{rem:sect2} and 
$\Hom_G(W^{\cC}_n|_x, F|_x) = \Hom_G(W^{\cC}|_x, F|_x)$ is 
also equal to $F|_x$. (Note that, since 
$G$ is equal to $\Spec (W^{\cC}|_x)^{\vee}$, giving a 
$G$-equivariant map $W^{\cC}|_x \lra F|_x$ is equivalent to give
an image of the unit element $1 \in W^{\cC}|_x$.) 

Next we prove the essential surjectivity. 
For any $V \in \Rep_k(G)$ and $v \in V$, there exists a unique 
$G$-equivariant map $W^{\cC}|_x \lra V$ with $1 \mapsto v$, and 
this map factors through some $W_n^{\cC}|_x$. Thus we see that 
there exists a $G$-equivariant surjection 
${W^{\cC}}^{\oplus r}_n|_x \lra V$ for some $n,r \in \N$. Repeating this argument, 
we see that there exists a $G$-equivariant exact sequence 
${W^{\cC}}^{\oplus s}_m|_x \os{\alpha}{\lra} {W^{\cC}}_n^{\oplus r}|_x \lra V \lra 0 $
for some $n,m,r,s \in \N$. 
Since the functor \eqref{eq:def1def2-2} is fully faithful, there 
exists a morphism $\wt{\alpha} : {W^{\cC}}^{\oplus s}_m \lra {W^{\cC}}^{\oplus r}_n$ 
which is sent to $\alpha$ by the functor \eqref{eq:def1def2-2}. Then 
$\Coker \,\wt{\alpha}$ is sent to $V$ and so the essential 
surjectivity is proved. 
Hence the functor \eqref{eq:def1def2-2} is an equivalence and the 
proof is finished. 
\end{proof}

\begin{rem} \label{ref.abs}
In this remark  we outline an abstract approach to Definition 
\ref{def2} and Theorem \ref{thm:1and2}: the approach   has been  suggested  to us by 
the referee.  

Let $t: \cC \lra \cD$, $\omega: \cD \lra \cC$ be as in the beginning of 
Section 2, and put $G := G(\cD,\omega)$ as defined in Section 2, so that 
$\cD \cong \Rep_{\cC}G$ by Proposition \ref{reltannaka}. 
We define a pro-object $\widetilde{W}$ in $\cD \cong \Rep_{\cC}G$ and 
a morphism $\widetilde{e}: 1_{\cC} \lra \omega(\widetilde{W})$ of pro-objects 
in $\cC$ in the following way: 
\medskip 

\noindent
(1) \, $\widetilde{W} \in \cD \cong \Rep_{\cC}G$ is defined as the pro-object 
$\cO_G^{\vee}$ in $\cC$ endowed with the $G$-action induced by 
the comultiplication $\cO_G \lra \cO_G \otimes \cO_G$. \\
(2) \, $\widetilde{e}$ is defined as the dual of the counit $\cO_G \lra 1_{\cC}$. 
\medskip 

Let $t_*: \cD \cong \Rep_{\cC}G \lra \cC$ be the functor of taking $G$-invariants, 
which is the right adjoint of the functor $t$. 
Then we can show by direct computation that, for any object $E$ in $\cD$, 
the morphism 
\begin{equation}\label{eq:20200823-1}
t_*{\cal H}om(\widetilde{W}, E) \lra \omega(E); \quad \varphi \mapsto \omega(\varphi)(e) 
\end{equation}
is an isomorphism. 

When we specialize the above abstract setting to the case $\cC = \MICn(S/k), \allowbreak \cD = \NfMICn(X/k), \allowbreak t = f^*_{\dR}$ and $\omega = \iota^*_{\dR}$, 
we have $t_* = f_{\dR *}$   then we see that the    isomorphism   we have found  is the same as  those one 
given in \eqref{eq:strongw1-2} in Remark \ref{rem:wn-strongw1}.  Therefore, 
the pair $(\widetilde{W}, \widetilde{e})$ we defined here, in the  abstract setting, gives the 
pair $(W,e)$ in   the case of  Theorem \ref{thm:wn}. 

If we define the Hopf algebra structure on $\omega(\widetilde{W})^{\vee} = \omega(\cO_G)$ in 
the way described before Definition \ref{def2}, we see again by direct computation that 
the resulting Hopf algebra structure is the same as the original Hopf algebra 
strcuture on $\cO_G$. This shows Theorem \ref{thm:1and2} in  the abstract setting, and 
specializing as before $\cC, \cD$ (and so on,  as in the previous part of this remark), we obtain 
Theorem \ref{thm:1and2}. 
\end{rem}

\section{Relative minimal model of Navarro Aznar}

In this section  we introduce  a third definition of 
relatively unipotent de Rham fundamental group 
by using the theory of relative minimal model 
developed by Navarro Aznar \cite{nagm}. Because our definition is different from his original one, 
we will  give the steps in detail. 
We will state the compatibility with the previous definitions, 
but the proof will be postponed to Section 5 (by introducing a fourth definition of the relatively unipotent de Rham fundamental group).
We  review now the theory  of  commutative differential graded algebras (cdga's), 
following \cite{nagm}. 
Let $R$ be a commutative algebra over a field $k$ of characteristic zero. 

A cdga is a graded algebra $A = \bigoplus_{i=0}^{\infty} A^i$ endowed with 
a differential $d: A^i \lra A^{i+1} \,(i \geq 0)$ with $d \circ d = 0$ 
satisfying 
\begin{align*}
& xy = (-1)^{pq} yx \quad (x \in A^p, y \in A^q), \\ 
& d(xy) = (dx) y + (-1)^p x(dy) \quad (x \in A^p, y \in A^q). 
\end{align*}
The  $ k$-algebra $R$ is regarded as an cdga with $R^0 = R, R^i = 0 \,(i>0), d=0$. 
A morphism of cdga's is a morphism of algebras compatible with 
grading and differential. For a cdga $A$, we denote its $n$-th cohomology group by $H^n(A)$ and for a morphism $\phi: A \longrightarrow B$ of cdga's, we denote by $H^n(A,B)$ the $n$-th cohomology group of the complex $M_{\phi}$ defined by $M^n_{\phi}=A^n\oplus B^{n-1}$ with differential $d: M_{\phi}^n \longrightarrow M_{\phi}^{n+1}$ given by  $d(a,b)= (d_A(a), \phi(a)-d_B(b))$, where $d_A$ (resp. $d_B$) is the differential of $A$ (resp. $B$).


An $R$-cdga is a cdga $A$ endowed with a morphism 
$R \lra A$ of cdga's. An augmented $R$-cdga is an $R$-cdga 
$(A,i:R \lra A)$ endowed further with another morphism 
$e:A \lra R$ of cdga's (called an augmentation) such that 
$e \circ i = \id$. A(n augmented) morphism of (augmented) 
$R$-cdga's is a morphism of $R$-algebras compatible with 
grading, differential (and augmentation).
(In \cite{nagm}, an augmented morphism is called a pointed morphism, and 
the adjective `pointed' is often used for several notions with augmentation. 
In this paper, we will always use the adjective `augmented' 
in place of `pointed'.) 

A Hirsch extension of an (augmented) $R$-cdga is an 
(augmented) inclusion $A \hra A \otimes \bigwedge (E)$ of 
(augmented) $R$-cdga's, where $E$ is a free $R$-module, $\bigwedge (E)$ is the free commutative graded algebra generated by $E$ in degree $1$ and 
the differential of $A \otimes \bigwedge (E)$ is 
induced by a map which sends $E$ to $A^1$. 
(It is called a Hirsch extension of degree $1$ in 
\cite[(2.1)]{nagm}, but we will omit the term `of degree $1$' because 
we will not treat the case of higher degree.) 

An (augmented) $R$-cdga $A$ is called $(1,q)$-minimal 
if there exists a sequence of Hirsch extensions 
\begin{equation}\label{eq:minfil}
R = A(0) \subseteq A(1) \subseteq \cdots \subseteq A(q) = A. 
\end{equation}
An (augmented) $R$-cdga $A$ is called $1$-minimal if 
$A$ is the union of $(1,q)$-minimal (augmented) $R$-cdga's $(q \in \N)$  
with respect to Hirsch extensions as above. 
(As a matter of fact it is indicated as  $(2,0)$-minimal in 
\cite[(2.1)]{nagm}, but we call it $1$-minimal, 
following \cite{gm}.) 

A $(1,q)$-minimal model of a(n augmented) $R$-cdga $A$ is a(n augmented) 
morphism 
$$ \rho_q: M(q) \lra A $$
from a $(1,q)$-minimal (augmented) $R$-cdga $M(q)$, endowed with 
filtration $\{M(q')\}_{q'=0}^q$ as in \eqref{eq:minfil}, such that  
the induced maps 
$H^i(M(q')) \lra H^i(A) \, (i=0,1)$ are isomorphisms 
and the induced map $H^2(M(q'-1),A) \lra H^2(M(q'),A)$ is zero for 
$1 \leq q' \leq q$. 
A $1$-minimal model of an (augmented) $R$-cdga $A$ is a(n augmented) 
morphism 
$$ \rho: M \lra A $$
which is the union of $(1,q)$-minimal models ($q \in \N$) as above 
with respect to Hirsch extensions $M(q-1) \subseteq M(q)$. 
By definition, we see that $H^i(\rho): H^i(M) \lra H^i(A)$ is an isomorphism 
for $i=0,1$ and injective for $i=2$. Note that the composition of 
a $(1,q)$-minimal model $M(q) \lra A$ and a quasi-isomorphism of 
(augmented) $R$-cdga $A \lra B$ is again a $(1,q)$-minimal model and 
the same is true also for $1$-minimal models. 

For two morphisms $f, g: A \lra B$ of $R$-cdga's, 
a {\it Sullivan homotopy} \cite[(2.3)]{nagm}
between them is a morphism of $R$-cdga's $h: A \lra R(t,dt) \otimes B$ with 
$p_0 \circ h = f, p_1 \circ h = g$, where 
$R(t,dt)$ is the $R$-cdga $R[t] \oplus R[t]dt$ and 
$p_i \, (i=0,1)$ is the morphism $R(t,dt) \otimes B \lra B$ defined by 
$t \mapsto i, dt \mapsto 0$.  
For two morphisms $f, g: A \lra B$ of augmented $R$-cdga's, 
a Sullivan homotopy between them is a morphism 
$h: A \lra R(t,dt) \otimes B$ as above 
such that the diagram 
\begin{equation*}
\xymatrix{
A\ar[d]\ar[r]^(.30)h&R(t,dt)\otimes B\ar[d]\\
R\ar[r]&R(t,dt)
}
\end{equation*}
is commutative, where vertical arrows are augmentations and 
the lower horizontal map is the canonical inclusion. 
We write a Sullivan homotopy $h$ between $f$ and $g$ by 
$h: f \simeq_{\rm Su} g$, and 
we say that two morphisms $f, g: A \lra B$ of (augmented) $R$-cdga's 
are Sullivan homotopic if there exists a Sullivan homotopy between them. 
It is easy to see from the definition that, 
if two morphisms $f, g: A \lra B$ of (augmented) $R$-cdga's are 
Sullivan homotopic, $f \circ l$ and $g \circ l$ (resp. $l \circ f$ and $l \circ g$) are 
Sullivan homotopic for any morphism of (augmented) $R$-cdga's 
$l: C \longrightarrow A$ (resp. $l : B \longrightarrow C$). 
Moreover, following \cite[Lem. 2.7]{nagm} and \cite[10.7]{gm}, we have: 

\begin{lem}\label{lem:eqrel}
If $A$ is $($augmented and$)$ $(1,q)$-minimal, 
the relation on the set of $($augmented$)$ morphisms $A \lra B$ 
defined by Sullivan homotopy is an equivalence relation. 
\end{lem}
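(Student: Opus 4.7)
The plan is to verify reflexivity, symmetry, and transitivity for the relation defined by Sullivan homotopy. The first two properties hold without the $(1,q)$-minimality hypothesis on $A$, whereas transitivity is the substantive point and will be proved by an inductive lifting argument along the Hirsch filtration $R = A(0) \subseteq A(1) \subseteq \cdots \subseteq A(q) = A$.

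For reflexivity, to each morphism $f: A \lra B$ I would attach the constant homotopy $h(a) := 1 \otimes f(a)$, which tautologically satisfies $p_0 \circ h = p_1 \circ h = f$ and respects augmentations. For symmetry, the $R$-cdga automorphism $\sigma: R(t,dt) \lra R(t,dt)$ determined by $t \mapsto 1-t$, $dt \mapsto -dt$ fixes $R$ pointwise and exchanges the evaluations $p_0, p_1$, so $(\sigma \otimes \id_B) \circ h$ converts a witness of $f \simeq_{\rm Su} g$ into one for $g \simeq_{\rm Su} f$; since $\sigma$ preserves the inclusion $R \hra R(t,dt)$, the augmented case is immediate.

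For transitivity, my strategy is to use a $2$-simplicial model. Let $\Omega(\Delta^2)_R$ denote the $R$-cdga of polynomial differential forms on the standard $2$-simplex, with its three edge-restriction morphisms $\partial_0, \partial_1, \partial_2: \Omega(\Delta^2)_R \lra R(t,dt)$. The joint restriction to the horn $\Lambda^2_1$ formed by the two edges incident to the middle vertex, namely $\rho := (\partial_0, \partial_2): \Omega(\Delta^2)_R \lra \Omega(\Lambda^2_1)_R$, is a surjective quasi-isomorphism (both source and target resolve $R$). Given $h_1: f \simeq_{\rm Su} g$ and $h_2: g \simeq_{\rm Su} \ell$, their compatibility at the common vertex $g$ assembles into a morphism $H_\partial: A \lra \Omega(\Lambda^2_1)_R \otimes B$. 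If I can lift $H_\partial$ along $\rho \otimes \id_B$ to a cdga morphism $\widetilde{H}: A \lra \Omega(\Delta^2)_R \otimes B$, then $(\partial_1 \otimes \id_B) \circ \widetilde{H}$ provides the required Sullivan homotopy $f \simeq_{\rm Su} \ell$.

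The main obstacle is constructing the lift $\widetilde{H}$, and this is precisely where the $(1,q)$-minimality of $A$ enters. Proceeding by induction on the Hirsch filtration, assume $\widetilde{H}$ has been constructed on $A(q'-1)$ and $A(q') = A(q'-1) \otimes \bigwedge(E)$. For each generator $e \in E$ I must choose $\widetilde{H}(e) \in (\Omega(\Delta^2)_R \otimes B)^1$ with $d\widetilde{H}(e) = \widetilde{H}(de)$ and $(\rho \otimes \id_B)(\widetilde{H}(e)) = H_\partial(e)$. Surjectivity of $\rho \otimes \id_B$ yields some preimage $\xi$ of $H_\partial(e)$; the defect $\omega := d\xi - \widetilde{H}(de)$ is a closed element of degree $2$ in the kernel $K^\bullet := \ker(\rho \otimes \id_B)$, and its class in $H^2(K^\bullet)$ is the obstruction to adjusting $\xi$ by an element of $K^1$. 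Since $\rho$ is a quasi-isomorphism of termwise-free $R$-cdga's, the kernel $K^\bullet$ is acyclic in positive degrees, so $H^2(K^\bullet) = 0$ and the obstruction vanishes; freeness of $E$ then allows the choices on generators to extend multiplicatively to $A(q')$. In the augmented case one fixes an augmentation vertex of $\Omega(\Delta^2)_R$ matching those at the endpoints of $R(t,dt)$ and makes all choices within the augmentation ideal, using the same acyclicity.
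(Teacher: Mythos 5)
Your proof is correct and follows essentially the same route as the paper: the paper's algebra $C := (R(t_0,dt_0)\otimes_R R(t_1,dt_1)\otimes_R R(t_2,dt_2))/(\sum_i t_i -1, \sum_i dt_i)$ is exactly your $\Omega(\Delta^2)_R$, its map to $R(t_1,dt_1)\times_R R(t_2,dt_2)$ is your horn restriction $\rho$, and transitivity is obtained by lifting the pair of homotopies through this surjective quasi-isomorphism and restricting to the remaining edge. The only difference is that where you prove the lifting step inline by induction along the Hirsch filtration with the obstruction in $H^2$ of the (acyclic) kernel, the paper simply cites the standard lifting lemma for $(1,q)$-minimal cdga's against surjective quasi-isomorphisms.
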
 

\begin{proof}
We only recall the proof of transitivity given in \cite[Prop. 12.7]{fht} which we use later. 
Assume that we are given (augmented) morphisms $f_0, f_1, f_2: A \lra B$, a 
Sullivan homotopy $h_1: A \lra R(t_1,dt_1) \otimes B$ between 
$f_0$ and $f_1$, and a 
Sullivan homotopy $h_2: A \lra R(t_2,dt_2) \otimes B$ between 
$f_1$ and $f_2$. Then $h_1$ and $h_2$ naturally induce the morphism 
$$ h_1 \times h_2: A \lra (R(t_1, dt_1) \times_R R(t_2, dt_2)) \otimes_R B, $$
where $R(t_1, dt_1) \times_R R(t_2, dt_2)$ denotes the 
fiber product with respect to the map $t_1 \mapsto 1, dt_1 \mapsto 0$ and 
the map $t_2 \mapsto 0, dt_2 \mapsto 0$. Define the $R$-cdga $C$ by 
$$ C := 
(R(t_0,dt_0) \otimes_R R(t_1,dt_1) \otimes_R R(t_2,dt_2))/ 
(\sum_{i=0}^2t_i - 1, \sum_{i=0}^2dt_i) $$
and consider the morphism 
$$ p: C \lra R(t_1, dt_1) \times_R R(t_2, dt_2)$$ 
defined by 
$t_0 \mapsto (1-t_1, 1-t_2), t_1 \mapsto (t_1,0), t_2 \mapsto (0,t_2). $ 
Then we have the diagram 
\begin{equation*}
\begin{CD}
@. C \otimes_R B \\ 
@. @V{p \otimes \id}VV \\ 
A @>{h_1 \times h_2}>> (R(t_1, dt_1) \times_R R(t_2, dt_2)) \otimes_R B, 
\end{CD}
\end{equation*} 
and one sees that the map $p \otimes \id$ is a surjective quasi-isomorphism. 
Using this fact, one proves (\cite[10.4]{gm}, \cite[Lem. 2.5]{nagm}, \cite[Lem. 12.4]{fht}) that 
there exists a morphism $q: A \lra C \otimes_R B$ which makes the above 
diagram commutative. Then the composition of $q$ with the morphism 
$C \otimes_R B \lra R(t,dt) \otimes_R B$ given by $t_0 \mapsto 1-t, t_1 \mapsto 0, 
t_2 \mapsto t$ gives a Sullivan homotopy between $f_0$ and $f_2$. 
\end{proof}

The following proposition (\cite[Prop. 2.9]{nagm}) assures the uniqueness of 
$(1,q)$-minimal model. 

\begin{prop}\label{prop:1q}
Let $A, A'$ be augmented $R$-cdga's and let us assume given the following 
augmented diagram of augmented $R$-cdga's 
\begin{equation}\label{eq:20190203-1}
\begin{CD}
A @>f>> A' \\ 
@A{\rho}AA @A{\rho'}AA \\ 
M(q) @. M'(q),
\end{CD}
\end{equation}
where $\rho, \rho'$ are $(1,q)$-minimal models. Then there exists 
a unique morphism $\varphi: M(q) \lra M'(q)$ which admits 
a Sullivan homotopy 
$h: f \circ \rho \simeq_{\rm Su} \rho' \circ \varphi$, and 
such a Sullivan homotopy $h$ is unique. 
Moreover, we have the following: \\ 
$(1)$ If $f$ is a quasi-isomorphism, $\varphi$ is an isomorphism. \\
$(2)$ If $f_1: A \longrightarrow A'$ is another morphism of augmented $R$-cdga's 
which is Sullivan homotopic to $f$ and $\varphi_1: M(q) \longrightarrow M'(q)$ 
is the morphism satisfying the conclusion of the proposition with $f$ replaced by $f_1$, 
then $\varphi_1$ is equal to $\varphi$.
\end{prop}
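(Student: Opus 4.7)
The plan is to proceed by induction on $q$, with the base case $q=0$ being trivial since $M(0) = M'(0) = R$ forces $\varphi = \id$ and $h$ the constant Sullivan homotopy. For the inductive step, we may write $M(q) = M(q-1) \otimes \bigwedge(E)$ and $M'(q) = M'(q-1) \otimes \bigwedge(E')$ as Hirsch extensions in degree $1$, and assume we have already constructed a unique $\varphi_{q-1}: M(q-1) \to M'(q-1)$ together with a unique Sullivan homotopy $h_{q-1}: f \circ \rho|_{M(q-1)} \simeq_{\rm Su} \rho'|_{M'(q-1)} \circ \varphi_{q-1}$. The main task will be to extend $(\varphi_{q-1}, h_{q-1})$ uniquely to a compatible pair $(\varphi, h)$ on $M(q)$.

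Since $M(q)$ is obtained from $M(q-1)$ by adjoining the free $R$-module $E$ in degree $1$, defining $\varphi$ will reduce to giving an $R$-linear map $\lambda: E \to M'(q)^1$ with $d_{M'(q)} \circ \lambda = \varphi_{q-1} \circ d_{M(q)}|_E$; extending $h_{q-1}$ to $h$ will analogously reduce to choosing, for each $e \in E$, a degree $1$ element of $R(t,dt) \otimes_R M'(q)$ with prescribed differential and prescribed boundary values under $p_0$ and $p_1$. I plan to package these simultaneous lifting problems into the mapping complex formalism from the discussion preceding Lemma \ref{lem:eqrel}, so that for each $e \in E$ the obstruction to a compatible extension becomes a class in a relative $H^2$ group mapping to both $H^2(M'(q-1), A')$ and $H^2(M(q-1), A)$; the defining conditions that $H^2(M(q-1), A) \to H^2(M(q), A)$ and $H^2(M'(q-1), A') \to H^2(M'(q), A')$ vanish, combined with the fact that $H^1(\rho)$ and $H^1(\rho')$ are isomorphisms, should force this obstruction to be zero. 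The same argument applied to the difference of two candidate extensions will give the uniqueness of both $\varphi$ and $h$.

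The main obstacle will be to set up the obstruction theory precisely, tracking how the new generators of $E$ interact with the already-constructed homotopy $h_{q-1}$ when forming a homotopy in $R(t,dt) \otimes_R M'(q)$; the vanishing conditions built into the definition of $(1,q)$-minimal model are, essentially by design, exactly what is needed. For part (1), once $\varphi$ is constructed, I would observe that if $f$ is a quasi-isomorphism then $f \circ \rho: M(q) \to A'$ is itself a $(1,q)$-minimal model of $A'$, and the uniqueness statement just established (applied with $f$ replaced by $\id_{A'}$ and the two minimal models $f \circ \rho, \rho'$) shows that $\varphi$ is the canonical comparison between two $(1,q)$-minimal models of the same augmented $R$-cdga; an inductive argument along the Hirsch filtration will then show $\varphi$ is an isomorphism, since at each stage the generating free $R$-module is determined up to isomorphism by $H^1(A')$ and the relevant kernel in $H^2$. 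For part (2), it will suffice to invoke uniqueness: if $f_1 \simeq_{\rm Su} f$, concatenating this Sullivan homotopy (composed with $\rho$) with a Sullivan homotopy witnessing $f \circ \rho \simeq_{\rm Su} \rho' \circ \varphi$ via the construction reviewed in the proof of Lemma \ref{lem:eqrel} produces a Sullivan homotopy $f_1 \circ \rho \simeq_{\rm Su} \rho' \circ \varphi$, so $\varphi$ also satisfies the conclusion with $f_1$ in place of $f$, and uniqueness of $\varphi_1$ then forces $\varphi_1 = \varphi$.
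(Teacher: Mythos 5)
Your proposal re-derives from scratch the core existence-and-uniqueness statement that the paper simply quotes: the published proof handles everything except claims (1) and (2) by citing Navarro Aznar \cite[Prop.~2.9]{nagm}, reduces (1) to the cited special case $A=A'$, $f=\id$ by reading the square as a comparison of the two $(1,q)$-minimal models $f\circ\rho$ and $\rho'$ of $A'$ (the same observation you make), and proves (2) exactly as you do, by chaining Sullivan homotopies via the construction in Lemma \ref{lem:eqrel} and invoking the uniqueness of $\varphi_1$. Your inductive, obstruction-theoretic argument along the Hirsch filtration is the standard proof sitting behind that citation, and it does go through in the relative setting because the modules adjoined at each stage are free over $R$; what it buys is self-containedness, at the cost of redoing work the paper deliberately outsources.

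Two points in your sketch need repair before it is a proof. First, the Sullivan homotopy $h$ is a morphism $M(q)\to R(t,dt)\otimes_R A'$, not into $R(t,dt)\otimes_R M'(q)$ as written twice in your plan; the simultaneous lifting problem for a new generator $e$ is to choose $\varphi(e)\in M'(q)^1$ and $h(e)\in (R(t,dt)\otimes_R A')^1$ with prescribed differentials and with $p_0h(e)=f\rho(e)$, $p_1h(e)=\rho'\varphi(e)$, and the obstruction class must be set up accordingly. Second, the inductive hypothesis ``a unique $\varphi_{q-1}:M(q-1)\to M'(q-1)$'' is stronger than what uniqueness of the full $\varphi$ gives you: a competing morphism $\psi:M(q)\to M'(q)$ admitting a Sullivan homotopy to $f\circ\rho$ need not carry $M(q-1)$ into $M'(q-1)$, so the uniqueness argument (for $\varphi$, for $h$, and for the stage-by-stage isomorphism claim in your treatment of (1)) has to be run with target $M'(q)$ throughout rather than assumed to be filtration-compatible. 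These are exactly the points where Navarro Aznar's argument does the work; either write out the obstruction cocycles and the uniqueness step explicitly with the correct targets, or cite \cite[Prop.~2.9]{nagm} as the paper does.
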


\begin{proof}
The proposition, except for the last two claims, is proven in 
\cite[Prop. 2.9]{nagm}. The claim $(1)$ is proven in \cite[Prop. 2.9]{nagm} 
in the case $A = A'$ and $f = \id$, 
and we can reduce the claim $(1)$ in general case to this case by considering the diagram 
\eqref{eq:20190203-1} as the diagram 
\begin{equation*}
\begin{CD}
A' @>{\id}>> A' \\ 
@A{f \circ \rho}AA @A{\rho'}AA \\ 
M(q) @. M'(q). 
\end{CD}
\end{equation*}

Next we prove the claim (2). If $f \simeq_{\rm Su} f_1$, we have a chain of Sullivan homotopies 
$f_1 \circ \rho  \simeq_{\rm Su}  f  \circ \rho  \simeq_{\rm Su}  \rho^{\prime}\circ  \varphi$. 
So $\varphi$ satisfies the condition required for $\varphi_1$. Thus 
$\varphi = \varphi_1$ by the uniqueness assertion for $\varphi_1$. 
\end{proof}

Next we recall some properties of sheaves of cdga's \cite[(2.14)]{nagm}.  
Let $S$ be a scheme over a field $k$ of characteristic zero. 
Then one can define the notion of a sheaf of (augmented) $\cO_S$-cdga's 
and that of a(n augmented) morphism between sheaves of $\cO_S$-cdga's in usual way. 

We will need a notion of ho-morphism. 
A covering sieve of opens of $S$ is an open covering 
$\cU = \{U_{\alpha}\}_{\alpha}$ of $S$ such that, for 
any $U \in \cU$ and any open $U' \subseteq U$, 
$U'$ belongs to $\cU$. Then, 
for sheaves of (augmented) $\cO_S$-cdga's $\cA, \cB$, 
a(n augmented) ho-morphism $f: \cA \lra \cB$ 
is defined to be a family of (augmented) morphisms 
$$ f(U): \cA(U) \lra \cB(U) $$
of $\cO_S(U)$-cdga's for every affine $U \in \cU$ 
such that, for any two affines $U' \subseteq U$ in $\cU$, the map
$\cB(U' \hra U) \circ f(U): \cA(U) \lra \cB(U')$ admits a Sullivan homotopy with  
$f(U') \circ \cA(U' \hra U)$. 
(Here $\cA(U' \hra U): \cA(U) \lra \cA(U'), \cB(U' \hra U): \cB(U) \lra \cB(U')$ 
are restriction morphisms.) 
Note that our definition of ho-morphism is 
different from that in \cite{nagm} in the sense that, in our definition,  
the morphisms $f(U)$ are defined only on affine $U$'s in $\cU$. 
A morphism is a ho-morphism, and a ho-morphism naturally induces 
maps $H^i(\cA) \lra H^i(\cB) \, (i \in \N)$ 
between cohomology sheaves. Hence the notion of 
quasi-isomorphism makes sense for ho-morphisms.

A Hirsch extension of sheaves of (augmented) $\cO_S$-cdga's
is an injective (augmented) morphism $\cA \hra \cB$ for which 
there exists a covering sieve of opens $\cU$ of $S$ such that, for each $U \in \cU$, the map $\cA(U) \hra \cB(U)$ is  
a Hirsch extension in the previous sense. 
A sheaf of (augmented) $\cO_S$-cdga's $\cA$ is called $(1,q)$-minimal 
if there exists a sequence of Hirsch extensions 
\begin{equation}\label{eq:minfil-sheaf}
\cO_S = \cA(0) \subseteq \cA(1) \subseteq \cdots \subseteq \cA(q) = \cA. 
\end{equation}
Also, a sheaf of (augmented) $\cO_S$-cdga's $\cA$ is called 
$1$-minimal if 
$\cA$ is the union of $(1,q)$-minimal 
sheaves of (augmented) $\cO_S$-cdga's 
$(q \in \N)$  
with respect to Hirsch extensions as above. 

A $(1,q)$-minimal model of a(n augmented) sheaf of 
$\cO_S$-cdga's $\cA$ is a(n augmented) 
ho-morphism 
$$ \rho_q: \cM(q) \lra \cA $$
from a $(1,q)$-minimal sheaf of (augmented) $\cO_S$-cdga's 
$\cM(q)$ such that there exists a covering sieve of opens 
$\cU$ of $S$ with any $\rho_q(U): \cM(q)(U) \hra \cA(U)$ with affine 
$U \in \cU$ a $(1,q)$-minimal model in the previous sense. 
A $1$-minimal model of a(n augmented) sheaf of 
$\cO_S$-cdga's $\cA$ is a $1$-minimal (augmented) 
sheaf of $\cO_S$-cdga's $\cM = \bigcup_q \cM(q)$ 
endowed, for each $q \in \N$, with a ho-morphism 
$$ \rho_q: \cM(q) \lra \ \cA $$
with respect to a covering sieve of opens $\cU_q$ of $S$ 
such that $\cU_q \subseteq \cU_{q-1}$ and that 
$\rho_q(U)|_{\cM(q-1)(U)} = \rho_{q-1}(U)$ for 
any $U \in \cU_q$ affine. 

For two ho-morphisms $f, g: \cA \lra \cB$ of sheaves of 
(augmented) $\cO_S$-cdga's, a Sullivan homotopy between them is 
a family of Sullivan homotopies $f(U) \simeq_{\rm Su} g(U)$ \, 
($U \in \cU$ affine) for some covering sieve of opens $\cU$ of $S$. 
We denote a (sheaf version of) Sullivan homotopy $h$ between $f$ and $g$ 
also by $h: f \simeq_{\rm Su} g$. 

Then Lemma \ref{lem:eqrel} implies that, when 
$\cA$ is (augmented and) $(1,q)$-minimal, 
the relation on the set of (augmented) ho-morphisms $\cA \lra \cB$ 
defined by Sullivan homotopy is an equivalence relation. 

To prove a sheaf version of Proposition \ref{prop:1q}, we need to impose 
the quasi-coherence condition on sheaves of $\cO_S$-cdga's. 


\begin{prop}\label{prop:1qs}
Let $\cA, \cA'$ be quasi-coherent sheaves of augmented $\cO_S$-cdga's 
and let us assume given the following 
augmented diagram of ho-morphisms 
\begin{equation*}
\begin{CD}
\cA @>f>> \cA' \\ 
@A{\rho}AA @A{\rho'}AA \\ 
\cM(q) @. \cM'(q),
\end{CD}
\end{equation*}
where $\rho, \rho'$ are $(1,q)$-minimal models. 
We assume moreover that 
$\cM(q), \cM'(q)$ are quasi-coherent, as well as 
all the sheaves of $\cO_S$-cdga's appearing in 
their filtration. Then there exists 
a unique morphism $\varphi: \cM(q) \lra \cM'(q)$ 
which admits 
a Sullivan homotopy 
$h: f \circ \rho \simeq_{\rm Su} \rho' \circ \varphi$, and 
such a Sullivan homotopy $h$ is unique. 
Moreover, we have the following: \\ 
$(1)$ If $f$ is a quasi-isomorphism, $\varphi$ is an isomorphism. \\
$(2)$ If $f_1: \cA \longrightarrow \cA'$ is another ho-morphism of sheaves of augmented $\cO_S$-cdga's 
which is Sullivan homotopic to $f$ and $\varphi_1: \cM(q) \longrightarrow \cM'(q)$ 
is the morphism satisfying the conclusion of the proposition with $f$ replaced by $f_1$, 
$\varphi_1$ is equal to $\varphi$.
\end{prop}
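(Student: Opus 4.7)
My plan is to deduce the proposition by applying Proposition \ref{prop:1q} locally on a sufficiently fine covering sieve of affine opens and then gluing, with quasi-coherence playing the decisive role in passing from homotopical to strict compatibilities. First I would choose a covering sieve $\cU$ of affine opens of $S$ fine enough that $f$, $\rho$ and $\rho'$ are defined on $\cU$ and the $(1,q)$-minimal filtrations of $\cM(q)$ and $\cM'(q)$ restrict, over each $U\in\cU$, to $(1,q)$-minimal filtrations of $\cO_S(U)$-cdga's. For each such $U$, Proposition \ref{prop:1q} supplies a unique morphism $\varphi(U)\colon\cM(q)(U)\lra\cM'(q)(U)$ admitting a Sullivan homotopy $h(U)\colon f(U)\circ\rho(U)\simeq_{\rm Su}\rho'(U)\circ\varphi(U)$, and $h(U)$ itself is unique.

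To glue the $\varphi(U)$'s, I would fix affines $U'\subseteq U$ in $\cU$ and aim to show the strict equality $\cM'(q)(U'\hra U)\circ\varphi(U)=\varphi(U')\circ\cM(q)(U'\hra U)$. Quasi-coherence gives $\cM(q)(U')=\cM(q)(U)\otimes_{\cO_S(U)}\cO_S(U')$, so $\cM'(q)(U'\hra U)\circ\varphi(U)$ factors uniquely through $\cM(q)(U')$ as $\widetilde\varphi(U)\circ\cM(q)(U'\hra U)$ for a uniquely determined morphism $\widetilde\varphi(U)\colon\cM(q)(U')\lra\cM'(q)(U')$. Concatenating $h(U)$ (composed with $\cA'(U'\hra U)$) with the Sullivan homotopies provided by the ho-morphism structures of $f$, $\rho$ and $\rho'$, and using transitivity (Lemma \ref{lem:eqrel}, applicable since $\cM(q)(U)$ is $(1,q)$-minimal), should yield a Sullivan homotopy $f(U')\circ\rho(U')\circ\cM(q)(U'\hra U)\simeq_{\rm Su}\rho'(U')\circ\widetilde\varphi(U)\circ\cM(q)(U'\hra U)$. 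Factoring this homotopy itself through $\cM(q)(U')$ via quasi-coherence, the uniqueness in Proposition \ref{prop:1q} then forces $\widetilde\varphi(U)=\varphi(U')$. Quasi-coherence of $\cM(q)$ and $\cM'(q)$ will then assemble the $\varphi(U)$'s into a morphism of sheaves $\varphi\colon\cM(q)\lra\cM'(q)$, and the $h(U)$'s into the required Sullivan homotopy $h$.

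The remaining assertions should reduce routinely to their analogues in Proposition \ref{prop:1q}. Uniqueness of $\varphi$ and of $h$ will follow from local uniqueness. For (1), a quasi-isomorphism $f$ of ho-morphisms gives each $f(U)$ a quasi-isomorphism on a suitable refinement of $\cU$, so Proposition \ref{prop:1q}(1) makes each $\varphi(U)$ an isomorphism and hence $\varphi$ is an isomorphism. For (2), Sullivan homotopic $f,f_1$ restrict to Sullivan homotopic $f(U),f_1(U)$ on a refinement of $\cU$, and Proposition \ref{prop:1q}(2) forces $\varphi(U)=\varphi_1(U)$ locally, hence globally.

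The principal obstacle, as I see it, is the gluing compatibility itself: Proposition \ref{prop:1q} determines $\varphi(U)$ uniquely only after the existence of a Sullivan homotopy is assumed, whereas gluing a morphism of sheaves demands the strict equality between $\cM'(q)(U'\hra U)\circ\varphi(U)$ and $\varphi(U')\circ\cM(q)(U'\hra U)$. Quasi-coherence is precisely what lets me upgrade the homotopical uniqueness on $\cM(q)(U)$ to a genuine uniqueness on $\cM(q)(U')$ via extension of scalars, after which Proposition \ref{prop:1q}'s uniqueness can be re-invoked at the $U'$-level to produce the desired strict equality.
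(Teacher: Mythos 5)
Your proposal is correct and follows essentially the same route as the paper: apply Proposition \ref{prop:1q} over each affine of a common covering sieve, then glue by concatenating the Sullivan homotopies coming from the ho-morphism structures of $f$, $\rho$, $\rho'$ with the local homotopy (using transitivity as in Lemma \ref{lem:eqrel}) and invoking the uniqueness statements of Proposition \ref{prop:1q}. The only minor difference is organizational, in the gluing step: you base-change $\varphi(U)$ along $\cO_S(U)\to\cO_S(U')$ via quasi-coherence and then apply the uniqueness of Proposition \ref{prop:1q} entirely over $U'$, whereas the paper applies Proposition \ref{prop:1q}(2) directly to the two composites $\cM(q)(U)\to\cM'(q)(U')$ compared against the Sullivan-homotopic maps $\cA(U)\to\cA'(U')$.
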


\begin{proof}
We can suppose that all the ho-morphisms in the diagram are 
defined with respect to the same sieve of opens $\cU$. 
Take an affine $U \in \cU$. By the quasi-coherence, 
$H^i(\cA(U)) = H^i(\cA)(U)$ and the same holds for 
$\cM(q)$. Thus 
the morphisms $\rho(U): \cM(q)(U) \lra \cA(U), 
\rho'(U): \cM'(q)(U) \lra \cA'(U)$ are $(1,q)$-minimal models. 
Hence we are in the situation of 
Proposition \ref{prop:1q} and so we have the unique 
morphism $\varphi(U): \cM(q)(U) \lra \cM'(q)(U)$ satisfying 
the conclusion of that proposition. 

To finish the proof of the proposition, it suffices to prove that 
the morphisms $\varphi(U)$'s glue to give a morphism of 
sheaves $\varphi: \cM(q) \lra \cM'(q)$. To do so, it suffices to prove that, 
for an inclusion of affine opens $U' \subset U$ in $\cU$, the composite  
$\cM(q)(U) \os{\varphi(U)}{\lra} \cM'(q)(U) \longrightarrow \cM'(q)(U')$ 
is equal to the composite 
$\cM(q)(U) \lra \cM(q)(U') \os{\varphi(U')}{\longrightarrow} \cM'(q)(U')$.

In the following, we call that a square  
$$
\xymatrix{ 
A \ar[r]^{\alpha} & A' \\ 
B \ar[r]^{\beta} \ar[u]^{\gamma} & B' \ar[u]^{\gamma'}
}
$$
of augmented $R$-cdga's is Sullivan homotopically commutative if 
$\alpha \circ \gamma$ is Sullivan homotopic to $\gamma' \circ \beta$. 
We consider the following diagrams:  
$$
\xymatrix{ 
  \cA(U) \ar[r]^-{f(U)}
&  \cA'(U) \ar[r]  &  \cA'(U') \\
  \cM(q)(U) \ar[r]^-{\varphi(U)} \ar[u]^-{\rho(U)} _{ \quad \quad\mathrm (A)} & 
\cM'(q)(U) \ar[r] \ar[u]^-{\rho'(U)}
_{ \quad \quad\mathrm (B)}
&\cM'(q)(U'), \ar[u]^-{\rho'(U')}}
$$
$$
\xymatrix{ 
  \cA(U) \ar[r] &  \cA(U') \ar[r]^-{f(U')}  &  \cA'(U') \\
  \cM(q)(U) \ar[r] \ar[u]^-{\rho(U)} _{ \quad \quad\mathrm (C)} & 
  \cM(q)(U') \ar[r]^-{\varphi(U')} \ar[u]^-{\rho(U')} _{ \quad \quad\mathrm (D)} & 
  \cM'(q)(U'). \ar[u]^-{\rho'(U')} }
$$
The squares $(A)$ and $(D)$ are Sullivan homotopically commutative by Proposition \ref{prop:1q}, and 
the fact that $\rho$ and $\rho'$ are ho-morphisms implies that the squares $(B)$ and $(C)$ are also 
Sullivan homotopically commutative. Since Sullivan homotopies behave well with respect to composition, 
we see that the big rectangles in the above two diagrams are Sullivan homotopically commutative. 
On the other hand, since $f$ is a ho-morphism, the compositions of the upper horizontal maps of the two diagrams 
are Sullivan homotopic.  Then Proposition \ref{prop:1q}(2) implies that the compositions 
of the bottom horizontal maps of the two diagrams are the same. So the proof of the proposition is finished. 
\end{proof}

We prove a theorem on the existence of $(1,q)$-minimal model and 
$1$-minimal model, the idea of the proof is based on the methods of   
\cite[Thm. 5.6]{nagm}. 
Let $g: S \lra \Spec k$ be as in Notation \ref{notation}, 
and let $p_j^m: S^m(1) \lra S \, (j = 1,2, m \in \N), 
p_{j,j'}^m: S^m(2) \lra S^m(1) \, (1 \leq j < j' \leq 3, m \in \N)$ 
be as in Section 1. 
Let $\cA$ be a quasi-coherent sheaf of augmented $\cO_S$-cdga's and 
for $r=1,2$, let $\{\cA^m(r)\}_m$ be a projective system of 
quasi-coherent sheaves of augmented $\cO_{S^m(r)}$-cdga's. 
Assume that $H^0(\cA) = \cO_S, H^0(\cA^m(r)) = \cO_{S^m(r)}$. 
Also, assume that we are given 
a ho-morphism 
$$ \eta: \cA^0(1) \lra \cA $$
and 
a projective system of ho-morphisms
\begin{align*}
& \{\eta_j^m: (p_j^m)^*\cA \lra \cA^m(1)\}_m \quad (j =1,2), \\
& \{\eta_{j,j'}^m: (p_{j,j'}^m)^*\cA^m(1) \lra \cA^m(2)\}_m \quad (1 \leq j < j' \leq 3) 
\end{align*}
which are all quasi-isomorphisms such that
$\eta \circ \eta_1^0 \simeq_{\rm Su}\id, \eta \circ \eta_2^0 \simeq_{\rm Su}\id$   
(note that $p_1^0 = p_2^0 = \id$) and that, for any $m$, the diagram 
\begin{equation}\label{eq:big}
\xymatrix{
& (p_{2,3}^m)^*(p_2^m)^*\cA \ar[r]^{(p_{2,3}^m)^*\eta_2^m} \ar@{=}[ld] & (p_{2,3}^m)^*\cA^m(1) \ar[d]^{\eta_{2,3}^m}
& (p_{2,3}^m)^*(p_1^m)^*\cA \ar[l]_{(p_{2,3}^m)^*\eta_1^m} \ar@{=}[rd] \\ 
(p_{1,3}^m)^*(p_2^m)^*\cA \ar[rd]^{(p_{1,3}^m)^*\eta_2^m}  & & \cA^m(2) & & (p_{1,2}^m)^*(p_2^m)^*\cA \ar[ld]_{(p_{1,2}^m)^*\eta_2^m}  \\ 
& (p_{1,3}^m)^*\cA^m(1) \ar[ru]^{\eta_{1,3}^m} & & (p_{1,2}^m)^*\cA^m(1) 
\ar[lu]_{\eta_{1,2}^m} \\ 
& (p_{1,3}^m)^*(p_1^m)^*\cA \ar[u]^{(p_{1,3}^m)^*\eta_1^m}  \ar@{=}[rr] & & 
 (p_{1,2}^m)^*(p_1^m)^*\cA \ar[u]^{(p_{1,2}^m)^*\eta_1^m}  
}\end{equation}
is commutative up to Sullivan homotopy. This commutativity up to Sullivan homotopy  
implies that, for any $i$, 
the composites
$$ H^i(\eta_1^m)^{-1} \circ H^i(\eta_2^m): 
(p_2^m)^*H^i(\cA) \os{\cong}{\lra} H^i(\cA^m(1)) 
\os{\cong}{\lra}  (p_1^m)^*H^i(\cA) \quad (m \in \N) $$
define a stratification on $H^i(\cA)$. 
(Remember that, thanks to the hypothesis (B) and the proof of 
Proposition \ref{prop:trinity}, the maps $p_j^m$ are flat for $j=1,2$.) 
We assume moreover that, 
with this stratification, $H^i(\cA)$ belongs to $\MICn(S/k)$. 

\begin{thm}\label{thm:minmod}
Let the situation be as above. Then there exist unique 
$(1,q)$-minimal models 
$$ \rho: \cM(q) \lra \cA, \quad 
\rho^m(r): \cM^m(r)(q) \lra \cA^m(r) \, (r=1,2, m \in \N)$$
and isomorphisms 
\begin{align*}
& \zeta_j^m: (p_j^m)^*\cM(q) \os{\cong}{\lra} \cM^m(1)(q) \quad (j = 1,2, m \in \N), \\ 
& \zeta_{j,j'}^m: (p_{j,j'}^m)^*\cM^m(1)(q) \os{\cong}{\lra} \cM^m(2)(q) \quad (1 \leq j< j'\leq 3, 
m \in \N) 
\end{align*}
which satisfy the following conditions$:$ \\
$(1)$ \, For each $m' \leq m$, there exists a unique 
morphism $\cM^m(r)(q) \lra \cM^{m'}(r)(q)$ which is 
compatible with 
$\rho^m(r)$, $\rho^{m'}(r)$ 
up to unique Sullivan homotopy and compatible with 
$\zeta_j^m$, $\zeta_j^{m'}$ 
and $\zeta_{j,j'}^m$, $\zeta_{j,j'}^{m'}$ as morphisms. \\ 
$(2)$ \, For $m \in \N$, the augmented diagrams 
\begin{equation*}
\begin{CD}
(p_j^m)^*\cA @>{\eta_j^m}>> \cA^m(1) \\ 
@A{(p_j^m)^*\rho}AA @A{\rho^m(1)}AA \\ 
(p_j^m)^*\cM(q) @>{\zeta_j^m}>> \cM^m(1)(q)
\end{CD}
\qquad 
\begin{CD}
(p_{j,j}^m)^*\cA^m(1) @>{\eta_{j,j}^m}>> \cA^m(2) \\ 
@A{(p_{j,j}^m)^*\rho^m(1)}AA @A{\rho^m(2)}AA \\ 
(p_{j,j'}^m)^*\cM^m(1)(q) @>{\zeta_{j,j'}^m}>> \cM^m(2)(q)
\end{CD}
\end{equation*}
are commutative up to unique Sullivan homotopy. \\
$(3)$ \, $\zeta_1^0 = \zeta_2^0$ 
via the canonical identification $p_1^0 = p_2^0 = \id$, and 
there exists a commutative diagram as \eqref{eq:big} 
with $\cA, \cA^m(r), \eta_j^m, \eta_{j,j'}^m$ replaced by 
$\cM(q), \cM^m(r)(q), \zeta_j^m, \zeta_{j,j'}^m$ respectively. 
$($Hence, for each $i \in \N$, the isomorphisms 
$\{(\zeta_1^m)^{-1} \circ \zeta_2^m\}_m$ induce a 
structure of stratification on 
the degree $i$ part $\cM(q)^i$ of $\cM(q).)$ \\
$(4)$ \, With respect to the structure of stratification defined in $(3)$, 
$\cM(q)^i$ belongs to $\MICn(S/k)$. 
\end{thm}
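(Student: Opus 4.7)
The plan is to prove the theorem by induction on $q$. The base case $q=0$ is immediate: we set $\cM(0) := \cO_S$, $\cM^m(r)(0) := \cO_{S^m(r)}$, take all augmentations to be the identity, and let the $\zeta$'s be the canonical identifications. Conditions (1)--(4) are trivially satisfied in this case. For the inductive step, assume we have constructed all the required $(1,q-1)$-minimal data. We shall pass from level $q-1$ to $q$ by a single Hirsch extension whose generating module is controlled by the cone cohomology, and whose stratification data is dictated by the ambient $\MICn(S/k)$-structure.

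First, I would identify $\cH^2(\cM(q-1),\cA)$. By the defining property of a $(1,q-1)$-minimal model, the map $\rho$ induces isomorphisms on $\cH^i$ for $i = 0,1$, so the long exact sequence of the mapping cone gives a short exact sequence
$$ 0 \lra \cH^2(\cM(q-1),\cA) \lra \cH^2(\cM(q-1)) \lra \cH^2(\cA). $$
By the inductive condition (4), each $\cM(q-1)^i$ lies in $\MICn(S/k)$, so $\cH^2(\cM(q-1))$ is the kernel/cokernel of a pair of maps in $\MICn(S/k)$; by hypothesis, $\cH^2(\cA)$ also lies in $\MICn(S/k)$. Using hypothesis (E), which makes $\MICn(S/k)$ an abelian subcategory of $\wt{\MIC}(S/k)$, the kernel $\cH^2(\cM(q-1),\cA)$ inherits a canonical structure as an object of $\MICn(S/k)$, hence is locally free of finite rank. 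Define $\cE := \cH^2(\cM(q-1),\cA)^{\vee} \in \MICn(S/k)$ and perform the Hirsch extension
$$ \cM(q) := \cM(q-1) \otimes \bigwedge(\cE), $$
where the differential on $\cE$ is a lift of the tautological inclusion $\cE^{\vee \vee} = \cH^2(\cM(q-1),\cA) \hra \cH^2(\cM(q-1))$ to $\cM(q-1)^2$; the augmentation is extended by sending $\cE$ to $0$. Since the added classes die in $\cH^2(\cA)$, we can extend $\rho$ to a ho-morphism $\rho_q : \cM(q) \to \cA$ locally, and we glue via Sullivan homotopy using Proposition~\ref{prop:1qs}; by construction, $\cH^2(\cM(q),\cA) \to \cH^2(\cA)$ is injective and $\rho_q$ is still a quasi-isomorphism on $\cH^0, \cH^1$, so it is a $(1,q)$-minimal model.

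Next, I would perform the analogous construction on each $S^m(r)$, obtaining $\cM^m(r)(q) := \cM^m(r)(q-1) \otimes \bigwedge(\cE^m(r))$, where $\cE^m(r) := \cH^2(\cM^m(r)(q-1),\cA^m(r))^{\vee}$. The crucial identification comes from pulling back the previous step along $p_j^m$: by the inductive isomorphisms $\zeta_j^m$ at level $q-1$, by the quasi-isomorphisms $\eta_j^m$, and by the flatness of $p_j^m$ (which follows from hypothesis (B) as noted in the proof of Proposition~\ref{prop:trinity}), one obtains a canonical isomorphism $\cE^m(1) \cong (p_j^m)^*\cE$, and similarly $\cE^m(2) \cong (p_{j,j'}^m)^*\cE^m(1)$. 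This yields the desired $\zeta_j^m$ and $\zeta_{j,j'}^m$. The cocycle condition in (3) follows because the identifications of $\cE^m(r)$ with pulled-back copies of $\cE$ arise from stratification data on $\cE$ itself inside $\MICn(S/k)$; the big diagram \eqref{eq:big} applied to the cone cohomologies at level $q-1$ then forces the corresponding diagram for $\cE$ to commute. Finally, (4) at level $q$ is automatic since $\cM(q)^i$ is a sum of tensor products of factors in $\MICn(S/k)$, and this category is closed under tensor products; the remaining conditions (1) and (2) reduce to the uniqueness-up-to-Sullivan-homotopy provided by Proposition~\ref{prop:1qs}.

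The main obstacle I anticipate is Step~4: one must verify that the \emph{natural} stratification on $\cE = \cH^2(\cM(q-1),\cA)^{\vee}$, inherited via the abelian category structure of $\MICn(S/k)$, agrees with the stratification forced by the independently constructed $\cE^m(r)$. This boils down to checking that the construction of the cone cohomology sheaf is functorial under base change and compatible with the cocycle diagram \eqref{eq:big}, which in turn requires a careful tracking of Sullivan homotopies across the hexagonal diagram. The uniqueness assertions of Proposition~\ref{prop:1qs} (applied in both the quasi-coherent setting on $S$ and on each $S^m(r)$) are what ultimately pin down the $\zeta$'s and force the compatibilities to hold on the nose rather than merely up to homotopy.
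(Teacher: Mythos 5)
Your proposal follows essentially the same route as the paper's proof: induction on $q$, with $\cM(q)$ obtained from $\cM(q-1)$ by a Hirsch extension along the mapping-cone cohomology $H^2(\cM(q-1),\cA)$, constructed affine-locally on a covering sieve where that sheaf is free, glued by the uniqueness statements of Propositions \ref{prop:1q} and \ref{prop:1qs}; the comparison isomorphisms $\zeta_j^m$, $\zeta_{j,j'}^m$ and the compatibilities (1)--(3) come from Proposition \ref{prop:1qs}, and (4) from the stratification-compatible exact sequence $0 \to \cM(q)^1/\cM(q-1)^1 \to H^2(\cM(q-1)) \to H^2(\cA)$.

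Three small repairs are needed. First, the Hirsch extension must be taken along $H^2(\cM(q-1),\cA)$ itself, not its dual: as written, $\cE := H^2(\cM(q-1),\cA)^{\vee}$ is inconsistent with your own description of the differential (note $\cE^{\vee\vee}=\cE$, not $H^2(\cM(q-1),\cA)$); the dual only enters later, when passing to the Lie algebra $(M^1_{X/S})^{\vee}$. Second, your inductive step uses that $H^i(\cM(q-1)) \to H^i(\cA)$ is an isomorphism for $i=0,1$, which fails for the trivial level-$0$ object, so the passage from $q=0$ to $q=1$ must be treated separately (a Hirsch extension by $H^1(\cA)$ with zero differential, which is exactly the paper's base case); equivalently, one must there use the cone computation $H^2(\cO_S,\cA)\cong H^1(\cA)$ rather than the kernel description, which presupposes $1$-minimality. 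Third, identifying the generating modules $\cE^m(r)\cong (p_j^m)^*\cE$ does not by itself produce the isomorphisms $\zeta_j^m$ of augmented cdga's, since the differentials depend on the chosen sections $\sigma$ and need not correspond under that identification; as you yourself indicate at the end, the correct mechanism (and the one the paper uses) is to apply Proposition \ref{prop:1qs} directly to the two $(1,q)$-minimal models of $\cA^m(1)$, namely $(p_j^m)^*\cM(q)\to(p_j^m)^*\cA\to\cA^m(1)$ and $\cM^m(1)(q)\to\cA^m(1)$ (using flatness of $p_j^m$ and quasi-coherence), whose uniqueness clause also yields the cocycle condition in (3) without presupposing any stratification on $\cE$.
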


\begin{proof}
Since $H^1(\cA)$ belongs to $\MICn(S/k)$, we 
can define the covering sieve of opens $\cU_1$ in $S$ by 
$$\cU_1 := \left\{U \subseteq S \,\left|\, 
\, \text{$H^1(\cA)$ is free on $U$}
\right. \right\}. $$
Take an affine $U \in \cU_1$ and put $A_U := \cA(U)$, $R_U := \cO_S(U)$. 
By the quasi-coherence of $\cA$, $H^1(A_U)$ is a free $R_U$-module. Let 
$\sigma: E := H^1(A_U) \lra Z^1(A_U)$ be a 
section of the canonical projection $Z^1(A_U) \lra H^1(A_U)$, and 
define $M(1)_U$ by $M(1)_U := \bigwedge (E)$ with zero differential. 
Also, define the morphism $M(1)_U \lra A_U$ as the one induced by 
$E \os{\sigma}{\lra} Z^1(A_U) \hra A_U$. Then $M(1)_U$ is a $(1,1)$-minimal 
model of $A_U$. 

Suppose now $U' \subseteq U$ be affine opens in $\cU_1$ and 
consider the following diagram: 
\begin{equation*}
\begin{CD}
A_U \otimes_{R_U} R_{U'} @>>> A_{U'} \\ 
@AAA @AAA \\ 
M(1)_U \otimes_{R_U} R_{U'} @. M(1)_{U'}.
\end{CD}
\end{equation*}
Because $R_{U'}$ is flat over $R_U$ and $\cA$ is quasi-coherent, 
the horizontal map is a quasi-isomorphism, and the left 
vertical map is a $(1,1)$-minimal model. Hence, by 
Proposition \ref{prop:1q}, there is a unique isomorphism 
$M(1)_U \otimes_{R_U} R_{U'} \lra M(1)_{U'}$ which makes 
the diagram commutative up to Sullivan homotopy. 
By unicity, this isomorphism is compatible if we consider 
affine $U'' \subseteq U' \subseteq U$ in $\cU_1$. 
To this data, we can associate a sheaf $\cM(1)$ 
of $\cO_S$-cdga's which is locally free of finite rank on 
each degree (hence quasi-coherent) and we have naturally a ho-morphism of 
$(1,1)$-minimal model
$\rho: \cM(1) \lra \cA$ with respect to $\cU_1$.
We can define the $(1,1)$-minimal models 
$\rho(r)^m: \cM^m(r)(1) \lra \cA^m(r) \, (r = 1,2, m \in \N)$
in the same way. We can define the  
isomorphisms 
\begin{align*}
& \zeta_j^m: (p_j^m)^*\cM(1) \os{\cong}{\lra} \cM^m(1)(1) \quad (j = 1,2, m \in \N), \\ 
& \zeta_{j,j'}^m: (p_{j,j'}^m)^*\cM(1)(1) \os{\cong}{\lra} \cM^m(2)(1) \quad (1 \leq j < j'\leq 3, 
m \in \N) 
\end{align*}
by Proposition \ref{prop:1qs}, and 
the conditions (1), (2), (3) follow also from Proposition \ref{prop:1qs}. 
(We have the equality $\zeta_1^0 = \zeta_2^0$ because we also have 
the isomorphism $\zeta: \cM^0(1)(1) \lra \cM(1)$ compatible with 
$\eta$ up to unique Sullivan homotopy and we see that 
$\zeta \circ \zeta_1^0 = 
\zeta \circ \zeta_2^0 = \id$ by Proposition \ref{prop:1qs}.) 
The condition (4) for $i=1$ is immediate because 
$\cM(1)^1$ (endowed with the stratification $\{(\zeta_1^m)^{-1} \circ \zeta_2^m\}_m$) 
is isomorphic to $H^1(\cA)$ via $\rho$, and this implies the condition (4) for general $i$ 
because 
$\cM(1)^i$'s $(i \in \N)$ are wedge products of $\cM(1)^1$. 
So the theorem holds for $q=1$. 

Next, we prove the theorem for general $q$, assuming the existence of 
$(1,q-1)$-minimal models 
$\cM(q-1) \lra \cA$, $\cM^m(r)(q-1) \lra \cA^m(r) \, (r=1,2, m \in \N)$
and isomorphisms 
\begin{align*}
& (p_j^m)^*\cM(q-1) \os{\cong}{\lra} \cM^m(1)(q-1) \quad (j = 1,2, m \in \N), \\ 
& (p_{j,j'}^m)^*\cM(1)(q-1) \os{\cong}{\lra} \cM^m(2)(q-1) \quad (1 \leq j < j'\leq 3, 
m \in \N) 
\end{align*}
as in the statement of the theorem. 
(Note that $\cM(q-1)$ is locally free of finite rank on each degree 
and hence quasi-coherent.)  Moreover, by minimality, for  the second cohomology group of the mapping fiber we get the equality 
$H^2(\cM(q-1), \cA) = \Ker(H^2(\cM(q-1)) \lra H^2(\cA))$ and it 
is locally free by induction, because both 
$H^2(\cM(q-1)), H^2(\cA)$ belong to $\MICn(S/k)$. 
So we can define the covering sieve of opens $\cU_q$ in $S$ by
$$\cU_q := \left\{U \in \cU_{q-1} \,\left|\, 
\text{$H^2(\cM(q-1), \cA)$ is free on $U$}
\right. \right\}. $$
Take $U \in \cU_q$ affine and 
put $A := \cA(U)$, $M(q-1) := \cM(q-1)(U)$ and $R := \cO_S(U)$.
By the quasi-coherence of $\cA$ and $\cM(q-1)$, 
$H^2(M(q-1), A)$ is free. Let 
\begin{align*}
\sigma: E := H^2(M(q-1), A) & \lra Z^2(M(q-1),A) \\ & := 
\{(m,a) \in M(q-1)^2 \oplus A^1 \,|\, dm = 0, \rho'(m) = da\} 
\end{align*}
(where $d$ denotes the differential of $M(q-1)$ and $A$ and 
$\rho'$ denotes the map $M(q-1) \lra A$) 
be a section of the canonical projection $Z^2(M(q-1),A) \lra H^2(M(q-1),A)$, 
and define $M(q)$ by $M(q) := M(q-1) \otimes \bigwedge (E)$, with 
differential induced by that on $M(q-1)$ and the map 
$$E \os{\sigma}{\lra} Z^2(M(q-1),A) \hra M(q-1)^2 \oplus A^1 
\os{\rm proj.}{\lra} M(q-1)^2. $$
Also, define the morphism $M(q) \lra A$ extending 
$M(q-1) \lra A$ as the one induced by the map 
$$E \os{\sigma}{\lra} Z^2(M(q-1),A) \hra M(q-1)^2 \oplus A^1 
\os{\rm proj.}{\lra} A^1. $$
Then $M(q)$ is a $(1,q)$-minimal 
model of $A$. 
By sheafifying as before using Propositions \ref{prop:1q}, 
we see that 
$M(q)$'s for every affine $U \in \cU_q$ 
form a sheaf $\cM(q)$ and  
that the morphisms $M(q) \lra A$ for every affine $U \in \cU_q$   
form a ho-morphism of $(1,q)$-minimal model
$\rho: \cM(q) \lra \cA$ with respect to $\cU_q$. 
The $(1,q)$-minimal models 
$\rho^m(r): \cM^m(r)(q) \lra \cA^m(r) \, (r=1,2, m \in \N)$
are defined in the same way, 
and the isomorphisms 
\begin{align*}
& \zeta_j^m: (p_j^m)^*\cM(q) \os{\cong}{\lra} \cM^m(1)(q) \quad (j = 1,2, m \in \N), \\ 
& \zeta_{j,j'}^m: (p_{j,j'}^m)^*\cM(1)(q) \os{\cong}{\lra} \cM^m(2)(q) \quad (1 \leq j, j'\leq 3, 
m \in \N) 
\end{align*}
are defined by Proposition \ref{prop:1qs}, and 
the conditions (1), (2), (3) follow also from Proposition \ref{prop:1qs}. 

Finally we check the condition (4). By construction, we have 
the commutative diagram 
\[ 
\xymatrix{
& & 0 \ar[d] \\ 
E \ar[r]^-{\sigma} \ar[d]^d 
\ar@/^20pt/[rr]^-{\cong}
& Z^2(M(q-1),A) \ar[r] & H^2(M(q-1),A) \ar[d] \\ 
Z^2(M(q-1)) \ar[rr] &  & H^2(M(q-1)) \ar[d] \\
& & H^2(A),  
}\] 
where the horizontal arrows other than $\sigma$ are 
the canonical surjections, the right vertical line is 
exact the left vertical arrow $d$ is the differential on $M(q)$. 
Since $E \cong M(q)^1/M(q-1)^1$, 
the differential on $\cM(q)$ induces the exact sequence 
$$ 0 \lra \cM(q)^1/\cM(q-1)^1 \lra H^2(\cM(q-1)) \lra H^2(\cA) $$
which is compatible with the stratification. Thus 
$\cM(q)^1/\cM(q-1)^1$ belongs to $\MICn(S/k)$. Hence so does 
$\cM(q)^1$, and so does $\cM(q)^i$ because 
$\cM(q)^i$'s $(i \in \N)$ are wedge products of $\cM(q)^1$. 
So the proof of the theorem is finished. 
\end{proof}

\begin{cor}\label{cor:minmod}
Let the situation be as above. Then there exist unique 
$1$-minimal models $\cM = \bigcup_q \cM(q)$, 
$\cM^m(r) = \bigcup_q \cM^m(r)(q)$ and isomorphisms 
\begin{align*}
& \zeta_j^m: (p_j^m)^*\cM \os{\cong}{\lra} \cM^m(1) \quad (j = 1,2, m \in \N), \\ 
& \zeta_{j,j'}^m: (p_{j,j'}^m)^*\cM^m(1) \os{\cong}{\lra} \cM^m(2) \quad (1 \leq j, j'\leq 3, 
m \in \N) 
\end{align*}
which satisfy the conditions $(1)$, $(2)$, $(3)$ in Theorem \ref{thm:minmod} and 
the following condition: \\
$(4)$ \, With respect to the structure of stratification defined in $(3)$, 
the degree $i$ part $\cM^i$ of $\cM$ 
belongs to the ind-category of $\MICn(S/k)$. 
\end{cor}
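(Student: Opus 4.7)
The plan is to obtain the corollary by taking unions over $q$ of the data produced by Theorem \ref{thm:minmod}, using the uniqueness statements there to check that everything glues coherently. The heart of the argument is already in the inductive construction in the theorem: $\cM(q)$ was built from $\cM(q-1)$ by an explicit Hirsch extension $\cM(q-1) \hra \cM(q)$, and similarly for $\cM^m(r)(q)$. So one gets, for free, a filtration of the type \eqref{eq:minfil-sheaf}, and one can define
$$ \cM := \bigcup_q \cM(q), \qquad \cM^m(r) := \bigcup_q \cM^m(r)(q). $$
By definition these are $1$-minimal sheaves of augmented $\cO_S$-cdga's (resp. $\cO_{S^m(r)}$-cdga's), and the ho-morphisms $\rho_q, \rho^m(r)_q$ to $\cA, \cA^m(r)$ respectively are, by construction, extensions of the previous ones (with respect to the nested covering sieves $\cU_q \subseteq \cU_{q-1}$ of Theorem \ref{thm:minmod}). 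Hence they assemble into $1$-minimal models $\rho: \cM \lra \cA$ and $\rho^m(r): \cM^m(r) \lra \cA^m(r)$.

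Next I would pass the isomorphisms to the union. For each $q$, Theorem \ref{thm:minmod} produces isomorphisms $\zeta^m_{j,q}: (p_j^m)^*\cM(q) \os{\cong}{\lra} \cM^m(1)(q)$ and $\zeta^m_{j,j',q}: (p_{j,j'}^m)^*\cM^m(1)(q) \os{\cong}{\lra} \cM^m(2)(q)$, and condition (1) of that theorem says that for $q' \leq q$ there is a unique transition morphism $\cM(q) \lra \cM(q')$ (etc.) compatible with $\rho$ and with the $\zeta$'s. Applied to the inclusion $\cM(q-1) \hra \cM(q)$ already sitting inside the construction, this uniqueness forces $\zeta^m_{j,q}$ to restrict to $\zeta^m_{j,q-1}$ on $(p_j^m)^*\cM(q-1)$, and similarly for $\zeta^m_{j,j',q}$. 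Therefore the $\zeta$'s extend (by taking limits over $q$) to isomorphisms
$$ \zeta_j^m: (p_j^m)^*\cM \os{\cong}{\lra} \cM^m(1), \qquad \zeta_{j,j'}^m: (p_{j,j'}^m)^*\cM^m(1) \os{\cong}{\lra} \cM^m(2), $$
and conditions (1), (2), (3) of the statement are inherited directly from the corresponding conditions for each $\cM(q)$. The equality $\zeta_1^0 = \zeta_2^0$ and the commutativity of the analogue of \eqref{eq:big} hold on each piece $\cM(q)$, hence on the union.

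Finally, for condition (4), observe that
$$ \cM^i = \varinjlim_q \cM(q)^i, $$
and each $\cM(q)^i$ lies in $\MICn(S/k)$ by condition (4) of Theorem \ref{thm:minmod}, with transition maps $\cM(q-1)^i \hra \cM(q)^i$ induced by Hirsch extensions; these transition maps are morphisms in $\MICn(S/k)$ because they are compatible with the stratifications (again by the uniqueness part of Theorem \ref{thm:minmod} applied to the pull-back isomorphisms $\zeta^m_j$). Hence $\cM^i$ is canonically an object of the ind-category of $\MICn(S/k)$, proving (4).

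For uniqueness, if $\cM', \cM'^m(r)$ is another collection of data satisfying (1)--(4), then Proposition \ref{prop:1qs} applied in each degree $q$ to the $(1,q)$-truncations yields canonical isomorphisms $\cM(q) \cong \cM'(q)$ compatible with the $\rho$'s (up to unique Sullivan homotopy) and with the $\zeta$'s, and these glue to an isomorphism on the unions. The only point that requires a little care---and is probably the main (mild) obstacle---is that the covering sieves $\cU_q$ shrink with $q$; but by choosing a cofinal system one checks that the ho-morphism $\rho$ is well-defined on the intersection $\bigcap_q \cU_q$, and the isomorphisms $\zeta_j^m$ are globally defined on $S$ as morphisms of quasi-coherent sheaves, so no information is lost in passing to the limit.
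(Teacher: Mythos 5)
Your proposal is correct and follows essentially the same route as the paper, whose proof of this corollary is literally the one-sentence remark that the assertion follows from Theorem \ref{thm:minmod} by taking the union with respect to $q$; your elaboration of how the $\zeta$'s and conditions (1)--(4) pass to the union is exactly what that sentence compresses. The only superfluous (and slightly shaky) point is your closing remark about $\bigcap_q \cU_q$: the paper's definition of a $1$-minimal model of a sheaf of augmented $\cO_S$-cdga's only requires the compatible family of ho-morphisms $\rho_q$ on the pieces $\cM(q)$ with respect to the nested sieves $\cU_q$, so no single ho-morphism on the union (nor any cofinality argument for the shrinking sieves) is needed.
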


\begin{proof}
The assertion immediately follows from Theorem \ref{thm:minmod} by 
taking the union with respect to $q$. 
\end{proof}

\renewcommand{\thefootnote}{\arabic{footnote}}

Now we apply Corollary \ref{cor:minmod} to our geometric situation 
and give the third definition of relatively unipotent de Rham fundamental group. 
Let the notations be as in Notation \ref{notation} and we use the methods and definitions in \cite{naho}.
Let $X = \bigcup_{i \in I}U_i$ be an affine open covering of $X$ 
with $I$ finite such that each $U_i$ is affine over $S$.
Fix an order on $I$ and for $n \in \N$, define $X_n$ by 
$X_n := \coprod_{i_0 < \cdots < i_n, \in I} U_{i_0} \cap \cdots \cap U_{i_n}$. 
Then $X_n$'s form a semi-simplicial\footnote{In \cite{naho}, the terminology `strict simplicial' is used.} scheme $X_{\b}$ and we have an affine  morphism 
$\pi: X_{\b} \lra X$, such that $f \circ \pi$ is also affine. Let 
$\pi_{S}: S_{\b} \lra S$ be the pull-back of $\pi$ by 
$\iota$ (then $\pi_S$ is also affine). Then the section $\iota$ induces 
a surjective morphism of semi-cosimplicial sheaves of $\cO_S$-cdga's 
$$ 
\cA^{\b,*} := f_*\pi_{*}\Omega^{*}_{X_{\b}/S} 
\os{\iota^*}{\lra} \pi_{S *} \cO_{S_{\b}}.  
$$ 
By applying the Thom--Whitney functor $\sTW$
(\cite[\S 3]{naho}), we obtain 
the surjective morphism of sheaves of $\cO_S$-cdga's 
$\sTW(\iota^*): \sTW(\cA^{\b,*}) \lra \sTW(\pi_{S *} \cO_{S_{\b}})$ such that 
$H^0(\sTW(\pi_{S *} \cO_{S_{\b}})) \allowbreak = \cO_S$. 

\begin{defn}\label{def:axs}
Let the notations be as above. 
We define the sheaf of $\cO_S$-cdga's 
$\cA_{X/S} \subseteq \sTW(\cA^{\b,*})$ as the inverse image of 
$\cO_S = H^0(\sTW(\pi_{S *} \cO_{S_{\b}})) \hra \sTW(\pi_{S *} \cO_{S_{\b}})$ 
by $\sTW(\iota^*)$.\footnote{It is assumed in \cite{naho} that $\sTW(\cA^{\b,*})$ is augmented over $\cO_S$ (\cite[\S6]{nagm}). Since this is not the case in general, we need to make this construction.} 
Then, $\cA_{X/S} $ is in fact a sheaf of augmented $\cO_S$-cdga's.
\end{defn}

If we denote the functor which associates 
the single complex to a semi-cosimplicial complex by $\s$, 
we have functorial quasi-isomorphisms (\cite[Thm. 3.3]{naho}) 
\begin{align*}
& \cA_{X/S} \lra \sTW(\cA^{\b,*}) \lra \s(\cA^{\b,*}). 
\end{align*}
(Note that the morphism $\cA_{X/S} \lra \sTW(\cA^{\b,*})$ is a 
quasi-isomorphism because $\cA_{X/S}$ is the kernel of the map 
$$ \sTW(\cA^{\b,*})  \oplus \cO_S \lra \sTW(\pi_{S *} \cO_{S_{\b}}) $$
which is surjective by the exactness of $\sTW$, and the inclusion 
$\cO_S \hra \sTW(\pi_{S *} \cO_{S_{\b}})$ is a quasi-isomorphism.) 

Note that each term in $\cA^{\b,*}$ is quasi-coherent. 
Then we can conclude that $\sTW(\cA^{\b,*})$ is quasi-coherent 
in each degree: in fact, our covering $X = \bigcup_{i \in I}U_i$ is finite 
and so the end functor which is used to define $\sTW$ involves 
finitely many nonzero quasi-coherent sheaves. 
For the same reason, $\sTW(\pi_{S *} \cO_{S_{\b}})$ is quasi-coherent 
in each degree. Hence $\cA_{X/S}$ is quasi-coherent in each degree.

$H^i(\s(\cA^{\b,*}))$ is nothing but $R^if_{\dR *}(\cO_X,d)$, 
we have the functorial isomorphism 
\begin{equation}\label{eq:agm}
H^i(\cA_{X/S}) \os{\cong}{\lra} R^if_{\dR *}(\cO_X,d). 
\end{equation}

Let $p_j^m: S^m(1) \lra S, q_j^m: X_j^m \lra S^m(1), 
f_j^m: X_j^m \lra S^m(1), 
\wh{f}^m: \wh{X}^m \lra S^m(1), 
\wh{q}_j^m: \wh{X}^m \lra X$ be as in Section 1, 
before Definition \ref{strcrys} and before Proposition \ref{basechange}, 
and 
let $\wh{\iota}^m: S^m(1) \lra \wh{X}^m$ be the section of 
$\wh{f}^m$ induced by $\iota$. Also, let $(X_{\b})_j^m$ be $X_{\b} \times_X X_j^m$, 
let $\wh{X}_{\b}^m$ be the log formal tube of $X_{\b}$ in 
$(X_{\b})_1^m \times_{S^m(1)} (X_{\b})_2^m$ and 
let $S^m(1)_{\b}$ be the inverse image of $\wh{X}_{\b}^m \lra \wh{X}^m$ by 
$\wh{\iota}^m$. 
Then the section $\wh{\iota}^m$ induces the 
surjective morphism of semi-cosimplicial sheaves of 
$\cO_{S^m(1)}$-cdga's 
$$ 
f_*\pi_{*}\Omega^{*}_{\wh{X}^m_{\b}/S^m(1)} 
\lra \pi_{S *} \cO_{S^m(1)_{\b}}, 
$$ 
and we can define the augmented sheaf of $\cO_{S^m(1)}$-cdga's 
from it, which we denote by 
$\cA_{X/S^m(1)}$. 
Working on $S^m(2)$ and considering the formal log tube in triple products, 
we can define the augmented sheaf of $\cO_{S^m(2)}$-cdga's 
in a similar way, which we denote by 
$\cA_{X/S^m(2)}$. By functoriality of the construction, 
we have the canonical morphisms 
\begin{align*}
& \eta: \cA_{X/S^0(1)} \lra \cA_{X/S}, \\ 
& \{\eta_j^m: (p_j^m)^*\cA_{X/S} \lra \cA_{X/S^m(1)}\}_m \quad (j =1,2), \\
& \{\eta_{j,j'}^m: (p_{j,j'}^m)^*\cA_{X/S^m(1)} \lra \cA_{X/S^m(2)}\}_m \quad (1 \leq j < j' \leq 3) 
\end{align*}
with $\eta \circ \eta_1^0 = \eta \circ \eta_2^0 = \id$ 
which fit into the commutative diagram like \eqref{eq:big}. 
Moreover, $\eta$, $\eta_j^m$'s are quasi-isomorphisms because 
the morphisms of cohomologies 
$$ 
H^i(\eta_j^m): (p_j^m)^* H^i(\cA_{X/S}) 
\lra H^i(\cA_{X/S^m(1)}) 
$$
(note that $p_j^m$'s are flat) are identified with the isomorphisms  
$$ 
{p_j^m}^*R^if_{\dR *}(\cO_X,d) 
\os{\cong}{\lra} R^i\wh{f}^m_{\dR *}(\cO_{\wh{X}^m},d) 
$$
via \eqref{eq:agm} and the variant of it for $\cA_{X/S^m(1)}$. 
By the same reason, $\eta_{j,j'}^m$'s are also quasi-isomorphisms. 
Thus the composition 
$$ 
H^i(\eta_1^m)^{-1} \circ H^i(\eta_2^m): 
(p_2^m)^* H^i(\cA_{X/S})  
\os{\cong}{\lra} H^i(\cA_{\wh{X}^m/S^m(1)}) \os{\cong}{\lra} (p_1^m)^*H^i(\cA_{X/S}) \quad (m \in \N) 
$$ 
defines a stratification on $H^i(\cA_{X/S})$. Moreover, since it is identified with 
the composition  
\begin{align*}
{p_2^m}^*R^if_{\dR *}(\cO_X,d) 
& \os{\cong}{\lra} R^i\wh{f}^m_{\dR *}(\cO_{\wh{X}^m},d) 
\os{\cong}{\lla} {p_1^m}^*R^if_{\dR *}(\cO_X,d) \quad (m \in \N) 
\end{align*} 
giving the Gauss--Manin connection 
on $R^if_{\dR *}(\cO_X,d)$, $H^i(\cA_{X/S})$ belongs to $\MICn(S/k)$. 
Therefore, our objects fit the hypotheses in Corollary \ref{cor:minmod}, and so 
there exists a unique $1$-minimal model 
$M_{X/S}$ of $\cA_{X/S}$ endowed with the stratification 
$$ (\zeta_1^m)^{-1} \circ \zeta_2^m: (p_2^m)^*M_{X/S} \os{\cong}{\lra} 
(p_1^m)^*M_{X/S} $$
with respect to which $M_{X/S}^i \,(i \in \N)$ belongs to the ind-category of 
$\MICn(S/k)$ and the differentials $M_{X/S}^i \lra M_{X/S}^{i+1}$ are 
morphisms in the ind-category of $\MICn(S/k)$ because $\zeta_j^m \,(j=1,2)$ 
are morphisms of sheaves of cdga's.  
In particular, each $M_{X/S}^i$ is flat over $\cO_S$. 
Note that $M_{X/S}$ is the union of $(1,q)$-minimal models, 
which we denote by $M_{X/S}(q)$. (In particular  each 
$M_{X/S}(q)^i$ belongs to $\MICn(S/k)$.) 

Now put $(M^1_{X/S})^{\vee} := \varprojlim_q (M_{X/S}(q)^1)^{\vee}$ 
and denote the isomorphism 
$$ {p_2^m}^*(M^1_{X/S})^{\vee} \os{\cong}{\lra} 
{p_1^m}^*(M^1_{X/S})^{\vee} $$
induced by $(\zeta_1^m)^{-1} \circ \zeta_2^m$ 
by $\epsilon^m$. Then the pair $((M^1_{X/S})^{\vee}, \{\epsilon^m\})$ 
is an object in the pro-category of $\MICn(S/k)$. 
The minus of the differential $-d: M_{X/S}^1 \lra M_{X/S}^2 = M_{X/S}^1 \wedge M_{X/S}^1$ endows it with 
a structure of Lie algebra in the pro-category of $\MICn(S/k)$ 
which is the projective limit of Lie algebras $(M_{X/S}(q)^1)^{\vee} \, (q \in \N)$ 
in $\MICn(S/k)$. 
(The minus sign comes from the fact that $M_{X/S}^1$ is in degree $1$ in the 
complex $M_{X/S}$, while we regard $M_{X/S}^1$ as a Lie coalgebra sitting in degree $0$.) 
Since the image of the 
map $-d: M_{X/S}(q)^1 \lra M_{X/S}(q)^2 = M_{X/S}(q)^1 \wedge M_{X/S}(q)^1$ is 
contained in $M_{X/S}(q-1)^2 = M_{X/S}(q-1)^1 \wedge M_{X/S}(q-1)^1$ by 
definition of $(1,q)$-minimal model, $(M_{X/S}(q)^1)^{\vee}$ is a nilpotent 
Lie algebra in $\MICn(S/k)$ and so $(M^1_{X/S})^{\vee}$ is a pro-nilpotent 
Lie algebra in the pro-category of $\MICn(S/k)$. 

To associate to $(M^1_{X/S})^{\vee}$ a pro-unipotent group scheme in 
$\MICn(S/k)$, 
we explain the correspondence between nilpotent Lie algebras and certain unipotent 
group schemes in $\MICn(S/k)$. Following \cite[1.3]{tong}, we call a group scheme 
$G$ over $S$ unipotent if $G$ is flat over $S$ and the geometric fibers of 
$G \lra S$ are unipotent algebraic groups. We call an affine group scheme $G$ in 
$\MICn(S/k)$ a unipotent group scheme in $\MICn(S/k)$ if its underlying 
group scheme over $S$ is unipotent. Let ${\rm NL}(S/k)$ be 
the category of nilpotent Lie algebras in $\MICn(S/k)$ and let 
${\rm UG}(S/k)$ be the category of unipotent group schemes in 
$\MICn(S/k)$ satisfying the following condition $(\star)$: 
\bigskip 

\noindent 
$(\star)$ \,\, There exists an object $V$ in $\MICn(S/k)$ such that 
$G$ is realized as a closed subgroup scheme of $GL(V)$ in $\MICn(S/k)$. 
\bigskip 

Concerning a sufficient condition for an affine group scheme $G$ in $\MICn(S/k)$ 
to satisfy the condition $(\star)$, we have the following lemma. 

\begin{lem}\label{lem:unip2020sep}
Let $G$ be an affine group scheme in $\MICn(S/k)$ satisfying the following conditions: \\ 
$(1)$ \, $\cO_G$ is generated as an $\cO_S$-algebra by some subobject $\cM \subseteq \cO_G$ 
in $\MICn(S/k)$. \\ 
$(2)$ \, $\cO_G$ is locally projective as an $\cO_S$-module. \\
Then $G$ satisfies the condition $(\star)$. 
\end{lem}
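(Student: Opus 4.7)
The plan is to produce a finite-rank subobject $V\subseteq \cO_G$ in $\MICn(S/k)$ which contains $\cM$ and is stable under the right coproduct (i.e.\ $\Delta(V)\subseteq V\otimes \cO_G$). The restricted coaction $\rho=\Delta|_V\colon V\to V\otimes \cO_G$ then endows $V$ with the structure of a representation of $G$ in $\MICn(S/k)$, equivalently a morphism of affine group schemes $\varphi\colon G\to GL(V)$ in $\MICn(S/k)$, and the hypothesis (1) that $\cM$ generates will force $\varphi$ to be a closed immersion.

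To construct $V$, I would work étale-locally on $S$, where by hypothesis (2) $\cO_G$ is a direct summand of a free $\cO_S$-module. Fixing a local basis $m_1,\dots,m_d$ of $\cM$, I would write $\Delta(m_a)=\sum_j u_{a,j}\otimes c_{a,j}$ with the $c_{a,j}$'s locally linearly independent in $\cO_G$ (always arrangeable by local projectivity), and take $V_{\mathrm{loc}}$ to be the local $\cO_S$-module generated by the $m_a$ and the $u_{a,j}$. Coassociativity $(\Delta\otimes \id)\Delta(m_a) = (\id\otimes \Delta)\Delta(m_a)$, combined with the linear independence of the $c_{a,j}$'s, then forces $\Delta(u_{a,j})\in V_{\mathrm{loc}}\otimes \cO_G$, so $V_{\mathrm{loc}}$ is locally a subcomodule containing $\cM$. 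I would then define $V$ globally to be the smallest subobject of $\cO_G$ in $\mathrm{Ind}(\MICn(S/k))$ that contains $\cM$ and is stable under $\Delta$; the local bound on its rank forces $V$ to be of finite rank globally. Since $V$ sits inside some $\cO_{G,\alpha}\in \MICn(S/k)$, hypothesis (E) that $\MICn(S/k)$ is an abelian subcategory of $\wt{\MIC}(S/k)$ puts $V$ in $\MICn(S/k)$; horizontality of $\Delta$ guarantees that $\rho$ is a morphism in $\MICn(S/k)$.

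To finish, I would verify that the dual map $\varphi^*\colon \cO_{GL(V)}\to \cO_G$ is surjective. The image of $\varphi^*$ contains the image of the matrix coefficient map $V^\vee\otimes V\to \cO_G$ sending $\phi\otimes v\mapsto (\phi\otimes \id)\rho(v)$; specializing $\phi$ to the restriction $\epsilon|_V$ of the counit $\epsilon\colon \cO_G\to \cO_S$, the counit axiom yields $(\epsilon\otimes \id)\Delta(v)=v$, so the image of $\varphi^*$ contains $V$, and in particular contains $\cM$. Hypothesis (1) then immediately gives surjectivity, so $\varphi$ is a closed immersion. The main obstacle will be the construction in the second paragraph, precisely in checking that the local subcomodules $V_{\mathrm{loc}}$ glue to a globally defined finite-rank subobject of $\cO_G$ living in $\MICn(S/k)$; this is where the local projectivity (2), the finite generation hypothesis on $\cM$, and the abelianness (E) all enter crucially.
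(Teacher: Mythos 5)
Your overall strategy coincides with the paper's: produce a finite-rank horizontal subcomodule $V\subseteq\cO_G$ containing $\cM$, take the induced homomorphism $G\lra GL(V)$ of affine group schemes in $\MICn(S/k)$, and conclude it is a closed immersion because $\cM$ generates $\cO_G$ as an $\cO_S$-algebra (the paper quotes the argument of \cite[Theorem 4.9]{milne}, which is exactly your counit/matrix-coefficient computation, and that part of your proposal is fine). The gaps are in the construction of $V$. First, over a ring you cannot ``compare coefficients'' along merely linearly independent elements $c_{a,j}$: linear independence does not give functionals separating them, so coassociativity does not by itself force $\Delta(u_{a,j})\in V_{\mathrm{loc}}\otimes\cO_G$. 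What local projectivity actually buys is a dual basis $\{(c_j,\phi_j)\}$ of $\cO_G$, and the correct move is to define $u_{a,j}:=(\id\otimes\phi_j)\Delta(m_a)$; this is precisely the mechanism of \cite[Exp.\ VI, Lemme 11.8]{SGA3-1}, which the paper invokes. Second, the passage from the choice-dependent local modules $V_{\mathrm{loc}}$ to a global object is asserted rather than proved: the existence of a ``smallest $\Delta$-stable subobject containing $\cM$'' needs an argument, and your $V_{\mathrm{loc}}$, as written, depends on the chosen expression of $\Delta(m_a)$, so the gluing is not automatic. The paper avoids both issues by defining the subcomodule canonically and globally as the coefficient space $c(\cM):=\im\bigl(\cM\otimes_{\cO_S}\cO_G^{\vee}\lra\cO_G\bigr)$ (comultiplication followed by evaluation), which requires no choices and no gluing.

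The most serious gap is the claim that $V\in\MICn(S/k)$ ``by hypothesis (E)''. Condition (E) says that $\MICn(S/k)$ is an abelian subcategory of $\wt{\MIC}(S/k)$; it does not say that an arbitrary horizontal coherent submodule of an object of $\MICn(S/k)$ again lies in $\MICn(S/k)$. Nilpotence of residues would indeed be inherited, but local freeness of finite rank is the real issue, and it can fail for a finitely generated submodule of a locally free module when $S^{\circ}$ is not smooth; this is exactly the finiteness/projectivity input that the paper does not reprove but imports from \cite[Exp.\ VI, Lemme 11.8, Lemme 11.8.1, Proposition 11.9]{SGA3-1} applied to $c(\cM)$. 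So: same route and a correct endgame, but to complete your construction you need the dual-basis argument in place of linear independence, a canonical (or glueable) definition of $V$ such as $c(\cM)$, and an actual proof (or citation) that this subcomodule is locally free of finite rank, rather than an appeal to (E).
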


\begin{proof}
Let $c(\cM)$ be the image of the map 
$$ \cM \otimes_{\cO_S} \cO_G^{\vee} \hra 
\cO_G \otimes_{\cO_S} \cO_G^{\vee} 
\os{\text{comult} \otimes \id}{\lra} \cO_G \otimes_{\cO_S} \cO_G \otimes_{\cO_S} \cO_G^{\vee} 
\os{\id \otimes \text{ev}}{\lra} \cO_G, $$
where $\text{comult}$ is the comultiplication on $\cO_G$ and $\text{ev}$ is the evaluation map. 
Then, by \cite[Expos\'e VI, Lemme 11.8, Lemme 11.8.1, Proposition 11.9]{SGA3-1}, 
we see that $c(\cM)$ is an object in $\MICn(S/k)$ containing $\cM$ and it has the 
comodule structure $c(\cM) \lra c(\cM) \otimes_{\cO_S} \cO_G$ induced by the 
comultiplication of $\cO_G$. So we have a homomorphism 
$G \lra GL(c(\cM))$ of affine group schemes in $\MICn(S/k)$, and it is a closed immersion 
by the argument of \cite[Theorem 4.9]{milne}. 
\end{proof}

The functor ${\rm Lie}: {\rm UG}(S/k) \lra {\rm NL}(S/k)$ of taking Lie algebras is 
defined in usual way, and the functor ${\mathbb{W}}: {\rm NL}(S/k) \lra {\rm UG}(S/k)$ 
is defined so that ${\mathbb{W}}(L) := \ul{\Spec}_{\cO_S} {\rm Sym}(L^{\vee})$ is the vector 
bundle associated to $L$ as a scheme over $S$ and that the group structure is 
defined by the Baker--Campbell--Hausdorff formula. 
(${\mathbb{W}}(L)$ satisfies the condition ($\star$) because ${\rm Sym}(L^{\vee})$ 
is locally projective as an $\cO_S$-module and it is generated by $L^{\vee}$.) 
Then we have the following. 

\begin{prop}\label{prop:unip2020sep}
The above functors ${\rm Lie}, {\mathbb{W}}$ are the equivalences which 
are the inverses of each other. 
\end{prop}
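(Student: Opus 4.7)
The plan is to prove the proposition by constructing mutually inverse natural transformations ${\rm id} \Rightarrow {\rm Lie} \circ {\mathbb W}$ and ${\mathbb W} \circ {\rm Lie} \Rightarrow {\rm id}$ via a relative exponential/logarithm construction, ultimately reducing to the classical equivalence between unipotent linear algebraic groups and nilpotent Lie algebras over a field of characteristic zero.

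First I will dispatch the easy direction: for $L \in {\rm NL}(S/k)$, the augmentation ideal $I$ of ${\rm Sym}(L^{\vee})$ satisfies $I/I^2 \cong L^{\vee}$ canonically in $\MICn(S/k)$, so ${\rm Lie}({\mathbb W}(L)) \cong L$. The induced bracket coincides with the original one because the BCH formula has leading nonabelian correction $\tfrac{1}{2}[X,Y]$.

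For the other direction, given $G \in {\rm UG}(S/k)$, fix a closed embedding $\rho : G \hookrightarrow GL(V)$ with $V \in \MICn(S/k)$ of rank $r$ supplied by condition $(\star)$. Fiberwise unipotence forces $\rho$ to factor through the closed subscheme $U_r(V) := \{g \in GL(V) : (g-1)^r = 0\}$, and correspondingly ${\rm Lie}(G) \hookrightarrow \End(V)$ factors through $N_r(V) := \{X \in \End(V) : X^r = 0\}$. The truncated exponential and logarithm
\[ \exp(X) = \sum_{i=0}^{r-1} \frac{X^i}{i!}, \qquad \log(1+X) = \sum_{i=1}^{r-1} \frac{(-1)^{i-1}}{i} X^i \]
are mutually inverse $\cO_S$-scheme morphisms $N_r(V) \rightleftarrows U_r(V)$, and they lie in $\MICn(S/k)$ because they are universal polynomials in horizontal operations on $\End(V)$. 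On $N_r(V)$ the identity $\exp(X)\exp(Y) = \exp({\rm BCH}(X,Y))$ is a polynomial identity since iterated brackets eventually vanish, so $\exp$ carries the BCH operation to matrix multiplication.

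It then remains to show that $\exp$ restricts to an isomorphism ${\mathbb W}({\rm Lie}(G)) \xrightarrow{\sim} G$ of group schemes in $\MICn(S/k)$. Factorization of the restriction of $\exp$ to ${\rm Lie}(G)$ through $G$ is a closed condition checked fiberwise, where it follows from the classical statement that in characteristic zero a unipotent linear algebraic group equals the exponential of its Lie algebra; the inverse is given by $\log$. That the induced map is an isomorphism of $\cO_S$-schemes is fiberwise by the same classical theory and globally by flatness of both sides. The main obstacle will be two related technical points: verifying that ${\rm Lie}(G)$ is actually an object of $\MICn(S/k)$, i.e., that $I/I^2$ for the augmentation ideal $I$ of $\cO_G$ is locally free over $\cO_S$ (which follows from the smoothness of $G \to S$, in turn from smoothness of unipotent groups in characteristic zero combined with flatness of $G/S$); and globalizing the fiberwise inclusion $\exp({\rm Lie}(G_s)) \subseteq G_s$ into a well-defined morphism over $S$, which uses the universality of the formula for $\exp$ together with the closedness of $G$ in $GL(V)$.
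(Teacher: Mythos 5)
Your overall strategy---construct an exponential map and reduce the verification of the group law to the classical statement over a field---is the same as the paper's, and the direction ${\rm Lie}\circ{\mathbb{W}}=\id$ is handled identically. But two of your key steps do not hold as written. First, the identity $\exp(X)\exp(Y)=\exp(\mathrm{BCH}(X,Y))$ is \emph{not} a polynomial identity on $N_r(V)\times N_r(V)$: iterated brackets of two nilpotent endomorphisms need not vanish (for $X=E_{12}$, $Y=E_{21}$ in $\mathfrak{gl}_2$ they generate $\mathfrak{sl}_2$), so the BCH series does not terminate there. It terminates only on pairs lying in a common nilpotent Lie subalgebra such as ${\rm Lie}(G)$, and using it there presupposes exactly what is at stake, namely that ${\rm Lie}(G)$ is a nilpotent Lie algebra of nilpotent endomorphisms \emph{over $S$}, not merely fibre by fibre.

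Second, your repeated reductions ``checked fiberwise''---that $\rho$ factors through $U_r(V)$, that $\exp$ carries ${\mathbb{W}}({\rm Lie}(G))$ into $G$, and that the resulting map is an isomorphism---rest on the principle that factoring through a closed subscheme can be tested on fibres over closed points. That principle fails unless the relevant total spaces are reduced, which you neither assume nor prove, and which is not among the hypotheses of Notation \ref{notation}. Concretely, over ${\rm Spec}\,k[\epsilon]/(\epsilon^2)$ the closed subgroup of $GL_2$ given by $t\mapsto\exp\bigl(t(E_{12}+\epsilon E_{21})\bigr)$ is flat with unipotent fibre, yet $(g-1)^2=t^2\epsilon\, I+\cdots\neq 0$ on it, so it does not lie in $U_2(V)$; the same example defeats the nilpotency bound $X^r=0$ for ${\rm Lie}(G)$. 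The paper's proof avoids both problems: it obtains $\exp\colon{\mathbb{W}}({\rm Lie}(G))\to G$ as an isomorphism of affine \emph{schemes} in $\MICn(S/k)$ directly from condition $(\star)$ together with \cite[Proposition 1.3, Corollary 1.4]{tong}, and it checks compatibility with the two multiplications by forming the difference kernel of the two composites and applying the restriction functor $\MICn(S/k)\to\MIC(s/s)=\Vector_k$, which is exact and faithful; reduction to a single fibre is legitimate there precisely because every object and morphism involved lives in (the ind-category of) $\MICn(S/k)$. To salvage your hands-on construction you would need either a reducedness hypothesis you do not have, or to replace each fibrewise check by this fibre-functor argument (the equations cutting out $U_r(V)$ and $G$ inside $GL(V)$, and the locus where the two composites agree, are horizontal, so faithfulness of $s_{\dR}^*$ applies), at which point you are essentially reproducing the paper's proof.
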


\begin{proof}
The following proof is inspired by \cite[1.3]{tong}. The equality ${\rm Lie} \circ {\mathbb{W}} = \id$ 
is standard, and it suffices to prove the equality ${\mathbb{W}} \circ {\rm Lie} = \id$. 
Let $G$ be an object in ${\rm UG}(S/k)$. 
By the condition $(\star)$ and \cite[Proposition 1.3, Corollary 1.4]{tong}, 
we can define the exponential map 
$$ \exp: {\mathbb{W}}({\rm Lie}(G)) \lra G$$ 
of affine schemes in $\MICn(S/k)$ which is an isomorphism (but we do not know yet 
if it is an isomorphism of affine group schemes in $\MICn(S/k)$). 

It suffices to prove that the exponential map is an isomorphism of 
affine group schemes in $\MICn(S/k)$. To do so, we compare the composites 
\begin{align*}
& {\mathbb{W}}({\rm Lie}(G)) \times_S {\mathbb{W}}({\rm Lie}(G)) \os{\text{mult}}{\lra} 
{\mathbb{W}}({\rm Lie}(G))  \os{\exp}{\lra} G, \\ 
&  {\mathbb{W}}({\rm Lie}(G)) \times_S {\mathbb{W}}({\rm Lie}(G)) 
\os{\exp \times \exp}{\lra} G \times_S G \os{\text{mult}}{\lra} G, 
\end{align*}
where $\text{mult}$ denotes the multiplication. Let $D$ be the difference kernel of them, 
which is an affine scheme in $\MICn(S/k)$. Then it suffices to prove that 
$D$ is equal to ${\mathbb{W}}({\rm Lie}(G)) \times_S {\mathbb{W}}({\rm Lie}(G))$, and 
we can check it after we apply the restriction functor 
$\MICn(S/k) \lra \MIC(s/s)$ to $D$ and 
${\mathbb{W}}({\rm Lie}(G)) \times_S {\mathbb{W}}({\rm Lie}(G))$. 
So we can reduce to the case over a field and in this case, 
the concidence of the group structure on ${\mathbb{W}}({\rm Lie}(G))$ and that on 
$G$ is well-known. Thus we have shown that the exponential map is an isomorphism 
of affine group schemes in $\MICn(S/k)$ and so we are done. 
\end{proof}

The equivalence of the categories ${\rm NL}(S/k)$, ${\rm UG}(S/k)$ extends 
to the equivalence between the category of pro-objects in ${\rm NL}(S/k)$ and 
that in ${\rm UG}(S/k)$. Applying it to $(M^1_{X/S})^{\vee}$, 
we can give the third definition of relatively unipotent 
de Rham fundamental groups in the following way:

\begin{defn}[{\bf Third definition of relatively unipotent $\pi_1$}]\label{def3}
Let the notations be as above. We define the relatively unipotent de Rham fundamental group 
$\pi^{\dR}_1(X/S,\iota)$ as the pro-unipotent group scheme in 
$\MICn(S/k)$ corresponding to the pro-nilpotent Lie algebra 
$(M^1_{X/S})^{\vee}$. 
\end{defn}

Then we have the following comparison theorem. 

\begin{thm}\label{prop4_2}
The first and the third definitions of $\pi_1^{\dR}(X/S,\iota)$ are 
canonically isomorphic. 
\end{thm}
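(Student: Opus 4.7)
The plan is to bridge the first (Tannakian) definition and the third (minimal model) definition via a fourth, intermediate definition given by the reduced bar construction on the augmented sheaf of cdga's $\cA_{X/S}$, and then to prove the two compatibilities independently. This mirrors the classical absolute setting: the equivalence of minimal-model and bar constructions is due to Bloch--Kriz \cite{bk}, and the equivalence of the bar construction and Tannakian formalism is due to Terasoma \cite{terasoma}. So I would first postpone the proof and defer it to Section~5, as the authors announce, where the fourth definition will be introduced.

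Concretely, I would define $B := H^0(\overline{B}(\cA_{X/S}))$, the zeroth cohomology of the reduced bar complex of the augmented sheaf of $\cO_S$-cdga's $\cA_{X/S}$ (Definition \ref{def:axs}). By functoriality of the bar construction applied to the ho-morphisms $\eta_j^m, \eta_{j,j'}^m$ together with the fact that these are quasi-isomorphisms, $B$ inherits a canonical structure of commutative Hopf algebra in the ind-category of $\MICn(S/k)$ in exactly the way $H^i(\cA_{X/S})$ does, giving a pro-affine group scheme $\Spec B$ in $\MICn(S/k)$. For the identification with the Tannakian $\pi_1^{\dR}(X/S,\iota)$ of Definition~\ref{def1}, I would adapt Terasoma's argument to this relative setting: using the pro-system $(W,e)$ of Theorem~\ref{thm:wn} (and the isomorphism \eqref{eq:strongw1-3} together with the universal property \eqref{eq:strongw1-2}) one produces a comparison map between $(\iota_{\dR}^*W)^{\vee}$ and $B$, which, by Theorem~\ref{thm:1and2} together with the description of comodule categories via iterated integrals, can be checked to be an isomorphism of Hopf algebras in the ind-category of $\MICn(S/k)$.

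For the identification of $\Spec B$ with the third definition, I would invoke the relative version of Bloch--Kriz: since $\rho: M_{X/S} \lra \cA_{X/S}$ is a quasi-isomorphism (being a $1$-minimal model, $\rho$ is a $2$-quasi-isomorphism; one passes to the full statement by taking unions of $(1,q)$-minimal stages) and the reduced bar construction is homotopy-invariant with respect to Sullivan homotopies, one obtains a quasi-isomorphism on bar complexes compatible with the Hopf algebra structure. For the $1$-minimal cdga $M_{X/S}$ with zero differential in degree $>1$ produced by the inductive Hirsch extension, the classical computation identifies $H^0(\overline{B}(M_{X/S}))$ with the graded dual of the completed universal enveloping algebra of the pro-nilpotent Lie coalgebra $M_{X/S}^1$. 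Dualizing and invoking the equivalence of Proposition~\ref{prop:unip2020sep} between pro-nilpotent Lie algebras and pro-unipotent group schemes in $\MICn(S/k)$, one concludes that $\Spec B$ is isomorphic to the pro-unipotent group scheme ${\mathbb{W}}((M^1_{X/S})^{\vee})$ of Definition~\ref{def3}.

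The main obstacle, and the reason the proof is postponed, is to make every step compatible with the stratifications coming from the diagrams on $S^m(1)$ and $S^m(2)$, i.e.\ to work throughout in the category $\MICn(S/k)$ rather than merely over $\cO_S$. Both Theorem~\ref{thm:minmod} and the diagram \eqref{eq:big} have been designed precisely to transport these stratifications through the minimal model; the same transport must be carried out for the bar construction, and the comparison maps between the three constructions must be shown to intertwine these stratifications. The essential tool for this is the uniqueness up to unique Sullivan homotopy provided by Propositions~\ref{prop:1q} and~\ref{prop:1qs}, which forces all diagrams involving $\eta_j^m$, $\eta_{j,j'}^m$, $\zeta_j^m$, $\zeta_{j,j'}^m$ and the bar comparison maps to commute after passage to cohomology; combined with the functoriality of the bar construction and the universality in (W1), this yields the required compatibility with stratifications, hence with the monodromy action.
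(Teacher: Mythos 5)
Your overall plan---insert a fourth definition via a bar construction, compare it with the Tannakian definition following Terasoma and with the minimal model following Bloch--Kriz---is indeed the strategy of the paper. But there is a genuine gap in your implementation: you take as the intermediate object $H^0(\overline{B}(\cA_{X/S}))$, the bar construction of the augmented sheaf of cdga's $\cA_{X/S}$ itself. In the relative setting the bar construction (and in particular the K\"unneth identification of the $E_1$-term of the Eilenberg--Moore spectral sequence, which is what gives the Hopf algebra structure in the ind-category of $\MICn(S/k)$ and the filtration arguments) requires each graded piece and each cohomology sheaf to be flat over $\cO_S$. This flatness is \emph{not} known for $\cA_{X/S}^i$, and the paper explicitly remarks, right after introducing the fourth definition, that for this reason the bar construction cannot be applied to $\cA_{X/S}$ or $\cA_{X/S^m(r)}$. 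The paper instead applies $B(-)$ to the $1$-minimal model $M_{X/S}$, whose terms lie in the ind-category of $\MICn(S/k)$ and are therefore flat, and defines $\pi_1^{\dR}(X/S,\iota) := \Spec H^0(B(M_{X/S}))$. A related slip: $\rho\colon M_{X/S}\to\cA_{X/S}$ is not a quasi-isomorphism (a $1$-minimal model only gives isomorphisms on $H^0,H^1$ and an injection on $H^2$), so even the transfer of bar constructions along $\rho$ that you invoke is not literally available as stated.

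For comparison, the paper's proof of the statement runs as follows. The identification of the first definition with $\Spec H^0(B(M_{X/S}))$ (Theorem 5.5) is not done via a direct map between $(\iota_{\dR}^*W)^{\vee}$ and the bar construction, but through the chain of categories $\NfMICn(X/k)\cong \Nf\cEn \cong \Nf\cEn_M \cong \Rep_{\MICn(S/k)}(\Spec H^0(B(M_{X/S})))$, where $\Nf\cEn$ and $\Nf\cEn_M$ are categories of relatively unipotent modules with stratified $\cA_{X/S}$- resp.\ $M_{X/S}$-connections; each equivalence is verified after restriction along the exact faithful functor $\MICn(S/k)\to\MIC(s/s)$, reducing to absolute statements (including Terasoma's). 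The identification of the third definition with the fourth then proceeds by showing that the projection $B(M_{X/S})\to B^{-1,1}(M_{X/S})=M_{X/S}^1[1]$ induces an isomorphism of Lie coalgebras $QH^0(B(M_{X/S}))\to M_{X/S}^1$ (again checked fiberwise via Bloch--Kriz), and by verifying, using the Eilenberg--Moore filtration and Lemma 4.7, that $\Spec H^0(B(M_{X/S}(q)))$ satisfies the condition $(\star)$ so that the Lie algebra/unipotent group correspondence of Proposition 4.8 applies. Your proposal would need to be rebuilt around the bar construction of $M_{X/S}$ (not $\cA_{X/S}$) for these steps to go through.
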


The proof of this theorem will be given in Section 5.

\begin{rem}\label{rem:mxss}
We can apply to the morphism $X_s \lra s$ the same construction 
we used in order to define the sheaf of augmented $\cO_S$-cdga's 
$\cA_{X/S}$. So we can define 
the augmented $k$-cdga $\cA_{X_s/s}$. Then, we can define the $1$-minimal 
model of it, which we denote by $M_{X_s/s} = \bigcup_q M_{X_s/s}(q)$. 
(To define $M_{X_s/s}$, the classical theory of $1$-minimal model 
in \cite[\S 13]{gm} is enough and we do not need the relative version.) 

Because we have a family of ho-morphisms 
$\{ M_{X/S}(q) \lra \cA_{X/S} \}_q$, there exists a 
decreasing sequence of affine open subschemes $U_q \, (q \in \N)$ 
containing $s$ such that the above ho-morphisms induce 
morphisms $M_{X/S}(q')(U_q) \lra \cA_{X/S}(U_q) \, (q,q' \in \N, q' \leq q)$ of 
$(1,q')$-minimal models which is compatible with respect to $q'$. 
By taking tensor product with respect to 
the map $\cO_S(U_q) \lra k$ induced by $s$ and composing it 
with the map $\cA_{X/S} \otimes_{\cO_S,s^*} k \lra \cA_{X_s/s}$ 
(where $s^*$ denotes the map $\cO_S \lra \cO_s = k$ induced by $s$), 
we obtain the compatible family of morphisms 
$M_{X/S}(q') \otimes_{\cO_S,s^*} k \lra \cA_{X_s/s} \, (1 \leq q' \leq q)$. 
We can prove that the morphism 
$M_{X/S}(q) \otimes_{\cO_S,s^*} k \lra \cA_{X_s/s}$ we obtained 
is the $(1,q)$-minimal model 
of $\cA_{X_s/s}$.  To see it, it suffices to prove that 
$H^i(M_{X/S}(q') \otimes_{\cO_S,s^*} k) \lra H^i(\cA_{X_s/s})$ is 
an isomorphism for $i=0,1, 1 \leq q' \leq q$ and that 
$H^2(M_{X/S}(q') \otimes_{\cO_S,s^*} k,\cA_{X_s/s}) \lra 
H^2(M_{X/S}(q'+1) \otimes_{\cO_S,s^*} k,\cA_{X_s/s}) \, (1 \leq q' \leq q-1)$ is the zero map. 
Since each term of $M_{X/S}(q')$ are objects in $\MICn(S/k)$, 
they and their cohomologies are locally free. Hence we have 
$H^i(M_{X/S}(q') \otimes_{\cO_S,s^*} k) = H^i(M_{X/S}(q')) \otimes_{\cO_S,s^*} k 
= H^i(\cA_{X/S}) \otimes_{\cO_S,s^*} k$ for $i=0,1$. 
Since $H^i(\cA_{X/S})$ (resp. $H^i(\cA_{X_s/s})$) is the (relative) de Rham cohomology 
$R^if_{\dR *}(\cO_X,d)$ (resp. $R^i f_{s \dR}(\cO_{X_s},d)$), 
we have the base change isomorphism $H^i(\cA_{X/S}) \otimes_{\cO_S,s^*} k = 
H^i(\cA_{X_s/s})$ and so the first condition to be the $(1,q)$-minimal model 
is proved. Because the first condition is fulfilled, 
we have 
$$H^2(M_{X/S}(q') \otimes_{\cO_S,s^*} k,\cA_{X_s/s}) = 
\Ker(H^2(M_{X/S}(q') \otimes_{\cO_S,s^*} k) \lra H^2(\cA_{X_s/s})),$$
and this is the base change of 
$H^2(M_{X/S}(q'), \cA_{X/S}) = 
\Ker(H^2(M_{X/S}(q')) \lra H^2(\cA_{X/S}))$ by its local freeness as above. 
So the second condition to be the $(1,q)$-minimal model 
follows from that for $M_{X/S}(q')(U_q) \, (1 \leq q' \leq q)$. 

So, by Proposition \ref{prop:1q}, we have a unique isomorphism 
of $(1,q)$-minimal models 
$M_{X/S}(q) \otimes_{\cO_S,s^*} k \lra M_{X_s/s}(q)$. 
We can do the same construction with $U_q$ replaced by $U_{q+1}$, 
and we obtain another isomorphism 
$M_{X/S}(q) \otimes_{\cO_S,s^*} k \lra M_{X_s/s}(q)$, which is 
compatible with the restriction. By construction, the latter isomorphism 
is the restriction of the isomorphism $M_{X/S}(q+1) \otimes_{\cO_S,s^*} k 
\lra M_{X_s/s}(q+1)$ to $M_{X/S}(q)$. 
So we can take the union of this isomorphisms with respect to $q$ and 
obtain the isomorphism 
$M_{X/S} \otimes_{\cO_S,s^*} k \lra M_{X_s/s}$. 
In other words, $M_{X_s/s}$ is canonically isomorphic to 
the image of $M_{X/S}$ by the ind-version of the restriction 
functor $\MICn(X/S) \lra \MIC(s/s)$. 
\end{rem}

\begin{rem} \label{GM} The definition of the connection on 
$(M^1_{X/S})^{\vee}$ given above is not the same as 
the original one of Navarro Aznar in \cite[\S 5, \S 6]{nagm}.  
We expect that his definition is the same as our definition in 
Definition \ref{def3}, but we will not pursue this topic in this paper. 
\end{rem}

\medskip \noindent

\section{Relative bar construction}

In this section, we give the fourth definition of 
relatively unipotent de Rham fundamental groups 
by using the relative version of bar construction. 
Then we prove the coincidence of this definition with 
the first one given in Section 2. The proof of Theorem 
\ref{prop4_2} will be achieved by showing 
that the fourth definition is equivalent to the 
third one. This will complete the circle: 
all  four definitions are equivalent. 

First we give a brief explanation on bar construction in relative setting: 
the construction is the one given in \cite{hain}, but we work over 
an arbitrary commutative ring $R$. (In \cite{hain}, $R$ is assumed to be 
a field.) Let $M$ be an augmented 
$R$-cdga. Assume that, for any $i \in \N$, the degree $i$ part 
$M^i$ of $M$ and the $i$-th cohomology $H^i(M)$ 
are flat $R$-modules and that $H^0(M) = R$. 
(The flatness assumption here is automatic in the situation in \cite{hain}, but 
we need to impose this assumption in order that the K\"{u}nneth formula, e.g. 
the middle equality in \eqref{eq:20192013} below, holds in our setting.) 
Let $\ol{M} := \Ker(M \lra R)$ be the augmentation ideal. 

For $s, t \in \N$, let 
$B^{-s,t}(M)$ be the set of elements in $\bigotimes^s_R \ol{M}$ which are 
homogeneous of degree $t$. We can define 
the combinatorial differential $d_C: B^{-s,t}(M) \lra B^{-s+1,t}(M)$ and 
the internal differential $d_I: B^{-s,t}(M) \lra B^{-s,t+1}(M)$ 
satisfying $d_Cd_I + d_Id_C = 0$, as in \cite[p. 275]{hain}. 
Then we define $B(M)$ as the single complex associated to 
$(B^{-s,t}(M), d_C, d_I)$. 

We define the bar filtration $F^{-s}B(M)$ by 
$F^{-s}B(M) := \bigoplus_{u \leq s} B^{-u,v}(M)$. 
The spectral sequence associated to it is called 
the Eilenberg-Moore spectral sequence. 
The $E_1$-term of the Eilenberg--Moore spectral sequence is written as 
\begin{equation}\label{eq:20192013}
E_1^{-s,t} = H^t(\bigotimes^s_R \ol{M}) = 
\text{(degree $t$ part of $\bigotimes^s_R H^{\b}(\ol{M})$)}
= B^{-s,t}(H^{\b}(M)). 
\end{equation}

We have the morphisms 
$$ i: B(R) \lra B(M), \quad e: B(M) \lra B(R) $$ 
induced by the inclusion $R \hra M$ and the augmentation 
$M \lra R$, and we can define the morphisms 
$$\wedge: B(M) \otimes_R B(M) \lra B(M), 
\quad \Delta: B(M) \lra B(M) \otimes_R B(M) $$
called the shuffle product and the diagonal (\cite[pp. 275--276]{hain}). 
These morphisms induce morphisms on $0$-th cohomology 
\begin{align}
& i_H: R \lra H^0(B(M)), \quad e_H: H^0(B(M)) \lra R, \label{eq:hopf} \\  
& \wedge_H: H^0(B(M)) \otimes_R H^0(B(M)) \lra H^0(B(M)), \nonumber \\ 
& \Delta_H: H^0(B(M)) \lra H^0(B(M)) \otimes_R H^0(B(M)) \nonumber 
\end{align}
and we can check that 
$H^0(B(M)) := (H^0(B(M)), i_H, e_H, \wedge_H, \Delta_H)$ 
forms a Hopf algebra over $R$. 

Now we go back to our geometric situation.  
Assume that we are in the framework of Notation \ref{notation}. 
Let $\cA_{X/S}, \cA_{X/S^m(r)} \,(r = 1,2, m \in \N)$ be the sheaf of 
augmented cdga's defined in the previous section (Definition \ref{def:axs} and what follows), and 
let 
$M_{X/S}, M_{X/S^m(r)}$ 
be their $1$-minimal models. Recall that we also have 
isomorphisms 
\begin{align*}
& \zeta_j^m: (p_j^m)^*M_{X/S} \os{\cong}{\lra} M_{X/S^m(1)} \quad (j = 1,2, m \in \N), \\ 
& \zeta_{j,j'}^m: (p_{j,j'}^m)^*M_{X/S^m(1)} \os{\cong}{\lra} M_{X/S^m(2)} 
\quad (1 \leq j, j'\leq 3, 
m \in \N) 
\end{align*}
with $\zeta_1^0 = \zeta_2^0$ 
which fits into a commutative diagram like \eqref{eq:big}. 
In particular, the composite 
$$ (\zeta_1^m)^{-1} \circ \zeta_2^m: (p_2^m)^*M_{X/S} \os{\cong}{\lra} 
(p_1^m)^*M_{X/S} $$
defines a stratification, and we saw in the previous section 
that $M_{X/S}^i \, (i \in \N)$ 
belongs to the ind-category of $\MICn(S/k)$. 
Also, $H^i(M_{X/S}) = R^if_{\dR *}(\cO_X,d) \, (i \in \N)$ belongs to 
$\MICn(S/k)$. Hence  
the conditions required for performing the bar construction above 
are satisfied for $M_{X/S}$ and 
$M_{X/S^m(r)}$. 
Therefore, we can define the $0$-th cohomologies of bar constructions  
$H^0(B(M_{X/S}))$, $H^0(B(M_{X/S^m(r)}))$, and we have the isomorphisms 
\begin{align*}
& H^0(B(\zeta_j^m)): (p_j^m)^*H^0(B(M_{X/S})) \os{\cong}{\lra} H^0(B(M_{X/S^m(1)})) \quad (j = 1,2, m \in \N), \\ 
& H^0(B(\zeta_{j,j'}^m)): (p_{j,j'}^m)^*H^0(B(M_{X/S^m(1)})) \os{\cong}{\lra} H^0(B(M_{X/S^m(2)})) 
\quad (1 \leq j, j'\leq 3, 
m \in \N) 
\end{align*}
which fit into a commutative diagram like \eqref{eq:big}. 
Hence the composite 
$$ H^0(B(\zeta_1^m))^{-1} \circ H^0(B(\zeta_2^m)): 
(p_2^m)^*H^0(B(M_{X/S})) \os{\cong}{\lra} (p_1^m)^*H^0(B(M_{X/S})) $$
defines a stratification on $H^0(B(M_{X/S}))$. 
Moreover, the Eilenberg--Moore spectral sequences for 
$B(M_{X/S}), B(M_{X/S^m(r)})$ are compatible with 
the maps $B(\zeta_j^m), B(\zeta_{j,j'}^m)$. Since their $E_1$-terms 
are objects in $\MICn(S/k)$, 
the Eilenberg--Moore spectral sequences can be understood as  
a spectral sequence in the ind-category of $\MICn(S/k)$. 
Thus we see that $H^0(B(M_{X/S}))$ is a Hopf algebra object in the ind-category of 
$\MICn(S/k)$. Using this, we give the fourth definition of 
relatively unipotent de Rham fundamental groups. 

\begin{defn}[{\bf Fourth definition of relatively unipotent $\pi_1$}]\label{def5}
Let the notations be as above. 
Then we define the relatively unipotent de Rham fundamental group $\pi^{\dR}_1(X/S,\iota)$ by $$\pi^{\dR}_1(X/S,\iota) := \Spec H^0(B(M_{X/S})), $$ 
which is an affine group scheme in $\MICn(S/k)$. 
\end{defn}

\begin{rem}
One may be tempted to consider the bar construction of 
cdga's $\cA_{X/S}, \cA_{X/S^m(r)}$. 
Note that we cannot perform the same construction as before 
because we do not know the flatness of $\cA_{X/S}^i, \cA_{X/S^m(r)}^i$. 
\end{rem}

\begin{rem}
We can also define the bar construction $B(M_{X_s/s})$ of the minimal augmented 
$k$-cdga $M_{X_s/s}$ associated to the morphism $X_s \lra s$, and 
$H^0(B(M_{X_s/s}))$ has a structure of Hopf algebra over $k$. 
Note that each cohomology group $H^n(B(M_{X/S})) \, (n \allowbreak \in \N)$  
has a structure of an object in the ind-category of $\MICn(S/k)$
defined by the composite $H^n(B(\zeta_1^m))^{-1} \circ H^n(B(\zeta_2^m))$, 
hence it is flat over $\cO_S$. 
This fact and the flatness of each term of $B(M_{X/S})$ over $\cO_S$ 
imply that the canonical morphism 
$$ H^0(B(M_{X/S})) \otimes_{\cO_S, s^*} k \lra H^0(B(M_{X_s/s})) $$
(where $s^*:\cO_S \lra \cO_s = k$ is the morphism induced by $s$) 
is an isomorphism. In other words, $H^0(B(M_{X_s/s}))$ is the image of 
$H^0(B(M_{X/S}))$ by the ind-version of the restriction functor 
$\MICn(S/k) \lra \MIC(s/s)$. 
\end{rem}

Then we have the following comparison result. 

\begin{thm} \label{15}
The first and the fourth definitions of $\pi_1^{\dR}(X/S,\iota)$ are 
canonically isomorphic. 
\end{thm}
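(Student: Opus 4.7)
The plan is to reduce the comparison to the second definition via Theorem \ref{thm:1and2} and then to the already-known absolute case treated by Terasoma \cite{terasoma}. Since $\pi^{\dR}_1(X/S,\iota) = \Spec(\iota_{\dR}^*W)^{\vee}$ in the second definition, the goal becomes to construct a canonical isomorphism of Hopf algebra objects in the ind-category of $\MICn(S/k)$
$$ \Phi: H^0(B(M_{X/S})) \os{\cong}{\lra} (\iota_{\dR}^*W)^{\vee}. $$
The starting point is to notice that both sides carry natural filtrations compatible with the unipotency degree: on the right, by the projective system $\{W_n\}_n$, and on the left, by the bar filtration $F^{-n}B(M_{X/S})$.

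First I would construct $\Phi$ at the level of each filtration step. The truncated minimal model $M_{X/S}(n)$ is $(1,n)$-minimal, so $H^0(F^{-n}B(M_{X/S}(n)))$ classifies iterated integrals of length $\leq n$ of closed $1$-forms in $M_{X/S}^1$. Interpreting such iterated integrals algebraically via the stratification structure built into $M_{X/S}$ in Section 4, each cohomology class gives rise to an element of $\iota_{\dR}^*\cE$ for $\cE \in \NfMICn(X/k)$ of index of unipotence $\leq n$, evaluated at the section $\iota$ via $e_n$. This yields a morphism $H^0(F^{-n}B(M_{X/S})) \lra (\iota_{\dR}^*W_n)^{\vee}$ in $\MICn(S/k)$ using the universality property \eqref{eq:strongw1-2} of $(W_n, e_n)$; passing to the limit in $n$ gives $\Phi$. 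The compatibility of $\Phi$ with the multiplicative (shuffle) product and diagonal coproduct on $H^0(B(M_{X/S}))$ and the dual structures on $(\iota_{\dR}^*W)^{\vee}$ follows from functoriality, as both structures arise from the diagonals of $X$, $S$ and from the section $\iota$.

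Second, to check that $\Phi$ is an isomorphism, I would apply the exact faithful tensor functor $s_{\dR}^*: \MICn(S/k) \lra \Vector_k$. By the flatness of each term of $B(M_{X/S})$ over $\cO_S$ and the base change isomorphism $M_{X/S} \otimes_{\cO_S,s^*}k \cong M_{X_s/s}$ from Remark \ref{rem:mxss}, one obtains $s_{\dR}^*H^0(B(M_{X/S})) \cong H^0(B(M_{X_s/s}))$; similarly $s_{\dR}^*(\iota_{\dR}^*W)^{\vee} \cong (x_{\dR}^*(W|_{X_s}))^{\vee}$, which is the coordinate ring of $\pi_1^{\dR}(X_s,x)$ by Theorem \ref{thm:1and2} applied to $X_s/s$. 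Then $s_{\dR}^*\Phi$ is identified with the classical comparison isomorphism of Terasoma in the absolute case, between the bar construction on the $1$-minimal model and the Hopf algebra of the de Rham fundamental group. Since both sides are ind-objects of $\MICn(S/k)$ whose transition maps are filtered injections and whose graded pieces are locally free, the isomorphism property descends level-by-level from an isomorphism after $s_{\dR}^*$ back to an isomorphism in $\MICn(S/k)$.

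The main obstacle is the careful algebraic construction of $\Phi$ at each finite level together with the verification that it respects the stratification data (equivalently the log connection) on both sides; this is essentially a relative, purely algebraic reformulation of Chen's iterated integrals. The compatibility with the Hopf algebra structures and with the stratifications reduces ultimately to the fact that the stratification on $M_{X/S}$ constructed in Theorem \ref{thm:minmod} is functorial in the cdga data, matching the stratification appearing in the construction of $(W_n, e_n)$ in Theorem \ref{thm:wn}; once this is in place, the fiberwise check via Terasoma's theorem concludes the proof.
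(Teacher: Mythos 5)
Your overall strategy (reduce to the second definition via Theorem \ref{thm:1and2}, then compare $H^0(B(M_{X/S}))$ directly with $(\iota_{\dR}^*W)^{\vee}$ and check the result fiberwise at $s$) is different from the paper's, which never uses the second definition: the paper instead builds a chain of Tannakian equivalences $\NfMICn(X/k) \cong \Nf\cEn \cong \Nf\cEn_M \cong \Rep_{\MICn(S/k)}(\Spec H^0(B(M_{X/S})))$ through intermediate categories of $\cA_{X/S}$- and $M_{X/S}$-connections equipped with stratifications (Propositions \ref{prop:1st}, \ref{prop:2nd}, \ref{prop:3rd}), each equivalence being verified after restriction to the fiber. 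Your fiberwise descent step at the end is sound in principle: a morphism of ind-objects of $\MICn(S/k)$ that becomes an isomorphism under the exact faithful functor $s_{\dR}^*$ is an isomorphism. The problem is everything before that.

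The genuine gap is that the central map $\Phi: H^0(B(M_{X/S})) \to (\iota_{\dR}^*W)^{\vee}$ is never actually constructed. The phrase ``interpreting such iterated integrals algebraically via the stratification structure'' names precisely the hard point and then skips it: in this purely algebraic setting there is no notion of iterated integral, and a bar class on $M_{X/S}$ does not by itself produce an element of $\iota_{\dR}^*\cE$ for any $\cE \in \NfMICn(X/k)$. To get such elements one must first pass from $M_{X/S}$-data to genuine objects of $\NfMICn(X/k)$, and this bridge is nontrivial for two reasons the proposal does not address: (i) the minimal model maps to $\cA_{X/S}$ only by a ho-morphism, i.e.\ a family of morphisms defined on affines up to Sullivan homotopy, so even defining a functor from $M_{X/S}$-connections to $\cA_{X/S}$-connections requires the uniqueness of Sullivan homotopies and a cocycle-condition gluing argument (this is the content of Proposition \ref{prop:2nd}); and (ii) $\cA_{X/S}$-connections must then be related to actual modules with integrable log connection on $X$, together with their stratifications over $S^m(1)$ (Proposition \ref{prop:1st}, plus the analogue of the $(W_n,e_n)$ construction in $\Nf\cEn$ needed for the base-point comparison of Proposition \ref{fiberpropa}). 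The appeal to the universality \eqref{eq:strongw1-2} cannot be made before this bridge exists, and the coalgebra/algebra compatibilities you assert ``by functoriality'' would also have to be proved for a map you have not written down. Consequently the final step — identifying $s_{\dR}^*\Phi$ with Terasoma's comparison isomorphism — cannot be verified and the fiberwise argument becomes circular; the role Terasoma's theorem can legitimately play is the one it plays in the paper, namely proving the fiberwise equivalence between representations of $\Spec H^0(B(M_{X_s/s}))$ and $M_{X_s/s}$-connections, after the relative categorical equivalences have been established.
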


Before proving Theorem \ref{15}, we give a proof of 
Theorem \ref{prop4_2} assuming Theorem \ref{15}. 

\begin{proof}[Proof of Theorem \ref{prop4_2} assuming Theorem \ref{15}]
We prove that the third and the fourth definitions of 
$\pi_1^{\dR}(X/S,\iota)$ are canonically isomorphic. 

First, we prove the claim that the pro-nilpotent Lie algebra 
in the pro-category of $\MICn(S/k)$ 
associated to 
$\Spec H^0(B(M_{X/S}))$ is naturally isomorphic to 
$(M_{X/S}^1)^{\vee}$. We put  
$$ QH^0(B(M_{X/S})) := \Ker (e_H)/\wedge_H(\Ker(e_H) \otimes \Ker (e_H)), $$
which is called the module of indecomposables of $H^0(B(M_{X/S}))$. 
Then it has a structure of a Lie coalgebra 
$$  QH^0(B(M_{X/S})) \lra \bigwedge^2 QH^0(B(M_{X/S})) $$
induced by the map 
$$ \Delta_H - \tau \circ \Delta_H : 
H^0(B(M_{X/S})) \lra H^0(B(M_{X/S})) \otimes H^0(B(M_{X/S})), $$
where $\tau$ is the endomorphism on $H^0(B(M_{X/S})) \otimes H^0(B(M_{X/S}))$ 
defined by $x \otimes y \mapsto y \otimes x$. 
On the other hand, the minus of the differential $-d:M_{X/S}^1 \lra M_{X/S}^2 = \bigwedge^2 M_{X/S}^1$ 
defines a structure of Lie coalgebra on $M_{X/S}^1$. Then 
it suffices to prove that these two Lie coalgebras (in the ind-category of $\MICn(S/k)$) 
are isomorphic, because the Lie algebra associated to the group scheme 
$\Spec H^0(B(M_{X/S}))$ (in the ind-category of $\MICn(S/k)$) is 
the dual of $QH^0(B(M_{X/S}))$. 

One can check that the projection 
$B(M_{X/S}) \lra B(M_{X/S})^{-1,1} = M_{X/S}^1[1]$ 
(where $[1]$ denotes the shift of the degree by $1$) 
induces the morphism 
$$  QH^0(B(M_{X/S})) \lra M_{X/S}^1 $$
of Lie coalgebras on $S$, and since this construction is functorial, 
this is actually a morphism of Lie coalgebra objects in the ind-category of 
$\MICn(S/k)$. Therefore, to prove that it is an isomorphism, it suffices to prove that 
the restriction of this morphism to $\MIC(s/s)$, which is the morphism 
$$  QH^0(B(M_{X_s/s})) \lra M_{X_s/s}^1, $$
is isomorphic, because the functor $\MICn(S/k) \lra \MIC(s/s)$ is 
a fiber functor, hence exact and faithful. This is proven by Bloch--Kriz (\cite[Theorem 2.30]{bk}, see also 
\cite[Theorem 2.6.2]{hain}). 
So we have proven the required claim. 

Then, by Proposition \ref{prop:unip2020sep}, it suffices to prove that 
the affine group scheme $\Spec H^0(B(M_{X/S}))$ is a pro-object of 
the category ${\rm UG}(S/k)$ defined in Section 4. Because 
$H^0(B(M_{X/S}))$ is the inductive limit of $H^0(B(M_{X/S}(q))) \, (q \in \N)$, 
we are reduced to proving that the affine group scheme 
$\Spec H^0(B(M_{X/S}(q)))$ is an object of ${\rm UG}(S/k)$. 
Note that we have the equality 
$$ H^0(B(M_{X/S}(q))) = \bigcup_{s \geq 0} \Fil^{-s}H^0(B(M_{X/S}(q))), $$ 
where $\Fil^{-s}H^0(B(M_{X/S}(q))) \, (s \geq 0)$ is the filtration defined 
by the Eilenberg--Moore spectral sequence. Because 
the Eilenberg--Moore spectral sequence is a spectral sequence in 
the ind-category of $\MICn(S/k)$ and the graded quotient 
$${\rm gr}^{-s}H^0(B(M_{X/S}(q))) := \Fil^{-s}H^0(B(M_{X/S}(q)))/\Fil^{-s+1}H^0(B(M_{X/S}(q)))$$ 
is a subquotient of $E_1^{-s,s} = \bigotimes^s_{\cO_S} H^1(M_{X/S}(q))$, 
${\rm gr}^{-s}H^0(B(M_{X/S}(q))) \, (s \geq 0)$ 
are objects in $\MICn(S/k)$. Therefore, locally on $S$, 
$H^0(B(M_{X/S}(q)))$ is isomorphic to 
$\bigoplus_{s=0}^{\infty} {\rm gr}^{-s} H^0(B(M_{X/S}(q)))$ 
as an $\cO_S$-module and so it is a locally projective $\cO_S$-module. 

Moreover, we have the isomorphism 
$QH^0(B(M_{X_s/s}(q))) \cong M_{X_s/s}(q)^1$ of finite dimensional 
Lie coalgebras by \cite[Theorem 2.30]{bk} and then, by \cite[Corollary 2.31]{bk}, 
$H^0(B(M_{X_s/s}(q)))$ is generated by some finite dimensional $k$-vector space 
as $k$-algebra. Thus there exists some $t \geq 0$ such that 
$\Fil^{-t}H^0(B(M_{X_s/s}(q)))$ generates $H^0(B(M_{X_s/s}(q)))$ as $k$-algebra, namely, 
the natural map 
$$ {\rm Sym}\,\Fil^{-t}H^0(B(M_{X_s/s}(q))) \lra H^0(B(M_{X_s/s}(q)))$$ 
is surjective. Then, since the cokernel of the natural map 
\begin{equation}\label{eq:unip2020sep}
{\rm Sym}\,\Fil^{-t}H^0(B(M_{X/S}(q))) \lra H^0(B(M_{X/S}(q)))
\end{equation}
has a structure of an ind-object in $\MICn(S/k)$ and its fiber at $s$ is zero, 
we see that the map \eqref{eq:unip2020sep} is surjective, that is, 
$\Fil^{-t}H^0(B(M_{X/S}(q)))$ generates $H^0(B(M_{X/S}(q)))$ as 
$\cO_S$-algebra. So we have shown that $\Spec H^0(B(M_{X/S}(q)))$ 
satisfies the assumptions in Lemma \ref{lem:unip2020sep} and thus satisfies 
the condition $(\star)$ in Section 4. (Also, we see that 
the geometric fibers of $\Spec H^0(B(M_{X/S}(q)))$ are algebraic groups.) 

By definition of the coproduct $\Delta$ on 
$H^0(B(M_{X/S}(q)))$, we have 
\begin{align*}
& \Fil^{0}H^0(B(M_{X/S}(q))) = \cO_S, \\ 
& \Delta(\Fil^{-s}H^0(B(M_{X/S}(q)))) \subseteq 
\sum_{t=0}^s \Fil^{-s+t}H^0(B(M_{X/S}(q))) \otimes \Fil^{-t}H^0(B(M_{X/S}(q))).  
\end{align*}
So, by \cite[Theorem 14.5]{milne}, the geometric fibers of $\Spec H^0(B(M_{X/S}(q)))$ 
are unipotent algebraic groups. In conclusion, $\Spec H^0(B(M_{X/S}(q)))$ 
is a unipotent group scheme in $\MICn(S/k)$ satisfying the condition $(\star)$, that is, 
$\Spec H^0(B(M_{X/S}(q)))$ belongs to ${\rm UG}(S/k)$. 
So the proof of the theorem is finished. 
\end{proof}

\begin{rem}
The definition of Lie coalgebra on $QH^0(B(M_{X/S}))$ follows \cite[2.6]{hain} 
(hence follows \cite[(6.24)]{hain-memoir}), 
because the one given in \cite[Lemma 2.29(a)]{bk} is not correct. 
(The coproduct $\Delta_H$ itself does not induce the Lie coalgebra structure.) 
Also, our definition of 
the Lie coalgebra structure on $M_{X/S}^1$ involves the minus sign. We can 
check that the morphism $  QH^0(B(M_{X/S})) \lra M_{X/S}^1 $ in the proof 
is compatible with these corrected definitions. 
\end{rem}

To prove Theorem \ref{15}, we define the category of 
$\cA_{X/S}$-connections and prove some of its basic properties. Let $\MIC(\cA_{X/S})$ be the category of locally free modules with 
integrable $\cA_{X/S}$-connection, that is, the category of 
pairs $(E,\nabla)$ consisting of a locally free $\cA_{X/S}^0$-module 
$E$ of finite rank and 
an integrable connection $\nabla: E \lra E \otimes_{\cA_{X/S}^0} \cA_{X/S}^1$ 
(an additive map satisfying the Leibniz rule $\nabla(ae) = a \nabla(e) + e \otimes da 
\,(a \in \cA_{X/S}^0, e \in E)$ such that the composite 
$$ E \os{\nabla}{\lra} E \otimes_{\cA_{X/S}^0} \cA_{X/S}^1 
\os{\nabla_1}{\lra} E \otimes_{\cA_{X/S}^0} \cA_{X/S}^2, $$
where $\nabla_1$ is the map defined by $e \otimes \omega 
\mapsto \nabla(e) \wedge \omega + e \otimes d\omega$, 
is zero). We can also define the category  $\MIC(\cA_{X/S^m(r)})$ for $r = 1,2, m \in \N$. 
The categories $\MIC(\cA_{X/S})$, $\MIC(\cA_{X/S^m(r)})$ are 
exact tensor categories in natural way. We have `the pullback functors'  
{\allowdisplaybreaks{
\begin{align*}
& f^*: \MIC(S/S) \lra \MIC(\cA_{X/S}), \\ 
& {f^m}^*: \MIC(S^m(r)/S^m(r)) \lra \MIC(\cA_{X/S^m(r)}) \quad (r=1,2, m \in \N), \\ 
& {q_j^m}^*: \MIC(\cA_{X/S}) \lra \MIC(\cA_{X/S^m(1)}) \quad (j=1,2, m \in \N), \\  
& {q_{j,j'}^m}^*: \MIC(\cA_{X/S^m(1)}) \lra \MIC(\cA_{X/S^m(2)}) \quad (1 \leq j < j' \leq 3, m \in \N),  \\ 
&  \eta^*: \MIC(\cA_{X/S^0(1)}) \lra \MIC(\cA_{X/S}) 
\end{align*}}}%
which are induced by the morphisms $\cO_S \lra \cA_{X/S}, 
\cO_{S^m(r)} \lra \cA_{X/S^m(r)}, 
\cA_{X/S} \lra \cA_{X/S^m(1)}$, $\cA_{X/S^m(1)} \lra \cA_{X/S^m(2)}$ 
and $\cA_{X/S^0(1)} \lra \cA_{X/S}$
defined by $f, f^m, \wh{q}_j^m$, the analogue of $\wh{q}_j^m$ for the map $q_{j,j'}^m$ 
and the diagonal $X \lra \widehat{X}^0$, 
respectively. These functors are exact tensor functors. 

Let $\NfMIC(\cA_{X/S})$ be the full subcategory of 
$\MIC(\cA_{X/S})$ 
whose objects are iterated 
extensions of objects in $f^*\MIC(S/S) \subseteq \MIC(\cA_{X/S})$. 
It is also an exact tensor category in a natural way. 

Also, we define a category $\cE$ and its full subcategory $\Nf\cEn$ 
which will play an important role in this section. 

\begin{defn}
Let $\cE$ be the category of pairs $(E,\{\epsilon^m\})$ consisting of 
an object $E$ in $\MIC(\cA_{X/S})$ and a compatible 
family of isomorphisms $($called stratifications$)$ 
$\{\epsilon^m:{q_2^m}^*E \os{\cong}{\lra} {q_1^m}^*E\}$ 
in $\MIC(\cA_{X/S^m(1)})$ with $\eta_*(\epsilon^0) = \id$ 
which satisfies the cocycle condition 
${q_{1,2}^m}^*(\epsilon^m) \circ {q_{2,3}^m}^*(\epsilon^m) 
= {q_{1,3}^m}^*(\epsilon^m)$ in $\MIC(\cA_{X/S^m(2)}) \, (m \in \N)$. 
Then the pullback functors $f^*, {f^m}^*$ above induce the pullback functor 
$$ f^*: \MIC(S/k) \cong \Strat(S/k) \lra \cE. $$

Let $\Nf\cEn$ be the full subcategory of $\cE$ 
whose objects are iterated 
extensions of objects in the essential image of the functor 
$$ \MICn(S/k) \hra \MIC(S/k) \os{f^*}{\lra} \cE. $$
\end{defn}
Note that $\Nf\cEn$ is also an exact tensor category in a natural way. 

We say an object in $\NfMIC(\cA_{X/S})$ (resp. $\Nf\cEn$) is of index of unipotence $\leq n$ 
if it can be written as an iterated extension of at most $n$ objects in 
$f^*\MIC(S/S)$ (resp. $f^*\MICn(S/k)$). 

An object $(E,\nabla)$ in $\MIC(\cA_{X/S})$ naturally induces the de Rham complex 
$E \otimes_{\cA_{X/S}^0} \cA_{X/S}$. We define the $i$-th relative 
de Rham cohomology $R^if_*E$ of $(E,\nabla)$ by 
$$R^if_* E := H^i(E \otimes_{\cA_{X/S}^0} \cA_{X/S}), $$ 
and we put $f_*E := R^0f_*E$. 
For $E \in \MIC(S/S)$ and $E' \in \MIC(\cA_{X/S})$, we have the 
projection formula 
$$ R^if_*(f^*E \otimes E')= E \otimes R^if_*E'. $$
When $E' = (\cA_{X/S}^0,d)$, it is written as 
$$ R^if_*f^*E = E \otimes H^i(\cA_{X/S}) = E \otimes R^if_{\dR *}(\cO_X,d). $$ 
We define the $i$-th relative de Rham cohomology 
$R^if^m_*E$ for $E \in \MIC(\cA_{X/S^m(r)})$ in the same way, and 
the projection formula holds also in this case. 

When we are given an object $(E,\{\epsilon^m\})$ in $\cE$, 
we have functorial diagrams  
\begin{equation}\label{eq:axs1}
{p_2^m}^* R^if_*E \lra R^if^m_*{q_2^m}^*E \os{R^if^m_*\epsilon^m}{\lra} 
R^if^m_*{q_1^m}^*E \lla {p_1^m}^* R^if_*E \quad (m \in \N)
\end{equation}
in which the second arrow is an isomorphism. We do not know 
whether the first and the third arrows are isomorphisms in general, but 
they are isomorphisms when $E$ belongs to 
$f^*\MICn(S/k)$, by projection formula and 
the isomorphism \eqref{eq:gm} for $(\ol{E},\ol{\nabla}) = (\cO_X,d)$. 
Hence we see by induction that 
the arrows in \eqref{eq:axs1} are isomorphisms when 
$(E,\{\epsilon^m\})$ belongs to $\Nf\cEn$, and the diagrams \eqref{eq:axs1} 
induce the structure of an object in $\MICn(S/k) \subseteq \MIC(S/k) \cong 
\Strat(S/k)$. So we see that the functor $R^if_*$ induces the functor 
\begin{equation}\label{eq:dr-a}
R^if_*: \Nf\cEn \lra \MICn(S/k). 
\end{equation}
When $i=0$, we denote this functor by $f_*$. 
We have a morphism of functors $f^*f_* \lra \id$ from 
$\Nf\cEn$ to itself, and we can check (by using projection formula) that, 
for any $E \in \Nf\cEn$, $f^*f_*E \lra E$ gives an injection onto the maximal 
subobject of $E$ which belongs to $f^*\MICn(S/k)$. 

\begin{prop}\label{prop:tannakaen}
$\Nf\cEn$ is a neutral Tannakian category. 
\end{prop}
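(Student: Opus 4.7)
The plan is to mimic the proof (referenced as a similar argument to \cite[Proposition 1.4.3]{dp}) which established that $\NfMICn(X/k)$ is a neutral Tannakian category. All the required formal ingredients are already in place: the pullback $f^{*}:\MICn(S/k)\hra \Nf\cEn$, the right adjoint-like functor $f_{*}:\Nf\cEn\lra \MICn(S/k)$ of \eqref{eq:dr-a}, the fact that $f_{*}(\cO_{S},d)=\cO_{S}$ (equivalent to $H^{0}(\cA_{X/S})=\cO_{S}$, itself a consequence of assumption (D) together with \eqref{eq:agm}), and the observation made just before the proposition that, for $E\in\Nf\cEn$, the canonical morphism $f^{*}f_{*}E\hra E$ realises $f^{*}f_{*}E$ as the maximal subobject of $E$ lying in $f^{*}\MICn(S/k)$. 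I will also use assumption (E), which guarantees that $\MICn(S/k)$ is an abelian subcategory of $\wt{\MIC}(S/k)$, and that it is already known to be a neutral Tannakian category with fiber functor $s^{*}_{\dR}$.

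First I would set up the fiber functor. The section $\iota:S\lra X$ induces an augmentation $\cA_{X/S}\lra \cO_{S}$ (this is how $\cA_{X/S}$ was built in Definition \ref{def:axs}), and the same construction on $S^{m}(r)$ gives a system of augmentations compatible with the stratification data. These produce an exact tensor functor $\iota^{*}:\cE\lra \Strat(S/k)\cong \MIC(S/k)$ with $\iota^{*}\circ f^{*}=\id$. An easy induction on the index of unipotence, using that $\MICn(S/k)$ is closed under extensions in $\wt{\MIC}(S/k)$ (assumption (E)), shows that $\iota^{*}$ maps $\Nf\cEn$ into $\MICn(S/k)$. Composing with the fiber functor $s^{*}:\MICn(S/k)\lra \Vector_{k}$ of the Tannakian category $\MICn(S/k)$ yields a candidate fiber functor $\omega:=s^{*}\circ\iota^{*}:\Nf\cEn\lra \Vector_{k}$; it is a $k$-linear tensor functor, and its faithfulness and exactness will be automatic once abelianness is verified, by the same induction on unipotence index together with faithful exactness of $s^{*}$ on $\MICn(S/k)$.

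The main work, and the main obstacle, is to verify that $\Nf\cEn$ is abelian. Kernels, cokernels, images and coimages computed termwise in $\wt{\MIC}(\cA_{X/S})$ (with the obvious stratifications obtained by transporting via the $\epsilon^{m}$) form an abelian category, so the issue is to show that these constructions preserve the subcategory $\Nf\cEn$. I would argue by induction on the sum of the indices of unipotence of the source and the target, exactly as in \cite[Proposition 1.4.3]{dp}: given $\phi:E\lra F$ in $\Nf\cEn$, reduce to short exact sequences $0\lra f^{*}V\lra E\lra E'\lra 0$ and $0\lra f^{*}W\lra F\lra F'\lra 0$, apply $f_{*}$ to pieces (using projection formula, which is valid for objects of $\Nf\cEn$ as was observed in the paragraph following \eqref{eq:axs1}), use that $\MICn(S/k)$ is abelian to break up $\phi$ into pieces between objects in $f^{*}\MICn(S/k)$ and objects of strictly smaller unipotence index, and assemble the kernel and cokernel of $\phi$ as successive extensions of objects in $f^{*}\MICn(S/k)$. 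The key point that makes this assembly go through is precisely the universal property $f^{*}f_{*}E\hra E$ proved just before the statement of the proposition, which pins down the $f^{*}\MICn(S/k)$-part of any object of $\Nf\cEn$.

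Finally, rigidity is immediate: every object of $\Nf\cEn$ is, by construction, an iterated extension of objects $f^{*}V$ with $V\in\MICn(S/k)$ locally free of finite rank, so it is locally free of finite rank as an $\cA_{X/S}^{0}$-module and admits a dual in $\MIC(\cA_{X/S})$; the stratifications dualise, and the extension structure is respected by $(-)^{\vee}$ (applied in reverse order), so the dual again lies in $\Nf\cEn$. Internal Homs follow from duals and $\otimes$. Combined with the abelianness, the tensor structure inherited from $\cE$, and the fiber functor $\omega=s^{*}\circ\iota^{*}$, this shows that $\Nf\cEn$ is a neutral Tannakian category. I expect the verification of abelianness to be the only nontrivial step; all other axioms are formal consequences of the structure already developed in Sections 1 and 5.
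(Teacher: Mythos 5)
Your proposal is correct and follows essentially the same route as the paper: the main content is abelianness of $\Nf\cEn$, proved by induction on the indices of unipotence using condition (E) (abelianness of $\MICn(S/k)$), the pushforward $f_*$ with the projection formula (so that in the base case a morphism in $f^*\MICn(S/k)$ is $f^*f_*$ of a morphism in $\MICn(S/k)$), and a kernel--cokernel exact sequence to assemble kernels and cokernels as iterated extensions; rigidity and the fiber functor $s^*_{\dR}\circ\iota^*$ are then routine, exactly as in the paper's proof. The only cosmetic difference is that you organize the induction on the sum of the two indices and stress the maximal-subobject property of $f^*f_*E\hra E$, while the paper inducts first on the index of the source (with target of index $\leq 1$) and then on the target, using the functoriality of the connecting map in the six-term sequence.
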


\begin{proof}
We prove that $\Nf\cEn$ is an abelian category. 
(Then it is easy to see that $\Nf\cEn$ is a rigid tensor abelian category 
and a fiber functor is defined by the composition   
$$ \Nf\cEn \os{\iota^*}{\lra} \MICn(S/k) \lra \MIC(s/s) = \Vector_k $$
of `the pullback functor' $\iota^*$ by the augmentations 
$\cA_{X/S} \lra \cO_{S}$, 
$\cA_{X/S^m(r)} \lra \cO_{S^m(r)}$ 
and the functor  `taking fiber at $s$'.) 

Let $\alpha: E \lra F$ be a morphism in $\Nf\cEn$ with 
$E$ (resp. $F$) of index of unipotence $\leq n$ (resp. $\leq m$). 
We prove that the kernel of $\alpha$ exists and it is of index of unipotence 
$\leq n$ and that the cokernel of $\alpha$ exists and it is of index of unipotence 
$\leq m$. 

When $n = m = 1$,  $E,F$ belong to $f^*\MICn(S/k)$ and so 
$\alpha$ is equal to $f^*f_*\alpha: f^*f_*E \lra f^*f_*F$. 
Because $f^*$ is exact on $\MICn(S/k)$ and $\MICn(S/k)$ is assumed to be 
abelian by condition (E) of Notation \ref{notation}, 
we see that $f^*\Ker (f_*\alpha)$ is the kernel of $\alpha$ and 
that $f^*\Coker (f_*\alpha)$ is the cokernel of $\alpha$. This shows the claim 
in this case. 

Next we prove by induction  the claim for  general  $n$ and $m=1$. 
We have an exact sequence in $\Nf\cEn$ 
$$
0 \longrightarrow E^{\prime} \longrightarrow E \longrightarrow E^{\prime \prime} 
\longrightarrow 0
$$
with $E'$ (resp. $E''$) of index of unipotence $\leq n-1$ (resp. $\leq 1$). 
So we have the commutative diagram  
$$
\xymatrix{ 
0 \ar[r]&  E^{\prime} \ar[d]^-{\alpha'}  \ar[r]&  E \ar[r] \ar[d]^-{\alpha} & E^{\prime \prime} \ar[r]  \ar[d] & 0 \\
0 \ar[r]&  F   \ar[r]&  F \ar[r]  & 0  \ar[r]&0}
$$
in $\Nf\cEn$ with exact rows. 
It induces the following exact sequence of $\cA_{X/S}^0$-modules: 
$$
0 \rightarrow \Ker\,\alpha' \rightarrow \Ker\,\alpha \rightarrow {E^{\prime \prime}} 
\os{\delta}{\ra} \Coker\,\alpha' \rightarrow \Coker\,\alpha \rightarrow 0. 
$$
By induction hypothesis, $\Coker\,\alpha'$ is an object in 
$\Nf\cEn$ of index of unipotence $\leq 1$. 
We see also, as above, that 
$\delta$ is enriched to a morphism in $\Nf\cEn$ by functoriality of the connecting map $\delta$: 
in fact, there are analogous diagrams and exact sequences of $\cA_{X/S^m(r)}^0$-modules
which are compatible with respect to stratifications. 
Thus $\Coker\,\alpha$ is an object in $\Nf\cEn$ of index of unipotence $\leq 1$. 
Moreover, by induction hypothesis, 
the kernel of $\delta$ is an object in $\Nf\cEn$ of index of unipotence $\leq 1$ and $\Ker\,\alpha'$ is 
an object in $\Nf\cEn$ of index of unipotence $\leq n-1$. 
Thus $\Ker\,\alpha$ is an object in 
$\Nf\cEn$ of index of unipotence $\leq n$. Hence the claim is proved in this case. 

Finally we prove the claim in general case by induction on $m$. 
We have an exact sequence in $\Nf\cEn$ 
$$
0 \longrightarrow F^{\prime} \longrightarrow F \longrightarrow F^{\prime \prime} 
\longrightarrow 0
$$
with $F'$ (resp. $F''$) of index of nilpotence $\leq 1$ (resp. $\leq m-1$). 
So we have the commutative diagram  
$$
\xymatrix{ 
0 \ar[r]&  0 \ar[d] \ar[r]&  E \ar[r] \ar[d]^{\alpha} & E \ar[r]  \ar[d]^{\alpha'}  & 0 \\
0 \ar[r]&  F' \ar[r]&  F \ar[r]  & F''  \ar[r]&0}
$$
in $\Nf\cEn$ with horizontal lines exact. 
It induces the following exact sequence of $\cA_{X/S}^0$-modules: 
$$
0 \rightarrow \Ker\,\alpha \rightarrow \Ker\,\alpha' \os{\delta}{\rightarrow} F'  
\ra \Coker\,\alpha \rightarrow \Coker\,\alpha' \rightarrow 0. 
$$
By induction hypothesis, $\Ker\,\alpha'$ is an object in 
$\Nf\cEn$ of index of unipotence $\leq n$, and we see that  
$\delta$ is enriched to a morphism in $\Nf\cEn$ by functoriality of the connecting map $\delta$. 
Thus $\Ker\,\alpha$ is an object in $\Nf\cEn$ of index of unipotence $\leq n$. 
Moreover, by induction hypothesis, 
the cokernel of $\delta$ is an object in $\Nf\cEn$ of index of unipotence $\leq 1$ 
and $\Coker\,\alpha'$ is an object in 
$\Nf\cEn$ of index of unipotence $\leq m-1$. 
Thus $\Coker\,\alpha$ is an object in 
$\Nf\cEn$ of index of unipotence $\leq m$. Hence the claim is proved, and so 
the proof of the proposition is finished. 
\end{proof}

By using the functors between Tannakian categories 
$$ f^*: \MICn(S/k) \lra \Nf\cEn, \quad 
\iota^*: \Nf\cEn \lra \MICn(S/k), $$ 
we can define the group scheme $G(\Nf\cEn, \iota^*)$ in $\MICn(S/k)$ 
by the method explained in Section 2. 

Also, recall that we defined the augmented $k$-cdga $\cA_{X_s/s}$ 
in Remark \ref{rem:mxss}. Hence we can define 
the category $\MIC(\cA_{X_s/s})$ of locally free $\cA_{X_s/s}$-modules 
of finite rank endowed with integrable $\cA_{X_s/s}$-connection 
and the pullback functor 
$f_s^*: \MIC(\cA_{X_s/s}) \allowbreak \lra \MIC(s/s)$ induced by 
the canonial morphism $k \hra \cA_{X_s/s}$. 
Let $\Nfs\MIC(\cA_{X_s/s})$ be the full subcategory whose objects are 
iterated extensions of objects in $f_s^*\MIC(s/s)$. 
An object $(E,\nabla)$ in $\MIC(A_{X_s/s})$ naturally induces the de Rham complex 
$E \otimes_{\cA_{X_s/s}^0} \cA_{X_s/s}$, and we define the $i$-th 
de Rham cohomology $R^if_{s *}E$ of $(E,\nabla)$ by 
$$R^if_{s *} E := H^i(E \otimes_{\cA_{X_s/s}^0} \cA_{X_s/s}). $$ 
We put $f_{s *}E := R^0f_{s *}E$. 
As in the previous case, 
for $E \in \MIC(s/s)$ and $E' \in \MIC(\cA_{X_s/s})$, we have the 
projection formula 
$$ R^if_{s *}(f^*_s E \otimes E')= E \otimes R^if_{s *}E' $$
and when $E' = (\cA_{X_s/s}^0,d)$, it is written as 
$$ R^if_{s *}f^*_s E = E \otimes H^i(\cA_{X_s/s}) = E \otimes R^if_{s \dR *}(\cO_{X_s},d). $$ 
We have a morphism of functors $f_s^*f_{s*} \lra \id$ from 
$\Nfs\MIC(\cA_{X_s/s})$ to itself, and we can check (by using projection formula) that, 
for any $E \in \Nfs\MIC(\cA_{X_s/s})$, $f_s^*f_{s*}E \lra E$ gives an injection onto the 
maximal trivial subobject of $E$.

We have the restriction functor 
$\Nf\cEn \os{|_{X_s}}{\lra} \Nfs\MIC(\cA_{X_s/s})$
 induced by the morphism $\cA_{X/S} \lra \cA_{X_s/s}$ defined by the closed immersion 
$X_s \hra X$. 
By using the projection formula and induction on index of unipotence, 
we see that, for 
$E \in \Nf\cEn$, there exists the base change isomorphism 
\begin{equation}\label{eq:bcp-a}
(R^if_*E)|_s \os{\cong}{\lra} R^if_{s *}(E|_{X_s}), 
\end{equation}
where we denote the restriction $\MICn(S/k) \lra \MIC(s/s)$ by $|_s$. 
By the same reason, we have the similar base change isomorphism 
with respect to any morphism $S' \lra S$ of finite type 
for $E \in \Nf\cEn$. 

We can check in the same way as the proof of Proposition 
\ref{prop:tannakaen} that $\Nfs\MIC(\cA_{X_s/s})$ is a neutral 
Tannakian category with 
the fiber functor $x^*: \Nfs\MIC(\cA_{X_s/s}) \lra \Vector_k$ 
induced by the augmentation map $\cA_{X_s/s} \lra k$. 
So we can define the Tannaka dual 
of $(\Nfs\MIC(\cA_{X_s/s}), x^*)$, which we denote by 
$G(\Nfs\MIC(\cA_{X_s/s}) , x^*)$. Moreover, 
since the composite of pullback functors 
$$ 
\MICn(S/k) \os{f^*}{\lra} \Nf\cEn \os{|_{X_s}}{\lra} \Nfs\MIC(\cA_{X_s/s})
$$ 
factors through $\MIC(s/s) = \Vector_k$, 
the functor $|_{X_s}$ induces the morphism 
\begin{equation}\label{fibera}
G(\Nfs\MIC(\cA_{X_s/s}) , x^*) \lra s_{\dR}^*G(\Nf\cEn, \iota^*). 
\end{equation}

Then we have the following: 

\begin{prop}\label{fiberpropa}
The morphism \eqref{fibera} is an isomorphism.  
\end{prop}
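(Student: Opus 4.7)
The plan is to reduce the claim to three criteria, following the same Wildeshaus/Esnault--Hai--Sun type dictionary used in the proof of Proposition \ref{fiberprop}, and then to verify each of them by formally transporting the arguments of Proposition \ref{fiberprop} and Theorem \ref{thm:wn} from $\NfMICn(X/k)$ to the Tannakian category $\Nf\cEn$.

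Concretely, by \cite[I Proposition 1.4]{w} and \cite[Appendix A]{ehs} applied to the pair $(\Nf\cEn,\iota^*)$, $(\Nfs\MIC(\cA_{X_s/s}),x^*)$, the map \eqref{fibera} is an isomorphism once the following three claims are established:
\begin{enumerate}
\item[(i)] every object of $\Nfs\MIC(\cA_{X_s/s})$ is a quotient of $E|_{X_s}$ for some $E\in\Nf\cEn$;
\item[(ii)] if $E\in\Nf\cEn$ and $E|_{X_s}$ is trivial (i.e.\ in the image of $f_s^*$), then $E\cong f^*V$ for some $V\in\MICn(S/k)$;
\item[(iii)] if $E\in\Nf\cEn$ and $F_0\subseteq E|_{X_s}$ is its largest trivial subobject, then there exists $E_0\subseteq E$ in $\Nf\cEn$ with $E_0|_{X_s}=F_0$.
\end{enumerate}

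For (ii) and (iii), the proof of Proposition \ref{fiberprop} transfers verbatim, using the two ingredients already recorded in the excerpt: the adjunction $f^*f_*\to\id$ on $\Nf\cEn$, whose image is the maximal subobject in $f^*\MICn(S/k)$ (established before \eqref{eq:dr-a}), and the base change isomorphism \eqref{eq:bcp-a} for $f_*$ along $s\hookrightarrow S$. For (ii), the isomorphism $f^*f_*E\hra E$ becomes an isomorphism after restriction to $X_s$ (by triviality of $E|_{X_s}$ and the projection formula), hence is an isomorphism already on $X$ by comparison of ranks, so $V:=f_*E$ works. For (iii), one sets $E_0:=f^*f_*E$ and uses \eqref{eq:bcp-a} to identify $E_0|_{X_s}$ with $f_s^*f_{s*}(E|_{X_s})=F_0$.

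For (i), the main step is to build, inside $\Nf\cEn$, a projective system $\{(W_n^{\cE},e_n^{\cE})\}_{n\geq 1}$ of pointed objects satisfying the analogue of the universal property (W1) of Theorem \ref{thm:wn}. One puts $W_1^{\cE}:=(\cA_{X/S}^0,d)$ with the canonical stratification and $e_1^{\cE}:\cO_S\os{=}{\lra}\iota^*W_1^{\cE}$, and inductively defines $W_{n+1}^{\cE}$ as the extension in $\Nf\cEn$
\[
0 \lra f^*R^1f_*(W_n^{\cE,\vee})^{\vee} \lra W_{n+1}^{\cE} \lra W_n^{\cE} \lra 0
\]
whose class corresponds, under the projection formula, to the identity of $R^1f_*(W_n^{\cE,\vee})$; one then lifts $e_n^{\cE}$ to $e_{n+1}^{\cE}$. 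The entire proof of Theorem \ref{thm:wn} -- the verification of (W0), (W1) and of the stratified structure -- rests only on the formal package consisting of (a) the adjunction $f^*f_*\to\id$ with the described image, (b) the projection formula for $R^if_*$, (c) the base change isomorphism \eqref{eq:bcp-a}, (d) the equality $f_*(\cA_{X/S}^0,d)=(\cO_S,d)$ coming from $H^0(\cA_{X/S})=\cO_S$, and (e) the fact that $R^if_*$ lands in $\MICn(S/k)$ on $\Nf\cEn$ (which is \eqref{eq:dr-a}). All of these were recorded in Section~5, and the abelianness of $\Nf\cEn$ is Proposition \ref{prop:tannakaen}. With the $W_n^{\cE}$ constructed, the argument of Remark \ref{rem:sect2} gives, for every $F\in\Nfs\MIC(\cA_{X_s/s})$ and every basis of $x^*F$, a surjection $(W_n^{\cE}|_{X_s})^{\oplus N}\twoheadrightarrow F$ for some $n,N$, which is exactly (i).

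The main potential obstacle is not conceptual: it is simply to check that every structural property of $\NfMICn(X/k)$ used in the proofs of Theorem \ref{thm:wn} and Proposition \ref{fiberprop} has been established for $\Nf\cEn$ in the excerpt (adjunction, projection formula, base change, targeting $\MICn(S/k)$, abelianness). Since the stratification data is built into the objects of $\cE$ by construction, one never has to produce it by hand at the end, as was done at the level of log connections in Theorem \ref{thm:wn}. Consequently all inductive steps propagate directly, and the three criteria (i)--(iii) will follow with only cosmetic changes from the arguments already given in the paper.
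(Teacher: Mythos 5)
Your proposal is correct and follows essentially the same route as the paper: the paper's proof of Proposition \ref{fiberpropa} likewise reduces to the same three claims (quotient, triviality, largest trivial subobject) by imitating Proposition \ref{fiberprop}, proves (ii) and (iii) via the adjunction $f^*f_*\to\id$ together with the base change isomorphism \eqref{eq:bcp-a}, and establishes (i) by constructing the projective system of pointed objects in $\Nf\cEn$ satisfying (W0)'--(W2)' — this is exactly Proposition \ref{prop:wn2} and Remark \ref{rem:wn2} in the paper, proved by repeating the argument of Theorem \ref{thm:wn} with the same formal package you list.
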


\begin{proof}
We imitate the proof of Proposition \ref{fiberprop}. 
It suffices to prove the following three claims. 
\begin{enumerate}
\item Any object in $\NfsMIC(\cA_{X_s/s})$ is a quotient of an object of the form 
$E|_{X_s}$ \, $(E \in \Nf\cEn)$.  
\item For any object $E$ in $\Nf\cEn$ with $E|_{X_s}$ trivial, 
there exists an object $V$ in $\MICn(S/k)$ with $E = 
f^*V$. 
\item Let $E$ be an object in $\Nf\cEn$ and let 
$F_0 \in \Nfs\MIC(\cA_{X_s/s})$ be 
the largest trivial subobject of $E|_{X_s}$. Then there exists 
a subobject $E_0$ of $E$ with 
$F_0 = E_0|_{X_s}$. 
\end{enumerate}

First we prove the claim (2). By restricting the map 
$f^*f_*E \lra E$ to $\MIC(\cA_{X_s/s})$ and using base change 
property \eqref{eq:bcp-a}, we obtain the morphism 
\begin{align*}
f_{s}^*f_{s *}(E|_{X_s}) 
\cong f_{s}^*((f_{*}E)|_s) 
& = (f^*f_{*}E)|_{X_s} \lra E|_{X_s}, 
\end{align*}
which is an isomorphism due to the triviality of 
$E|_{X_s}$. Thus the map $f^*f_*E \lra E$ is an isomorphism 
and so the claim holds if we put $V := f_{*}E$. 

Next we prove the claim (3). If $E$ and $F_0 \subseteq 
E|_{X_s}$ be as in the statement of the claim, 
we have $F_0 = f_{s}^*f_{s *}(E|_{X_s})$. 
So, by the base change property \eqref{eq:bcp-a}, we obtain the isomorphism 
\begin{align*}
F_0 = f_{s}^*f_{s *}(E|_{X_s}) 
\cong f_{s}^*((f_{*}E)|_s)
= (f^*f_{*}E)|_{X_s}. 
\end{align*}
So the claim holds if we put $E_0 = f^*f_{*}E$. 

Finally, in order to prove the claim (1), 
it suffices to construct a projective system 
of objects $\{W_n\}_n$ in $\Nf\cEn$ such that, 
for any object $F$ in $\NfsMIC(\cA_{X_s/s})$, there exist 
$n, N \in \N$ and a surjection 
$W_n^{\oplus N}|_{X_s} \lra F$. 
This will be done in Proposition \ref{prop:wn2} 
and Remark \ref{rem:wn2} 
below. 
\end{proof}

\begin{prop}\label{prop:wn2}
There exists 
a projective system 
$(W,e) := \{(W_n, e_n)\}_{n \geq 1}$ consisting of 
\begin{itemize}
\item 
$W_n := (\ol{W}_{\mkern-4 mu n}, \{\epsilon_n^m\}_m) \in \Nf\cEn$, and 
\item 
A morphism $e_n: (\cO_S,d) \lra \iota^*W_n$ in $\MICn(S/k)$
\end{itemize}
which satisfies the following conditions: 
\begin{enumerate}
\item[{\rm (W0)'}] For any $n \geq 2$, the morphism 
$f_*(\ol{W}_{n-1}^{\vee}) \lra f_*(\ol{W}_{\mkern-4 mu n}^{\vee})$ induced by 
the transition map $\ol{W}_{\mkern-4 mu n} \lra \ol{W}_{n-1}$ is an isomorphism. 
\item[{\rm (W1)'}] For any $E \in \NfMIC(\cA_{X/S})$ of index of unipotence $\leq n$ and 
a morphism $v: \cO_S \lra \iota^* E$ in $\MIC(S/S)$, there exists a unique 
morphism $\varphi: \ol{W}_{\mkern-4 mu n} \lra E$ in $\NfMIC(\cA_{X/S})$ with 
$\iota^*(\varphi) \circ \ol{e}_n = v$, where 
$\ol{e}_n: \cO_S \lra \iota^*\ol{W}_{\mkern-4 mu n}$ is the underlying morphism 
of $e_n$ in $\NfMIC(\cA_{X/S})$. Moreover, the same universality holds after base change 
by any morphism $S' \lra S$ of finite type. 
\item[{\rm (W2)'}]
For any $n$, there exists an exact sequence 
\begin{equation}\label{wn-ext0-a}
0 \lra f^*R^1f_{*} (W_n^{\vee})^{\vee} \lra W_{n+1} \lra 
W_n \lra 0 
\end{equation}
in $\Nf\cEn$, where $R^1f_{*}: \Nf\cEn \lra \MICn(S/k)$ 
is the functor defined in \eqref{eq:dr-a}
\end{enumerate}
\end{prop}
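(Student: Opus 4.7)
The plan is to mirror the construction of Theorem \ref{thm:wn} step by step, with $\NfMICn(X/k)$ replaced by $\Nf\cEn$, $\NfMIC(X/S)$ by $\NfMIC(\cA_{X/S})$, and the formalism of log de Rham cohomology replaced by the $\cA$-connection formalism developed earlier in this section. All the ingredients needed for the original argument have analogues here: the functor $f_* \colon \Nf\cEn \to \MICn(S/k)$ of \eqref{eq:dr-a}, its right adjoint $f^*$, the fact that $f^* f_* E \hookrightarrow E$ picks out the maximal subobject lying in $f^* \MICn(S/k)$, the projection formula $R^i f_*(f^* E \otimes E') = E \otimes R^i f_* E'$, and the base change isomorphism \eqref{eq:bcp-a}.

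First, set $W_1 := (\cA_{X/S}^0, d) = f^* \cO_S$, endowed with the tautological stratifications coming from the identifications $(q_j^m)^* \cA_{X/S}^0 = \cA_{X/S^m(1)}^0$, and let $e_1 \colon \cO_S \xrightarrow{=} \iota^* W_1$ be the identity. The property (W1)$'$ for $n=1$ follows exactly as in Theorem \ref{thm:wn}: any morphism $\varphi \colon \cA_{X/S}^0 \to f^* V$ in $\NfMIC(\cA_{X/S})$ with $\iota^* \varphi = v$ must satisfy $\varphi = f^* f_* \varphi = f^* v$, and $f^* v$ works. Next, assuming that $(W_i, e_i)$ is constructed for $i \le n$, I build $\ol{W}_{n+1}$ in $\NfMIC(\cA_{X/S})$, working first on an affine open cover of $S$, as the extension
\begin{equation*}
0 \lra f^* R^1 f_*(\ol{W}_n^\vee)^\vee \lra \ol{W}_{n+1} \lra \ol{W}_n \lra 0
\end{equation*}
whose class in $\End(R^1 f_*(\ol{W}_n^\vee))$ is the identity, and I choose $\ol{e}_{n+1}$ lifting $\ol{e}_n$. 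The verification of (W0)$'$ and (W1)$'$ for $(\ol{W}_{n+1}, \ol{e}_{n+1})$ is the same diagram chase as in Theorem \ref{thm:wn}: one uses the long exact sequence associated to the dual of the displayed sequence, the fact that the connecting map $R^1 f_*(\ol{W}_n^\vee) \to R^1 f_*(\ol{W}_n^\vee)$ is the identity, and the inductive universal property of $\ol{W}_n$. The base change clause follows from \eqref{eq:bcp-a} applied to $S' \to S$. Uniqueness in (W1)$'$ then lets the local constructions glue to a global $(\ol{W}_{n+1}, \ol{e}_{n+1})$.

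Second, I promote $\ol{W}_{n+1}$ to an object of $\Nf\cEn$ by constructing the stratifications $\epsilon_{n+1}^m$. The universal property (W1)$'$ for $\ol{W}_{n+1}$, applied to the analogous construction $(\ol{W}_{n+1})^m_j$ on $X^m_j/S^m(1)$, yields a unique isomorphism
\begin{equation*}
\epsilon_{n+1}^m \colon (q_2^m)^* \ol{W}_{n+1} \xrightarrow{\cong} (q_1^m)^* \ol{W}_{n+1}
\end{equation*}
in $\NfMIC(\cA_{X/S^m(1)})$ compatible with the two pullbacks of $\ol{e}_{n+1}$; the cocycle condition over $S^m(2)$ and the compatibility in $m$ are forced by the same uniqueness, via a diagram chase on $S^m(2)$ parallel to the one in Theorem \ref{thm:wn}. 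This defines $W_{n+1} \in \Nf\cEn$, and the isomorphisms of augmentations give $e_{n+1} \colon (\cO_S, d) \to \iota^* W_{n+1}$ in $\MICn(S/k)$.

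The main obstacle, as in Theorem \ref{thm:wn}, is establishing (W2)$'$, namely that the defining exact sequence \eqref{wn-ext0-a} lives in $\Nf\cEn$ rather than merely in $\NfMIC(\cA_{X/S})$. The argument is to show that, under the identifications induced by $\epsilon_n^m$ on $\ol{W}_n$ and by its consequence $\delta_n^m \colon (p_2^m)^* R^1 f_*(\ol{W}_n^\vee) \cong (p_1^m)^* R^1 f_*(\ol{W}_n^\vee)$ (coming from the projection formula and base change isomorphism \eqref{eq:bcp-a} as in Proposition \ref{basechange}), the extension classes of the two pullbacks $(q_j^m)^*\bigl[\text{sequence}\bigr]$ both correspond to the identity element under the induced isomorphism of $\End$ groups. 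Hence there exists an isomorphism fitting into the three-term ladder; using (W1)$'$ to normalise via $\ol{e}_{n+1}$, this isomorphism coincides with $\epsilon_{n+1}^m$, so the ladder is a morphism in $\Nf\cEn$ and (W2)$'$ holds. I expect this step to be the longest, but it is a direct translation of the corresponding compatibility check in Theorem \ref{thm:wn}.
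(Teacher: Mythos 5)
Your proposal is correct and follows essentially the same route as the paper: the paper's proof is precisely "repeat Theorem \ref{thm:wn} with mild modifications," constructing $\ol{W}_{n+1}$ as the extension with identity class on affines, gluing via the uniqueness in (W1)$'$, obtaining the stratifications $\epsilon^m_{n+1}$ from the universal property transferred through the equivalences $\NfMIC(\cA_{X_j^m/S^m(1)}) \cong {\rm N}_{\wh{f}^m}\MIC(\cA_{X/S^m(1)})$ induced by the quasi-isomorphisms of Proposition \ref{prop:fib}, and then deducing (W2)$'$ from the isomorphisms $\delta_n^m$ exactly as you describe. The only point left implicit in your write-up is this explicit use of Proposition \ref{prop:fib} to pass to the common category over $S^m(1)$, which is the one genuinely new ingredient the paper highlights.
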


\begin{proof}
We repeat the proof of Theorem \ref{thm:wn}, with some mild 
modifications. For $n=1$, we put $W_1 := (\cA_{X/S}^0,d)$ and 
define $e_1$ to be the canonical isomorphism 
$(\cO_S,d) \os{=}{\lra} \iota^*(\cA_{X/S}^0,d)$. 
Then we can check the condition (W1)' for $n=1$ in the same way as 
the proof of Theorem \ref{thm:wn}.   

We construct $(W_{n+1}, e_{n+1})$ from $(W_i,e_i) \, (i \leq n)$. 
By induction hypothesis, we see that 
$R^1f_{*}(\ol{W}_{\mkern-4 mu n}^{\vee})$ belongs to $\MIC(S/S)$. 
So, when $S$ is affine, we can define 
$\ol{W}_{n+1} \in \NfMIC(\cA_{X/S})$ as the extension 
\begin{equation}\label{wn-ext-a}
0 \lra f^*R^1f_{*} (\ol{W}_{\mkern-4 mu n}^{\vee})^{\vee} \lra \ol{W}_{n+1} \lra 
\ol{W}_{\mkern-4 mu n} \lra 0 
\end{equation}
whose extension class, which is regarded as an element in 
$$\Gamma(S,  R^1f_{*} (\ol{W}_{\mkern-4 mu n}^{\vee}) \otimes R^1f_{*} (\ol{W}_{\mkern-4 mu n}^{\vee})^{\vee})  = 
\End (R^1f_{*} (\ol{W}_{\mkern-4 mu n}^{\vee})),$$ is the identity. 
Also, let $\ol{e}_{n+1}: \cO_S \lra \iota^*\ol{W}_{n+1}$ be a morphism 
which lifts the map $\ol{e}_n: \cO_S \lra \iota^*\ol{W}_{\mkern-4 mu n}$. 

We need to check the properties (W0)' and (W1)' 
for $(\ol{W}_{n+1},\ol{e}_{n+1})$, assuming $S$ and $S'$ affine. 
By using the base change property after \eqref{eq:bcp-a}, 
we see that the proof on $X \times_X S'/S'$ 
is the same as that on $X/S$  
and the proof of the properties on $X/S$ can be done 
in the same way as that of Theorem \ref{thm:wn}. 
Then we can define $(\ol{W}_{n+1},\ol{e}_{n+1})$ for general $S$ 
by gluing and check the properties (W0)', (W1)' for general $S$ and $S'$
in the same way as in the proof of Theorem \ref{thm:wn}. 

Next, let $f_j^m: X_j^m \lra S^m(1)$, 
$p_j^m: S^m(1) \lra S$, 
$q_j^m: X_j^m \lra X$, 
$\wh{f}^m: \wh{X}^m \lra S^m(1)$ and 
$\wh{q}_j^m: \wh{X}^m \lra X$ 
be as in Section 1, and let $\iota_j^m:S^m(1) \lra X_j^m$ be 
the base change of $\iota$, which is a section of 
$f_j^m$. Then we can define the sheaves of augmented cdga's 
$\cA_{X_j^m/S^m(1)}$ and the categories 
${\rm N}_{f_j^m}\MIC(\cA_{X_j^m/S^m(1)})$ from 
the morphism $f_j^m$. Also, we can define the category 
${\rm N}_{\wh{f}^m}\MIC(\cA_{X/S^m(1)})$ from 
the morphism $\wh{f}^m$. Then we have 
the quasi-isomorphisms 
$$ 
\cA_{X_2^m/S^m(1)} \lra 
\cA_{X/S^m(1)} \lla 
\cA_{X_1^m/S^m(1)}
$$ 
by Proposition \ref{prop:fib}, and these induce 
the equivalences of categories 
$$ 
{\rm N}_{f_2^m}\MIC(\cA_{X_2^m/S^m(1)}) 
\os{\cong}{\lra} 
{\rm N}_{\wh{f}^m}\MIC(\cA_{X/S^m(1)}) 
\os{\cong}{\lla} 
{\rm N}_{f_1^m}\MIC(\cA_{X_1^m/S^m(1)}). 
$$
Then, by the property (W1)', there exists 
uniquely the isomorphism 
$$\epsilon^m_{n+1}: ({q_2^m}^*\ol{W}_{n+1}, {p_2^m}^*\ol{e}_{n+1}) \os{\cong}{\lra} 
({q_1^m}^*\ol{W}_{n+1}, {p_1^m}^*\ol{e}_{n+1}) \, (m \in \N)$$ 
in the framework of the categories in the above diagram, 
which satisfies the cocycle condition. 
(To prove the cocycle condition, we need to work on 
pullbacks of $f:X \lra S$ to $S^m(2)$. We leave the reader to write 
out the detailed argument.) 
So the object $W_{n+1} := (\ol{W}_{n+1}, \{\epsilon_{n+1}^m\}_m)$ in 
the category $\cE$ is defined, and 
the isomorphisms ${p_2^m}^*\ol{e}_{n+1} \cong {p_1^m}^*\ol{e}_{n+1} \, (m \in \N)$ 
induces the map $e_{n+1}: (\cO_S,d) \lra \iota^*W_{n+1}$. 

Finally we need to prove that $W_{n+1}$ belongs to 
$\Nf\cEn$ and that it satisfies the property (W2)'. 
Since $W_n$ belongs to $\Nf\cEn$, 
we obtain the isomorphisms of Gauss--Manin connection 
$$ \delta_n^m: {p_2^m}^* R^1f_{*}(\ol{W}_{\mkern-4 mu n}^{\vee}) \os{\cong}{\lra} 
 {p_1^m}^* R^1f_{*}(\ol{W}_{\mkern-4 mu n}^{\vee}) \quad (m \in \N).
$$
by \eqref{eq:dr-a}. Using this, the rest of the proof  
can be done in the same way as the proof of 
Theorem \ref{thm:wn}. So the proof of the proposition is finished. 
\end{proof}

\begin{rem}\label{rem:wn2}
In this remark, which corresponds to Remark \ref{rem:sect2}, we give   the last step 
of the proof of Proposition \ref{fiberpropa}. 
By (W1)', for any object $E$ in $\NfsMIC(\cA_{X_s/s})$ and 
any morphism $v: k \lra x^* E$ in $\MIC(s/s) = \Vector_k$,  
there exists a unique 
morphism $\varphi_{v}: W_n|_{X_s} \lra E$ in $\NfsMIC(\cA_{X_s/s})$ with 
$x^*(\varphi) \circ (e_n|_{X_s}) = v$ for some $n$. 
By considering maps $v_1, ..., v_N: k \lra x^* E$ 
whose direct sum $k^{\oplus N} \lra x^*E$ is surjective, 
we obtain a surjective map 
$\oplus_{i=1}^N \varphi_{v_i}: W_n^{\oplus N}|_{X_s} \lra E$ for some $n$. 
\end{rem}

Now we compare the categories 
$\Nf\MICn(X/k)$ and $\Nf\cEn$, which is the first step of 
the proof of Theorem \ref{15}: 

\begin{prop}\label{prop:1st}
There exists an equivalence of categories 
$\Nf\MICn(X/k) \os{\cong}{\lra} \Nf\cEn$ over $\MICn(S/k)$. 
Equivalently, there exists an isomorphism 
$$ G(\Nf\cEn,\iota^*) \os{\cong}{\lra} \pi_1^{\dR}(X/S,\iota) $$ 
of affine group schemes in $\MICn(S/k)$, where $\pi_1^{\dR}(X/S,\iota)$ 
denotes the first definition of relatively unipotent de Rham fundamental group. 
\end{prop}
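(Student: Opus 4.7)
The plan is to construct an equivalence $\Phi: \NfMICn(X/k) \to \Nf\cEn$ over $\MICn(S/k)$ by leveraging the universal pointed systems in both categories and then deduce the isomorphism of affine group schemes from Tannakian duality.

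First I would construct the functor $\Phi$ as follows. Given $(E,\nabla) \in \NfMICn(X/k) \cong \NfStrCrysn(X/k)$, the underlying object in $\NfMIC(X/S) \cong \Crys((X/S)_{\inf})$ has an associated relative de Rham complex $E \otimes_{\cO_X} \Omega^{\bullet}_{X/S}$, and one can pull it back along the semi-simplicial resolution $\pi: X_{\bullet} \lra X$ and apply the Thom--Whitney functor $\sTW(f_* \pi_*(-))$ to produce a locally free $\cA_{X/S}^0$-module equipped with an integrable $\cA_{X/S}$-connection. Concretely, for $E = f_{\dR}^* V$ with $V \in \MICn(S/k)$, the projection formula yields $\Phi(f_{\dR}^*V) = f^*V$, and $\Phi$ preserves extensions, so $\Phi$ sends $\NfMIC(X/S)$-data of unipotence index $\leq n$ to $\NfMIC(\cA_{X/S})$-data of index $\leq n$. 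The relative stratifications $\{\epsilon^m\}$ on the crystal $E$ (recording the $k$-linear structure on top of the $S$-linear one) transport through $\sTW$ to stratifications on $\Phi(E)$ with values in $\MIC(\cA_{X/S^m(1)})$, and by functoriality the cocycle condition is preserved, so $\Phi(E) \in \Nf\cEn$. This construction is clearly compatible with $\iota^*$, since both sides recover $\iota_{\dR}^*E$ after pulling back by the augmentation.

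Next I would prove $\Phi$ is an equivalence by exploiting the universal pointed systems $\{(W_n, e_n)\}$ from Theorem \ref{thm:wn} and $\{(W'_n, e'_n)\}$ from Proposition \ref{prop:wn2}. Applying the universal property (W1)' to the pointed object $(\Phi(W_n), \iota^*\Phi(e_n))$ in $\Nf\cEn$ produces a unique compatible family of morphisms $\phi_n: W'_n \lra \Phi(W_n)$ in $\Nf\cEn$. To show each $\phi_n$ is an isomorphism, restrict to the fiber $X_s$: by base change (the isomorphism \eqref{eq:bcp-a} and its analogue for our setting), both $W_n|_{X_s}$ and $W'_n|_{X_s}$ satisfy the universal properties of (W1) and (W1)' over $s$, respectively, applied to the same morphism $X_s \lra s$, and so by the uniqueness of such universal pointed systems (which can be verified by the argument in Remark \ref{AIK1}), the induced map is forced to be an isomorphism. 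Since pullback to $s$ is a faithful exact tensor functor from $\MICn(S/k)$ to $\Vector_k$, the $\phi_n$ themselves are isomorphisms.

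Finally, both $\NfMICn(X/k)$ and $\Nf\cEn$ are generated, as Tannakian categories over $\MICn(S/k)$, by their respective pointed universal systems in the sense that every object embeds into a direct sum of quotients of $W_n^{\oplus N}$ (resp. ${W'_n}^{\oplus N}$) for suitable $n, N$; this is the content of Remark \ref{rem:sect2} and Remark \ref{rem:wn2}. The isomorphism $\{W'_n\} \cong \Phi(\{W_n\})$ of pro-universal objects then upgrades $\Phi$ to a fully faithful and essentially surjective tensor functor, just as in the proof of Theorem \ref{thm:1and2}. The resulting equivalence of Tannakian categories over $\MICn(S/k)$ translates, via Proposition \ref{reltannaka}, to the desired isomorphism $G(\Nf\cEn, \iota^*) \cong \pi_1^{\dR}(X/S, \iota)$ of affine group schemes in $\MICn(S/k)$.

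The main technical obstacle will be verifying that the Thom--Whitney lift $\Phi$ is compatible with the extra stratification datum that distinguishes $\NfStrCrysn(X/k)$ from $\NfMIC(X/S)$ and $\Nf\cEn$ from $\NfMIC(\cA_{X/S})$; in particular, checking the cocycle condition for the induced $\{\epsilon^m\}$ on $\Phi(E)$ requires unwinding how the Thom--Whitney construction interacts with the fiber products $X_j^m \times_{S^m(1)} X_{j'}^m$ and $\widehat{X}^m$. The remaining steps are essentially formal consequences of the universal properties established earlier.
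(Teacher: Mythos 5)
Your construction of the functor $\Phi: \NfMICn(X/k) \lra \Nf\cEn$ follows the paper's route (Thom--Whitney applied to the relative de Rham complex of $E$ along the semi-simplicial resolution, with the pullback $f_{\dR}^*V \mapsto f^*V$ and preservation of extensions); you gloss over the points the paper actually has to check there — the fiber-product definition forced by the augmentation, the exactness of $E \mapsto \cA_{X/S}(E)$, and the local freeness of $\cA_{X/S}(E)^0$ with $\cA_{X/S}(E)^0 \otimes_{\cA^0_{X/S}} \cA^i_{X/S} \cong \cA_{X/S}(E)^i$ — but these are fixable along the lines you sketch. The genuine gap is in your second step. You produce $\phi_n : W'_n \lra \Phi(W_n)$ from the property ${\rm (W1)'}$ and then claim that $\phi_n|_{X_s}$ is an isomorphism ``by the uniqueness of such universal pointed systems.'' Uniqueness of a universal pointed object holds only within a single category: $W_n|_{X_s}$ is universal in $\NfsMIC(X_s/s)$, while $W'_n|_{X_s}$ is universal in $\Nfs\MIC(\cA_{X_s/s})$, and the map $\phi_n|_{X_s}$ compares the latter with $\Phi_s(W_n|_{X_s})$, which you have not shown to satisfy ${\rm (W1)'}$ in the target category. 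The property ${\rm (W1)}$ for $W_n|_{X_s}$ only controls morphisms into objects coming from $\NfsMIC(X_s/s)$; to control morphisms into an arbitrary object of $\Nfs\MIC(\cA_{X_s/s})$ you need to know that $\Phi_s$ identifies the relevant $\Hom$ and $\Ext^1$ groups (with an injection on $\Ext^2$), i.e.\ that the cohomology of $\cA_{X_s/s}$ with trivial coefficients agrees with de Rham cohomology in degrees $0,1$ and injects in degree $2$. That comparison is exactly the heart of the paper's argument: after reducing the group-scheme isomorphism to the fiber via the two homotopy exact sequences (Propositions \ref{fiberprop} and \ref{fiberpropa}), the paper proves the fiberwise equivalence $\NfsMIC(X_s/s) \os{\cong}{\lra} \Nfs\MIC(\cA_{X_s/s})$ directly by checking $\Hom$ and $\Ext$ of iterated extensions of trivial objects through this cohomological input. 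Your proposal never supplies it, and without it the step ``the induced map is forced to be an isomorphism'' does not go through; once you do supply it, the detour through the universal systems and the Hadian-style generation argument becomes unnecessary, since the fiberwise $\Hom$/$\Ext$ comparison already yields the equivalence.

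There is also a small slip at the end of that step: $\phi_n$ is a morphism in $\Nf\cEn$, so to deduce that it is an isomorphism from its restriction to $X_s$ you should argue that its kernel and cokernel (which exist by Proposition \ref{prop:tannakaen}) vanish because the fiber functor of $\Nf\cEn$ is exact and faithful, rather than invoking faithfulness of $s_{\dR}^*$ on $\MICn(S/k)$.
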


\begin{proof}
First we define the functor $\Nf\MICn(X/k) \lra \Nf\cEn$. 
Let $E := (E,\{\epsilon^m\})$ be an object in $\Nf\MICn(X/k) \cong 
\NfStrCrysn(X/k)$. 

Let $\pi: X_{\b} \lra X$, $\pi_{S}: S_{\b} \lra S$ be as in Section 4.
Consider the surjective morphism of semi-cosimplicial sheaves 
$$ 
\cA^{\b,*}(E) := f_*\pi_{*}((E \otimes \Omega^{*}_{X/S})|_{X_{\bullet}}) 
\os{\iota^*}{\lra} \pi_{S *} (\iota^*E|_{S_{\b}})
$$ 
induced from $\iota$, where $E \otimes \Omega^{*}_{X/S}$ denotes 
the de Rham complex associated to $E$, regarded as an object in 
$\NfMIC(X/S)$. 
By applying the Thom--Whitney functor $\sTW$
(\cite[\S 3]{naho}), we obtain 
the surjective morphism of complexes of sheaves 
$\sTW(\iota^*): \sTW(\cA^{\b,*}(E)) \lra \sTW(\pi_{S *} (\iota^*E|_{S_{\b}}))$ such that 
$H^0(\sTW(\pi_{S *} (\iota^*E|_{S_{\b}}))) \allowbreak = \iota^*E$. 
We define the complex of sheaves 
$\cA_{X/S}(E) \subseteq \sTW(\cA^{\b,*}(E))$ as the inverse image of 
$\iota^*E \hra \sTW(\pi_{S *} (\iota^*E|_{S_{\b}}))$ by $\sTW(\iota^*)$. 
By construction, $\cA_{X/S}(E)$ has a structure of 
differential graded module over $\cA_{X/S}$. 

Note first that  the functor $\NfMIC(X/S) \ni E \mapsto \cA_{X/S}(E)$ 
is exact.  In fact, if we are given an exact sequence in $\NfMIC(X/S)$
$$
0 \lra E' \lra E \lra E'' \lra 0, 
$$
we obtain the following commutative diagram with exact horizontal lines because $f \circ \pi$ and $\pi_S$ are affine, the objects in  $\NfMIC(X/S)$ are locally free and the functor  $\sTW$ is exact (\cite[\S 2]{naho}): 
\begin{equation*}
\xymatrix{ 
0 \ar[r]& \sTW(\cA^{\b,*}(E')) \ar[r]\ar[d]&  \sTW(\cA^{\b,*}(E)) \ar[r] \ar[d] &  \sTW(\cA^{\b,*}(E'')) \ar[d]\ar[r]   & 0\\
 0 \ar[r]&  \sTW(\pi_{S *} (\iota^*E'|_{S_{\b}}))   \ar[r]&  \sTW(\pi_{S *} (\iota^*E|_{S_{\b}}))  \ar[r] &  \sTW(\pi_{S *} (\iota^*E''|_{S_{\b}}))  \ar[r]   & 0\\
 0 \ar[r]& \iota^*E' \ar[r] \ar[u]& \iota^*E \ar[r] \ar[u] & \iota^*E'' \ar[u] \ar[r]   & 0.}
\end{equation*}
We can then consider the following diagram in which the vertical maps are surjective and the horizontal lines are exact:
\begin{equation*}
\xymatrix{ 
0 \ar[r]& \sTW(\cA^{\b,*}(E')) \oplus   \iota^*E' \ar[r]\ar[d]&  \sTW(\cA^{\b,*}(E))  \oplus   \iota^*E \ar[r] \ar[d] &  \sTW(\cA^{\b,*}(E''))   \oplus   \iota^*E'' \ar[d]\ar[r]   & 0\\
 0 \ar[r]&.  \sTW(\pi_{S *} (\iota^*E'|_{S_{\b}}))   \ar[r]&  \sTW(\pi_{S *} (\iota^*E|_{S_{\b}}))  \ar[r] &  \sTW(\pi_{S *} (\iota^*E''|_{S_{\b}}))  \ar[r]   & 0.}
\end{equation*}
It then follows that the sequence of the kernel is exact, but it is exactly the sequence 
$$
0 \lra \cA_{X/S}(E') \lra \cA_{X/S}(E) \lra  \cA_{X/S}(E'') \lra 0. 
$$
Thus we have proven that the functor $\cA_{X/S}(-)$ is exact.

We now prove that the pair $(\cA_{X/S}(E)^0, \cA_{X/S}(E)^0 \lra \cA_{X/S}(E)^1)$ 
defines an object in $\MIC(\cA_{X/S})$. To do so, it suffices to show  that 
$\cA_{X/S}(E)^0$ is a locally free $\cA^0_{X/S}$-module and that 
the natural maps
$\cA_{X/S}(E)^0 \otimes_{\cA^0_{X/S}} \cA^i_{X/S} \lra \cA_{X/S}(E)^i \, (i \in \N)$ 
are isomorphisms.  By the exactness of the functor 
$\cA_{X/S}(-)$,  it suffices to check the claim when $E \in \NfMIC(X/S)$ is of the form 
$f_{\dR}^*F \, (F \in \MIC(S/S))$. Also, since 
 the claim is local on $S$, 
we may assume that $F$ is free and 
the claim is trivially true in this case. So we have defined an object 
$$\cA_{X/S}(E)^0 := (\cA_{X/S}(E)^0, 
\cA_{X/S}(E)^0 \lra \cA_{X/S}(E)^0 \otimes_{\cA^0_{X/S}} \cA^1_{X/S})$$ 
in $\MIC(\cA_{X/S})$. 

Also, we can apply the same construction to 
$(\wh{q}_2^m)_{\dR}^* E \cong (\wh{q}_1^m)_{\dR}^* E \in \MIC(\wh{X}^m/S^m(1))$ 
(where the notation is as in Section 1, before Proposition \ref{basechange}) and 
obtain an object in $\MIC(\cA_{X/S^m(1)})$, which we denote by 
$$\cA_{X/S^m(1)}(E)^0 := (\cA_{X/S^m(1)}(E)^0, 
\cA_{X/S^m(1)}(E)^0 \lra \cA_{X/S^m(1)}(E)^0 \otimes_{\cA^0_{X/S^m(1)}} \cA^1_{X/S^m(1)}). $$ 
We can define the object 
$$\cA_{X/S^m(2)}(E)^0 := (\cA_{X/S^m(2)}(E)^0, 
\cA_{X/S^m(2)}(E)^0 \lra \cA_{X/S^m(2)}(E)^0 \otimes_{\cA^0_{X/S^m(2)}} \cA^1_{X/S^m(2)})$$ 
in the same way and we can check the 
compatibility of $\cA_{X/S}(E)^0$ and 
$\cA_{X/S^m(r)}(E)^0$'s $(r=1,2, m \in \N)$ by 
induction of index of unipotence of $E$. So they form an object in 
$\Nf\cEn$ and so we have defined a functor 
$\NfMICn(X/k) \lra \Nf\cEn$. 

This functor induces a 
morphism of group schemes $G(\Nf\cEn, \iota^*) \lra \pi_1(X/S,\iota)$ 
in $\MICn(S/k)$. To prove this is an isomorphism, it suffices 
to prove it after we apply the 
restriction functor $\MICn(S/k) \lra \MIC(s/s)$ which is exact and faithful. 
By Propositions \ref{fiberprop} and \ref{fiberpropa}, 
this is equivalent to the claim that the functor 
$\NfsMIC(X_s/s) \lra \Nfs\MIC(\cA_{X_s/s})$, which is defined 
from the morphism $X_s \lra s$  as above, 
is an equivalence 
of categories. Because objects on both hand sides 
are iterated extensions of trivial objects, 
it suffices to check that $\Hom$ and $\Ext$ groups of  such
objects are isomorphic. This is true because $\Hom$ and $\Ext$ 
can be understood as zeroth and first cohomology groups
and cohomology groups with trivial coefficient are isomorphic 
by construction of $\cA_{X_s/s}$ 
 (to obtain the equivalence, one actually  needs  isomorphism  
on zeroth and first cohomology groups and the injectivity on the second 
cohomology groups). 
\end{proof}

By using the $1$-minimal models $M_{X/S}, M_{X/S^m(r)}$ 
(resp. the $(1,q)$-minimal models $M_{X/S}, M_{X/S^m(r)}$)
instead of 
$\cA_{X/S}, \cA_{X/S^m(r)}$, we can define the category similar to 
$\Nf\cEn$, which we denote by $\Nf\cEn_M$ (resp. $\Nf\cEn_{M(q)}$). 
Next we compare the categories 
$\Nf\cEn$, $\Nf\cEn_M$, 
which is the second step of 
the proof of Theorem \ref{15}: 

\begin{prop}\label{prop:2nd}
There exists an equivalence of categories 
$\Nf\cEn_M \lra \Nf\cEn$ over $\MICn(S/k)$.  
Equivalently, there exists an isomorphism 
$$ G(\Nf\cEn,\iota^*) \os{\cong}{\lra} G(\Nf\cEn_M,\iota^*) $$ 
of affine group schemes in $\MICn(S/k)$. 
\end{prop}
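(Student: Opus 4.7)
The plan is to construct a functor $\Phi: \Nf\cEn_M \lra \Nf\cEn$ by pullback along the ho-morphism $\rho: M_{X/S} \lra \cA_{X/S}$ (together with its counterparts $\rho^m(r)$ for $M_{X/S^m(r)} \lra \cA_{X/S^m(r)}$), and then to show $\Phi$ is an equivalence by reducing to the fiber over $s$. To define $\Phi$, let $U$ be an affine open in the covering sieve where $\rho$ is represented by an honest morphism $\rho_U: M_{X/S}(U) \lra \cA_{X/S}(U)$ of augmented $\cO_S(U)$-cdga's. Given $(E,\nabla_E)$ in $\MIC(M_{X/S})$, set $\Phi(E)(U) := E(U) \otimes_{M_{X/S}^0(U)} \cA_{X/S}^0(U)$ with induced connection. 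For a smaller $U' \subseteq U$ the two representatives of $\rho$ on $U'$ are Sullivan homotopic, and I would use such a Sullivan homotopy $h: M_{X/S}(U) \lra \cO_S(U)(t,dt) \otimes \cA_{X/S}(U')$ to produce an isomorphism between the two resulting modules with connection by integrating the induced flat $t$-connection from $t=0$ to $t=1$. The cocycle condition for gluing follows from the construction of transitivity in Lemma \ref{lem:eqrel} (the $C \otimes_R B$ intermediate homotopy), and transfers the stratifications $\{\epsilon^m\}$ via the compatibility of $\rho$ with the $\zeta_j^m, \zeta_{j,j'}^m$. Exactness and flatness issues are handled by the fact that each $M_{X/S}^i$ is (locally free, hence) flat over $\cO_S$, as established in Section 4, so that the pullback $-\otimes_{M_{X/S}^0} \cA_{X/S}^0$ is exact and preserves iterated extensions of objects in $f^*\MICn(S/k)$. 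Evidently $\Phi$ is compatible with $\iota^*$.

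To show $\Phi$ is an equivalence, I would first establish the analogue of Proposition \ref{fiberpropa} for $\Nf\cEn_M$: there is a natural split homotopy exact sequence
\[
G(\Nfs\MIC(M_{X_s/s}), x^*) \os{\cong}{\lra} s^*G(\Nf\cEn_M, \iota^*) \lra s^*G(\Nf\cEn_M, \iota^*).
\]
The proof would repeat the argument of Proposition \ref{fiberpropa} verbatim, using a projective system $\{W_n^M\}$ in $\Nf\cEn_M$ with the same universal property as Proposition \ref{prop:wn2}; the construction of $\{W_n^M\}$ goes through without change because it only uses the exact sequence \eqref{wn-ext0-a}, the projection formula, and the Gauss--Manin isomorphism, all of which hold for $\Nf\cEn_M$ by the same formal arguments (the key input being that each $M_{X/S}^i$ is in $\MICn(S/k)$ and that $H^i(M_{X/S}) = H^i(\cA_{X/S})$). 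Since $\Phi$ is plainly compatible with restriction to $s$ via the induced morphism $M_{X_s/s} \lra \cA_{X_s/s}$ from Remark \ref{rem:mxss}, the equivalence $\Phi$ reduces to showing that the induced functor $\bar{\Phi}: \Nfs\MIC(M_{X_s/s}) \lra \Nfs\MIC(\cA_{X_s/s})$ is an equivalence.

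For $\bar{\Phi}$, both source and target are Tannakian categories of iterated extensions of the trivial object $k$, so it suffices (as at the end of the proof of Proposition \ref{prop:1st}) to check that $\bar{\Phi}$ induces isomorphisms on $\Hom$ and $\Ext^1$ between trivial objects and injectivity on $\Ext^2$. These groups are precisely $H^0$, $H^1$, $H^2$ of the underlying cdga with trivial coefficients, and $\rho_s: M_{X_s/s} \lra \cA_{X_s/s}$ is a quasi-isomorphism (as a $1$-minimal model to its target), so these cohomology groups agree isomorphically in degrees $0,1$ and with injectivity in degree $2$, giving the required equivalence.

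The main obstacle will be the rigorous gluing step defining $\Phi$: one must show that the "parallel transport" isomorphism associated to a Sullivan homotopy is functorial and satisfies the cocycle condition on triple overlaps, which amounts to a homotopy-coherence statement for Sullivan homotopies acting on modules with integrable connection. This can be reduced, via Proposition \ref{prop:1qs} applied to the $(1,q)$-minimal truncations $M_{X/S}(q)$ and the uniqueness of Sullivan homotopies therein (Proposition \ref{prop:1q}(2)), to a uniqueness statement that propagates to infinity with $q$; once that is in hand, the remainder of the argument is formal Tannakian bookkeeping.
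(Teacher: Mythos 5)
Your proposal is correct and follows essentially the same route as the paper: define the functor locally on affines where the ho-morphism $\rho_q$ is a genuine morphism, glue the local pullback functors using the (unique) Sullivan homotopies with the cocycle condition handled by the transitivity construction of Lemma \ref{lem:eqrel} and Proposition \ref{prop:1q}(2), and then prove the equivalence by restricting to the fiber at $s$ via the analogue of Proposition \ref{fiberpropa} for $M_{X/S}$ and the fact that $H^i(M_{X_s/s}) \lra H^i(\cA_{X_s/s})$ is an isomorphism for $i=0,1$ and injective for $i=2$. The only differences are cosmetic: the paper realizes your ``parallel transport'' isomorphism formally as $p_{0,*}h_* \cong i_*^{-1}h_* \cong p_{1,*}h_*$ (using that $i$ is a quasi-isomorphism), and note that $\rho_s$ is not literally a quasi-isomorphism, though the weaker property you actually invoke is exactly what is needed.
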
 

\begin{proof}
Recall that we have families of ho-morphisms 
$$ \rho_q: M_{X/S}(q) \lra \cA_{X/S}, \quad \rho^m(r)_q: M_{X/S^m(r)}(q) \lra \cA_{X/S^m(r)} \quad 
(r = 1,2, m \in \N, q \in \N) $$
which are compatible in a suitable sense up to unique Sullivan homotopy. 
One problem is that $\rho_q \,(q \in \N)$ are not necessarily morphisms and so 
it is not immediate that $\rho_q \, (q \in \N)$ induce the functor 
$\NfMIC(M_{X/S}) \lra \NfMIC(\cA_{X/S}).$ 
We will construct this functor by constructing it locally 
on the level of $(1, q)$-minimal case, 
patching the local constructions, and taking the union with respect to $q \in \N$. 

Fix $q \in \N$. Then 
there exists an affine open covering 
$S = \bigcup_{\alpha} U_{\alpha}$ of $S$ such that 
the above ho-morphism induces for each $U_{\alpha}$ 
the morphism 
$\rho_{q,\alpha}(U_{\alpha}): M_{X/S}(q)(U_{\alpha}) \lra \cA_{X/S}(U_{\alpha})$. 
We write by $\rho_{q,\alpha}: M_{X/S}(q)|_{U_{\alpha}} \lra 
\cA_{X/S}|_{U_{\alpha}}$ the associated morphism of sheaves. 
Then we can define the corresponding functor 
$$ \rho_{q,\alpha,*}: \NfMIC(M_{X/S}(q)|_{U_{\alpha}}) \lra 
\NfMIC(\cA_{X/S}|_{U_{\alpha}}). $$

We prove that $\rho_{q,\alpha,*}$'s are compatible with respect to $\alpha$. 
On $U_{\alpha\alpha'} := U_{\alpha} \cap U_{\alpha'}$, 
there exists a unique Sullivan homotopy 
$$h: M_{X/S}(q)|_{U_{\alpha\alpha'}} \lra \cO_S(t,dt) \otimes \cA_{X/S}|_{U_{\alpha\alpha'}} $$
between $\rho_{q,\alpha}|_{U_{\alpha\alpha'}}$ and 
$\rho_{q,\alpha'}|_{U_{\alpha\alpha'}}$. Then 
$\rho_{q,\alpha,*}|_{U_{\alpha\alpha'}}$ is written as the composite 
$$ \NfMIC(M_{X/S}(q)|_{U_{\alpha\alpha'}}) \os{h_*}{\lra}  
\Nf\MIC(\cO_S(t,dt) \otimes \cA_{X/S}|_{U_{\alpha\alpha'}})
\os{p_{0,*}}{\lra}
\NfMIC(\cA_{X/S}|_{U_{\alpha\alpha'}}) $$ 
(where $h_*$ is the functor induced by $h$ and 
$p_{0,*}$ is the functor induced by the morphism 
$p_0: \cO_S(t,dt) \otimes \cA_{X/S}|_{U_{\alpha\alpha'}} \lra 
\cA_{X/S}|_{U_{\alpha\alpha'}}$ appeared in the definition of Sullivan homotopy) 
and $\rho_{q,\alpha',*}|_{U_{\alpha\alpha'}}$ is written as a similar 
composite, with $p_0$ replaced by $p_1$. 
On the other hand, if we denote the inclusion 
$\cA_{X/S}|_{U_{\alpha\alpha'}} \hra \cO_S(t,dt) \otimes \cA_{X/S}|_{U_{\alpha\alpha'}}$ 
by $i$, it induces the functor 
$$ i_*: \NfMIC(\cA_{X/S}|_{U_{\alpha\alpha'}}) \lra 
\NfMIC(\cO_S(t,dt) \otimes \cA_{X/S}|_{U_{\alpha\alpha'}}) $$
and it is an equivalence because $i$ is a quasi-isomorphism. 
Since we have $p_0 \circ i = \id = p_1 \circ i$, we see that 
\begin{equation}\label{homot}
\rho_{q,\alpha,*}|_{U_{\alpha\alpha'}} = p_{0,*}h_* \cong i_*^{-1}h_* \cong  
p_{1,*}h_* = \rho_{q,\alpha',*}|_{U_{\alpha\alpha'}}. 
\end{equation}
We should check that the isomorphism \eqref{homot}, which we denote by 
$\gamma_{\alpha\alpha'}$, satisfies the cocycle condition. 
The isomorphism $\gamma_{\alpha\alpha'}$ is defined via the Sullivan homotopy 
$h$, and the isomorphism $\gamma_{\alpha'\alpha''}$ is defined in the same way via 
the unique Sullivan homotopy of the form 
$$ h': M_{X/S}(q)|_{U_{\alpha'\alpha''}} \lra \cO_S(t,dt) \otimes A_{X/S}|_{U_{\alpha'\alpha''}}. $$
Then, on $U_{\alpha\alpha'\alpha''} := 
U_{\alpha} \cap U_{\alpha'} \cap U_{\alpha''}$, we have the Sullivan homotopy 
$h|_{U_{\alpha\alpha'\alpha''}}$ between 
$\rho_{q,\alpha}|_{U_{\alpha\alpha'\alpha''}}$ and 
$\rho_{q,\alpha'}|_{U_{\alpha\alpha'\alpha''}}$ and the 
Sullivan homotopy $h'|_{U_{\alpha\alpha'\alpha''}}$ 
between $\rho_{q,\alpha'}|_{U_{\alpha\alpha'\alpha''}}$ and 
$\rho_{q,\alpha''}|_{U_{\alpha\alpha'\alpha''}}$. 
Then, we can construct the Sullivan homotopy $H$ between 
$\rho_{q,\alpha}|_{U_{\alpha\alpha'\alpha''}}$ and 
$\rho_{q,\alpha''}|_{U_{\alpha\alpha'\alpha''}}$
by the construction in the proof of Lemma \ref{lem:eqrel}, 
and we see 
from the construction that the isomorphism 
$\rho_{q,\alpha,*}|_{U_{\alpha\alpha'\alpha''}} \cong 
\rho_{q,\alpha'',*}|_{U_{\alpha\alpha'\alpha''}}$ we obtain from $H$ is 
the same as the composite $\gamma_{\alpha'\alpha''} \circ \gamma_{\alpha\alpha'}$. 
On the other hand, by the uniqueness of Sullivan homotopy in Propositon \ref{prop:1q}, 
we see that $H$ is equal to 
the restriction of the unique Sullivan homotopy 
$$ h'': M_{X/S}(q)|_{U_{\alpha\alpha''}} \lra \cO_S(t,dt) \otimes A_{X/S}|_{U_{\alpha\alpha''}} $$
between $\rho_{q,\alpha}|_{U_{\alpha\alpha''}}$ and 
$\rho_{q,\alpha''}|_{U_{\alpha\alpha''}}$ 
to $U_{\alpha\alpha'\alpha''}$. Thus we see the equality 
 $\gamma_{\alpha'\alpha''} \circ \gamma_{\alpha\alpha'} = \gamma_{\alpha\alpha''}$, 
namely, we proved the required cocycle condition. Therefore, the isomorphisms 
\eqref{homot} for $\alpha, \alpha'$ glue to give a functor 
$$ \rho_{q,*}: \NfMIC(M_{X/S}(q)) \lra \NfMIC(\cA_{X/S}). $$

By the same argument, we can construct the functors 
$$ \rho^m(r)_{q,*}: \NfMIC(M_{X/S^m(r)}(q)) \lra \NfMIC(\cA_{X/S^m(r)}) \quad 
(r=1,2, m \in \N) $$
and the functors $\rho_{q,*}, \rho^m(r)_{q,*}$ are compatible. 
(To prove the compatibility, we argue again  
with Sullivan homotopy because the diagram in Theorem \ref{thm:minmod}(2) is commutative only up to unique Sullivan homotopy.) Hence these functors induce the compatible functors  
$\Nf\cEn_{M(q)} \lra \Nf\cEn \, (q \in \N)$. 
(The compatibility here follows again from the argument, with Sullivan homotopy.) 
Because $\Nf\cEn_M$ is the inductive limit of 
$\Nf\cEn_{M(q)} \, (q \in \N)$, we obtain the functor 
$\Nf\cEn_{M} \lra \Nf\cEn$, thus the morphism 
\begin{equation}\label{eq:eme}
G(\Nf\cEn, \iota^*) \lra G(\Nf\cEn_M, \iota^*)
\end{equation}
of affine group schemes in $\MICn(S/k)$. To prove this morphism is an isomorphism, 
it suffices to check it after we apply the restriction functor 
$\MICn(S/k) \lra \MIC(s/s)$ which is exact and faithful. So, 
by Proposition \ref{fiberpropa} (and its analogue for $M_{X/S}$), 
it suffices to prove that the morphism 
$$ G(\NfsMIC(\cA_{X_s/s}), x^*) \lra G(\NfsMIC(M_{X_s/s}), x^*) $$
is an isomorphism. 
Thus it suffices to prove that the functor 
$$ \NfsMIC(M_{X_s/s}) \lra \NfsMIC(\cA_{X_s/s}) $$ 
is an equivalence. This is true because the map on cohomologies 
$H^i(M_{X_s/s}) \lra H^i(\cA_{X_s/s})$ is an isomorphism 
for $i=0,1$ and injective for $i=2$ by construction of $M_{X_s/s}$. 
So we finished the proof of proposition. 
\end{proof}

The third step of the proof is to compare the category 
$\Nf\cEn_M$ and the category 
$\Rep_{\MICn(S/k)}(\Spec H^0(B(M_{X/S})))$ of representations of 
$\Spec H^0(B(M_{X/S}))$ in $\MICn(S/k)$. 

\begin{prop}\label{prop:3rd}
There exists an equivalence of categories 
\begin{equation}\label{hhhh}
\Rep_{\MICn(S/k)}(\Spec H^0(B(M_{X/S}))) \os{\cong}{\lra} \Nf\cEn_M
\end{equation}
over $\MICn(S/k)$. 
Equivalently, there exists an isomorphism 
\begin{equation}\label{hhhhgroup}
G(\Nf\cEn_M, \iota^*) \os{\cong}{\lra}  \Spec H^0(B(M_{X/S})) 
\end{equation}
of affine group schemes in $\MICn(S/k)$. 
\end{prop}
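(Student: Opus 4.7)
The plan is to construct an exact tensor functor
$$
F: \Rep_{\MICn(S/k)}(\Spec H^0(B(M_{X/S}))) \lra \Nf\cEn_M
$$
compatible with the canonical fiber functors over $\MIC(s/s) = \Vector_k$, and then to deduce that $F$ is an equivalence by restricting to the fiber at $s$ and invoking the absolute comparison theorem.

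Construction of $F$: put $G := \Spec H^0(B(M_{X/S}))$. Given a $G$-comodule $(V,\rho)$ in $\MICn(S/k)$, with coaction $\rho: V \lra V \otimes_{\cO_S} H^0(B(M_{X/S}))$, I compose $\rho$ with the canonical projection $B(M_{X/S}) \twoheadrightarrow B(M_{X/S})^{-1,1} = M_{X/S}^1[1]$ (the same projection used in the proof of Theorem \ref{prop4_2}) to obtain an $\cO_S$-linear map $\omega_\rho: V \lra V \otimes_{\cO_S} M_{X/S}^1$. Together with the integrable connection already carried by $V$ as an object of $\MICn(S/k)$, this extends by the Leibniz rule to an integrable $M_{X/S}$-connection
$$
\nabla: V \otimes_{\cO_S} M_{X/S}^0 \lra V \otimes_{\cO_S} M_{X/S}^1;
$$
integrability is the Maurer--Cartan equation $d\omega_\rho + \tfrac{1}{2}[\omega_\rho,\omega_\rho] = 0$, which follows from the coassociativity of $\rho$ combined with the fact that, by minimality, the differential on $M_{X/S}^1$ lands in $M_{X/S}^1 \wedge M_{X/S}^1$ and is dual to the Lie bracket on $(M_{X/S}^1)^\vee$ (cf.\ the proof of Theorem \ref{prop4_2}). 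The required stratifications on $F(V)$ are obtained by tensoring the stratification of $V$ with the stratifications $\zeta_j^m$ of $M_{X/S}$; compatibility with $\omega_\rho$ follows from the compatibility of $\rho$ with the stratification on $H^0(B(M_{X/S}))$ inherited from those of $M_{X/S^m(r)}$.

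Unipotence: since $V$ is locally finitely generated over $\cO_S$ and $H^0(B(M_{X/S})) = \bigcup_{s \geq 0} \Fil^{-s} H^0(B(M_{X/S}))$, with $\Fil^0 = \cO_S$ and graded quotients which are subobjects of $E_1^{-s,s} = \bigotimes^s_{\cO_S} H^1(M_{X/S})$ in the ind-category of $\MICn(S/k)$, the coaction $\rho$ factors through some $V \otimes \Fil^{-s}$. The bar filtration then induces a finite filtration of $F(V)$ whose graded pieces lie in the essential image of $f^*: \MICn(S/k) \lra \Nf\cEn_M$. Thus $F(V) \in \Nf\cEn_M$, and $F$ is clearly exact and tensor-compatible. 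The functor is manifestly compatible with the canonical fiber functors: applying the restriction $\MICn(S/k) \lra \MIC(s/s) = \Vector_k$ to both sides and using the base-change compatibility $M_{X/S} \otimes_{\cO_S,s^*} k \cong M_{X_s/s}$ (Remark \ref{rem:mxss}) together with the analogous compatibility for $H^0(B(M_{X/S}))$, the fiber of $F$ at $s$ is identified with the absolute functor
$$
F_s: \Rep_k(\Spec H^0(B(M_{X_s/s}))) \lra \NfsMIC(M_{X_s/s}).
$$

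Since the restriction $\MICn(S/k) \lra \Vector_k$ is exact and faithful, to show $F$ is an equivalence it suffices to show that $F_s$ is. This is the classical comparison, in the absolute case over a field of characteristic zero, between the Tannakian and the bar/minimal-model constructions of the unipotent fundamental group: both $\Spec H^0(B(M_{X_s/s}))$ and the Tannaka group of $\NfsMIC(M_{X_s/s})$ are the pro-unipotent affine group scheme over $k$ with pro-nilpotent Lie algebra $(M_{X_s/s}^1)^\vee$ (see Bloch--Kriz \cite{bk} and Terasoma \cite{terasoma}), and the comparison functor $F_s$ implements this identification. Granting this, $F$ itself is an equivalence and, via Tannakian formalism (Proposition \ref{reltannaka}), produces the required isomorphism $G(\Nf\cEn_M, \iota^*) \os{\cong}{\lra} \Spec H^0(B(M_{X/S}))$ of affine group schemes in $\MICn(S/k)$. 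The main obstacle is a clean and rigorous verification of integrability and stratification-compatibility for $\nabla$; the cleanest route is probably to observe first, using Proposition \ref{prop:unip2020sep} and the Lie coalgebra isomorphism $QH^0(B(M_{X/S})) \cong M_{X/S}^1$ established in the proof of Theorem \ref{prop4_2}, that $G$-comodules in $\MICn(S/k)$ are the same as modules over the pro-nilpotent Lie algebra $(M_{X/S}^1)^\vee$ in $\MICn(S/k)$, after which the Maurer--Cartan condition for $\omega_\rho$ is automatic.
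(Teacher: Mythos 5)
Your overall strategy coincides with the paper's: construct a functor from $\Rep_{\MICn(S/k)}(\Spec H^0(B(M_{X/S})))$ to $\Nf\cEn_M$ over $\MICn(S/k)$, check that it lands in $\Nf\cEn_M$, and then reduce the equivalence to the fiber at $s$ via the exact faithful restriction $\MICn(S/k) \to \MIC(s/s)$, where the absolute statement is Terasoma's theorem. Where you differ is in the mechanics of the functor. The paper first equips $H^0(B(M_{X/S}))$ itself with the canonical $M_{X/S}$-connection $\nabla_H$ (induced by $x \mapsto \Delta(x) - x\otimes 1$ followed by projection to $M^1_{X/S}$), observes via the bar filtration that $(H^0(B(M_{X/S})),\nabla_H)$ is an ind-object of $\Nf\cEn_M$, and then sends $(V,\Delta_V)$ to the kernel of $\Delta_V\otimes\id - \id\otimes\Delta_H$ inside $V\otimes H^0(B(M_{X/S}))$, which is the image of $\Delta_V$; integrability and the stratification are thus \emph{inherited} from $\nabla_H$, with no Maurer--Cartan computation on $V$. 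You instead define the connection form $\omega_\rho$ directly by projecting the coaction to the $B^{-1,1}=M^1_{X/S}$ component and must verify flatness yourself. Two cautions on that verification: as stated, Maurer--Cartan does not follow from coassociativity plus minimality alone --- the essential extra input is that $\rho$ lands in $H^0(B(M_{X/S}))$, i.e.\ the bar-cocycle condition tying the degree $(-1,1)$ and $(-2,2)$ components (which relates $d_M\omega_\rho$ to $\omega_\rho\wedge\omega_\rho$); your fallback through the Lie-theoretic dictionary (Proposition \ref{prop:unip2020sep} plus $QH^0(B(M_{X/S}))\cong M^1_{X/S}$) does work and is not circular, since those facts are established in the proof of Theorem \ref{prop4_2} independently of this proposition, but you should then also justify that comodules over the Hopf algebra agree with nilpotent modules over the Lie algebra in the relative category. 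Finally, the step ``$F$ is an equivalence because $F_s$ is'' should be phrased, as in the paper, through the induced morphism of affine group schemes in $\MICn(S/k)$, which one checks to be an isomorphism after applying the exact faithful restriction; with these points tightened, your argument is a correct, essentially equivalent variant whose main cost is the direct flatness check that the paper's kernel construction avoids.
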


\begin{proof}
The proof is inspired by the argument of  \cite[Thm 7.6]{terasoma} in the absolute case and we will reduce to it.
The composition 
$$\Delta': B(M_{X/S}) \lra B(M_{X/S}) \otimes_{\cO_S}  B(M_{X/S}) 
\os{\text{proj}}{\lra} B(M_{X/S}) \otimes_{\cO_S} M_{X/S}^1 $$
of the map $x \mapsto \Delta(x) -  x \otimes 1$ and the canonical 
projection induce an integrable $M_{X/S}$-connection 
$$ 
\nabla_H:  H^0(B(M_{X/S})) \lra  H^0(B(M_{X/S})) \otimes_{\cO_S} M^1_{X/S}. $$
Because $\nabla_H$ can be written as the inductive limit of the maps 
$$ H^0(F^{-s}B(M_{X/S})) \lra  H^0(F^{-s+1}B(M_{X/S})) \otimes_{\cO_S} M^1_{X/S}, $$
we see that $(H^0(B(M_{X/S})), \nabla_H)$ actually belongs to the ind-category of 
$\Nf\MIC(M_{X/S})$. Moreover, since this construction is also  possible  on 
$S^m(r) \, (r=1,2, m \in \N)$, $(H^0(B(M_{X/S})), \nabla_H)$ is 
naturally regarded as an object in the ind-category of $\Nf\cEn_M$. 

An object in the category 
$\Rep_{\MICn(S/k)}(\Spec H^0(B(M_{X/S})))$ induces 
a pair $(V,\Delta_V)$  of a locally free $\cO_S$-module of 
finite rank endowed with a comodule structure 
$\Delta_V: V \lra V \otimes_{\cO_S}H^0(B(M_{X/S}))$, and similar pairs 
on $S^m(r) \,(r = 1,2, m \in \N)$ which are compatible. 
We define the integrable $M_{X/S}$-connection corresponding to 
$(V,\Delta_V)$ as the kernel of the map 
\begin{equation}\label{eq:hhh}
\Delta_V \otimes \id - \id \otimes \Delta_H: 
V \otimes H^0(B(M_{X/S})) \lra V \otimes H^0(B(M_{X/S})) \otimes 
H^0(B(M_{X/S})) 
\end{equation}
endowed with a structure of connection induced from that on 
$H^0(B(M_{X/S}))$'s on the right. Since this construction is possible also 
on $S^m(r) \,(r = 1,2, m \in \N)$, we can define the above object as 
an object in the ind-category of $\Nf\cEn_M$. 
However, it actually belongs to $\Nf\cEn_M$ because the kernel of the map 
\eqref{eq:hhh} is equal to the image of 
$\Delta_V: V \hra V \otimes_{\cO_S}H^0(B(M_{X/S}))$. 
In this way we define the functor \eqref{hhhh}, 
hence the morphism \eqref{hhhhgroup}. To see that it is an isomorphism, 
it suffices to prove it after we apply the restriction functor 
$\MICn(S/k) \lra \MIC(s/s)$ which is exact and faithful. So it suffices to prove the 
functor 
$$ \Rep_{k}(\Spec H^0(B(M_{X_s/s}))) \os{\cong}{\lra} \NfsMIC(M_{X_s/s}) $$ 
defined in a similar way as above is an equivalence, and this is shown in 
the proof of \cite[Thm. 7.6]{terasoma}. So we are done. 
\end{proof}

Now we can prove Theorem \ref{15}: 

\begin{proof}[Proof of Theorem \ref{15}] 
By Propositions \ref{prop:1st}, \ref{prop:2nd} and \ref{prop:3rd}, we have 
isomorphisms 
$$ 
\pi_1^{\dR}(X/S,\iota) \os{\cong}{\lla} G(\Nf\cEn,\iota^*) 
\os{\cong}{\lra} G(\Nf\cEn_M,\iota^*) \os{\cong}{\lla} 
\Spec H^0(B(M_{X/S}))
$$
of affine group schemes in $\MICn(S/k)$. So we are done. 
\end{proof}

\begin{rem} 
Assume that the log structures on $X$ and $S$ are trivial and that 
$S$ is smooth over $k$. Then, as we explained in Remark \ref{GM}, there is 
a priori another way to define an integrable connection on $S/k$ 
on $M_{X/S}^1$, hence on $M_{X/S}$, which is due to Navarro Aznar. 
This induces the structure of an integrable connection on the bar construction 
$H^0(B(M_{X/S}))$, and so we can regard $\Spec H^0(B(M_{X/S}))$ 
as an affine group scheme in $\MICn(S/k)$. So we obtain yet another definition 
of the relatively unipotent de Rham fundamental group $\pi_1^{\dR}(X/S,\iota)$. 

We expect that this definition is compatible with our definition in Definition 
\ref{def5}, but we will not pursue this topic in this paper. 
\end{rem}

\section{Calculation of monodromy for stable log curves} 

In this section  we  calculate 
the monodromy action on  the relatively unipotent 
log de Rham fundamental group  in the case of stable log curves:  we will use all the various definitions we gave in the previous paragraphs.  
As an application, we will have  a purely algebraic proof 
of the result of Andreatta--Iovita--Kim \cite{aik}.

\subsection{Statement of main result and first reductions}

Throughout this section,  $k$ will be a field of characteristic zero and 
$S$ will  be the standard log point over $k$. First we introduce the notion of 
log curve, which is due to F. Kato \cite{fk}. 

\begin{defn}\label{def:logcurve}
A morphism $f: X \lra S$ of fs log schemes is a log curve 
if it is log smooth, integral and if its geometric fiber 
is a reduced and connected curve.
\end{defn}

Then we have the following local description by \cite[1.1]{fk}:  

\begin{prop}\label{prop:logcurve}
Let $f: X:=(X^{\circ}, \cM_X) \lra S := (S^{\circ},\cM_S)$ be a log curve and assume $k$ is algebraically closed. 
Then the underlying scheme $X^{\circ}$ of $X$ has at worst ordinary double 
points. Moreover, there exists a finite set of closed points $\{s_1, ..., s_r\}$ 
in the smooth locus $X^{\circ,{\rm sm}}$ of $X^{\circ}$ $($called marked points$)$ such that 
the log structure of $f:X \lra S$ is described in the following way: \\ 
$(1)$ \, Etale locally around a double point, the underlying morphism 
$f^{\circ}:X^{\circ} \lra S^{\circ} = \Spec k$ of $f$ factors as 
$$ X^{\circ} \lra \Spec k[x,y]/(xy) \lra \Spec k$$ 
with the first morphism etale, and the log structure of $f$ is associated to 
the chart 
\begin{align*}
& \N^2 \oplus_{\Delta,\N,m} \N \to \cO_X, \quad 
\N \to k, \quad \N \to \N^2 \oplus_{\Delta,\N,m} \N. \\ 
& \hspace{5mm} ((1,0),0) \mapsto x \qquad\,\,\,\, 1 \mapsto 0 \qquad 1 \mapsto 
((0,0),1) \\ 
&   \hspace{5mm} ((0,1),0) \mapsto y \\ 
&   \hspace{5mm} ((0,0),1) \mapsto 0,
\end{align*}
for some $m \geq 1$. Here, $\N^2 \oplus_{\Delta,\N,m} \N$ denotes the pushout of  the diagram
$$ \N^2 \os{\Delta}{\lla} \N \os{m}{\lra} \N
$$
in the category of fs monoids with $\Delta: \N \rightarrow \N^2$ the diagonal map and $m: \N \rightarrow \N$ the multiplication by $m$. \\ 
$(2)$ \, Etale locally around each $s_i$, 
$f^{\circ}$ factors as 
$$ X^{\circ} \lra \Spec k[x] \lra \Spec k$$ 
with the first morphism etale, and 
the log structure of $f$ is associated to the chart 
\begin{align*}
& \N^2 \to \cO_X, \quad 
\N \to k, \quad \N \to \N^2. \\ 
& \! (1,0) \mapsto 0 \quad \,\,\, 1 \mapsto 0 \quad 1 \mapsto 
(1,0) \\ 
& \! (0,1) \mapsto x 
\end{align*} 
$(3)$ \, Etale locally at other points, $f^{\circ}$ is smooth and 
the log structure on $f$ is associated to the chart 
\begin{align*}
& \N \to \cO_X, \quad 
\N \to k, \quad \N \to \N. \\ 
& \hspace{2mm} 1 \mapsto 0 \qquad \,\,\, 1 \mapsto 0 \quad  \,\,\, 1 \mapsto 1
\end{align*} 
Moreover, the locus $X_{\rm triv} := \{ x \in X \,|\, \cM_{S,\ol{f(x)}}/\cO^{\times}_{S,\ol{f(x)}} 
\os{\cong}{\to} \cM_{X,\ol{x}}/\cO^{\times}_{X,\ol{x}} \}$ 
is equal to $X^{\circ,{\rm sm}} \setminus \{s_1, ..., s_r\}$. 
\end{prop}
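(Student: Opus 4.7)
The plan is to reduce the statement to a classification of local charts for log smooth integral morphisms to the standard log point, and then to inspect the resulting affine models. Throughout, I would work etale locally at a geometric point $\bar x$ of $X$ and use freely that $k$ is algebraically closed.

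First, by Kato's structure theorem for log smooth morphisms \cite{ka}, etale locally around $\bar x$, there exists a chart $\N \to P$ for $f$ with $P$ an fs monoid and $\N \to P$ an integral homomorphism, such that the induced morphism $X \to S \times_{\Spec k[\N]} \Spec k[P]$ is strict etale in a neighborhood of $\bar x$; replacing $P$ by its sharpening $\overline{P} = P/P^{\times}$, we may assume the chart $P \to \cO_{X,\bar x}$ induces an isomorphism $P \cong \overline{\cM}_{X,\bar x}$. Since $f$ is a log curve, $\Omega^1_{X/S}$ is locally free of rank $1$, hence a direct calculation shows that $\operatorname{rank}(P^{\gp}/\N^{\gp}) + (\dim X^{\circ} - \operatorname{rank}\overline{\cM}_{X,\bar x}^{\gp}) = 1$. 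Together with the fact that the fiber is a connected reduced curve, this forces $\operatorname{rank} P^{\gp} \leq 2$.

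The heart of the argument is then classifying sharp fs monoids $P$ of rank at most $2$ equipped with an integral homomorphism $\N \to P$: the only possibilities are $P = \N$ with $\N \to \N$ the multiplication by some $m \geq 1$; $P = \N^2$ with $\N \to \N^2$ a coordinate inclusion (up to multiplication); and $P = \N^2 \oplus_{\Delta, \N, m} \N$ with $\N \to P$ the inclusion of the third summand. Non-saturated or higher-rank candidates are ruled out by the integrality of $\N \to P$ together with the saturatedness of $P$. Plugging each of these charts into $\Spec k[P] \otimes_{k[\N]} k$ and using that $X \to S \times_{\Spec k[\N]} \Spec k[P]$ is strict etale, I obtain exactly the three local models of cases (1), (2), (3); in particular case (1) produces etale-locally $\Spec k[x,y]/(xy)$, so that $X^{\circ}$ has at worst ordinary double points. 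Moreover, reducedness and connectedness of geometric fibers together with log smoothness rule out self-intersecting or non-reduced configurations, so the classification is exhaustive.

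Finally, the set $\{s_1,\dots,s_r\}$ is defined to be the (necessarily finite, by properness considerations applied locally) collection of closed points at which case (2) applies; by construction these lie in $X^{\circ,\mathrm{sm}}$. Comparing the three cases shows that $\overline{\cM}_{S,\ol{f(x)}} \to \overline{\cM}_{X,\bar x}$ is an isomorphism precisely at case (3) points, which gives the identification $X_{\rm triv} = X^{\circ,\mathrm{sm}} \setminus \{s_1,\dots,s_r\}$. The main obstacle is the classification step: one must verify that the integrality hypothesis, combined with sharpness and rank $\leq 2$, really exhausts to the three listed monoids and no exotic amalgams appear; once this is in hand, the rest of the statement is a direct inspection.
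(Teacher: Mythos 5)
Your overall strategy (Kato's structure theorem, reduction to a neat chart, classification of the pairs $(P,\N\to P)$, then inspection of the resulting local models) is the right one; note, however, that the paper itself gives no proof of this proposition --- it is quoted directly from F.~Kato \cite[1.1]{fk} --- so the comparison is with Kato's original argument, of which your sketch is essentially a reconstruction. The genuine gap is precisely in the step you flag as the main obstacle: the classification of sharp fs monoids $P$ of rank $\leq 2$ equipped with an integral homomorphism $\N \to P$. Integrality cannot do the work you assign to it, because \emph{every} homomorphism $h\colon \N \to P$ into an integral monoid is automatically integral: if $h(a_1)+b_1=h(a_2)+b_2$ with, say, $a_1\geq a_2$, cancellativity gives $b_2=h(a_1-a_2)+b_1$, and one satisfies Kato's condition with $a_3=0$, $a_4=a_1-a_2$, $b=b_1$. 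Hence ``integrality plus saturatedness'' excludes nothing, and the three families you list are not exhaustive among such pairs: $P=\N$ with multiplication by $m\geq 2$, $P=\N^2$ with $1\mapsto(c,0)$ for $c\geq 2$ or $1\mapsto(a,b)$ with $a,b\geq 1$ not both equal to $1$, and a general two-dimensional sharp fs cone monoid with an arbitrary nonzero element as $h(1)$ all pass your test.

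What actually pins down the list is the hypothesis that the geometric fiber is reduced (and one-dimensional), which in your outline appears only as an afterthought. One must compute the closed fiber of the local model, $k[P]\otimes_{k[\N]}k=k[P]/(z^{h(1)})$, and check it is reduced exactly in the listed cases: in the rank-one case $k[s]/(s^m)$ forces $m=1$ (giving case (3)); in the $\N^2$ case $k[x,y]/(x^ay^b)$ forces $(a,b)$ to be $(1,0)$, $(0,1)$ or $(1,1)$ (giving case (2), resp.\ the $m=1$ node); and for a general rank-two sharp fs monoid one must show that nilpotents appear unless $(P,h(1))$ is the amalgam $\N^2\oplus_{\Delta,\N,m}\N$ with $h(1)$ the distinguished third generator, in which case the fiber is $k[x,y]/(xy)$. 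This reducedness analysis is the actual content of the proof and is missing from your argument. Two smaller points: the finiteness of $\{s_1,\dots,s_r\}$ comes from $X$ being of finite type over $k$ (the locus where $\ol{\cM}_X$ has rank $2$ inside the smooth locus is closed of dimension zero), not from properness, which is not assumed in Definition \ref{def:logcurve}; and ``coordinate inclusion up to multiplication'' in your $\N^2$ case must be sharpened to the coordinate inclusion itself, again by reducedness.
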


For a log curve $f:X \lra S$, the reduced divisor of marked points of its geometric fiber 
descends to a reduced divisor on $X$, which we denote by $D$. 
We say that $f:X \lra S$ is marked (resp. unmarked) if $D$ is nonempty (resp. empty). 

\begin{rem}\label{rem:20190307}
Let $f: X:=(X^{\circ}, \cM_X) \lra S := (S^{\circ},\cM_S)$ be a log curve
with $k$ not necessarily algebraically closed, and let ${\bf s}$ be a $k$-rational 
point of $X$ contained in the divisor $D$ of marked points defined above. 
Let $\ol{{\bf s}} \cong \Spec\,\ol{k}$ (where $\ol{k}$ is an algebraic closure of $k$) 
be a geometric point over ${\bf s}$ and let $\cM_{\bf s}, \cM_{\ol{{\bf s}}}$ be 
the pullback of the log structure $\cM_X$ to ${\bf s}, \ol{{\bf s}}$, respectively. 
Then Proposition \ref{prop:logcurve}(2) implies that $\cM_{\ol{{\bf s}}}$ is associated to 
the chart 
$$ \N^2 \to \cO_{\ol{{\bf s}}}; \quad (1,0) \mapsto 0, \quad (0,1) \mapsto 0. $$
Thus $\cM_{\ol{{\bf s}}}/\cO^{\times}_{\ol{{\bf s}}} \cong \N^2$. Since $\cM_{\ol{{\bf s}}}/
\cO^{\times}_{\ol{{\bf s}}}$ 
is the stalk at $\ol{{\bf s}}$ of the etale sheaf $\cM_{{\bf s}}/\cO^{\times}_{{\bf s}}$ on ${\bf s}$, 
the monoid $\cM_{\ol{{\bf s}}}/\cO^{\times}_{\ol{{\bf s}}} \cong \N^2$ is endowed with the canonical action 
of ${\rm Aut}(\ol{{\bf s}}/{\bf s}) \cong {\rm Gal}(\ol{k}/k)$. 

We prove that this action is trivial. To show it, it suffices to prove that 
the action of any element of 
${\rm Aut}(\ol{{\bf s}}/{\bf s})$ on the set $\{(1,0),(0,1)\}$ 
is trivial. This holds because the element $(1,0)$, which is the pullback of 
an element of the log structure $\cM_S$, is fixed by any element of 
${\rm Aut}(\ol{{\bf s}}/{\bf s})$ because $S$ is the standard log point over $k$. 

The triviality of the above action implies that $\cM_{{\bf s}}/\cO^{\times}_{{\bf s}}$ 
is the constant sheaf $\N^2$ on ${\bf s}$. Also, by \cite[Proposition 1.3]{HN17}, 
we have an isomorphism 
$$ \cM_{\bf s} \cong \cO_{\bf s}^{\times} \times (\cM_{\bf s}/\cO_{\bf s}^{\times}) 
\cong \cO_{\bf s}^{\times} \times \N^2. $$
Hence we conclude that $\cM_{{\bf s}}$ is associated to 
the chart 
\begin{equation}\label{eq:20190307}
\N^2 \to \cO_{{\bf s}}; \quad (1,0) \mapsto 0, \quad (0,1) \mapsto 0, 
\end{equation}
namely, the log structure $\cM_{\bf s}$ admits the chart \eqref{eq:20190307} 
globally on ${\bf s}$, without taking any etale extension of ${\bf s}$. 

A similar property holds also for $k$-rational smooth points of $X$, but 
it does not necessarily hold for $k$-rational double points of $X$. 
\end{rem}

In this section we apply the results of the previous sections to the diagram 
\begin{equation}\label{6-0}
\xymatrix {
X \ar[r]^f & S \ar[r]^(0.36){g} \ar @/^4mm/[0,-1]^{\iota} & \Spec k, 
} 
\end{equation}
where $f$ is a proper log curve, $g$ is the structure morphism and 
$\iota$ is a section of $f$. Also, let $s \hra S$ be the identity map 
and let $x$ be the composition $s \hra S \os{\iota}{\lra} X$. 
Then we are in the situation of \ref{notation}. (The conditions (A), (B) (C), (D) (E) 
follows from Proposition \ref{prop:abcde-rs}.) 
We translate in this situation various objects we have studied in the last sections.
The category $\mathrm{MIC}^{\mathrm{n}}(S/k)$ is canonically equivalent to  
the category of pairs $(V, N)$ consisting of a finite-dimensional vector space $V$ 
and a nilpotent endomorphism $N$, and so we identify them in the following. 
The fiber functor $s^{*}_{\dR}:\mathrm{MIC}^{\rm{n}}(S/k)\rightarrow \mathrm{Vec}_k$ 
is the forgetful functor $(V,N) \mapsto V$. 
The Tannaka dual $\pi_1^{\dR}(S,s)$ of $(\MICn(S/k),s^*_{\dR})$ is 
isomorphic to $\G_{a,k}$ and an object $(V,N) \in \MICn(S/k)$ corresponds to 
the representation $\G_{a,k} \lra GL(V); t \mapsto \exp(tN)$ by the 
equivalence $\MICn(S/k) \cong \Rep(\pi_1^{\dR}(S,s)) = \Rep(\G_{a,k})$ of 
Tannaka duality. We denote the Tannaka dual of 
$(\NfMICn(X/k),x_{\dR}^*)$ by $\pi_1^{\dR}(X,x)$, as in the previous 
sections. However, unlike the previous sections, we abusively denote 
the group scheme $s_{\dR}^*\pi_1^{\dR}(X/S,\iota) = \pi_1^{\dR}(X_s,x)$ over $k$, 
which is the Tannaka dual of $(\NfMIC(X_s/s), x_{\dR}^*) = (\NfMIC(X/S), x_{\dR}^*)$ 
($s$ is the identity), by $\pi_1^{\dR}(X/S,\iota)$, because we found that the notation we have used before ($\pi_1^{\dR}(X_s,x)$) could arise confusion. On the other hand, we  can regard the relatively 
unipotent de Rham fundamental group of $X/S$, 
which is an affine group scheme in $\MICn(S/k)$, as the group scheme 
$\pi_1^{\dR}(X/S,\iota)$ over $k$ endowed with the conjugate action of 
$\pi_1^{\dR}(S,s) = \G_{a,k}$ associated to the split exact sequence 
\begin{equation*}
\xymatrix {
1 \ar[r] & \pi_1^{\dR}(X/S,\iota) \ar[r] & 
\pi^{\dR}_1(X,x) \ar[r]^{f_*} & \pi^{\dR}_1(S,s) \ar[r] 
\ar @/^6mm/[0,-1]^{\iota_*} & 1
} 
\end{equation*}
(see Section 2). The action of $1 \in \G_{a,k} = \pi_1^{\dR}(S,s)$ on 
$\pi_1^{\dR}(X/S,\iota)$ (the conjugation by $\epsilon := \iota_*(1)$) 
is called the monodromy action. 

To state the main result in this section, we introduce the notion 
of stable log curve (due to F.~Kato \cite[Cor. 1.1]{fk}). 

\begin{defn}\label{def:stablelogcurve}
When $k$ is algebraically closed, 
a proper log curve $f:X \lra S$ is called a stable log curve 
if the underlying morphism of schemes $f^{\circ}:X^{\circ} \lra S^{\circ}$ 
endowed with marked points $\{s_1, ..., s_r\}$ in Proposition 
\ref{prop:logcurve} is a pointed stable curve in the sense of 
\cite{knudsen}, namely, if it is not a smooth genus $1$ curve with no marked points and, 
for any irreducible component $Y$ of 
$X^{\circ}$ birational to $\Pr^1_k$, the cardinality of double points of $X$ in $Y$
$($counted doubly when a double point is a self intersection point$)$ plus 
that of marked points in $Y$ is $\geq 3$. 
For general $k$, a proper log curve $f:X \lra S$ is called a stable log curve 
if so is its geometric fiber. 
\end{defn}

For an affine group scheme $G$ over $k$, we denote the (abstract) group 
of automorphisms of $G$ by ${\mathrm{Aut}}(G)$, its subgroup of inner 
automorphisms induced by elements of $G(k)$ by ${\mathrm{Inn}}(G)$, and 
the group ${\mathrm{Aut}}(G)/{\mathrm{Inn}}(G)$ of outer automorphisms by 
${\mathrm{Out}}(G)$. Then the main result of this section is the following: 

\begin{thm} \label{!!}
Let $f\colon X \lra S$ be a stable log curve and $\iota:S \lra X$ be its section. 
Then the monodromy action on $\pi_1^{\dR}(X/S,\iota)$ is trivial as an element 
in $\mathrm{Out}(\pi_1^{\mathrm{dR}}(X/S, \iota))$ if and only if 
the underlying morphism of schemes of $f$ is smooth. 
\end{thm}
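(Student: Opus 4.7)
By flat base change (compatibility of the tower $\{W_{n}\}$ with field extension, Remark \ref{rem:wn-compati}, together with Remark \ref{gmexseq}(1)), the formation of $\pi_{1}^{\dR}(X/S,\iota)$ and of its monodromy are compatible with $\Spec\bar k\to \Spec k$, so one reduces to the case where $k$ is algebraically closed and the local description of Proposition \ref{prop:logcurve} is available at every closed point of $X$. I would use the four equivalent definitions of $\pi_{1}^{\dR}(X/S,\iota)$ developed in Sections 2--5 simultaneously: the Tannakian description of Section 2 controls what ``outer'' means and houses the monodromy, while the bar/$1$-minimal-model descriptions of Sections 4--5 permit explicit local computation.

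\textbf{The ``if'' direction.} If $f^{\circ}$ is smooth, only cases (2) and (3) of Proposition \ref{prop:logcurve} occur \'etale-locally on $X$, and in each of these the chart produces a canonical splitting $\cM_{X}^{\gp}\cong \cM_{X/S}^{\gp}\oplus f^{-1}\cM_{S}^{\gp}$. Hence, \'etale-locally, $f$ factors as a product $Y\times_{\Spec k}S\to S$ for some log-smooth $Y/\Spec k$ carrying only the marked-point log structure. A direct computation on the formal tubes then shows that the pulled-back complexes $(p_{1}^{m})^{*}\cA_{X/S}$ and $(p_{2}^{m})^{*}\cA_{X/S}$ agree canonically, i.e.\ the stratification on $\cA_{X/S}$ (and hence on $M_{X/S}$) is trivial. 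Consequently the Gauss--Manin connection on $R^{1}f_{\dR *}(W_{n}^{\vee})$ vanishes for every $n$, and induction via the exact sequence (W2) of Theorem \ref{thm:wn} shows that each $W_{n}$ has trivial stratification. The monodromy on $\pi_{1}^{\dR}(X/S,\iota)$ is then trivial on the nose (a fortiori trivial modulo inner automorphisms).

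\textbf{The ``only if'' direction: the key computation.} Assume $f^{\circ}$ has a double point $p$. Using the chart $\N^{2}\oplus_{\Delta,\N,m}\N$ of Proposition \ref{prop:logcurve}(1) at $p$ and the canonical elements $u_{i}\in \varprojlim_{m}\cO_{X^{m}(1)}^{\times}$ from the proof of Proposition \ref{prop:trinity}, the relation $xy=t^{m}$ forces $u_{x}u_{y}\equiv 1$ on the formal tube $\wh X^{m}$, producing a non-zero logarithmic contribution to the transition isomorphism $\epsilon^{m}$ in a neighbourhood of $p$. Feeding this into the Thom--Whitney presentation of $\cA_{X/S}$ built from the normalisation $\tilde X\to X$ and the double-point tubes gives an explicit description of $(M_{X/S}^{1},\nabla)$ as an extension of $\bigoplus_{i}H^{1}_{\dR}(X_{i})$ by the ``crossing'' module $\bigoplus_{p'}k\cdot\omega_{p'}$ (the sum over double points), with Gauss--Manin connection satisfying $N(\omega_{p'})=\mathrm{cl}_{p'}$, where $\mathrm{cl}_{p'}$ is dual to the cycle class of $p'$ in the dual graph of $X$. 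In particular $N\ne 0$ as soon as the dual graph contains a cycle.

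\textbf{Main obstacle.} The substantive step is to promote ``$N\ne 0$'' to non-triviality modulo inner automorphisms of the pro-unipotent group $\pi_{1}^{\dR}(X/S,\iota)$. When the dual graph of $X$ contains a cycle, the above shows that $N$ acts non-trivially on the abelianisation $\pi_{1}^{\dR}(X/S,\iota)^{\ab}\cong H^{1}_{\dR}(X/S)^{\vee}$; since inner automorphisms of a pro-unipotent group act trivially on the abelianisation, this settles the outer-triviality question in that case. When the dual graph is a tree, Definition \ref{def:stablelogcurve} (which excludes smooth genus-one curves with no marked points and enforces the stability condition) guarantees that some component $X_{i}$ carries non-trivial classes in $M_{X_{i}/k}^{1}$. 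The plan is to construct in the next level of the minimal model an explicit degree-two bracket $[a,b]$, with $a$ supported on $X_{i}$ and $b$ a crossing generator $\omega_{p'}$ at a double point adjacent to $X_{i}$, and to show that $N$ acts non-trivially on the class of $[a,b]$ modulo the appropriate filtration step. Ruling out the possibility that this effect can be produced by conjugation by any element of $\pi_{1}^{\dR}(X/S,\iota)(k)$ reduces to a system of linear relations in $\pi_{1}^{\dR}(X/S,\iota)^{\ab}$ and its Lie-theoretic successive quotients; the combinatorics of the dual graph, together with the stability hypothesis, ensure that this system admits no solution. This is the algebraic counterpart of Oda's path-in-dual-graph analysis and, as the authors indicate, will constitute the core of Subsection 6.7 (following the referee's simplification).
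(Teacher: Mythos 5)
Two genuine gaps. First, your ``if'' direction proves too much and therefore proves too little: you conclude that the monodromy is ``trivial on the nose'', but this fails whenever the section $\iota$ passes through a marked point (which is allowed for a stable log curve, cf.\ Remark \ref{good_notgood_section}). In that case the pullback $\iota_{\dR}^*E$ of an object pulled back from the curve $X'$ with marked-divisor log structure carries the nilpotent operator $bN_2$ coming from the residue along the marked divisor, so conjugation by $\iota_*(1)$ is a \emph{nontrivial inner} automorphism, not the identity; your chain ``trivial stratification on $\cA_{X/S}$ $\Rightarrow$ each $W_n$ has trivial stratification $\Rightarrow$ monodromy trivial in $\mathrm{Aut}$'' breaks at the last step. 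The correct statement (Proposition \ref{not_good_trivial}) distinguishes good sections (Aut-trivial) from non-good ones (only Out-trivial, via the explicit element $\eta=\exp(bN_2)$ of $\pi_1^{\dR}(X/S,\iota)$), and your argument must be amended to produce that inner automorphism rather than assert triviality.

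Second, and more seriously, the ``only if'' direction is not a proof: you settle only the case where the dual graph has a cycle (where nontriviality on the abelianization suffices, essentially as in Theorem \ref{thm:loop}), and for the tree case you state a ``plan'' whose decisive step --- ruling out that the monodromy effect on a depth-two bracket can be produced by conjugation --- is left as an unspecified ``system of linear relations'' to be killed by ``combinatorics of the dual graph''. This is exactly where the substance of the theorem lies, and it does not reduce to linear relations in the abelianization and successive quotients in any evident way. The paper's route requires: (i) the log blow-up reduction from stable to \emph{minimal semistable} log curves (Lemma \ref{lem:bu}, Proposition \ref{!!bu}), needed because the tangential-section and residue computations are carried out for semistable charts; (ii) the further reduction to a line-shaped dual graph with non-terminal components $\Pr^1$ (Theorem \ref{thm:line}), via push-out diagrams of Lie algebras; (iii) tangential sections at double points and the explicit local monodromy identities (Propositions \ref{prop:comp1}, \ref{prop:comp2}), giving the formula $\beta\circ\alpha\mapsto\eta_0^{-n}\circ\beta\circ\eta_0^{n}\circ\alpha$ of Proposition \ref{prop:linemono}; (iv) the amalgamated-product description of $L(X/S)$ with the precise images of the gluing generator ($v_1$ or $\sum_i[v_{2i-1},v_{2i}]$, Remark \ref{rem:pi1-yz}); and (v) a computation modulo $\Fil^4$ in the one-relator Lie algebra $L(\bar v_1,\bar v_2,\bar w_1,\bar w_2)/([\bar v_1,\bar v_2]+[\bar w_1,\bar w_2])$ using Labute's basis of the graded pieces to force $d\equiv 0$ and reach the contradiction $[\bar w_1,[\bar v_1,\bar v_2]]\equiv 0$. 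None of these ingredients appears in your sketch, so as it stands the tree case --- e.g.\ two positive-genus components meeting in one point, where the abelianization sees nothing --- is unproved.
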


First we prove the `if' part of Theorem \ref{!!}. To state it in a more precise form, 
we introduce the notion of good section. 

\begin{defn} 
The section $\iota: S\rightarrow X$ is called good 
if its image is in $X_{\rm triv}$. 
\end{defn}

\begin{rem}\label{good_notgood_section}
Suppose that ${\bf { s}}$ is a $k$-rational double point of $X$ and assume that 
the pullback log structure $\cM_{\bf s}$ on ${\bf {s}}$ from that on $X$ admits 
a chart of the form 
$$ \N^2 \oplus_{\Delta,\N,m} \N \to \cO_{\bf {s}}, \quad 
((1,0),0) \mapsto 0, \quad ((0,1),0) \mapsto 0, \quad ((0,0),1) \mapsto 0 $$
globally, without taking any etale extension of ${\bf s}$ 
(cf. Proposition \ref{prop:logcurve}(1)). 
Then, when $m=1$, 
there does not exist a section $\iota:S \lra X$ of $f$ whose image is $\bf {s }$. Indeed, 
 such a section would induce a map  $\widetilde{\alpha}: \cM_{\bf s} \longrightarrow \cM_S$ inducing the isomorphism  
${\cal O}_{\bf s}^{\times} \os{\cong}{\lra} {\cal O}_{S}^{\times}$, hence a map
$\alpha: \N^2 \oplus_{\Delta,\N,1} \N  = \N^2= \cM_{\bf s}/{\cal O}_{\bf s}^{\times}   \lra \N= \cM_S/ {\cal O}_{S}^{\times}$  with $\alpha^{-1}(0) = \{(0,0)\}$.  Moreover, since $\iota$ is a section of $f$, $\alpha$ would be  a section 
of the diagonal map $\Delta: \N \longrightarrow \N^2$. This is impossible because $\alpha(1,1)= \alpha (\Delta(1))=1$, while $\alpha(1,1)= \alpha(1,0)+\alpha(0,1)\geq 1+1=2$. 


On the other hand, when $m \geq 2$, 
there exists a section $\iota:S \lra X$ of $f$ whose image is $\bf {s}$. 
(For example, the map 
$$ \N^2 \oplus_{\Delta,\N,m} \N \to \N; \quad ((a,b),c) \mapsto a(m-1)+b+c $$
is a section of the map $\N \to \N^2 \oplus_{\Delta,\N,m} \N$ in Proposition \ref{prop:logcurve}(1), 
and this induces the required section $\iota$ of $f$.) 
But this section is not good by definition. 

Moreover, if ${\bf { s}}$ is a $k$-rational point in the marked divisor, 
we have a section $\iota:S \lra X$ of 
$f$ whose image is ${\bf { s}}$. Indeed, we always have a global chart 
\eqref{eq:20190307} by Remark \ref{rem:20190307} and there exists a section 
$\beta: \N^2 \to \N$ of the 
map $\N \to \N^2$ in Proposition \ref{prop:logcurve}(2) 
with $\beta^{-1}(0) = \{0\}$, which induces the required section $\iota$ of $f$. 
(Such a map $\beta$ is necessarily of the form 
$(1,0) \mapsto 1, (0,1) \mapsto b$ for some $b \geq 1$.) This section is not 
good either. 
\end{rem}

Then the `if' part of Theorem \ref{!!} follows from the following proposition. 

\begin{prop}\label{not_good_trivial}
Let $f\colon X\rightarrow S$ be a stable log curve such 
that underlying morphism of schemes of $f$ 
is smooth. Then we have the following$:$ \\
$(1)$ \, If the section $\iota$ is good, 
the monodromy action on $\pi_1^{\mathrm{dR}}(X/S, \iota)$ is trivial as an 
element of $\mathrm{Aut}(\pi_1^{\mathrm{dR}}(X/S, \iota)).$ \\ 
$(2)$ \, If  the section $\iota$ is not good, the monodromy action on
$\pi_1^{\mathrm{dR}}(X/S, \iota)$ is trivial as an 
element of $\mathrm{Out}(\pi_1^{\mathrm{dR}}(X/S, \iota))$.
\end{prop}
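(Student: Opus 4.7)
The key geometric input is that $f^{\circ}$ smooth forces a product decomposition at the log-scheme level. Let $X' := (X^{\circ}, \cM_{X^{\circ},D})$ be the log scheme obtained by equipping the smooth scheme $X^{\circ}$ with the log structure associated to the reduced marked divisor $D$, viewed as a log scheme over $\Spec k$ with trivial log structure. Using the local description in Proposition \ref{prop:logcurve}(2),(3) (only the smooth and marked local models occur, since $f^{\circ}$ has no double points), I would check that the natural morphism
\[
X \longrightarrow X' \times_{\Spec k} S
\]
is an isomorphism of fs log schemes over $S$: locally the log structure of $X$ is the direct sum $\cM_{X',\cdot} \oplus \cM_{S,\cdot}$.

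For part (1), since $\iota$ is good its image lies in $X_{\rm triv} = X^{\circ}\setminus\mathrm{supp}(D)$, where the log structure of $X$ coincides with the pullback from $S$. Under the identification above, $\iota$ is then the base change, along the structural map $S \to \Spec k$, of a unique section $\iota':\Spec k \to X'$ of $f':X' \to \Spec k$, producing a Cartesian square compatible with the sections. By the base-change compatibility of the projective system $\{(W_n,e_n)\}$ (second half of Remark \ref{rem:wn-compati}), the system for $(f,\iota)$ is the pullback of the corresponding system $\{(W'_n,e'_n)\}$ for $(f',\iota')$. Going through the second definition (Definition \ref{def2}) and Theorem \ref{thm:1and2}, the affine group scheme $\pi_1^{\dR}(X/S,\iota)$ in $\MICn(S/k)$ is the pullback of $\pi_1^{\dR}(X',\iota')$, which lives in $\MICn(\Spec k/k) = \Vector_k$. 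Objects pulled back from $\Vector_k$ to $\MICn(S/k)$ carry trivial stratification, i.e.\ zero nilpotent endomorphism, so the monodromy action on $\pi_1^{\dR}(X/S,\iota)$ equals the identity in $\Aut(\pi_1^{\dR}(X/S,\iota))$.

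For part (2), I would reduce to part (1) by changing the section. If $\iota,\iota_1$ are two sections of $f$ and $t\in\pi_1^{\dR}(S,s)=\G_{a,k}$, the element $h(t):=\iota_{*}(t)\iota_{1,*}(t)^{-1}$ is sent to $1$ by $f_{*}$ and hence lies in $\pi_1^{\dR}(X/S,\iota_1)$. Thus conjugation by $\iota_{*}(t)$ and conjugation by $\iota_{1,*}(t)$, after transporting through any element of the path torsor $\mathrm{Isom}^{\otimes}(\iota^{*}_{\dR},\iota^{*}_{1,\dR})$, differ by inner conjugation by $h(t)$, so the composed maps $\G_{a,k}\to\Out(\pi_1^{\dR}(X/S,\iota))\cong\Out(\pi_1^{\dR}(X/S,\iota_1))$ agree. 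After base change to $\bar k$, the open subscheme $X_{\bar k,\mathrm{triv}}$ is nonempty and therefore has a $\bar k$-point, yielding a good section $\iota_1$ of $f_{\bar k}$ to which part (1) applies. Thus the outer monodromy for $(f_{\bar k},\iota_1)$ is trivial; by the section-independence just established, the same holds for $(f_{\bar k},\iota_{\bar k})$, and by fpqc descent of innerness for pro-unipotent group schemes this descends to triviality in $\Out(\pi_1^{\dR}(X/S,\iota))$ over $k$.

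\textbf{Main obstacle.} The product decomposition and the pullback computation in part (1) are essentially routine given the machinery of Sections 1--3. The delicate point is the change-of-section argument in part (2): namely, rigorously verifying that the canonical (up to inner automorphism) isomorphism $\pi_1^{\dR}(X/S,\iota)\cong\pi_1^{\dR}(X/S,\iota_1)$ coming from the path torsor transports the monodromy action of one to that of the other up to an inner automorphism (rather than some outer twist), and handling the descent of triviality in $\Out$ along $k\to\bar k$, where one must use that for pro-unipotent group schemes the sheaves $\Aut$, $\mathrm{Inn}$ are sufficiently well-behaved.
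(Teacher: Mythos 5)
Your part (1) is correct and, while packaged differently, rests on the same geometric fact the paper uses: that for $f^{\circ}$ smooth the relative data come from the log scheme $X'=(X^{\circ},\mathcal{N})$ over $\Spec k$ ($\mathcal{N}$ the divisorial log structure of the marked divisor). The paper argues pointwise in the Tannakian groups: for $\gamma\in\pi_1^{\dR}(X/S,\iota)$ and $(E,\nabla)\in\NfMICn(X/k)$, the action of $\epsilon^{-1}\gamma\epsilon$ on $\iota^*E$ only depends on the image of $(E,\nabla)$ in $\NfMIC(X/S)\cong {\rm N}_{f'}\MIC(X'/k)$, and replacing $(E,\nabla)$ by the object coming from $X'/k$ makes the residue $N$ vanish, so $\epsilon^{-1}\gamma\epsilon=\gamma$. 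Your route instead uses $X\cong X'\times_{\Spec k}S$, the fact that a good section is the base change of a point $\iota'$ of $X'$, and the base-change compatibility of $\{(W_n,e_n)\}$ (Remark \ref{rem:wn-compati}) together with Definition \ref{def2}; this is a legitimate alternative and the verifications you list (the product decomposition, the factorization of $\iota$, conditions (A)--(E) for $X'/k$) all go through.

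For part (2) your route is genuinely different from the paper's, and as written it has two real gaps. The paper never changes the section: since $X$ has no double points the image of $\iota$ is a marked point, the restriction of $(E,\nabla)$ there is a triple $(E|_x,N_1,N_2)$, and $\iota^*_{\dR}(E,\nabla)=(\iota^*E,N_1+bN_2)$ for some $b\geq 1$, so $\epsilon$ acts by $\exp(N_1+bN_2)$; replacing $(E,\nabla)$ by the object from $X'/k$ kills $N_1$, and because $N_2$ depends only on the image in $\NfMIC(X/S)$, the assignment $\exp(bN_2)$ defines an element $\eta\in\pi_1^{\dR}(X/S,\iota)$ with $\epsilon^{-1}\gamma\epsilon=\eta^{-1}\gamma\eta$ --- triviality in $\mathrm{Out}$ with no base change and no path torsors. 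In your argument, first, the element $h(t)=\iota_*(t)\iota_{1,*}(t)^{-1}$ is not defined as it stands, since $\iota_*(t)$ and $\iota_{1,*}(t)$ live in fundamental groups at different base points; one must first choose a path in the \emph{relative} torsor $\pi_1^{\dR}(X/S,x,x_1)$ (so that the induced identification of $\mathrm{Out}$'s is path-independent) and then compare the two conjugation actions --- this is exactly what Proposition \ref{prop:HES_t} establishes (it is proved later in the paper but independently of Proposition \ref{not_good_trivial}, so there is no circularity, only missing detail on your side). Second, the reduction to $\bar k$ forces you to descend innerness: the statement ``$\phi_{\bar k}$ inner by an element of $G(\bar k)$ implies $\phi$ inner by an element of $G(k)$'' is \emph{not} a formal fpqc-descent fact (it fails for non-unipotent groups); for the pro-unipotent $G=\pi_1^{\dR}(X/S,\iota)$ in characteristic zero it does hold, but only via the argument that the transporter is a torsor under the pro-unipotent centre and $H^1(k,-)$ of such a group vanishes (with an inverse-limit step), which you must supply or cite. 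Note also that the base change to $\bar k$ is unavoidable in your approach, since $X_{\rm triv}$ need not have a $k$-rational point even though $X$ has a $k$-rational marked point; the paper's direct computation sidesteps both issues.
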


\begin{proof} 
Let $\epsilon := \iota_*(1) \in \pi_1^{\dR}(X,x)$ be as before Definition \ref{def:stablelogcurve}. 
Take $\gamma \in \pi_1^{\dR}(X/S,\iota)$ and for $(E,\nabla) \in \NfMICn(X/k)$, 
we calculate the action of $\epsilon^{-1} \circ \gamma \circ \epsilon$ on 
$\iota^*E$. 

(1) \, If we denote the image of $(E,\nabla)$ by $\iota_{\dR}^*: \NfMICn(X/k) \lra \MICn(S/k)$ 
by $(\iota^*E,N)$, the action of $\epsilon$ on $\iota^*E$ is given by $\exp(N)$. 
Also, since $\epsilon^{-1} \circ \gamma \circ \epsilon$ is an element in $\pi_1^{\dR}(X/S,\iota)$, 
the action of it on $\iota^*E$ depends only on the image $(E,\ol{\nabla})$ of 
$(E,\nabla)$ in $\NfMIC(X/S)$. Since the underlying morphism of $f$ is smooth,  
if we denote the log structure on $X^{\circ}$ associated to 
the marked divisor $D$ by ${\cal N}$ and denote the log scheme $(X^{\circ},{\cal N})$ by 
$X'$, we have the equivalence $\NfMIC(X/S) \cong {\rm N}_{f'}\MIC(X'/k)$. 
(Here $f':X' \lra \Spec k$ is the structure morphism.)  
Denote the object on the right hand side corresponding to $(E,\ol{\nabla})$ by 
$(E,\nabla')$. Then, to calculate the action of $\epsilon^{-1} \circ \gamma \circ \epsilon$ 
on $\iota^*E$, we may replace $(E,\nabla)$ by the image of $(E,\nabla')$ by the functor 
${\rm N}_{f'}\MIC(X'/k) \lra \NfMICn(X/k)$. If we do so, 
we see that $N=0$ by construction. Hence the action of 
$\epsilon ^{-1}\circ\gamma \circ\epsilon$ on $\iota^*E$ is the same as that of $\gamma$ 
and so $\epsilon ^{-1}\circ\gamma \circ\epsilon = \gamma$. 

(2) \, Since $X$ has no double points, $x$($:=$ the image of $S^{\circ}$ by $\iota$) 
is a marked point. So, if we endow $x$ with 
the pullback log structure from $X$, the log structure on $x$ is induced by 
the map $\N^2 \to k; (1,0) \mapsto 0, (0,1) \mapsto 0$ by Remark \ref{rem:20190307} 
and the restriction of $(E,\nabla)$ to $x$ 
is identified with the triple $(E|_x,N_1,N_2)$ consisting of a finite-dimensional $k$-vector space 
endowed with two commuting nilpotent endomorphisms. The map $\iota': S \to x$ induced by $\iota$ 
is induced by the identity map on $\Spec k$ and the monoid homomorphism 
$\beta: \N^2 \to \N$ as in Remark \ref{good_notgood_section} for some $b \geq 1$. By pulling back the triple 
$(E|_x,N_1,N_2)$ by $\iota'$, we see that the image of $(E,\nabla)$ by $\iota_{\dR}^*$ is 
$(\iota^*E,N_1+bN_2)$. Hence the action of $\epsilon$ on $\iota^*E$ is given by $\exp(N_1+bN_2)$. 
As in the proof of (1), to know 
the action of $\epsilon^{-1} \circ \gamma \circ \epsilon$ 
on $\iota^*E$, we may replace $(E,\nabla)$ by the image of $(E,\nabla')$ in $\NfMICn(X/k)$.   
If we do so, we see that $N_1=0$. Also, the functorial action $\exp(bN_2)$ on all  $\iota^*E$'s  defines 
an element $\eta$ in $\pi_1^{\dR}(X/S,\iota)$, because $N_2$ only depends on 
the image of $(E,\nabla)$ in $\NfMIC(X/S)$. Then 
the action of 
$\epsilon ^{-1}\circ\gamma \circ\epsilon$ on $\iota^*E$ is the same as that of 
$\eta^{-1} \circ \gamma \circ \eta$ 
and so $\epsilon ^{-1}\circ\gamma \circ\epsilon = \eta^{-1} \circ \gamma \circ \eta$. 
Therefore, the monodromy action on $\pi_1^{\mathrm{dR}}(X/S, \iota)$ is trivial as an element of 
$\mathrm{Out}(\pi_1^{\mathrm{dR}}(X/S, \iota))$. 
\end{proof}

We deal now with the  `only if' part of Theorem \ref{!!}: we will introduce some propositions in order to reduce the proof to  simpler cases.

\begin{prop}\label{prop:!!bc}
Let $k \subseteq k'$ be an extension of fields. Then the `only if' part of 
Theorem \ref{!!} is true if it is true for the base extension 
\begin{align*}
& f': X' := X \times_{\Spec k} \Spec k' \lra S' := S \times_{\Spec k} \Spec k' \\ 
& \iota': S' \lra X' 
\end{align*}
of $f$, $\iota$ by the morphism $\Spec k' \lra \Spec k$. 
\end{prop}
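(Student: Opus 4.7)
The plan is to argue by contrapositive: assume the underlying morphism of $f$ is not smooth and that the `only if' part holds after base change to $k'$, and then deduce that the monodromy on $\pi_1^{\dR}(X/S,\iota)$ is nontrivial in $\mathrm{Out}$. The first observation is that non-smoothness of the underlying morphism of $f$ (equivalently, the existence of a geometric double point) is preserved under the base change $\Spec k' \lra \Spec k$ since $k \subseteq k'$ is faithfully flat; hence the hypothesis applies to $(f', \iota')$ and tells us that the monodromy on $\pi_1^{\dR}(X'/S',\iota')$ is nontrivial as an outer automorphism.

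The core of the argument is therefore the compatibility of the entire package (relatively unipotent log de Rham fundamental group, splitting induced by $\iota$, and the monodromy action) with base change along $\Spec k' \lra \Spec k$. I would prove this using the projective system $\{(W_n,e_n)\}$ of Theorem \ref{thm:wn}, in combination with Remark \ref{rem:wn-compati}: the middle square in diagram \eqref{eq:wn-compati} formed from $k \subseteq k'$ is Cartesian, so the canonical morphism \eqref{eq:wn-compati22} is an isomorphism. Equivalently, by Theorem \ref{thm:1and2}, this gives a canonical $\pi_1^{\dR}(S',s')$-equivariant isomorphism of affine group schemes
\begin{equation*}
\pi_1^{\dR}(X/S,\iota) \times_{\Spec k} \Spec k' \;\cong\; \pi_1^{\dR}(X'/S',\iota'),
\end{equation*}
compatible with the sections $\iota_*$ and $\iota'_*$, and with the ambient homotopy exact sequence \eqref{eq:2020Aug-0}. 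In particular, the element $\epsilon' := \iota'_*(1) \in \pi_1^{\dR}(X',x')$ is the image of $\epsilon := \iota_*(1) \in \pi_1^{\dR}(X,x)$ under the base change morphism.

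The conclusion is then formal: if the monodromy on $\pi_1^{\dR}(X/S,\iota)$ were trivial in $\mathrm{Out}$, there would exist $\eta \in \pi_1^{\dR}(X/S,\iota)(k)$ such that $\epsilon^{-1}\circ \gamma \circ \epsilon = \eta^{-1}\circ \gamma \circ \eta$ for every $\gamma \in \pi_1^{\dR}(X/S,\iota)$. Base-changing to $k'$ and using the isomorphism above, the image $\eta'$ of $\eta$ in $\pi_1^{\dR}(X'/S',\iota')(k')$ would satisfy the analogous identity with $\epsilon$ replaced by $\epsilon'$, contradicting that the monodromy on $\pi_1^{\dR}(X'/S',\iota')$ is nontrivial in $\mathrm{Out}$.

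The step most worth checking carefully is the base-change compatibility itself: one must verify that the diagram relating $X \to S \to \Spec k$ to $X' \to S' \to \Spec k'$ satisfies the Cartesian hypothesis needed to invoke Remark \ref{rem:wn-compati} (which it does, trivially), and that this yields compatibility not merely of underlying group schemes but of the full split extension together with the induced conjugation action of $\pi_1^{\dR}(S,s)$. I do not expect any substantive obstacle here: everything is formal once the functoriality of $\{(W_n,e_n)\}$ in Remark \ref{rem:wn-compati} is in hand.
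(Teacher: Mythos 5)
Your proposal is correct and follows essentially the same route as the paper: the paper also deduces the compatible isomorphism $\pi_1^{\dR}(X'/S',\iota') \cong \pi_1^{\dR}(X/S,\iota)\times_{\Spec k}\Spec k'$ (with its monodromy action) from the second definition together with the base-change compatibility of $\{(W_n,e_n)\}$ in Remark \ref{rem:wn-compati}, notes that non-smoothness of $f^{\circ}$ passes to ${f'}^{\circ}$, and concludes by the same formal outer-automorphism argument.
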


\begin{proof}
Note that we have the isomorphism 
$\pi_1^{\mathrm{dR}}(X'/S', \iota')\cong\pi_1^{\mathrm{dR}}(X/S, \iota)\times_{\Spec k} \Spec k'$ 
which is compatible with monodromy action.  
This follows from the second definition (Definition \ref{def2}) of 
$\pi_1^{\mathrm{dR}}(X/S, \iota)$ and 
the fact that $W_n$ in Section 3 is compatible with base change 
by the morphism $\Spec k' \lra \Spec k$ (Remark \ref{rem:wn-compati}). 
Also, it follows from the third definition (Definition \ref{def3}) of 
$\pi_1^{\mathrm{dR}}(X/S, \iota)$ and the fact that the construction of 
$1$-minimal model $M_{X/S}$ is compatible with base change by 
the morphism $\Spec k' \lra \Spec k$. 
Also, the underlying morphism $f^{\circ}$ of $f$ is smooth if and only if 
that ${f'}^{\circ}$ of $f'$ is smooth. Hence, if $f^{\circ}$ is not smooth, 
${f'}^{\circ}$ is not smooth either and so the monodromy action is not trivial 
as an element in ${\rm Out}(\pi_1^{\dR}(X'/S',\iota'))$. Hence it is not trivial as an 
element in ${\rm Out}(\pi_1^{\dR}(X/S,\iota))$ either. So we are done. 
\end{proof}

To prove the next lemma  we need  an explicit description of 
certain log blow-ups of a proper log curve $X \lra S$. Assume that $k$ is algebraically closed and 
let $\bf s $ be a double point of $X^{\circ}$. Then the local description of $X$ around $\bf s$ 
is given in Proposition \ref{prop:logcurve}(1). Assume that $m \geq 2$ in the local 
description there. If we consider the ideal 
$J := (\N^2 \oplus_{\Delta,\N,m} \N) \setminus \{0\} \subseteq \N^2 \oplus_{\Delta,\N,m} \N$ 
and glue the ideal (defined etale locally around $\bf s$) in $\cM_X$ generated by $J$ 
and $\cM_X|_{X \setminus \{{\bf s}\}}$, we obtain the coherent ideal $\cJ$ of $\cM_X$. 
We denote the log blow-up of $X$ with respect to $\cJ$ by $h: X' \lra X$. 

Etale locally, $X'$ is strict etale over the log blow-up of the log scheme 
$$ X_0 := (\Spec k[x,y]/(xy), (\N^2 \oplus_{\Delta,\N,m} \N)^a) $$ 
in Proposition \ref{prop:logcurve}(1) by the ideal $J$. 
By \cite[Proposition 4.3, Corollary 4.8]{niziol}, it is the closed subscheme 
$X'_0$ 
defined by $t$ of the normalization $Y'$ of the blow up of 
$Y := \Spec k[x,y,t]/(xy-t^m)$ along the ideal $(x,y,t)$. 
Because $Y'$ is covered by the schemes 
\begin{equation}\label{eq:localy}
\Spec k[x,t/x], \quad \Spec k[y,t/y], \quad \Spec k[x/t,y/t,t]/((x/t)(y/t) - t^{m-2}), 
\end{equation}
$X'_0$ is covered by the schemes 
\begin{equation}\label{eq:localy-2}
\Spec k[x,t/x]/(x(t/x)), \quad \Spec k[y,t/y]/(y(t/y)), \quad \Spec k[x/t,y/t]/((x/t)(y/t))  
\end{equation}
in the case $m \geq 3$, 
and they are endowed with log structures associated to 
$\N^2, \N^2, \N^2 \allowbreak \oplus_{\Delta,\N,m-2} \N$, respectively, i.e. the cases 
(1) with $m=1$, (1) with $m=1$ and (1) with $m$ replaced by $m-2$ in the notation introduced in Proposition \ref{prop:logcurve}. 
In the case $m=2$, the third affine scheme in \eqref{eq:localy-2} 
should be replaced by $\Spec k[x/t,y/t]/((x/t)(y/t)-1)$ endowed with the log structure associated to $\N^2 \oplus_{\Delta,\N,0} \N$. Because the basis of the first factor $\N^2$ 
are sent to $x/t$ and $y/t$ which are now invertible, the log structure is associated to the first factor $\N$. Hence we are in the case (3) of Proposition \ref{prop:logcurve}.
From this description, we see that $X' \lra S$ is again a log curve.  
Then we have the following: 

\begin{lem}\label{lem:bu}
Let $f: X \lra S$ be a proper log curve with a good section 
$\iota:S \lra X$ and let 
$h: X' \lra X$ be the log blow-up at the double point $\bf s$ as above. Also, 
let $f':X' \lra S$ be the composite $f \circ h$ and let 
$\iota': S \lra X'$ be the section induced by $\iota$. 
$($Note that $h$ is an isomorphism on $X \setminus \{{\bf s}\}$ and that   
the image of $\iota$ cannot be ${\bf s}$ because $\iota$ is a good section.$)$ 
Then we have the isomorphism 
$$ h_*: \pi_1^{\dR}(X'/S,\iota') \os{\cong}{\lra} \pi_1^{\dR}(X/S, \iota) $$ 
which is compatible with monodromy action. 
\end{lem}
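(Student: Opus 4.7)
The plan is to deduce the isomorphism from the functoriality of the pointed projective system $(W,e) = \{(W_n,e_n)\}_n$ recorded in Remark~\ref{rem:wn-compati}, using the log blow-up invariance of relative de Rham cohomology established in Proposition~\ref{prop:invariance}.

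First, I observe that $h \circ \iota' = \iota$: because $\iota$ is good its image lies in $X_{\rm triv}$ and hence avoids the double point $\mathbf{s}$, over which $h$ is an isomorphism, so $\iota'$ is forced to be $h^{-1} \circ \iota$. Since $f: X \lra S$ is log smooth with $S$ log regular (being the standard log point), $X$ is log regular and $\Omega^1_{X/S}$ is locally free, so Proposition~\ref{prop:invariance} applies to $h: X' \lra X$ and yields quasi-isomorphisms
$$ Rh_*\Omega^j_{X'/S} \simeq \Omega^j_{X/S} \qquad (j \geq 0). $$
Combining these via the Leray spectral sequence for $f' = f \circ h$ gives natural isomorphisms $R^if_{\dR *}(\cO_X,d) \os{\cong}{\lra} R^if'_{\dR *}(\cO_{X'},d)$ for every $i \geq 0$.

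With these in place, the third statement of Remark~\ref{rem:wn-compati} applies with $\varphi_S = \id_S$ and $\varphi_X = h$, and the canonical morphism of pointed projective systems
$$ \{(W'_n,e'_n)\}_{n \geq 1} \lra \{(h^*_{\dR} W_n, e_n)\}_{n \geq 1} $$
in $\NfMICn(X'/k)$ is an isomorphism. Pulling back by $\iota'$ and using $h \circ \iota' = \iota$ produces an isomorphism
$$ \iota'^*_{\dR} W' \os{\cong}{\lra} \iota^*_{\dR} W $$
in the ind-category of $\MICn(S/k)$ which respects all the Hopf algebra structure (product, coproduct, unit, augmentation), as these are built functorially from the pointed projective system in the paragraph preceding Definition~\ref{def2}. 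Dualising and applying $\Spec$, Definition~\ref{def2} then yields the desired isomorphism $h_*: \pi^{\dR}_1(X'/S,\iota') \os{\cong}{\lra} \pi^{\dR}_1(X/S,\iota)$ of affine group schemes in $\MICn(S/k)$.

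The compatibility with the monodromy action is automatic: the monodromy is precisely the $\pi_1^{\dR}(S,s) = \G_{a,k}$-action encoded in the $\MICn(S/k)$-structure, and the whole chain of isomorphisms above lives in $\MICn(S/k)$. The only geometric input is the log blow-up invariance Proposition~\ref{prop:invariance}, which is already established; the remainder is formal functoriality of the construction of $(W_n,e_n)$, so I expect no serious obstacle beyond verifying that the hypotheses of Proposition~\ref{prop:invariance} and of Remark~\ref{rem:wn-compati}(3) are indeed met in our situation.
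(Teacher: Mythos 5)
Your overall strategy (establish the invariance of relative de Rham cohomology under the log blow-up, then invoke Remark \ref{rem:wn-compati}(3) with $\varphi_S = \id_S$ to identify the pointed systems $\{(W'_n,e'_n)\}$ and $\{(h^*_{\dR}W_n,e_n)\}$, and conclude via Definition \ref{def2}) would work, and it is a legitimate alternative to the paper's route, which instead deduces an equivalence $\NfMIC(X/S) \cong \mathrm{N}_{f'}\MIC(X'/S)$ from the same cohomological input and applies Tannaka duality; your treatment of the monodromy compatibility through the $\MICn(S/k)$-structure is also acceptable, whereas the paper checks it via the commutative diagram of split homotopy exact sequences.

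However, there is a genuine gap at the one place where geometry enters: you justify the key quasi-isomorphisms $Rh_*\Omega^j_{X'/S} \simeq \Omega^j_{X/S}$ by applying Proposition \ref{prop:invariance} to $h\colon X' \to X$, claiming that $S$ is log regular and hence $X$ is log regular. Neither claim holds. The standard log point is not log regular (at its unique point one has $\dim \cO = 0$ while $\dim(\cO/I) + \operatorname{rank}\ol{\cM}^{\rm gp} = 0 + 1 = 1$), and at a double point of $X$ the same count fails ($\dim \cO_{X,x} = 1$ but $\dim(\cO_{X,x}/I_x) + \operatorname{rank}\ol{\cM}^{\rm gp}_{X,x} = 0 + 2 = 2$). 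So the hypothesis ``$\cX'$ log regular'' of Proposition \ref{prop:invariance} is violated, and the proposition cannot be applied to the blow-up of the fiber. This is precisely the difficulty the paper's proof is designed to circumvent: etale locally at the double point one embeds $X_0$ as the divisor $t=0$ in the log regular scheme $Y = \Spec k[x,y,t]/(xy-t^m)$, applies Niziol's and Kato's log blow-up invariance ($\cO_Y \cong Rh_{Y,*}\cO_{Y'}$) on $Y$, and then descends to the fiber using the Tor-independence of $\cO_{X'_0}$ and $\cO_{Y'}$ over $\cO_Y$ together with the projection formula, obtaining $\cO_{X_0} \cong Rh_{0,*}\cO_{X'_0}$ and hence $\cO_X \cong Rh_*\cO_{X'}$; only then do the log etaleness of $h$ and the projection formula give the comparison of de Rham cohomologies. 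Without this (or an equivalent) argument, the cohomological input your proof rests on is unproved.
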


\begin{proof}
First we prove the isomorphism 
$H^n_{\dR}(X/S) \os{\cong}{\lra} H^n_{\dR}(X'/S)$ 
of log de Rham cohomologies for any $n$. 
Let $X_0, X'_0, Y, Y'$ be as  in the notation before the lemma. Then we have the 
Cartesian diagram 
\[
\xymatrix{
X'_0 \ar[r]^{i'} \ar[d]^{h_0}& Y'\ar[d]^{h_Y}\\
X_0 \ar[r]^{i} &Y, 
}
\]
where $h_0, h_Y$ are log blow-ups and $i'$, $i$  are 
exact closed immersions. 
Because the log blow-up $h$ is log etale, we have the isomorphism 
$h^*\Omega^j_{X/S} \cong \Omega^j_{X'/S}$. By using this, the projection 
formula and a spectral sequence, we see that, to prove 
the isomorphism in the lemma, it suffices to prove that the morphism 
$$ \cO_X \lra Rh_*\cO_{X'} $$ 
is an isomorphism. (See the proof of Proposition \ref{prop:invariance}.) 
This is further reduced to proving the isomorphism 
$$ \cO_{X_0} \os{\cong}{\lra} Rh_{0,*}\cO_{X'_0},$$ 
which, in turn, is equivalent to the isomorphism 
$$ i_{\ast}\cO_{X_0} \os{\cong}{\lra} i_{\ast} Rh_{0,*}\cO_{X'_0}.$$ 

Thanks to \cite[Corollary 4.7]{niziol} and \cite[Theorem 11.3]{ka2}, we have 
the isomorphism 
\begin{equation}\label{eq:logblowup-y}
\cO_{Y} \os{\cong}{\lra} Rh_{Y,*}\cO_{Y'}.
\end{equation} 
Moreover, since the multiplication by $t$ is injective on the rings 
appearing in \eqref{eq:localy}, 
$\cO_{X'_0}$ and $\cO_{Y'}$ are Tor-independent over $\cO_{Y}$. 
Hence, 
we have the isomorphisms 
\begin{align*}
 i_{\ast} \cO_{X_0} & \cong ( i_{\ast} \cO_{X_0}) \otimes^L_{\cO_Y} \cO_Y \cong
( i_{\ast}\cO_{X_0} )\otimes^L_{\cO_Y} Rh_{Y,*} \cO_{Y'} \\ 
& 
\cong 
 Rh_{Y,*} (L h^*_Y i_{\ast}    \cO_{X_0})    \cong   Rh_{Y,*} (h^*_Y i_{\ast}    \cO_{X_0})\\ 
& \cong  Rh_{Y,*} ( i'_{\ast}    \cO_{X'_0})  \cong i_{\ast}  Rh_{0,*} \cO_{X'_0}, 
\end{align*}
where the third isomorphism is the projection formula and the fourth isomorphism follows from the Tor-independence  of  $\cO_{X'_0}$ and $\cO_{Y'}$  over $\cO_{Y}$ (see also
 \cite[\href{https://stacks.math.columbia.edu/tag/08Ib}{Tag 08IB}]{stacks} for these isomorphisms). 
In conclusion, we have the isomorphism $H^n_{\dR}(X/S)  {\cong}  H^n_{\dR}(X'/S)$ for any $n$. 

The aforementioned  isomorphism of cohomologies implies the equivalence
$$h_{\dR}^*: \NfMIC(X/S) \allowbreak \os{\cong}{\lra} \mathrm{N}_{f'}\mathrm{MIC}(X'/S).$$
Indeed, because objects on both sides are iterated extensions of trivial objects, it will suffice  to check 
the identification of $\Hom$ and $\Ext$ groups on such objects.  But this follows from the isomorphism of the zeroth and the first cohomology groups as well as the injection of the map between the second cohomology groups (see the proof of Proposition \ref{prop:1st}). 
We then obtain, by Tannakian duality, an isomorphism
$$h_*: \pi_1^{\dR}(X'/S,\iota') \os{\cong}{\lra} \pi_1^{\dR}(X/S, \iota).$$ 
Since this isomorphism fits into the commutative diagram 
\begin{equation*}
\xymatrix{ 
1 \ar[r]& \pi_1^{\dR}(X'/S,\iota') \ar[r]\ar[d]^{h_*}& \pi_1^{\dR}(X',\iota') \ar[r]^(0.4){f_*} \ar[d]^{h_*} &  \pi_1^{\dR}(S,s)\cong{\mathbb G}_{a,k} \ar[d]^{\rm{id}} \ar[r]   \ar @/^6mm/[0,-1]^{\iota'_*} & 1\\
 1 \ar[r]& \pi_1^{\dR}(X/S,\iota) \ar[r]& \pi_1^{\dR}(X,\iota) \ar[r]^(0.4){f_*}  &  \pi_1^{\dR}(S,s)\cong{\mathbb G}_{a,k}  \ar[r]   \ar @/^6mm/[0,-1]^{\iota_*} & 1}
\end{equation*}
of split exact sequences, we see that the vertical arrows are isomorphisms and that 
the isomorphism in the lemma is compatible with monodromy action. 
So the lemma is proved. 
\end{proof}

Let us introduce the notion of (minimal) semistable log curve. 

\begin{defn}
When $k$ is algebraically closed, 
a proper log curve $X \lra S$ is called a semistable log curve if, in  
the local description around any double point of $X$ given in Proposition 
\ref{prop:logcurve}, we can take $m=1$. A semistable log curve 
$X \lra S$ is called minimal if it is not a smooth genus $1$ curve with no marked points and, 
for any irreducible component $Y$ of 
$X^{\circ}$ birational to $\Pr^1_k$ which intersects with other components 
at $\leq 1$ point, 
the cardinality of double points of $X$ in $Y$
$($counted doubly when a double point is a self intersection point$)$ plus 
that of marked points in $Y$ is $\geq 3$. 
For general $k$, a proper log curve $f:X \lra S$ is called a $($minimal$)$ semistable log curve 
if so is its geometric fiber. 
\end{defn}

\begin{rem}
The terminology `semistable' comes from the fact that a typical example of 
a semistable log curve is the log special fiber of a proper curve over $\Spec k[[x]]$ with 
semistable reduction (in the sense of \cite{aik} for example) endowed with the log structure 
defined by the special fiber. Some authors say the 
notion of semistable reduction in this sense as regular semistable reduction, but 
we found that the terminology `regular semistable log curve' is not good because 
the underlying scheme of a semistable log curve is not regular unless it is smooth over $k$. 
We warn the reader that, in our terminology, a stable log curve is not necessarily a 
semistable log curve.  
\end{rem}

What we  are going to prove in the  next subsections (from Subsection 6.2 to Subsection 6.7)  is the following theorem: 

\begin{thm} \label{!!!}
Let $f\colon X \lra S$ be a minimal semistable log curve and let $\iota:S \lra X$ be a section. 
Then the monodromy action on $\pi_1^{\dR}(X/S,\iota)$ is nontrivial as an element 
in $\mathrm{Out}(\pi_1^{\mathrm{dR}}(X/S, \iota))$ if  
the underlying morphism of schemes of $f$ is not smooth. 
\end{thm}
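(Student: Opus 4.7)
The plan is to prove Theorem \ref{!!!} by reducing to an explicit local computation at a double point, expressed on the relative unipotent de Rham $\pi_1$ through its abelianization, which is identified with (the dual of) $H^1_{\dR}(X/S)$ equipped with the Gauss--Manin stratification. First, by Proposition \ref{prop:!!bc}, I may assume $k$ is algebraically closed. Since the underlying morphism of $f$ is not smooth, there is a double point $\mathbf{p}$ of $X$, of local type (1) of Proposition \ref{prop:logcurve} with $m=1$ (by the semistable hypothesis). Let $\nu:\widetilde{X}\to X$ be the normalization of $X^{\circ}$ endowed with the pullback log structure, so that $\widetilde{X}\to S$ is a disjoint union of smooth (marked) log curves, and let $X_{\bullet}\to X$ be the semi-simplicial resolution induced by $\nu$, whose double points contribute copies of $S$ in degree $1$.

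Second, I would feed this resolution into Definition \ref{def:axs} to obtain $\cA_{X/S}$ via the Thom--Whitney functor and take the $1$-minimal model $M_{X/S}$ of Corollary \ref{cor:minmod}, using Definition \ref{def3} for $\pi_1^{\dR}(X/S,\iota)$. Mayer--Vietoris on the resolution gives a short exact sequence
\begin{equation*}
0 \lra H^0_{\dR}(\widetilde{X}/S) \lra \bigoplus_{\mathbf{p}\ \text{node}} H^0_{\dR}(S/S) \lra H^1_{\dR}(X/S) \lra H^1_{\dR}(\widetilde{X}/S) \lra 0,
\end{equation*}
compatible with the stratification induced via Section 1. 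By the equivalence of Theorem \ref{prop4_2} (or Theorem \ref{15}), the abelianization $\pi_1^{\dR}(X/S,\iota)^{\mathrm{ab}}$ is the vector group $\mathbb{W}$ associated to the dual of $H^1_{\dR}(X/S)$, as an object of $\MICn(S/k)$, i.e.\ the stratification on $H^1_{\dR}(X/S)$ encodes the monodromy operator $N$ on the abelianization.

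Third, I would carry out the local computation at $\mathbf{p}$. In the local chart $\N^2\oplus_{\Delta,\N,1}\N\to\cO_X$, the unit $u$ of Proposition \ref{prop:trinity} attached to the generator of $\cM_S$ pulls back on the infinitesimal neighborhood $S^1(1)$ to a nontrivial element whose image in $H^1$ of the de Rham complex of a neighborhood of $\mathbf{p}$ is the ``vanishing cycle class'' $\delta_{\mathbf{p}}$, namely the image of $1\in H^0_{\dR}(S/S)$ under the connecting homomorphism of the above Mayer--Vietoris sequence. By direct computation the Gauss--Manin stratification sends a lift of $\dlog x$ (a local section of $H^1_{\dR}(X/S)$ supported at $\mathbf{p}$) to $\delta_{\mathbf{p}}$, producing a nonzero nilpotent endomorphism $N$ on $H^1_{\dR}(X/S)$. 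The minimality hypothesis is precisely what guarantees that no component of $\widetilde{X}$ allows $\delta_{\mathbf{p}}$ to become a coboundary, so $\delta_{\mathbf{p}}\neq 0$ in $H^1_{\dR}(X/S)$ and consequently $N\neq 0$.

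Finally, nontriviality of $N$ on the abelianization forces nontriviality of the monodromy as an outer automorphism of $\pi_1^{\dR}(X/S,\iota)$, because inner automorphisms act trivially on the abelianization, so any class whose image under monodromy disagrees with the identity on $H^1_{\dR}$ witnesses an outer nontriviality. The main obstacle will be the computation in the third step: one has to verify, using the explicit stratification coming from the chart $\N^2\oplus_{\Delta,\N,1}\N$ at the node and the functoriality of Definition \ref{def:axs} with respect to the semi-simplicial resolution, that the Gauss--Manin connection on $H^1_{\dR}(X/S)$ has nonzero residue along $\mathbf{p}$; and then to invoke minimality to ensure the vanishing-cycle class survives globally. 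This is essentially the algebraic counterpart of the Picard--Lefschetz formula for the degeneration at $\mathbf{p}$, and once it is established the rest of the argument reduces to standard facts about abelianizations of unipotent group schemes.
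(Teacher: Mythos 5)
Your strategy—detect the monodromy on the abelianization $\pi_1^{\dR}(X/S,\iota)^{\rm ab}\cong (H^1_{\dR}(X/S))^{\vee}$ via a Picard--Lefschetz computation of the Gauss--Manin residue at a node—only works when the node lies on a \emph{loop} of the dual graph, and this is a genuine gap. In the Mayer--Vietoris sequence you write (which, incidentally, is missing the $H^0_{\dR}(X/S)$ term at the left), the span of the vanishing-cycle classes $\delta_{\mathbf p}$ inside $H^1_{\dR}(X/S)$ has dimension equal to the first Betti number of the dual graph, and the image of the nilpotent operator $N$ on $H^1_{\dR}(X/S)$ is exactly this span. Hence if the dual graph is a tree—for instance two positive-genus smooth components glued at a single point, which is a perfectly admissible minimal semistable log curve with non-smooth underlying scheme—then $N=0$ on $H^1_{\dR}(X/S)$, the monodromy acts trivially on the abelianization, and your argument concludes nothing. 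Your claim that minimality prevents $\delta_{\mathbf p}$ from becoming a coboundary is not correct: whether $\delta_{\mathbf p}$ survives in $H^1$ is governed by the combinatorics of the dual graph (loops), not by minimality; minimality plays a different role, namely ensuring the local pieces of the fundamental Lie algebra are large enough for the non-abelian argument below.

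The paper does use exactly your argument, but only for the loop case (Theorem \ref{thm:loop}, where nontriviality is indeed established already on $\pi_1^{\dR}(X/S,\iota)^{\rm ab}$). The remaining and harder case, where the dual graph is a tree, is reduced (Theorem \ref{thm:line}) to a chain of components and then treated by an intrinsically non-abelian computation: tangential sections at the nodes are introduced (Subsection \ref{tangential points}), the monodromy is computed on explicit elements as a product of local ``Dehn twist'' factors $\eta_i$ (Propositions \ref{prop:comp1}, \ref{prop:comp2}, \ref{prop:linemono}), and nontriviality in $\mathrm{Out}$ is extracted from a calculation in $L(X/S)/\Fil^4 L(X/S)$ using the explicit presentations of Propositions \ref{propr>0} and \ref{prope} and the Baker--Campbell--Hausdorff formula; this is where the minimality hypothesis is actually consumed. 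To repair your proposal you would have to supply an argument of this depth (degree $\geq 2$ in the lower central series) for tree-type degenerations, since no statement about $H^1_{\dR}$ alone can see them.
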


In fact, the following proposition allows us to reduce the `only if' part of 
Theorem \ref{!!} to the above theorem: 

\begin{prop}\label{!!bu}
The `only if' part of Theorem \ref{!!} follows from Theorem \ref{!!!}. 
\end{prop}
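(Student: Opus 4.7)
The plan is to reduce the ``only if'' part of Theorem \ref{!!} to an application of Theorem \ref{!!!} by iteratively applying log blow-ups at double points with parameter $m \geq 2$, using a mild strengthening of Lemma \ref{lem:bu}. First, Proposition \ref{prop:!!bc} allows reduction to the case of an algebraically closed base field $k$. Given a stable log curve $f\colon X \to S$ with non-smooth underlying morphism and section $\iota$, Proposition \ref{prop:logcurve} attaches to each double point $\mathbf{s}$ a parameter $m_{\mathbf{s}} \geq 1$, and $f$ is semistable exactly when all $m_{\mathbf{s}}=1$; the relevant complexity is $c(X) := \sum_{\mathbf{s}}(m_{\mathbf{s}}-1)$.

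The first observation is that the proof of Lemma \ref{lem:bu} uses the goodness of $\iota$ only to guarantee $\iota(S) \neq \mathbf{s}$, so its conclusion --- an isomorphism of relative de Rham fundamental groups compatible with monodromy --- holds whenever the section does not pass through the blown-up point. The local description preceding Lemma \ref{lem:bu} shows that one such blow-up at $\mathbf{s}$ with $m_{\mathbf{s}} \geq 2$ inserts a chain of new components whose double points have parameters $(1,1,m_{\mathbf{s}}-2)$ (the middle piece being smooth when $m_{\mathbf{s}}=2$), strictly decreasing $c$. Each newly inserted $\Pr^1$ meets the rest of the curve in exactly two points, so the minimality condition holds vacuously for it, while the original components retain their special-point counts, so the resulting log curve is again minimal.

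In order to iterate indefinitely, one must avoid the image of the section at each step. For this I would first replace $\iota$ by a good section $\iota_0\colon S \to X$, which exists because $X_{\rm triv}$ contains $k$-rational smooth non-marked points on each irreducible component over the algebraically closed $k$. The key claim is that triviality of the outer monodromy on $\pi_1^{\dR}(X/S,\iota)$ is equivalent to triviality of the outer monodromy on $\pi_1^{\dR}(X/S,\iota_0)$. The Tannakian category $\NfMICn(X/k)$ together with its stratification structure over $\MICn(S/k)$, which by Corollary \ref{fiberpropcor} determines the monodromy via conjugation by $\iota_*(1)$, is independent of the choice of section; changing $\iota$ to $\iota_0$ merely replaces the fiber functor $\iota_{\dR}^*$ by $\iota_{0,\dR}^*$ and alters the resulting fundamental group and its monodromy by an inner automorphism coming from the torsor of isomorphisms between the two fiber functors.

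With $\iota_0$ good, the blow-ups can be iterated freely. Since $c$ decreases strictly at each step and stays in $\N$, the process terminates after finitely many steps at a minimal semistable log curve $\widetilde f\colon \widetilde X \to S$ with induced good section $\widetilde\iota_0$. Because the process only rearranges double-point structure without eliminating the singular locus, $\widetilde X$ remains non-smooth, and Theorem \ref{!!!} yields a nontrivial outer monodromy on $\pi_1^{\dR}(\widetilde X/S, \widetilde\iota_0)$. Transporting through the chain of isomorphisms produced by the generalized Lemma \ref{lem:bu} gives the required nontriviality for $\pi_1^{\dR}(X/S,\iota_0)$, and hence for $\pi_1^{\dR}(X/S,\iota)$. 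The principal obstacle is to make the section-independence claim of the third paragraph rigorous, as it requires a careful Tannakian analysis of how the monodromy action transforms under a change of fiber functor in a neutral Tannakian category over $\MICn(S/k)$.
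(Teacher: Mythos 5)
Your argument is correct and follows essentially the same route as the paper: reduce to algebraically closed $k$ (Proposition \ref{prop:!!bc}), reduce to a good section by a change-of-fiber-functor argument, resolve the double points with $m\geq 2$ by log blow-ups into a minimal semistable log curve (noting the new $\Pr^1$-components meet the rest in two points, so minimality is preserved), apply Theorem \ref{!!!}, and transfer the nontriviality back via Lemma \ref{lem:bu}. The section-independence step you flag as the principal obstacle is precisely what the paper's Proposition \ref{prop:HES_t} provides (via a $k$-rational point of the torsor of isomorphisms between the two fiber functors, available after a field extension permitted by Proposition \ref{prop:!!bc}, which changes the monodromy only by an inner automorphism), so once the section is replaced by a good one no strengthening of Lemma \ref{lem:bu} beyond its stated form is needed.
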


\begin{proof}
We assume the validity of 
Theorem \ref{!!!} and prove the `only if' part of Theorem \ref{!!}. 
By Proposition \ref{prop:!!bc}, we may assume that $k$ is algebraically closed. 
Also, by Proposition \ref{prop:HES_t} below, we may assume that 
the section $\iota$ is good. 
Assume that we are given a stable log curve $f:X \lra S$ and its good  
section $\iota:S \lra X$ such that 
$X^{\circ}$ is not smooth over $k$. 
By the local description \eqref{eq:localy-2} of log blow-up as above, 
we see that there exists a composition of log blow-ups 
$X' \lra X$ as before Lemma \ref{lem:bu} (possibly identity) 
such that $X'$ is a semistable log curve with ${X'}^{\circ}$ not smooth 
over $k$.  Moreover, since 
$X$ is stable and the new components 
appearing in $X'$ are isomorphic to 
$\Pr^1_k$ and intersects with other components at $2$ points, 
$X'$ is minimal. Thus, by Theorem \ref{!!!}, the monodromy action is nontrivial 
as an element of ${\rm Out}(\pi_1^{\dR}(X'/S,\iota'))$ 
(where $\iota':S \lra X'$ is the section induced by $\iota$). 
Then, by Lemma \ref{lem:bu}, we conclude that 
the monodromy action is nontrivial 
as an element of ${\rm Out}(\pi_1^{\dR}(X/S,\iota))$.  
\end{proof}

Finally we prove a proposition which allows us to change the base points 
for  log de Rham fundamental groups. Let $f: X \lra S$ be a 
stable log curve or a minimal semistable log curve with section 
$\iota: S \lra X$ 
and suppose that we are given 
a diagram of exact faithful $k$-linear tensor functors 
\begin{equation}\label{eq:omegaff}
\xymatrix{
\NfMIC(X/S) \ar[rd]^{\ol{\omega}} & 
\NfMICn(X/k) \ar[l]^r \ar[d]^{\omega} \ar@/^1pc/[r]^{\wt{\omega}}& 
\MICn(S/k) \ar[l]^-{f_{\dR}^*} \ar[ld]^{s_{\dR}^*} \\ 
& \Vector_k 
}
\end{equation}
(where $r$ is the restriction functor) 
with $\wt{\omega} \circ f_{\dR}^* = \id, s_{\dR}^* \circ \wt{\omega} = \omega, 
\omega \circ f_{\dR}^* = s_{\dR}^*, \ol{\omega} \circ r = \omega$, and denote 
the Tannaka dual of $(\NfMIC(X/S), \ol{\omega}), (\NfMICn(X/k), \omega)$ 
by $\pi_1^{\dR}(X/S, \ol{\omega}), \pi_1^{\dR}(X,\omega)$, respectively. 
Then we have the following: 

\begin{prop}\label{prop:HES_t} 
Let the notation be as above. Then 
the diagram \eqref{eq:omegaff} of functors induces the split exact sequence 
$$ 
\xymatrix {
1 \ar[r] & \pi_1^{\dR}(X/S,\ol{\omega}) \ar[r] & 
\pi^{\dR}_1(X,\omega) \ar[r]^{f_*} & \pi^{\dR}_1(S,s) \ar[r] 
\ar @/^6mm/[0,-1]^{\wt{\omega}^*} & 1. 
} 
$$
Moreover, the  induced monodromy action on $\pi_1^{\dR}(X/S,\ol{\omega})$ 
 by the diagram   is nontrivial as an element in 
${\rm Out}(\pi_1^{\dR}(X/S,\ol{\omega}))$ if and only if 
the monodromy action on $\pi_1^{\dR}(X/S,\iota)$ 
is nontrivial as an element in 
${\rm Out}(\pi_1^{\dR}(X/S,\iota))$. In particular, 
the validity of `only if' part of Theorem \ref{!!} and Theorem 
\ref{!!!} is independent of the choice of fiber functors 
as in the diagram \eqref{eq:omegaff}. 
\end{prop}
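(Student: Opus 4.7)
The plan is to follow the argument of Proposition~\ref{fiberprop} verbatim. That proof reduces the split exactness to three purely categorical conditions on the restriction functor $r\colon \NfMICn(X/k)\to \NfMIC(X/S)$: every object in the target is a quotient of some $r(E)$ with $E\in \NfMICn(X/k)$; if $r(E)$ is a trivial object in $\NfMIC(X/S)$, then $E\in f_{\dR}^{*}\MICn(S/k)$; and the maximal trivial subobject of $r(E)$ lifts to a subobject of $E$. Both the formulation and the verification of these conditions use only the universal pointed projective system $\{(W_n,e_n)\}_{n\geq 1}$ of Theorem~\ref{thm:wn} together with its base-change property (W1); no fiber functor is singled out. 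Hence, applied to the tuple $(\ol\omega,\omega,s_{\dR}^{*})$, the Tannakian criterion (\cite[I Proposition 1.4]{w} and \cite[Appendix A]{ehs}) produces the split exact sequence, with splitting induced by $\wt\omega$ via Tannaka duality.

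For the monodromy comparison, the first observation is that $\pi_1^{\dR}(X,\omega)$ is pro-unipotent for any fiber functor $\omega$ of $\NfMICn(X/k)$: because $\pi_1^{\dR}(S,s)=\G_{a,k}$ is unipotent, every object of $\MICn(S/k)$ is an iterated extension of the unit object, so every object of $\NfMICn(X/k)$ is an iterated extension of $(\cO_X,d)$, forcing its Tannaka dual to be pro-unipotent. Over a characteristic-zero field, torsors under pro-unipotent group schemes on $\Spec k$ are trivial, so the tensor Isom-torsor $T:=\underline{\mathrm{Isom}}^{\otimes}(x_{\dR}^{*},\omega)$ admits a $k$-point, i.e.\ a tensor isomorphism $\phi\colon x_{\dR}^{*}\os{\cong}{\lra}\omega$. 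The induced isomorphism of affine group schemes $\phi_{*}\colon \pi_1^{\dR}(X,x)\os{\cong}{\lra} \pi_1^{\dR}(X,\omega)$ satisfies $f_{*}\circ\phi_{*}=f_{*}$, because the natural self-automorphism of $s_{\dR}^{*}$ obtained from $\phi|_{f_{\dR}^{*}\MICn(S/k)}$ lives in the abelian group $\Aut^{\otimes}(s_{\dR}^{*})(k)=\G_{a,k}(k)$ and its conjugation action on $\pi_1^{\dR}(S,s)(k)$ is trivial; consequently $\phi_{*}$ restricts to an isomorphism $\pi_1^{\dR}(X/S,\iota)\os{\cong}{\lra}\pi_1^{\dR}(X/S,\ol\omega)$.

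Set $g:=\phi_{*}(\iota_{*}(1))\in \pi_1^{\dR}(X,\omega)(k)$. Both $g$ and $\wt\omega^{*}(1)$ project to $1\in \pi_1^{\dR}(S,s)(k)$, so $g\cdot\wt\omega^{*}(1)^{-1}$ lies in the kernel $\pi_1^{\dR}(X/S,\ol\omega)(k)$. The conjugation actions of $g$ and of $\wt\omega^{*}(1)$ on the normal subgroup $\pi_1^{\dR}(X/S,\ol\omega)$ therefore differ by an inner automorphism and coincide in $\mathrm{Out}(\pi_1^{\dR}(X/S,\ol\omega))$; meanwhile $\phi_{*}$ intertwines the conjugation action of $\iota_{*}(1)$ on $\pi_1^{\dR}(X/S,\iota)$ with that of $g$ on $\pi_1^{\dR}(X/S,\ol\omega)$. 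Combining these two facts identifies the two monodromy classes via the induced isomorphism $\mathrm{Out}(\pi_1^{\dR}(X/S,\iota))\cong \mathrm{Out}(\pi_1^{\dR}(X/S,\ol\omega))$, so nontriviality on one side is equivalent to nontriviality on the other. The crux is the triviality of $T$, which rests on the pro-unipotency of $\pi_1^{\dR}(X,\omega)$ and the vanishing of non-abelian $H^{1}$ for pro-unipotent group schemes over a characteristic-zero field; the remaining bookkeeping inside the split exact sequence is routine.
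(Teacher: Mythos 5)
Your proof is correct, but it differs from the paper's in two places worth noting. For the exactness, the paper does not re-run the criterion of \cite[I Proposition 1.4]{w} and \cite[Appendix A]{ehs}: it instead invokes \cite[Theorem 3.2]{DelMil82} to get that the scheme of tensor isomorphisms from $x_{\dR}^*$ to $\ol{\omega}$ on $\NfMIC(X/S)$ is faithfully flat over $k$, enlarges $k$ so that it has a rational point $\sigma$, and transports the already-known split exact sequence of Proposition \ref{fiberprop} through the conjugation isomorphisms $\ol{s}(\sigma)$, $s(\sigma)$ (the commutativity of the right-hand square again coming from the commutativity of $\G_{a,k}$); your route, re-checking the three categorical conditions --- which indeed involve only the categories, $f_{\dR}^*f_{\dR *}$, base change and the system $\{(W_n,e_n)\}$, not the fiber functors --- gives the bottom sequence directly and is equally valid. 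For the comparison isomorphism, the paper takes $\sigma$ on the relative category $\NfMIC(X/S)$ after a field extension, whereas you take $\phi$ on the absolute category $\NfMICn(X/k)$ over $k$ itself, justified by pro-unipotence of $\pi_1^{\dR}(X,\omega)$ and triviality of torsors under pro-unipotent groups in characteristic zero, and then descend to the kernels via $f_*\circ\phi_*=f_*$ (your use of commutativity of $\G_{a,k}$ here is exactly the paper's argument for the right square). What each buys: your argument avoids the extension of $k$ and the implicit reduction step ``we may enlarge $k$ to prove the proposition'', at the price of the $H^1$-vanishing for pro-unipotent affine group schemes, which you assert without proof; it is a standard fact, but strictly it needs the filtered-limit argument (surjectivity of $U_{i+1}(k)\to U_i(k)$ from vanishing of $H^1$ of the unipotent kernels, then passage to the limit over finitely generated tensor subcategories), so a reference or a line of proof should be added. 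The final step --- that conjugation by $\phi_*(\iota_*(1))$ and by $\wt{\omega}^*(1)$ differ by an inner automorphism because their quotient lies in the kernel, hence the two monodromy classes correspond under the induced isomorphism of outer automorphism groups --- coincides with the paper's concluding paragraph.
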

\begin{proof}
First note that the functor 
$$ 
\text{($k$-algebras)} \lra \text{(Sets)}; \,\,\,\, 
R \mapsto 
\left\{ 
\begin{aligned}
& \text{tensor isomorphisms from} \\ 
& \text{$\NfMIC(X/S) \os{x_{\dR}^*}{\to} \Vector_k \to {\rm Proj}_R$ to} \\ 
& \text{$\NfMIC(X/S) \os{\ol{\omega}}{\to} \Vector_k \to {\rm Proj}_R$}
\end{aligned}
\right\} 
$$ 
(where ${\rm Proj}_R$ is the category of finitely generated projective 
$R$-modules) 
is representable by an affine $k$-scheme 
$\pi_1^{\dR}(X/S,x_{\dR}^*,\ol{\omega})$
which is faithfully flat over $k$ (\cite[Theorem 3.2]{DelMil82}). 
Since we may enlarge $k$ to prove the proposition, 
we can assume that $\pi_1^{\dR}(X/S,x_{\dR}^*,\ol{\omega})$ 
has a $k$-rational point $\sigma$, namely, that there exists 
an isomorphism $\sigma$ from the fiber functor $x_{\dR}^*:
\NfMIC(X/S) \lra \Vector_k$ to 
the fiber functor
$\ol{\omega}: \NfMIC(X/S) \lra \Vector_k$. 
Let $\ol{s}(\sigma)$ be the isomorphism 
$$ \pi_1^{\mathrm{dR}}(X/S,\iota) \os{\cong}{\lra} \pi_1^{\mathrm{dR}}(X/S,\ol{\omega});  
\quad \alpha\mapsto \sigma \circ \alpha \circ \sigma^{-1}. $$
Also, let $\sigma'$ be the isomorphism from 
the fiber functor $x_{\dR}^*:
\NfMICn(X/k) \lra \Vector_k$ to 
the fiber functor
$\omega: 
\NfMICn(X/k) \lra \Vector_k$ induced by $\sigma$ and 
let $s(\sigma)$ be the isomorphism 
$$ \pi_1^{\mathrm{dR}}(X,x) \os{\cong}{\lra} \pi_1^{\mathrm{dR}}(X, \omega);  
\quad \alpha\mapsto \sigma' \circ \alpha \circ {\sigma'}^{-1}. $$
Then we have the commutative diagram 
\begin{equation}\label{change_x_t}
\xymatrix{
1\ar[r]&\pi_1^{\mathrm{dR}}(X/S, \iota)\ar[r] \ar[d]^{\ol{s}(\sigma)}&\pi_1^{\mathrm{dR}}(X, x)\ar[r]\ar[d]^{s(\sigma)} &\pi_1^{\mathrm{dR}}(S,s)\ar[r]\ar[d]^{\mathrm{id}}&1\\
1 \ar[r] & \pi_1^{\mathrm{dR}}(X/S,  \ol{\omega})\ar[r] &\pi_1^{\mathrm{dR}}(X, \omega)\ar[r] &\pi_1^{\mathrm{dR}}(S,s) 
\ar[r] & 1, \\
}
\end{equation}
where the top horizontal line is the exact sequence in 
Proposition \ref{fiberprop}: Indeed, the left square is commutative by construction, and 
for the right square, $s(\sigma)$ induces a conjugation on $\pi_1^{\mathrm{dR}}(S,s)$ by some element, which 
is necessarily trivial because $\pi_1^{\mathrm{dR}}(S,s)\cong \mathbb{G}_{a,k}$ is  commutative. 
So the bottom horizontal line is also exact and so we obtain the required exact sequence. 

As for the monodromy action, it is easy to see that, via the isomorphisms $\ol{s}(\sigma)$, 
the conjugate action of $\iota_*(1) \in \pi_1^{\dR}(X,x)$ on $\pi_1^{\dR}(X/S,\iota)$ and 
that of $\wt{\omega}^*(1) \in \pi_1^{\dR}(X,\omega)$ on $\pi_1^{\dR}(X/S,\ol{\omega})$ 
differ by some inner action on $\pi_1^{\dR}(X/S,\ol{\omega})$. Thus we can identify them 
in ${\rm Out}(\pi_1^{\dR}(X/S,\ol{\omega}))$ and so we obtain the latter assertion of the proposition. 
\end{proof}

\begin{rem}
For an affine group scheme $G$, we can consider the fppf sheafification 
${\mathrm{Out}}(G)^{\rm fppf}$ of the group ${\mathrm{Out}}(G)$ of outer 
automorphisms of $G$. Then, the conclusion of Theorems 
\ref{!!}, \ref{!!!} and Proposition \ref{prop:HES_t} remains to be true 
if we replace ${\mathrm{Out}}(\pi_1^{\dR}(X/S,\ol{\omega}))$ by 
${\mathrm{Out}}(\pi_1^{\dR}(X/S,\ol{\omega}))^{\rm fppf}$ in the statement. 
Indeed, Proposition \ref{not_good_trivial} implies the `if' part of this modified version of 
Theorem \ref{!!}. Moreover, Theorem \ref{!!!} for the base changes of $f$ by 
arbitrary field extensions $k \subseteq k'$ implies this modified version of 
Theorem \ref{!!!}. Also, Proposition \ref{prop:HES_t} for the base changes of $f$ by 
arbitrary field extensions $k \subseteq k'$ implies Proposition \ref{prop:HES_t} 
in this modified setting. Then we see that Proposition \ref{!!bu} remains to be true 
in this modified setting and so we obtain the `only if' part of this modified version of 
Theorem \ref{!!}. 
\end{rem}

\subsection{Calculation of Lie algebra}

Assume $k$  algebraically closed 
and let $f:X \lra S$ be a semistable log curve. 
In this subsection, we give an explicit calculation of 
the pro-nilpotent Lie algebra $L(X/S)$ associated to the de Rham fundamental group 
$\pi_1^{\dR}(X/S,\iota)$. Although the results in this subsection are 
well-known through transcendental arguments, we give here 
purely algebraic proofs. 

Put $g := \dim H^1(X,\cO_X)$ and call it the genus of $f:X \lra S$. Also, let 
$r$ be the number of marked points of $f:X \lra S$. 
First recall the following result on the de Rham cohomologies of $X$ over $S$. 

\begin{prop}\label{prop:coh}
Let the notations be as above. \\ 
$(1)$ \, If $f:X \lra S$ is unmarked\, $(r=0)$, we have 
$\dim H^0_{\dR}(X/S) = 1, \dim H^1_{\dR}(X/S) = 2g, \dim H^2_{\dR}(X/S)=1, \dim H^n_{\dR}(X/S) = 0 \,(n \geq 3)$. \\ 
$(2)$ \, If $f:X \lra S$ is marked\, $(r > 0)$, we have 
$\dim H^0_{\dR}(X/S) = 1, \dim H^1_{\dR}(X/S) = 2g+r-1, \dim H^n_{\dR}(X/S) = 0 \, (n \geq 2)$. 
\end{prop}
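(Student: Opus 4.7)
The plan is to analyze the Hodge-to-de Rham spectral sequence
\begin{equation*}
E_1^{p,q} = H^q(X, \Omega^p_{X/S}) \Longrightarrow H^{p+q}_{\dR}(X/S).
\end{equation*}
Since $f$ is a log curve, $\Omega^p_{X/S} = 0$ for $p \geq 2$, so only the columns $p = 0, 1$ contribute and the spectral sequence abuts at the $E_2$-page after a single differential $d_1$. The vanishing $H^n_{\dR}(X/S) = 0$ for $n \geq 3$ is then immediate.

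First I would identify the line bundle $\Omega^1_{X/S}$ via the local charts of Proposition \ref{prop:logcurve}. On the smooth unmarked locus it agrees with $\Omega^1_{X^{\circ}/k}$. At a double point with $m = 1$, the relation $\dlog x + \dlog y = 0$ exhibits the generator as a meromorphic $1$-form with residues $+1,-1$ on the two branches, i.e.\ a section of the dualizing sheaf $\omega_{X^{\circ}/k}$. At a marked point $s_i$ with coordinate $x$, $\dlog x$ has a simple pole at $s_i$. Hence $\Omega^1_{X/S} \cong \omega_{X^{\circ}/k}(D)$, of degree $2g - 2 + r$ on the Gorenstein curve $X^{\circ}$.

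The Hodge terms are then $h^0(\cO_X) = 1$, $h^1(\cO_X) = g$, and, via Riemann--Roch and Grothendieck--Serre duality on the nodal curve $X^{\circ}$: when $r = 0$, $h^0(\omega) = g$ and $h^1(\omega) = 1$; when $r > 0$, $h^1(\omega(D)) = h^0(\cO_{X^{\circ}}(-D))^{\vee} = 0$ (by connectedness and positivity of $\deg D$) and $h^0(\omega(D)) = \chi(\omega(D)) = g + r - 1$. The differential $d_1 \colon H^0(\cO_X) \to H^0(\Omega^1_{X/S})$ vanishes on constants. Only $d_1 \colon H^1(\cO_X) \to H^1(\Omega^1_{X/S})$ remains. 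In the marked case the target is zero, so $d_1 = 0$ trivially; the spectral sequence degenerates and gives $\dim H^0_{\dR} = 1$, $\dim H^1_{\dR} = g + (g + r - 1) = 2g + r - 1$, $\dim H^2_{\dR} = 0$, as claimed.

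The main obstacle is the unmarked case $r = 0$, where the target $H^1(\omega_{X^{\circ}/k}) \cong k$ is one-dimensional and we must show that $d_1$ vanishes, equivalently that $H^2_{\dR}(X/S) \neq 0$. My plan is to establish this by a smoothing argument. Namely, lift $f$ to a proper log smooth integral saturated morphism $\wt f \colon \wt X \to \cS$ over $\cS = (\Spec k[[t]], \cM_{\cS})$ with $\cM_{\cS}$ associated to $1 \mapsto t$, whose pullback along $S \hra \cS$ recovers $f$; such a lift exists by the unobstructedness of semistable log curves. Spreading out $\wt f$ to a model over an affine $\Z$-algebra of finite type and running the mixed-characteristic Frobenius argument already used in the proof of Proposition \ref{prop:abcde-rs} yields local freeness of $R^i \wt f_{\dR*}(\cO_{\wt X},d)$ on $\cS$. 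The generic fiber $\wt X \otimes_{k[[t]]} k((t)) \to \Spec k((t))$ is a smooth proper curve of genus $g$ with trivial log structures on both sides, whose de Rham cohomology has the classical dimensions $(1, 2g, 1, 0, \ldots)$. Base-change to the special fiber $S$, permissible since the cohomology sheaves are locally free, transports these dimensions to $X/S$, forcing $\dim H^2_{\dR}(X/S) = 1$ and hence $d_1 = 0$. Producing the smoothing and verifying the flatness and base change needed for the specialization is the main technical hurdle; an alternative route would be to construct a trace $H^2_{\dR}(X/S) \to k$ directly from Grothendieck duality on $X^{\circ}$ and check its non-vanishing via a \v{C}ech cocycle, but the smoothing argument is more uniform and fits better with the local-freeness machinery already developed in Section~1.
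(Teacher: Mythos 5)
Your proposal is correct, but it takes a genuinely different route from the paper, most visibly in the unmarked case. The paper's proof is short and citation-driven: it identifies the dualizing sheaf as $\Omega^1_{X/S}[1]$ (resp. $\Omega^1_{X/S}(-D)[1]$) via Tsuji's duality theory, quotes Kato's $E_1$-degeneration of the Hodge--de Rham spectral sequence to get $\dim H^1_{\dR}(X/S)=2g$ when $r=0$, obtains $\dim H^2_{\dR}(X/S)=1$ (resp. $=0$) from Tsuji's Poincar\'e duality, and then handles $H^1$ in the marked case by induction on $r$ using the exact sequence $0 \to \Omega^{\bullet}_{X'/S} \to \Omega^{\bullet}_{X/S} \to k_P[-1] \to 0$ obtained by erasing one marked point. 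Your treatment of the marked case is an attractive alternative: identifying $\Omega^1_{X/S}\cong\omega_{X^\circ/k}(D)$ by the local charts and computing Hodge numbers by Riemann--Roch and Serre duality on the Gorenstein curve, the degeneration is free because $H^1(\Omega^1_{X/S})=0$, and no induction is needed. In the unmarked case, where the whole content is the vanishing of $d_1\colon H^1(\cO_X)\to H^1(\Omega^1_{X/S})\cong k$, you replace the paper's appeal to Kato's degeneration theorem by a smoothing-and-specialization argument over $\cS=(\Spec k[[t]],\N\to k[[t]])$; this is a legitimate strategy (and in the spirit of the paper's Section 6.8 and Remark \ref{rem:gmcoin}(3)), and it buys a self-contained reason for the jump in $H^2$, but it is considerably heavier than what it replaces, and the technical core is not quite a quotation: Proposition \ref{prop:abcde-rs} is stated for bases of finite type over $k$ with strict \'etale charts to $\Af^{r,s}_k$, so the local freeness of $R^i\wt f_{\dR *}(\cO_{\wt X},d)$ over $k[[t]]$ requires re-running (not merely citing) the spreading-out/Frobenius argument, with the special fibre now sitting over the generic point of the divisor $t=0$ in the finite-type model rather than over a closed point; you also need all $R^i$ to be flat to invoke the paper's base-change criterion at the non-flat closed immersion $S\hra\cS$, and the algebraization of the formal log smooth lift needs the usual lifting of an ample line bundle. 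None of these steps should fail, but they are exactly the places where your plan defers work that the paper avoids by citing Kato and Tsuji.
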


\begin{proof}
In both cases, $\dim H^0_{\dR}(X/S) = 1$ by geometric reducedness and geometric 
connectedness of $f$. Also, $\dim H^n_{\dR}(X/S) = 0$ for $n \geq 3$ because 
the cohomological dimension for coherent cohomology of $X$ is $1$ (which 
is well-known in the case of smooth curves and we can reduce to this case by 
considering normalization). 

(1) \,   In this case $\Omega^1_{X/S}[1]$ is a dualizing sheaf by \cite[Theorem 2.21]{tsuji}: 
in fact using \cite[Corollary 2.6]{tsuji} one can find that the ideal $I_f$ in \cite[Theorem 2.21]{tsuji}
is the whole structure sheaf $\cO_X$. 
We have $\dim H^0(X,\Omega^1_{X/S}) = g$ by Grothendieck--Serre duality 
(see \cite[Theorem 1.5]{tsuji} for example), and 
by $E_1$-degeneration of Hodge--de Rham spectral sequence \cite{ka}, 
we see that $\dim H^1_{\dR}(X/S) = 2g$. By Tsuji's Poincar\'e duality 
\cite[\S 3]{tsuji}, we see that $\dim H^2_{\dR}(X/S) \allowbreak = 1$. 

(2) \, Let $D$ denote the divisor given by the marked points. In this case $\Omega^1_{X/S}(-D)[1]$ is the dualizing  sheaf by \cite[Theorem 2.21]{tsuji}: in this case the ideal $I_f$ in \cite[Theorem 2.21]{tsuji}
is $\cO_X(-D)$. By Tsuji's Poincar\'e duality, $$\dim H^2_{\dR}(X/S) \leq 
\dim H^0(X,\cO(-D)) = 0.$$ Also, if we denote the semistable 
log curve which we obtain by erasing one marked point $P$ 
of $X$ by $X'$, we have the exact sequence of de Rham complex 
$$ 0 \lra \Omega^{\bullet}_{X'/S} \lra \Omega^{\bullet}_{X/S} \lra k_P[-1] \lra 0. $$
Then the claim for $\dim H^1_{\dR}(X/S)$ follows from 
the result for $\dim H^n_{\dR}(X/S) \, (n=0,2)$, the long exact sequence 
associated to the above short exact sequence and induction on $r$. 
\end{proof}

The following proposition, which calculates 
$L(X/S)$ in the marked case, is the first main result in this subsection. 

\begin{prop}\label{propr>0}
When $r > 0$, $L(X/S)$ is a free pro-nilpotent Lie algebra of rank 
$2g+r-1$. 
\end{prop}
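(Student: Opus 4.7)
\medskip

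\emph{Proof plan for Proposition \ref{propr>0}.}

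The plan is to exploit the third definition of $\pi_1^{\dR}(X/S,\iota)$ (Definition \ref{def3}) together with Theorem \ref{prop4_2}, which identifies the pro-nilpotent Lie algebra $L(X/S)$ with $(M^1_{X/S})^{\vee}$, where $M_{X/S} = \bigcup_q M_{X/S}(q)$ is the $1$-minimal model of $\cA_{X/S}$. By Proposition \ref{prop:coh}(2), in the marked case $r>0$ we have $\dim_k H^1_{\dR}(X/S) = 2g+r-1$ and, crucially, $H^2_{\dR}(X/S) = 0$. I will show that the vanishing of $H^2$ forces the $1$-minimal model to be the ``free'' minimal cdga generated by $V := H^1_{\dR}(X/S)$, from which it follows by Koszul-type duality that $(M^1_{X/S})^{\vee}$ is the free pro-nilpotent Lie algebra on $V^{\vee}$.

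More precisely, I will argue by induction on $q$, closely following the explicit inductive construction of $M_{X/S}(q)$ in the proof of Theorem~\ref{thm:minmod}. For $q=1$ one has $M_{X/S}(1) = \bigwedge V$ with zero differential, since by construction $M_{X/S}(1)^1$ is a free $k$-module isomorphic to $H^1(\cA_{X/S}) = V$ via $\rho_1$. For the inductive step, observe that the space of new generators added at stage $q$ is
\[
E_q := H^2(M_{X/S}(q-1),\cA_{X/S}) = \Ker\bigl(H^2(M_{X/S}(q-1)) \lra H^2(\cA_{X/S})\bigr) = H^2(M_{X/S}(q-1)),
\]
because $H^2(\cA_{X/S}) = H^2_{\dR}(X/S) = 0$. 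Thus $M_{X/S}(q) = M_{X/S}(q-1) \otimes \bigwedge(E_q)$ with differential sending a basis of $E_q$ onto a basis of $H^2(M_{X/S}(q-1))$. In particular, by construction $H^2(M_{X/S}(q)) = 0$ for every finite $q$, and the map $M_{X/S}(q-1)^1 \hookrightarrow M_{X/S}(q)^1$ is strict in the sense that $M_{X/S}(q)^1/M_{X/S}(q-1)^1 \cong E_q$ fits into a short exact sequence $0 \to M_{X/S}(q)^1/M_{X/S}(q-1)^1 \to \bigwedge^2 M_{X/S}(q-1)^1 \to 0$ (the last quotient being zero because differentials kill all new $H^2$-classes).

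Dualising, the Lie coalgebra structure on $M^1_{X/S}$ given by $-d : M^1_{X/S} \to \bigwedge^2 M^1_{X/S}$ yields on $L(X/S) = (M^1_{X/S})^{\vee}$ a Lie algebra structure whose lower central series quotients are precisely the duals of $E_q$, and for which the natural map from the free pro-nilpotent Lie algebra $\mathrm{Lie}(V^{\vee})^{\wedge}$ on $V^{\vee}$ to $L(X/S)$ lifting $V^{\vee} = (M_{X/S}(1)^1)^{\vee} \twoheadrightarrow L(X/S)^{\mathrm{ab}}$ is surjective at each nilpotent stage. Injectivity at each stage will follow from a dimension count: by the dual picture, $M_{X/S}(q)$ is visibly the $(1,q)$-truncation of the minimal Sullivan model of the free cdga $(\bigwedge V, d=0)$, hence $\dim M_{X/S}(q)^1/M_{X/S}(q-1)^1$ equals the rank of the $q$-th graded piece of the free Lie algebra on $V^{\vee}$ in the lower central series filtration; both sides are finite-dimensional and equal, forcing the map to be an isomorphism.

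The main obstacle is making the final identification between the tower $\{M_{X/S}(q)\}_q$ and the minimal model of the Koszul-dual free cdga airtight, especially in the relative setting where each $M_{X/S}(q)^i$ carries the stratification giving an object of $\MICn(S/k)$. I expect this to reduce to a purely $k$-linear statement after applying the exact faithful fibre functor $s_{\dR}^*$ (as in Remark~\ref{rem:mxss}), where $M_{X/S} \otimes_{\cO_S,s^*} k \cong M_{X_s/s}$ and the classical statement that the $1$-minimal model of a cdga with $H^1 = V$, $H^2 = 0$ computes the free pro-nilpotent Lie algebra on $V^{\vee}$ can be invoked directly. Combined with flatness of each $M_{X/S}(q)^i$ over $\cO_S$ and compatibility of the construction with $s$, this will yield the claim that $L(X/S)$ is the free pro-nilpotent Lie algebra on $H^1_{\dR}(X/S)^{\vee}$, hence of rank $2g+r-1$.
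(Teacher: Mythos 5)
Your overall strategy—exploit $\dim H^1_{\dR}(X/S)=2g+r-1$ and $H^2_{\dR}(X/S)=0$ from Proposition \ref{prop:coh}(2), analyze the tower $\{M_{X/S}(q)\}_q$, and dualize to get a free pro-nilpotent Lie algebra—is the same as the paper's, but two central steps of your execution are wrong or missing. First, the claim that $H^2(M_{X/S}(q))=0$ for every finite $q$ is false: already for $\dim V=2$ one has $M(2)=\bigwedge(v_1,v_2,e)$ with $de=v_1\wedge v_2$, and $H^2(M(2))$ is two-dimensional, spanned by the classes of $v_1\wedge e$ and $v_2\wedge e$ (this is exactly why the tower does not stop and why the Lie algebra has infinite length). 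What minimality actually requires, and what is true, is only that the map $H^2(M(q-1),\cA_{X/S})\lra H^2(M(q),\cA_{X/S})$ is zero. Relatedly, $\cA_{X/S}$ is not quasi-isomorphic to the free cdga $(\bigwedge V, d=0)$, which has $H^2=\bigwedge^2 V\neq 0$; the correct comparison object (used in the paper) is the cdga $B$ with $B^0=k$, $B^1=V$, $B^{\geq 2}=0$ and zero differential, to which $\cA_{X/S}$ maps quasi-isomorphically by a choice of section $V\to Z^1(\cA_{X/S})$, precisely because $H^{\geq 2}_{\dR}(X/S)=0$ when $r>0$.

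Second, the step where you conclude freeness—"$\dim M(q)^1/M(q-1)^1$ equals the rank of the $q$-th graded piece of the free Lie algebra on $V^{\vee}$; both sides are finite-dimensional and equal"—is exactly the content to be proved, and as written it is either circular or deferred to an unproved "classical statement." The paper closes this gap constructively: it takes $L$ the free pro-nilpotent Lie algebra on $V^{\vee}$, forms the Chevalley--Eilenberg complexes $\bigwedge L_q^{\vee}$ of the nilpotent quotients $L_q=L/\Fil^{q+1}L$, maps them to $B$, and verifies the $(1,q)$-minimal-model axioms. The only nontrivial verification is that $H^2(\bigwedge L_q^{\vee})\lra H^2(\bigwedge L_{q+1}^{\vee})$ is the zero map, and this is where a genuinely new ingredient enters which your plan lacks: since $L$ is free, $H^n_{\rm Lie}(L,k)\cong H^n(G,k)=0$ for $n\geq 2$ ($G$ a free group, of cohomological dimension one), so any $2$-cocycle on $L_q$ cobounds by a functional $\psi$ on $L$; and since $\psi$ kills $[\Fil^{q+1}L,L]=\Fil^{q+2}L$, it descends to $L_{q+1}$, showing the cocycle dies in $H^2(\bigwedge L_{q+1}^{\vee})$. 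Without this (or an equivalent citation with matching hypotheses), your argument does not establish that the graded pieces of $L(X/S)$ have the ranks of a free Lie algebra. Your reduction via $s_{\dR}^*$ and Remark \ref{rem:mxss} is harmless but beside the point here, since $S$ is the standard log point and $\cA_{X/S}$ is already a $k$-cdga; the relative structure only supplies the monodromy operator, not any extra difficulty in computing the underlying Lie algebra.
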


\begin{proof}
We construct explicitly the $1$-minimal model of the $k$-cdga $\cA_{X/S}$ as in Section 4. 
First note that $H^n(\cA_{X/S}) = H^n_{\dR}(X/S) \, (n \in \N)$. 
Put $V := H^1(\cA_{X/S})$ and take a section  $\sigma: V \lra Z^1(\cA_{X/S})$ 
of the canonical projection $Z^1(\cA_{X/S}) \lra H^1(\cA_{X/S})$. 
Then define the $k$-cdga $B_{X/S}$ as  
$$ B_{X/S}^0 := k, \quad B_{X/S}^1 := V, \quad B_{X/S}^n = 0 \,\, (n \geq 2) $$ 
with zero differential, and define the morphism $i:B_{X/S} \lra \cA_{X/S}$ by 
the maps $k \hra \cA_{X/S}, V \os{\sigma}{\lra} Z^1(\cA_{X/S}) \hra \cA_{X/S}^1$. 
Then $i$ is 
a quasi-isomorphism by Proposition \ref{prop:coh}(2). Hence, to construct 
the $1$-minimal model of $\cA_{X/S}$, it suffices to construct 
that of $B_{X/S}$. 

Let $L$ be the free Lie algebra associated to the dual $V^{\vee}$ of $V$, 
and let $\{\Fil^n L\}_n$ be the central descending filtration of $L$.
(We follow the convention $L = \Fil^1 L$.) 
For $q \in \N$, we put $L_q := L/\Fil^{q+1} L$, denote its dual by 
$L_q^{\vee}$ and put $L^{\vee} := \varinjlim_q L_q^{\vee}$. 
Define the $k$-cdga $\bigwedge L_q^{\vee}$ (where $L_q^{\vee}$ is 
sitting in degree $1$) by introducing the differential 
induced by the map $L_q^{\vee} \lra \bigwedge^2 L_q^{\vee}$ dual to 
$-[\phantom{a},\phantom{a}]: L_q \wedge L_q \lra L_q$ 
(the minus sign comes from the shift of the degree). 
Also, we define the map of $k$-cdga's $\rho_q: 
\bigwedge L_q^{\vee} \lra B_{X/S}$ as the one 
induced by the $k$-linear map 
$\rho_q^1: L_q^{\vee} \lra V$, which is defined inductively 
in the following way: 
If $q=1$, we define the map $\rho_1^1: L_1^{\vee} \lra V$
as the canonical map $L_1^{\vee} \os{\cong}{\lra} (V^{\vee})^{\vee} = V$. 
For general $q$, we define the map $\rho_q^1: L_q^{\vee} \lra V$ as 
any chosen map extending $\rho_{q-1}^1$. 

To prove the proposition, it suffices to prove that 
the map $\rho_q$ is the $(1,q)$-minimal model for any $q$  
(and so $\varinjlim_q \rho_q: \bigwedge L^{\vee} \lra B_{X/S}$ is the 
$1$-minimal model and $L(X/S) = \varprojlim_q L_q$). 
By definition of the $(1,q)$-minimal model, we have to prove that 
$\bigwedge L_q^{\vee} \hra \bigwedge L_{q+1}^{\vee}$ is a Hirsch 
extension, that $H^0(\bigwedge L_q^{\vee}) = k$, 
that $H^1(\bigwedge L_q^{\vee}) = H^1(B_{X/S}) = V$ and that the map 
$$H^2(\bigwedge L_q^{\vee}, B_{X/S}) = 
H^2(\bigwedge L_q^{\vee}) \lra H^2(\bigwedge L_{q+1}^{\vee}, B_{X/S}) 
= H^2(\bigwedge L_{q+1}^{\vee})$$ is zero. 

By using the fact that the map 
$-[\phantom{a},\phantom{a}]: L_{q+1} \wedge L_{q+1} \lra L_{q+1}$ factors through 
$L_q \wedge L_q$, we see that the inclusion 
$\bigwedge L_q^{\vee} \hra \bigwedge L_{q+1}^{\vee}$ is a Hirsch 
extension. It is easy to see that $H^0(\bigwedge L_q^{\vee}) = k$. 
It is also easy to see that the homology of  
$$ L_q \wedge L_q \os{-[\phantom{a},\phantom{a}]}{\lra} L_q \os{0}{\lra} k $$
is $L_1$ and so $H^1(\bigwedge L_q^{\vee}) = L_1^{\vee} = V$. 
Hence, to see that $\rho_q$ is the $(1,q)$-minimal model for any $q \in \N$, it 
suffices to see that the map 
$H^2(\bigwedge L_q^{\vee}) \lra H^2(\bigwedge L_{q+1}^{\vee})$ is zero. 
To prove this, we embed the complex 
$$ \bigwedge L_{q+1}^{\vee} := [k \lra L_{q+1}^{\vee} \lra \bigwedge^2 L_{q+1}^{\vee} 
\lra  \bigwedge^3 L_{q+1}^{\vee} \lra \cdots ] $$
in  the complex 
$$ 
(\bigwedge L)^* := [k \lra L^* \lra (\bigwedge^2 L)^*  
\lra (\bigwedge^3 L)^* \lra \cdots ] 
$$ 
calculating the Lie algebra cohomology $H^n_{\rm Lie}(L,k)$. 
(Here, for a possibly infinite-dimensional vector space $W$, we denoted 
its dual by $W^*$ to distinguish from the notation $L^{\vee}$, which is 
the ind-finite dimensional dual of the pro-finite dimensional vector space 
$L$.) Because $L$ is a free Lie algebra of rank $2g+r-1$, the category 
of $L$-modules is identified with the category of modules over 
a free group $G$ of rank $2g+r-1$. Because the cohomological dimension 
of $G$ is $1$, we see that 
$H^n((\bigwedge L)^*) = H^n_{\rm Lie}(L,k) = H^n(G,k) = 0$ for 
$n \geq 2$. Now let $\varphi$ be a $2$-cocycle of the complex 
$\bigwedge L_{q}^{\vee}$, regarded as the linear form 
$\varphi: \bigwedge^2 L_{q} \lra k$. Since it is a $2$-coboundary 
in the complex $(\bigwedge L)^*$, we see that there exists a 
linear form $\psi: L \lra k$ such that 
the composite $\bigwedge^2 L  \lra \bigwedge^2 L_{q} \os{\varphi}{\lra} k$ 
is written as 
$\bigwedge^2 L  \os{-[\phantom{a},\phantom{a}]}{\lra} 
L \os{\psi}{\lra} k.$
Since $\Fil^{q+1} L \otimes L$ is sent to zero by the former map, so is 
by the latter map and so $\psi$ factors through $L_{q+1}$. Thus we see that 
$\varphi$ is a $2$-coboundary in the complex $\bigwedge L_{q+1}^{\vee}$. 
So we have shown that the map  
$H^2(\bigwedge L_q^{\vee}) \lra H^2(\bigwedge L_{q+1}^{\vee})$ is zero, 
as required. So the proof is finished. 
\end{proof}

Next we calculate $L(X/S)$ in the unmarked case by comparing with 
the marked case. In the following, we denote the free pro-nilpotent 
Lie algebra of rank $n$ with generator $w_1, ..., w_n$ by 
$L(w_1, ..., w_n)$ and denote the ideal of $L(w_1, ..., w_n)$ generated by 
an element $y \in L(w_1, ..., w_n)$ by $\langle y \rangle$. 
Then we have the following proposition, which is the second main result 
of this subsection: 

\begin{prop}\label{prope} 
When $r=0$, we have the isomorphism of pro-nilpotent Lie algerbras  
$$ L(X/S) \cong L(w_1, ..., w_{2g})/\langle \sum_{i=1}^g [w_{2i-1},w_{2i}] \rangle. $$
\end{prop}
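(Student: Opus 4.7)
The strategy adapts the proof of Proposition \ref{propr>0}: I construct the $1$-minimal model of $\cA_{X/S}$ explicitly, but now with the free Lie algebra replaced by a suitable quotient dictated by the nonvanishing of $H^2_{\dR}(X/S) = k$ (Proposition \ref{prop:coh}(1)). The shape of the quotient will be forced by the cup product pairing.

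First, I would use Tsuji's Poincar\'e duality (invoked in the proof of Proposition \ref{prop:coh}(1)) to obtain that the cup product $\cup: V \otimes V \to H^2_{\dR}(X/S) = k$ on $V := H^1_{\dR}(X/S)$ is a nondegenerate skew-symmetric pairing. Pick a symplectic basis $v_1, \dots, v_{2g}$ of $V$ with $v_{2i-1} \cup v_{2i}$ generating $H^2$ and all other cups zero, and let $w_1, \dots, w_{2g}$ be the dual basis of $V^\vee$. In the free pro-nilpotent Lie algebra $L := L(w_1, \dots, w_{2g})$ consider $\omega^\vee := \sum_{i=1}^g [w_{2i-1}, w_{2i}] \in \Fil^2 L$, and set $L' := L/\langle \omega^\vee \rangle$ with $L'_q := L'/\Fil^{q+1} L'$. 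Note that $L'_q$ is still finite-dimensional, and $\Fil^q L'/\Fil^{q+1}L'$ is a quotient of $\Fil^q L/\Fil^{q+1}L$; in particular $\Fil^2 L'/\Fil^3 L' \cong \bigwedge^2 V^\vee / \langle \omega^\vee\rangle$, whose dual sits inside $\bigwedge^2 V$ as the kernel of the cup product map.

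Second, I define $M(q) := \bigwedge L'^\vee_q$ as a $k$-cdga with $L'^\vee_q$ sitting in degree $1$ and differential dual to $-[\cdot,\cdot]$ on $L'_q$. The inclusions $M(q-1) \hookrightarrow M(q)$ are then Hirsch extensions by $(\Fil^q L'/\Fil^{q+1}L')^\vee$ in degree $1$, with differential landing in $\bigwedge^2 L'^\vee_{q-1}$. I then build compatible morphisms $\rho_q: M(q) \to \cA_{X/S}$ inductively: start with a section $\rho_1: V \to Z^1(\cA_{X/S})$, then at each step lift by choosing, for each new Hirsch generator $\tau$, a $1$-cochain $\rho_q(\tau) \in \cA^1_{X/S}$ whose differential equals $\rho_{q-1}(d\tau)$. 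The lift exists because the construction of $L'$ ensures that the image of $d\tau$ in $H^2(\cA_{X/S})$ vanishes at every stage — this is tied to the choice of the relation $\omega^\vee$ matching the cup product.

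Third, I verify the $(1,q)$-minimal model conditions. The checks $H^0(M(q)) \cong H^0(\cA_{X/S}) = k$ and $H^1(M(q)) \cong H^1(\cA_{X/S}) = V$ are immediate from the construction (the second follows from $H^1_{\mathrm{Lie}}(L'_q, k) = V$). Via the long exact sequence of the mapping cone and the $H^1$ isomorphism, the remaining condition $H^2(M(q-1), \cA_{X/S}) \to H^2(M(q), \cA_{X/S}) = 0$ reduces to showing that the kernel of $\rho_{q-1}^*: H^2(M(q-1)) \to H^2(\cA_{X/S})$ is annihilated by the Hirsch differentials added at stage $q$. This is a Chevalley--Eilenberg computation: I identify $H^*(M(q)) = H^*_{\mathrm{Lie}}(L'_q, k)$, and the key statement is that $H^2_{\mathrm{Lie}}(L', k)$ is one-dimensional, generated by a class $[\omega] \in \bigwedge^2 V \subseteq \bigwedge^2 L'^\vee$ whose image under $\rho_1^*$ is the cup product generator of $H^2(\cA_{X/S})$.

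The main obstacle is this final Lie algebra cohomology computation: showing $\dim H^2_{\mathrm{Lie}}(L', k) = 1$ and that the canonical class corresponds to the cup product generator under $\rho^*$. I would attack it by comparing with the free case via the quotient map $L \to L'$: since $L$ is free, $H^{\geq 2}_{\mathrm{Lie}}(L,k) = 0$ as shown in the proof of Proposition \ref{propr>0}, and a spectral sequence / long exact sequence argument relating the Chevalley--Eilenberg complexes of $L$, of the ideal $\langle \omega^\vee \rangle$, and of $L'$ should isolate the single new class in $H^2$. Compatibility with $\rho$ then follows because $\omega^\vee$ was defined precisely so that $\rho_1(\omega) \in \cA^2_{X/S}$ pairs correctly with the cup product. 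Once this is established, the construction above identifies $L(X/S) = \varprojlim_q L'_q = L'$, giving the claimed presentation.
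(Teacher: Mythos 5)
Your overall strategy -- building the $1$-minimal model of $\cA_{X/S}$ directly as $\bigwedge (L')^{\vee}_q$ with $L' = L(w_1,\dots,w_{2g})/\langle \sum_i [w_{2i-1},w_{2i}]\rangle$, in parallel with Proposition \ref{propr>0} -- is a genuinely different route from the paper, and it could in principle work. But the step you yourself single out as the main obstacle is a real gap, and your proposed way of closing it does not close it. The Hochschild--Serre five-term sequence for the ideal $\mathfrak{r} = \langle \omega^{\vee}\rangle \subseteq L$ with quotient $L'$, combined with $H^2_{\mathrm{Lie}}(L,k)=0$ and the fact that $\mathfrak{r}\subseteq [L,L]$ (so the restriction map $H^1(L)\to H^1(\mathfrak{r})^{L'}$ is zero), gives only the identification $H^2_{\mathrm{Lie}}(L',k)\cong (\mathfrak{r}/[\mathfrak{r},L])^{\vee}$. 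The statement that this space is one-dimensional (spanned by the class of $\omega^{\vee}$) is \emph{not} a formal consequence of the vanishing in the free case: it is precisely the Lie-algebra analogue of the Lyndon/Labute one-relator theorem, and it genuinely uses that the relator $\sum_i[w_{2i-1},w_{2i}]$ is a ``good'' element of $\Fil^2 L\setminus \Fil^3 L$ (for a degenerate relator the coinvariants $\mathfrak{r}/[\mathfrak{r},L]$ can be larger). So either you must invoke Labute's theorem (the paper cites \cite{labute}, but only for dimension counts in Section 6.7) or prove the relation-module statement by hand; your sketch ``a spectral sequence should isolate the single new class'' assumes exactly what has to be proved. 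Once that input is supplied, the rest of your argument (the truncation argument showing a $2$-cocycle at level $q$ killed in $H^2(\cA_{X/S})$ becomes a coboundary at level $q+1$, and the weight-$2$ compatibility of the surviving class with the cup-product generator via $\rho_1$) does go through along the lines of the proof of Proposition \ref{propr>0}.

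For comparison, the paper avoids this one-relator cohomology computation entirely. It adds a marked point at the image of the section, so that Proposition \ref{propr>0} applies and $L(X'/S)$ is \emph{free} of rank $2g$; it then observes that the square relating $\NfMIC(X/S)$, ${\mathrm{N}}_{f'}\MIC(X'/S)$, $\MICn(P/S)$ and $\MIC(S/S)$ gives a push-out of pro-nilpotent Lie algebras, so $L(X/S) = L(X'/S)/\langle z\rangle$ for a single element $z$; it identifies $z$ modulo $\Fil^3$ with $\sum_i[w_{2i-1},w_{2i}]$ by comparing the explicit $(1,2)$-minimal models of $\cA_{X/S}$ and $\cA_{X'/S}$ (this is where Poincar\'e duality enters, as in your step one); and it then upgrades this to an exact identification by an inductive change of generators $w_i \mapsto w_i + y_i$ at each filtration level. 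That reduction is what lets the paper get by with only the free-Lie-algebra cohomology vanishing, whereas your direct construction forces you to know the cohomology of the quotient Lie algebra itself.
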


\begin{proof}  
Let $P^{\circ}$ be the image of the section $\iota: S \lra X$ 
(with trivial log structure). Let $f': X' \lra S$ be the proper log curve 
we obtain by adding a marked point at $P^{\circ}$ to $X$, 
and let $P$ be the log scheme 
$(P^{\circ}, \cM_{X'}|_{P^{\circ}})$. Also, let 
$f_P: P \lra S$ be the composite $P \hra X' \os{f'}{\lra} S$. 
Then we can define the category $\MICn(P/S)$, which 
is identified with the caegory of pairs $(V,N)$ of a finite-dimensional 
$k$-vector space $V$ and a nilpotent endomorphism $N$ on $V$. 
Thus the Lie algebra $L(P/S)$ associated to its Tannaka dual, ${\mathbb{G}}_{a,k}$, is 
the abelian Lie algebra $k$. Also, we have the commutative diagram 
\begin{equation*}
\begin{CD}
\MICn(P/S) @<<< \MIC(S/S) \\ 
@AAA @AAA \\ 
{\mathrm{N}}_{f'}\MIC(X'/S) @<<< \NfMIC(X/S) 
\end{CD}
\end{equation*}
of categories. We can associate to it the corresponding commutative diagram 
\begin{equation}\label{eq:polie}
\begin{CD}
L(P/S) @>>> 0 \\ 
@VVV @VVV \\ 
L(X'/S) @>>> L(X/S)  
\end{CD}
\end{equation}
of pro-nilpotent Lie algebras, by taking the pro-nilpotent Lie algebras associated to 
the Tannaka duals of the categories in the diagram. Since an object in 
$\NfMIC(X/S)$ is nothing but an object in 
${\mathrm{N}}_{f'}\MIC(X'/S)$ trivial in 
${\mathrm{N}}_{f_P}\MIC(P/S)$, the diagram \eqref{eq:polie} is 
a push-out diagram in the category of pro-nilpotent Lie algebras. 
Thus $L(X/S)$ is the quotient of $L(X'/S)$ by the ideal generated 
by one element, denoted with $z$, which is the image of $1$ by $k = L(P/S) \lra L(X'/S)$. 

To identify the element $z$ (up to scalar), we investigate the construction of 
$(1,q)$-minimal models of $\cA_{X/S}, \cA_{X'/S}$ for $q=1,2$. 
First we put $V_1 := H^1(\cA_{X/S}) = H^1(\cA_{X'/S})$ 
(the latter equality follows from the computation in the proof of 
Proposition \ref{prop:coh}) and take sections 
$\sigma_1: V_1 \lra Z^1(\cA_{X/S}), \sigma'_1:V_1 \lra Z^1(\cA_{X'/S})$ 
of the canonical projections 
$Z^1(\cA_{X/S}) \lra H^1(\cA_{X/S}), Z^1(\cA_{X'/S}) \lra H^1(\cA_{X'/S})$ 
in a compatible way. Using the maps $\sigma_1, \sigma'_1$, 
we obtain the diagram of $k$-cdga's 
\begin{equation*}
\begin{CD}
\bigwedge V_1 @>>> \cA_{X/S} \\ 
@| @VVV \\ 
\bigwedge V_1 @>>> \cA_{X'/S}
\end{CD}
\end{equation*}
(with zero differentials on $\bigwedge V_1$) 
in which the horizontal arrows are $(1,1)$-minimal models. 
Next we put 
\begin{align*}
& V_2 := H^2(\bigwedge V_1, \cA_{X/S}) = \Ker(\bigwedge^2 V_1 \lra H^2(\cA_{X/S})), \\ 
& V'_2 := H^2(\bigwedge V_1, \cA_{X'/S}) = \bigwedge^2 V_1 
\end{align*}
and take sections 
$\sigma_2: V_2 \lra Z^2(\bigwedge V_1, \cA_{X/S}), \sigma'_2:V'_2 \lra 
Z^2(\bigwedge V_1, \cA_{X'/S})$ 
of the canonical projections 
$Z^2(\bigwedge V_1, \cA_{X/S}) \lra H^2(\bigwedge V_1, \cA_{X/S}), 
Z^2(\bigwedge V_1, \cA_{X'/S}) \lra H^2(\bigwedge V_1, \cA_{X'/S})$ 
in a compatible way. Using the $(1,1)$-minimal models and 
the maps $\sigma_2, \sigma'_2$, we obtain 
the diagram of $k$-cdga's 
\begin{equation*}
\begin{CD}
\bigwedge (V_1 \oplus V_2) @>>> \cA_{X/S} \\ 
@VVV @VVV \\ 
\bigwedge (V_1 \oplus V'_2) @>>> \cA_{X'/S}
\end{CD}
\end{equation*}
(where the differential on 
$\bigwedge (V_1 \oplus V_2)$ 
is induced by 
$V_2 \hra \bigwedge^2 V_1$ and that on  
$\bigwedge (V_1 \oplus V'_2)$ is similarly defined) 
in which the horizontal arrows are $(1,2)$-minimal models. 
From this construction, we see that the surjection of Lie algebras  
\begin{equation}\label{eq:lie1}
L(X'/S)/\Fil^3L(X'/S) \lra L(X/S)/\Fil^3L(X/S) 
\end{equation}
is identified with the surjection 
\begin{equation}\label{eq:lie2}
(V_1 \oplus V'_2)^{\vee} \lra (V_1 \oplus V_2)^{\vee}. 
\end{equation}
Let $v_1, ..., v_{2g}$ be a symplectic basis of $V_1$ with respect to 
the map $\bigwedge^2 V_1 \lra H^2(\cA_{X/S})$ so that 
$v_{2i-1}\wedge v_{2i} \, (1 \leq i \leq g)$ is sent to a common non-zero element 
of $H^2(\cA_{X/S})$ and that $v_m \wedge v_n$ is sent to zero unless 
$\{m,n\} = \{2i-1, 2i\}$ for some $1 \leq i \leq g$, and let 
$w_1, ..., w_{2g}$ be the dual basis on $V_1^{\vee}$. 
(Note that $\bigwedge^2 V_1 \lra H^2(\cA_{X/S})$ is a surjective 
map to $1$-dimensional $k$-vector space because it is identified with 
the cup product pairing $\bigwedge^2 H^1_{\dR}(X/S) \lra H^2_{\dR}(X/S)$
which is perfect by Poincar\'e duality.) Then 
${V'_2}^{\vee}$ has a basis $w_j \wedge w_l \,(1 \leq j < l \leq 2g)$, and 
$-\sum_{i=1}^g w_{2i-1} \wedge w_{2i} \in {V'_2}^{\vee} \subseteq 
(V_1 \oplus V'_2)^{\vee} $ is a generator (as $k$-vector space) 
of the kernel of the surjection 
\eqref{eq:lie2}. Because this element is equal to 
$\sum_{i=1}^g [w_{2i-1}, w_{2i}]$ by definition of the Lie bracket on 
$(V_1 \oplus V'_2)^{\vee} = L(X'/S)/\Fil^3L(X'/S)$ and 
$L(X'/S)$ is a free pro-nilpotent Lie algebra with generator 
$w_1, ..., w_{2g}$ by Proposition \ref{propr>0}, we see 
that there exists an isomorphism 
$$ L(X'/S)/\Fil^3L(X'/S) \cong L(w_1, ..., w_{2g})/\Fil^3 L(w_1, ..., w_{2g}) $$ 
such that the image of $z$ in $L(X'/S)/\Fil^3L(X'/S)$ is equal to 
$\sum_{i=1}^g [w_{2i-1}, w_{2i}]$ up to scalar. 

To prove the proposition, it suffices to prove that there exists 
a compatible family of isomorphisms  
$$ L(X'/S)/\Fil^{q+1}L(X'/S) \cong L(w_1, ..., w_{2g})/\Fil^{q+1} L(w_1, ..., w_{2g}) 
\quad (q \geq 2) $$ 
such that the image of $z$ in $L(X'/S)/\Fil^{q+1}L(X'/S)$ is equal to 
$\sum_{i=1}^g [w_{2i-1}, w_{2i}]$ up to scalar, by induction on $q$. 
Assume the claim for $q$ and take 
a lift of $w_i \in L(X'/S)/\Fil^{q+1} L(X'/S)$ to $L(X'/S)/\Fil^{q+2} L(X'/S)$, 
which we denote by the same letter. 
Then the image of $z$ in 
$L(X'/S)/\Fil^{q+2} L(X'/S)$ is the same as the image of 
$$  \sum_{i=1}^g [w_{2i-1}, w_{2i}] + y $$
in $L(X'/S)/\Fil^{q+2} L(X'/S)$ for some $y \in \Fil^{q+1} L(X'/S)$ 
up to scalar. Since $w_1, \dots, w_{2g}$ generates 
$\Fil^1 L(X'/S)/\Fil^2 L(X'/S)$, $y$ is equal to an element of the form 
$$\sum_{i=1}^g [w_{2i-1},y_{2i}] + 
\sum_{i=1}^g [y_{2i-1},w_{2i}] $$ 
in $L(X'/S)/\Fil^{q+2} L(X'/S)$ for some $y_i \in \Fil^q L(X'/S) \,(1 \leq i \leq 2g)$. 
Hence the image of $z$ in $L(X'/S)/\Fil^{q+2} L(X'/S)$ has the form 
$$  \sum_{i=1}^g [w_{2i-1}, w_{2i}] + \sum_{i=1}^g [w_{2i-1},y_{2i}] + 
\sum_{i=1}^g [y_{2i-1},w_{2i}] $$ 
up to scalar. 
Since the above element can be  rewritten as 
$$  \sum_{i=1}^g [w_{2i-1}+y_{2i-1}, w_{2i}+y_{2i}] $$ 
in $L(X'/S)/\Fil^{q+2} L(X'/S)$ and by denoting $w_i + y_i$ again by   $w_i$, 
we obtain the isomorphism 
$$ L(X'/S)/\Fil^{q+2}L(X'/S) \cong L(w_1, ..., w_{2g})/\Fil^{q+2} L(w_1, ..., w_{2g}) $$
such that the image of $z$ in $L(X'/S)/\Fil^{q+2}L(X'/S)$ is equal to 
$\sum_{i=1}^g [w_{2i-1}, w_{2i}]$ up to scalar. So the proof of the proposition 
is finished. 
\end{proof}

\subsection{Tangential sections}\label{tangential points}

In this subsection, we assume that $k$ is algebraically closed,  and let 
$f: X \lra S$ be a semistable log curve and  $x$ be a double point of $X$ 
endowed with the pullback log structure. Then, as we saw in 
Remark \ref{good_notgood_section}, there is no section of $f$ with image in $x$. 
However, in this subsection, we define two canonical  {\it `sections with image $x$' }
in Tannaka theoretic sense, which we  will call the {\it tangential sections}. 
Tangential sections will play an important role in the calculation of monodromy. 

Let $f: X \lra S$, $x$ be as above and let $f_x$ be the composite 
$x \hra X \lra S$. Then $f_x$ is associated to a chart 
\begin{align*}
& \N^2 \to k, \quad 
\N \to k, \quad \N \to \N^2. \\ 
& \! (1,0) \mapsto 0 \quad \,\,\, 1 \mapsto 0 \quad 1 \mapsto 
(1,1) \\ 
& \! (0,1) \mapsto 0 
\end{align*} 
We fix such a chart. 
The categories $\MICn(x/k)$, $\MICn(x/S)$ are defined as before and we have the natural functors 
\begin{align*}
& r: \MICn(x/k) \lra \MICn(x/S) \quad \text{(restriction functor)}, \\ 
& x_{\dR}^*: \MICn(x/k) \lra \MIC(x/x) = \Vector_k, \\ 
& x_{\dR}^*: \MICn(x/S) \lra \MIC(x/x) = \Vector_k, \\ 
& f_{x,\dR}^*: \MICn(S/k) \lra \MICn(x/k). 
\end{align*}
Also, by the above expression of the log scheme $x$, we see that 
the category $\MICn(x/k)$ is equivalent to that of triples  
$(V,N,N')$ consisting of a finite-dimensional $k$-vector space $V$ and 
two commuting nilpotent endomorphisms $N, N'$ on $V$. 

Now, let $\omega, \omega': \MICn(x/k) \lra \MICn(S/k)$ be the functors 
defined by $(V,N,N') \mapsto (V,N)$ and  $(V,N,N') \mapsto (V',N')$, respectively. 
Then we have the following diagram of functors in which the three evident triangles are commutative
\begin{equation}\label{eq:tang}
\xymatrix{
\MICn(x/S) \ar[rd]^{x_{\dR}^*} & 
\MICn(x/k) \ar[l]^r \ar[d]^{x_{\dR}^*} \ar@/^1pc/[r]^{\omega}& 
\MICn(S/k) \ar[l]^-{f_{x,\dR}^*} \ar[ld]^{s_{\dR}^*} \\ 
& \Vector_k,  
}
\end{equation}
and a similar one with $\omega$ replaced by $\omega'$. 
If we denote the Tannaka dual of $(\MICn(x/k), x_{\dR}^*)$ and 
$(\MICn(x/S), x_{\dR}^*)$ by $\pi_1^{\dR}(x,x)$ and $ \pi_1^{\dR}(x/S,x)$, respectively, 
we see easily from the above description of the category $\MICn(x/k)$ that 
we have the split exact sequence 
$$ 
\xymatrix {
1 \ar[r] & \pi_1^{\dR}(x/S,x) \ar[r] & 
\pi^{\dR}_1(x,x) = \G_{a,k}^2 \ar[r]^{f_{x,*}} & \pi^{\dR}_1(S,s) = \G_{a,k} \ar[r] 
\ar @/^6mm/[0,-1]^{\omega^*} & 1. 
} 
$$
and a similar split exact sequence with $\omega^*$ replaced by ${\omega'}^*$. 
We see also that the map $f_{x,*}$ is the sum and the map 
$\omega^*$ (resp. ${\omega'}^*$) is the inclusion of $\G_{a,k}$ into the 
first (resp. the second) component of $\G_{a,k}^2$. Hence, if we put 
$\epsilon := \omega^*(1)$ and $\epsilon' := {\omega'}^*(1)$, 
${\epsilon'}^{-1} \circ \epsilon$ is equal to a non-zero (and non-torsion) element 
$\eta$ of $\pi_1^{\dR}(x/S,x)$. 

A concrete description of the element $\eta$ is given in the following way. 
An object in $\MICn(x/S)$ is given by a finite-dimensional vector space $V$ endowed 
with a nilpotent map $N:=(N_1,N_2):  V \lra V \otimes_{\Z} (\Z^2/\Delta(\Z))$, 
where $\Delta:\Z \lra \Z^2$ is the diagonal map. Then the action of $\eta$ on such 
object is given by $\exp(N_1-N_2)$. 

Now we compose the above construction with the canonical 
inclusion $x \hra X$. Let $\omega_x$ be the composite 
$\NfMICn(X/k) \lra \MICn(x/k) \os{\omega}{\lra} \MICn(S/k)$ and 
define $\omega'_x$ similarly, with $\omega$ replaced by $\omega'$. 
The functors $\omega_x, \omega'_x$ are called tangential sections of $X$ at $x$. 
Then we have the following diagram of functors in which the three evident triangles are commutative
\begin{equation}\label{eq:tang2}
\xymatrix{
\NfMIC(X/S) \ar[rd]^{x_{\dR}^*} & 
\NfMICn(X/k) \ar[l]^r \ar[d]^{x_{\dR}^*} \ar@/^1pc/[r]^{\omega_x}& 
\MICn(S/k) \ar[l]^-{f_{\dR}^*} \ar[ld]^{s_{\dR}^*} \\ 
& \Vector_k,  
}
\end{equation}
and a similar commutative diagram with $\omega_x$ replaced by $\omega'_x$. 
If we denote the Tannaka dual of $(\NfMICn(X/k), x_{\dR}^*), 
(\NfMIC(X/S), x_{\dR}^*)$ by $\pi_1^{\dR}(X,x), \pi_1^{\dR}(X/S,x)$ respectively, 
we obtain the split exact sequence 
$$ 
\xymatrix {
1 \ar[r] & \pi_1^{\dR}(X/S,x) \ar[r] & 
\pi^{\dR}_1(X,x) \ar[r]^{f_{x,*}} & \pi^{\dR}_1(S,s) = \G_{a,k} \ar[r] 
\ar @/^6mm/[0,-1]^{\omega_x^*} & 1. 
} 
$$
and a similar split exact sequence with $\omega^*_x$ replaced by ${\omega'}_x^*$, 
by Proposition \ref{prop:HES_t}. If we put 
$\epsilon_x := \omega_x^*(1)$ and $\epsilon'_x := {\omega'}_x^*(1)$, 
${\epsilon'}_x^{-1} \circ \epsilon_x$ is equal to the image of 
$\eta \in \pi_1^{\dR}(x/S,x)$ in $\pi_1^{\dR}(X/S,x)$. 

\begin{rem}\label{rem:direc}
As we see above, there are two tangential sections at a double point $x$ of $X$. 
We will use the following geometric description 
to indicate  precisely one choice out of the two. 
Let $\wt{X} \lra X$ be the normalization of $X^{\circ}$, endowed with the 
pull-back log structure from that on $X$. Then the inverse image of $x$ consists of 
two points $x_1, x_2$ (endowed with pullback log structure) 
which are both isomorphic to  $x$ (as log scheme). If we take one point $x_1$,  
then there is a unique connected component $X_1$ of $\wt{X}$ containing 
$x_1$;  in  our fixed chart $\N^2 \lra \cO_x = \cO_{x_1}$, 
there is exactly one copy of $\N$  which extends to the zero map 
$\N \lra \cO_{X_1}$ in the log structure $\cM_{X_1} \lra \cO_{X_1}$. 
Also, if we take the other point $x_2$, the corresponding copy of 
$\N$ in $\N^2$ is different from the one we obtain from $x_1$. 
Thus, by  indicating ``the tangential section at $x_1$ (or $x_2$)'', we can 
identify precisely one of the two tangential sections at $x$. 
\end{rem}

In the rest of this subsection, we give some calculation of local monodromy. 
Let $f: X \lra S$ be a semistable log curve 
and let $X_1$ be a connected component of the normalization of $X^{\circ}$, 
endowed with the pullback log structure from that on $X$. 
Let $C_1 \subseteq X_1$ be the inverse image of the double points of $X$,  
let $D_1 \subseteq X_1$ be the inverse image of the marked points of $X$. 
Then, etale locally around a point in $C_1$, the underlying morphism 
of the map $f_1: X_1 \lra S$ is given as 
$$ X_1^{\circ} \lra \Spec k[t] \lra \Spec k$$ 
with first morphism etale, and 
the log structure of $f_1$ is associated to the chart 
\begin{align}
& \N^2 \to \cO_{X_1}, \quad 
\N \to k, \quad \N \to \N^2. \label{eq:chartN2} \\ 
& \! (1,0) \mapsto 0 \quad \,\,\, 1 \mapsto 0 \quad 1 \mapsto 
(1,1) \nonumber \\ 
& \! (0,1) \mapsto t \nonumber 
\end{align} 
Let $P$ be the submonoid of $\N^2$ generated by 
$(1,1), (0,1)$ and define the morphisms of log schemes 
$f'_1: X'_1 \lra S$ by changing the log structure around $C_1$ 
to the one associated to the chart 
\begin{align}
& P \to \cO_{X_1}, \quad 
\N \to k, \quad \N \to P. \label{eq:chartP} \\ 
& \! (1,1) \mapsto 0 \quad \,\,\, 1 \mapsto 0 \quad 1 \mapsto 
(1,1) \nonumber \\ 
& \! (0,1) \mapsto t \nonumber 
\end{align} 
Changing the log structure in this way has the effect that 
the points in $C_1$    are changed to marked points of $X'_1$ (viewed with the log structure relative to $S$)
On the other hand, since $P^{\rm gp} = \Z^2$, we have the 
equality of log differential modules $\Omega^1_{X_1/S} = \Omega^1_{X'_1/S}$ 
and so we have the equivalences  
\begin{equation}\label{eq:xx'}
{\mathrm{N}}_{f_1}\MIC(X_1/S) = 
{\mathrm{N}}_{f'_1}\MIC(X'_1/S), \quad {\mathrm{N}}_{f_1}\MICn(X_1/k) = 
{\mathrm{N}}_{f'_1}\MICn(X'_1/k). 
\end{equation}

Next we define the log scheme $X''_1$ to be the scheme $X_1^{\circ} = 
{X'_1}^{\circ}$ endowed with the log structure associated to 
the normal crossing divisor $C_1 \cup D_1$. Then we have the equivalence 
\begin{equation}\label{eq:x'x''}
{\mathrm{N}}_{f'_1}\MIC(X'_1/S) = {\mathrm{N}}_{f''_1}\MIC(X''_1/k), 
\end{equation}
where $f''_1: X''_1 \lra k$ is the structure morphism (no log structure on $k$).

For closed points $x, y$ of $X$ (endowed with the pullback log structure from that one of $X$), let 
$\pi_1^{\dR}(X/S,x,y)$ be the $k$-scheme representing the set of tensor isomorphisms 
from the functor  
$x_{\dR}^*: \NfMIC(X/S) \to \MIC(x/x) = \Vector_k$ to the functor 
$y_{\dR}^*: \NfMIC(X/S) \to \MIC(y/y) = \Vector_k$. 
For closed points $x_1, y_1$ of $X_1$, we define the $k$-scheme 
$\pi_1^{\dR}(X_1/S,x_1,y_1)$ in a similar way, using the category 
${\mathrm{N}}_{f_1}\MIC(X_1/S)$ instead of $\NfMIC(X/S)$. 
By \cite[Theorem 3.2]{DelMil82}, after we extend the field $k$, both $\pi_1^{\dR}(X_1/S,x_1,y_1)$ and $\pi_1^{\dR}(X_1/S,x,y)$  have a $k$-rational point 
 (and we are allowed to make field extensions by Proposition \ref{prop:!!bc}).  
If $x_1, y_1$ are sent to $x, y$ respectively by the map $X_1 \lra X$, 
we have the corresponding morphism 
$\pi_1^{\dR}(X_1/S,x_1,y_1) \lra \pi_1^{\dR}(X/S,x,y)$. 

Then we have the following: 

\begin{prop}\label{prop:comp1}
Let the notations be as above. \\
$(1)$ \, Let $x$ be a double point of $X$ with fixed inverse image $x_1$ in $X_1$, and 
let $\alpha$ be an element of $\pi_1^{\dR}(X/S,x)$ coming from 
$\pi_1^{\dR}(X_1/S,x_1)$. Let $\omega$ be the tangential section at $x_1$ and 
put $\epsilon := \omega^*(1) \in \pi_1^{\dR}(X,x)$. Then we have 
$\epsilon^{-1} \circ \alpha \circ \epsilon = \alpha$. \\ 
$(2)$ \, Let $x, y$ be double points $($possibly equal$)$ of $X$ with fixed distinct inverse images 
$x_1, y_1$ in $X_1$, and 
let $\alpha$ be an element of $\pi_1^{\dR}(X/S,x,y)$ coming from 
$\pi_1^{\dR}(X_1/S,x_1,y_1)$. Let $\omega_x, \omega_y$ be the tangential section at $x_1$,  
$y_1$ respectively and 
put $\epsilon_x := \omega_x^*(1) \in \pi_1^{\dR}(X,x), 
\epsilon_y := \omega_y^*(1) \in \pi_1^{\dR}(X,y)$. Then we have 
$\epsilon_y^{-1} \circ \alpha \circ \epsilon_x = \alpha$. 
\end{prop}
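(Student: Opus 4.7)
My plan is to reduce the verification of $\epsilon^{-1}\circ\alpha\circ\epsilon=\alpha$ to a computation on the normalized component $X_1$ and, via the equivalences \eqref{eq:xx'} and \eqref{eq:x'x''}, transport the problem to the log curve $X''_1$ over $\Spec k$, whose base carries only the trivial log structure. The starting observation is that $\alpha$'s action on $\iota_x^*E$ for $E\in\NfMIC(X/S)$ depends only on the restriction $E|_{X_1}\in{\mathrm{N}}_{f_1}\MIC(X_1/S)$, while $\epsilon$'s action on the same fiber equals $\exp(N_{(1,0)})$, where $N_{(1,0)}$ is the absolute residue at $x$ in the direction picked out by Remark \ref{rem:direc} (namely, the copy of $\N$ in $\cM_x/\cO_x^{\times}\cong\N^2$ which extends to zero on $X_1$). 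The identity to prove thus translates to the statement that $\alpha_E$ commutes with $N_{(1,0)}$ on $V:=\iota_x^*E$, and we verify this by descending to $X_1$ and invoking triviality of base monodromy on $X''_1$.

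The first concrete step would be to replace $(X,x)$ by $(X_1,x_1)$: under the pullback functor $\NfMICn(X/k)\to{\mathrm{N}}_{f_1}\MICn(X_1/k)$, $\epsilon$ maps to the element $\epsilon_1\in\pi_1^{\dR}(X_1,x_1)$ associated to the tangential section at $x_1$ on $X_1$, and $\alpha$ is the image of some $\alpha_1\in\pi_1^{\dR}(X_1/S,x_1)\subseteq\pi_1^{\dR}(X_1,x_1)$. Applying \eqref{eq:xx'} and \eqref{eq:x'x''} then yields identifications $\pi_1^{\dR}(X_1/S,x_1)=\pi_1^{\dR}(X''_1/k,x_1)=\pi_1^{\dR}(X''_1,x_1)$, the last equality because $\pi_1^{\dR}(\Spec k)=1$. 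Under these identifications, the absolute residue $N_{(1,0)}$ at $x_1$ on $X_1$ corresponds to the single residue of the pullback of $E|_{X_1}$ to $X''_1$ along the unique branch of $C_1\cup D_1$ through $x_1$, and $\alpha_1$ becomes an element $\alpha''\in\pi_1^{\dR}(X''_1,x_1)$.

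The heart of the argument is the following commutation: applying Proposition \ref{prop:HES_t} to the diagram \eqref{eq:tang2} with the tangential fiber functor at $x_1$, the conjugation by $\epsilon_1$ on the relative fundamental group coincides with the base-monodromy action of $\pi_1^{\dR}(S,s)=\G_{a,k}$, and under the equivalence $\pi_1^{\dR}(X_1/S,x_1)=\pi_1^{\dR}(X''_1,x_1)$ this translates to the action of $\G_{a,k}$ on $\pi_1^{\dR}(X''_1,x_1)$ induced by the trivial monodromy coming from $\pi_1^{\dR}(\Spec k)=1$; this action must vanish. We then obtain $\epsilon_1^{-1}\circ\alpha_1\circ\epsilon_1=\alpha_1$ in $\pi_1^{\dR}(X_1/S,x_1)$, from which the claim in $\pi_1^{\dR}(X/S,x)$ follows. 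For part (2) the same reasoning is to be applied to the bitorsor $\pi_1^{\dR}(X_1/S,x_1,y_1)=\pi_1^{\dR}(X''_1,x_1,y_1)$ of paths, where both endpoint-tangential-conjugations simultaneously become trivial. The principal technical obstacle will be to confirm that the tangential sections and the resulting split exact sequences are compatible with the two equivalences \eqref{eq:xx'} and \eqref{eq:x'x''}, which requires a careful check that the fiber functor at $x_1$, when viewed on $X_1$, $X'_1$, and $X''_1$, defines the same functor to $\Vector_k$ up to the natural identifications.
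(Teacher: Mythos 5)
Your overall route is the same as the paper's: reduce to the component $X_1$, pass through the equivalences \eqref{eq:xx'} and \eqref{eq:x'x''} to $X''_1$, and exploit the fact that $\epsilon$ acts on fibers by $\exp(N)$ where $N$ is the residue in the direction singled out by Remark \ref{rem:direc}. But the step that carries the actual content is asserted rather than proved, and the one concrete identification you offer is wrong. You claim that ``the absolute residue $N_{(1,0)}$ at $x_1$ corresponds to the single residue of the pullback of $E|_{X_1}$ to $X''_1$ along the unique branch of $C_1\cup D_1$ through $x_1$.'' This is not the case: under the identification $\Omega^1_{X_1/S}\cong\Omega^1_{X''_1/k}$ it is the \emph{relative} residue (the residue pair taken modulo the diagonal direction coming from the base) that matches the residue along the branch $t=0$ on $X''_1$, whereas $N_{(1,0)}$ is the residue in the transverse direction, the one whose generator maps to $0$ in $\cO_{X_1}$, and it is not determined by the relative object at all. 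The correct and crucial fact --- the one the paper uses --- is that for the particular absolute lift obtained by pulling back from $X''_1$ along $X_1\to X''_1$ one has $N_{(1,0)}=0$, because such a connection has log poles only along $\dlog t$. If, as you write, $N_{(1,0)}$ were the residue along the branch, it would in general be nonzero and your commutation claim would not follow.

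Because of this, the ``heart of the argument'' in your third paragraph is a genuine gap. The equivalence $\pi_1^{\dR}(X_1/S,x_1)\cong\pi_1^{\dR}(X''_1,x_1)$ is an equivalence of the \emph{relative} categories only; the conjugation by $\epsilon_1$ is defined through the absolute category ${\mathrm{N}}_{f_1}\MICn(X_1/k)$ and the tangential splitting, and there is no a priori morphism of split exact sequences identifying it with the trivial sequence of $X''_1$ over $\Spec k$ --- producing such a compatibility is exactly what has to be proved, and you defer it as a ``technical obstacle.'' The way to close the gap (and this is how the paper argues) is to note that $\epsilon^{-1}\circ\alpha\circ\epsilon$ lies in the relative group, hence its action on $x_1^{*}E$ depends only on the image of $E$ in ${\mathrm{N}}_{f_1}\MIC(X_1/S)$; one may therefore replace $E$ by the pullback of the corresponding object of ${\mathrm{N}}_{f''_1}\MIC(X''_1/k)$, for which the tangential residue $N$ vanishes, so $\epsilon$ acts trivially on the fiber and the conjugated element acts exactly as $\alpha$ does; in part (2) both $N_x$ and $N_y$ vanish simultaneously for this lift, giving $\epsilon_y^{-1}\circ\alpha\circ\epsilon_x=\alpha$. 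With that residue computation made explicit, and your mislabelled identification corrected, your reduction through $X_1$, $X'_1$, $X''_1$ does yield the proposition; as written, the key step is missing.
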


\begin{prop}\label{prop:comp2}
Let the notations be as above. 
Assume that $X_1$ is isomorphic to $\Pr^1_k$ with $D_1 = \emptyset, 
C_1 = \{x_1, y_1\}$ and that the image of $x_1, y_1$ in $X$, which we denote by $x, y$, are 
different. Let $\alpha$ be the element of $\pi_1^{\dR}(X_1/S,x_1,y_1)$ defined by 
$$ E|_x \os{\cong}{\lla} \Gamma(X_1,E) \os{\cong}{\lra} E|_y \quad ((E,\nabla) \in 
{\mathrm{N}}_{f_1}\MIC(X_1/S)) $$
and denote its image in $\pi_1^{\dR}(X/S,x,y)$ by the same letter. 
On the other hand, let $\omega_x$ $($resp. $\omega_y)$ be the tangential section 
at $x_1$ $($resp. $y_1)$ and let $\omega'_x$ $($resp; $\omega'_y)$ be 
the tangential section at $x$ $($resp. $y)$ which is different from 
$\omega_x$ $($resp. $\omega_y)$. Also, let 
$\eta_x$ $($resp. $\eta_y)$ be the image of 
${\omega'}_x^*(1)^{-1} \circ \omega_x^*(1)$ $($resp. ${\omega'}_y^*(1)^{-1} \circ \omega_y^*(1))$
in $\pi_1^{\dR}(X/S,x)$ $($resp,. $\pi_1^{\dR}(X/S,y))$.  
Then we have $\eta_y^{-1} \circ \alpha = \alpha \circ \eta_x$. 
\end{prop}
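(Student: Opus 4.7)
The plan is to reduce the identity $\eta_y^{-1} \circ \alpha = \alpha \circ \eta_x$ to an explicit computation on the simplified model $X''_1 \cong (\Pr^1_k, \{0, \infty\})$ via the equivalences \eqref{eq:xx'}, \eqref{eq:x'x''}. First I will identify the action of $\eta_x$ on fibers: unwinding the definitions in Subsection~\ref{tangential points}, the generator of $\pi_1^{\dR}(x/S,x) \cong \G_{a,k}$ acts on $(V,N) \in \MICn(x/S)$ as $\exp(N)$, so for any $(E,\nabla) \in \NfMIC(X/S)$ the element $\eta_x$ acts on $E|_x$ as $\exp(R_x)$, where $R_x \in \End(E|_x)$ is the residue of the relative log connection $\nabla$ at the double point $x$. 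Similarly $\eta_y$ acts on $E|_y$ as $\exp(R_y)$, so the identity to prove is equivalent to $\alpha R_x = -R_y \alpha$.

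Next I will transport $(E,\nabla)|_{X_1}$ through the chain of equivalences \eqref{eq:xx'}, \eqref{eq:x'x''} to an object $(E'', \nabla'') \in \mathrm{N}_{f''_1}\MIC(X''_1/k)$, and will check that under this transport the residues $R_x, R_y$ correspond to the residues of $\nabla''$ at $x_1, y_1$ respectively. This is a direct unwinding of the local charts \eqref{eq:chartN2}, \eqref{eq:chartP}, using that $P^{\gp} = \Z^2$ so the relative log differentials on $X'_1/S$ at $x_1$ coincide with the absolute log differentials on $X''_1/k$ at $x_1$.

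It then remains to verify $\alpha R_x = -R_y \alpha$ on $X''_1$. By induction on the length of the iterated extension, $\dim_k \Gamma(X''_1, E'') = \mathrm{rank}(E'')$ and the evaluation maps $\Gamma(X''_1, E'') \to E''|_{x_1}$, $\Gamma(X''_1, E'') \to E''|_{y_1}$ are both isomorphisms, so $\alpha$ is well defined. In a basis of global sections, $\nabla''$ takes the form $d + M\,\frac{dt}{t}$ for a nilpotent strictly upper-triangular matrix $M$, since the only log $1$-forms on $\Pr^1_k$ with poles in $\{0,\infty\}$ are constant multiples of $dt/t$. In this basis $R_x = M$, $R_y = -M$, and $\alpha = \mathrm{id}$, which gives the desired identity. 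The hard part will be the bookkeeping in the second step: carefully tracking the chart conventions at the double points in order to match the residues on $X/S$ with those on $X''_1/k$ across the equivalences, and to identify $\eta_x$ with $\exp(R_x)$.
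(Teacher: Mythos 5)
Your proposal is correct and follows essentially the same route as the paper's proof: you identify $\eta_x,\eta_y$ with exponentials of (relative) residues via the explicit description of $\eta\in\pi_1^{\dR}(x/S,x)$, transfer everything to $X''_1\cong\Pr^1_k$ through the equivalences \eqref{eq:xx'}, \eqref{eq:x'x''}, and conclude from the vanishing of the sum of the two residues, which the paper gets by citing the residue theorem and you verify directly from the global form $d+M\,\frac{dt}{t}$ in a basis of global sections. The sign bookkeeping you defer is in fact harmless: since the same convention (tangential sections at $x_1$ and $y_1$) is used at both points, a simultaneous sign flip of $R_x,R_y$ leaves the relation $\alpha R_x\alpha^{-1}=-R_y$ unchanged, which is exactly the identity $N'_x+N'_y=0$ appearing in the paper's argument.
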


\begin{rem}\label{rem:20190306}
Note that, when $X_1$ is isomorphic to $\Pr^1_k$, the underlying module
$E$ is a finite direct sum of copies of 
$\cO_{X_1}$ for any $(E,\nabla) \in {\mathrm{N}}_{f_1}\MIC(X_1/S)$ 
because $H^1(X_1, \cO_{X_1}) = 0$ (see \cite[Proposition 12.3]{Del89}). 
Thus the element $\alpha$ of $\pi_1^{\dR}(X_1/S,x_1,y_1)$
in the statement of Proposition \ref{prop:comp2} is well-defined. 
\end{rem}

\begin{proof}[Proof of Proposition \ref{prop:comp1}]
(1) \, First, note that we have the diagram 
\begin{equation}\label{eq:x_1HES}
\xymatrix {
1 \ar[r] & \pi_1^{\dR}(X_1/S,x_1) \ar[r] & 
\pi^{\dR}_1(X_1,x_1) \ar[r]^(.43){f_{1,*}} & \pi^{\dR}_1(S,s) = \G_{a,k} \ar[r] 
\ar @/^6mm/[0,-1]^{\omega^*} & 1. 
} 
\end{equation} If the above exact sequence is exact, it defines the action of  $\pi^{\dR}_1(S,s)$ on $\pi_1^{\dR}(X_1/S,x_1)$.
Since $X_1$ is not log smooth over $S$, it is not immediate that 
the previous sequence is exact. However, because of the equivalences 
\eqref{eq:xx'}, the sequence can be  identified with the sequence 
\begin{equation*}
\xymatrix {
1 \ar[r] & \pi_1^{\dR}(X'_1/S,x_1) \ar[r] & 
\pi^{\dR}_1(X'_1,x_1) \ar[r]^(.43){f_{1,*}} & \pi^{\dR}_1(S,s) = \G_{a,k} \ar[r] & 1, 
} 
\end{equation*}
which is known to be exact because $X'_1$ is log smooth over $S$. 
Thus the diagram \eqref{eq:x_1HES} is a split exact sequence.  So we can define the inner action of $\epsilon=\omega^*(1) \in \pi^{\dR}_1(X_1,x_1)$ on $ \pi_1^{\dR}(X_1/S,x_1)$
and so it suffices to prove the required equality in 
$\pi_1^{\dR}(X_1/S,x_1)$. For an object $(E,\nabla)$ in ${\mathrm{N}}_{f_1}\MICn(X_1/k)$, 
if we denote its image  in $\MICn(x_1/k)$ by $(E|_{x_1}, N,N')$, the action of 
$\epsilon$ on $E|_{x_1}$ is given by $\exp(N)$. Also, since 
$\epsilon^{-1} \circ \alpha \circ \epsilon$ is an element in 
$\pi_1^{\dR}(X_1/S,x_1)$, the action of it on $E|_{x_1}$ depends only on 
the image $(E,\ol{\nabla})$ of $(E,\nabla)$ in ${\mathrm{N}}_{f_1}\MICn(X_1/S)$. 
So we can take an object $(E,\nabla'')$ in ${\mathrm{N}}_{f''_1}\MICn(X''_1/k)$ corresponding to 
$(E,\ol{\nabla})$ by the equivalences of categories \eqref{eq:xx'}, \eqref{eq:x'x''}, and 
replace $(E,\nabla)$ by the image of $(E,\nabla'')$ in 
${\mathrm{N}}_{f_1}\MICn(X_1/k)$. If we do so, we have $N=0$. So we obtain that 
$\epsilon^{-1} \circ \alpha \circ \epsilon = \alpha$. \\ 
(2) \, By the same argument as in (1), it suffices to prove the equality 
$\alpha^{-1} \circ \epsilon_y^{-1} \circ \alpha \circ \epsilon_x = \id$ 
in $\pi_1^{\dR}(X_1/S,x_1)$. 
For an object $(E,\nabla)$ in ${\mathrm{N}}_{f_1}\MICn(X_1/k)$, 
if we denote the image of it in $\MICn(x_1/k)$ (resp.  $\MICn(y_1/k)$) 
by $(E|_{x_1}, N_x,N'_x)$ (resp. $(E|_{y_1}, N_y,N'_y)$), the action of 
$\epsilon_x$ on $E|_{x_1}$ (resp. $\epsilon_y$ on $E|_{y_1}$) is given by 
$\exp(N_x)$ (resp. $\exp(N_y)$). Also, since 
$\alpha^{-1} \circ \epsilon_y^{-1} \circ \alpha \circ \epsilon_x$ is an element in 
$\pi_1^{\dR}(X_1/S,x_1)$, the action of it on $E|_{x_1}$ depends only on 
the image $(E,\ol{\nabla})$ of $(E,\nabla)$ in ${\mathrm{N}}_{f_1}\MICn(X_1/S)$. 
If we replace $(E,\nabla)$ as in (1), we have $N_x=0$, $N_y=0$ and so we obtain that 
$\alpha^{-1} \circ \epsilon_y^{-1} \circ \alpha \circ \epsilon_x = \id$. 
\end{proof}

\begin{proof}[Proof of Proposition \ref{prop:comp2}]
As in the proof of Proposition \ref{prop:comp1}, it suffices to prove the 
equality $\alpha^{-1} \circ \eta_y \circ 
\alpha \circ \eta_x = \id$
in $\pi_1^{\dR}(X_1/S,x_1)$. If we define 
$(E|_{x_1}, N_x,N'_x)$, $(E|_{y_1}, N_y,N'_y)$ as in the proof of Proposition \ref{prop:comp1}(2), 
the action of $\eta_x$ on $E|_{x_1}$ (resp. $\epsilon_y$ on $E|_{y_1}$) is given by 
$\exp(N_x-N'_x)$ (resp. $\exp(N_y-N'_y)$), and if we replace $(E,\nabla)$ as in the proof 
of Proposition \ref{prop:comp1}(2), we have $N_x=0, N_y=0$. Also, via the identification 
by $\alpha$, we have the equality of operators $N'_x + N'_y = 0$ by 
the residue theorem. (Indeed, $N'_x$ (resp. $N'_y$) is the residue of $\nabla$ at $x$ (resp. 
at $y$).) Thus $\exp(-N'_y) = \exp(-N'_x)^{-1}$ and so 
we have the required equality $\alpha^{-1} \circ \eta_y \circ \alpha \circ \eta_x = \id$. 
\end{proof}

\subsection{Calculation of monodromy (I)}

In this subsection, we prove Theorem \ref{!!!} in the case where the dual graph of $X$ has a loop. 

\begin{thm}\label{thm:loop}
Theorem \ref{!!!} holds when the dual graph of the geometric fiber of $f:X \lra S$ has a loop.
\end{thm}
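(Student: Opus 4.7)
The plan is to establish non-triviality of the outer monodromy by detecting a non-zero monodromy action already at the level of the abelianization of $\pi_1^{\dR}(X/S,\iota)$. Since inner automorphisms of any pro-unipotent group act trivially on the abelianization of its Lie algebra, and since the first graded piece of the lower central series filtration on the Lie algebra $L(X/S)$ of $\pi_1^{\dR}(X/S,\iota)$ is canonically isomorphic to $H^1_{\dR}(X/S)^{\vee}$ (as in the proofs of Propositions \ref{propr>0} and \ref{prope}, where $V_1 = H^1(\cA_{X/S}) = H^1_{\dR}(X/S)$), it suffices to show that the Gauss--Manin monodromy acts non-trivially on $H^1_{\dR}(X/S) \in \MICn(S/k)$.

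First, by Proposition \ref{prop:!!bc}, I may assume $k$ is algebraically closed, and by Proposition \ref{prop:HES_t} the monodromy on $H^1_{\dR}(X/S)$ is independent of the choice of $\iota$ (being intrinsic to $H^1_{\dR}(X/S)$). I would then compute $H^1_{\dR}(X/S)$ together with the action of the Gauss--Manin monodromy $N$ via the normalization $\nu\colon \tilde X\to X$ (with log structure pulled back from $X$), using a Mayer--Vietoris type exact sequence
\begin{equation*}
0\longrightarrow H^1_{\dR}(\tilde X/S)\longrightarrow H^1_{\dR}(X/S) \longrightarrow H^1(\Gamma_X, k) \longrightarrow 0,
\end{equation*}
where $\Gamma_X$ is the dual graph of $X$. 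The submodule $H^1_{\dR}(\tilde X/S)$ should be fixed by $N$ (the normalization separates branches, removing the log monodromy coming from the double points), while the combinatorial quotient $H^1(\Gamma_X, k)$ is non-zero precisely when $\Gamma_X$ contains a cycle---which, in our setting, happens because there is a loop.

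Next, I would identify $N$ explicitly. At each double point $x$ of $X$, the local description of Proposition \ref{prop:logcurve}(1) together with Remark \ref{rem:gmcoin}(3) (which ensures that our crystalline-flavored Gauss--Manin coincides with the Katz--Oda one in the semistable log curve setting, possibly after a log blow-up as in Lemma \ref{lem:bu}) yields an explicit log Picard--Lefschetz formula: $N$ is a sum, over double points $x$ of $X$, of rank-one transvections along the classes $\delta_x\in H^1_{\dR}(X/S)$ of the vanishing cycles at $x$. For a loop double point, the class $\delta_x$ is non-zero in the combinatorial quotient $H^1(\Gamma_X,k)$---it represents the loop as a cycle in $\Gamma_X$---and hence $N$ is non-zero on $H^1_{\dR}(X/S)$. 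Unwinding the isomorphism $H^1_{\dR}(X/S)^{\vee} \cong L(X/S)/[L(X/S),L(X/S)]$ of Propositions \ref{propr>0} and \ref{prope}, this non-zero operator realises the outer class of the monodromy, completing the proof.

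The main obstacle is the rigorous algebraic derivation of the log Picard--Lefschetz formula in our setting. One approach is a direct computation in local charts using the explicit description of $\Omega^\bullet_{X/S}$ around a double point (via Proposition \ref{prop:logcurve}(1)) combined with the explicit Lie algebra calculation of Subsection 6.2; another is to exploit the compatibility of Gauss--Manin connections with the log blow-up relating the semistable model to a stable model, as in Remark \ref{rem:gmcoin}(3), and to reduce the computation to the universal stable log curve over $\overline M_g$. Either route is technical, and the careful bookkeeping of the vanishing cycle classes $\delta_x$ at the loop double point, together with the verification that they span the combinatorial part $H^1(\Gamma_X,k)$, constitutes the crux of the argument; it is at this point that the distinction between loops and separating edges of the dual graph becomes decisive.
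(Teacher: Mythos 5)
Your overall strategy — detect the monodromy on the abelianization, which is legitimate since inner automorphisms act trivially there and $L(X/S)^{\mathrm{ab}}\cong H^1_{\dR}(X/S)^{\vee}$ via the minimal model — is sound in principle, and it is in fact the route the paper explicitly mentions and sets aside ("a proof which uses only the cohomology theory, see [Mo84]"). But as written your argument has a genuine gap at exactly the decisive point: the "log Picard--Lefschetz formula", i.e.\ an algebraic identification of the residue $N$ of the Gauss--Manin connection on $H^1_{\dR}(X/S)$ showing $N\neq 0$ when the dual graph has a loop, is never proved. You flag it yourself as "the crux" and offer only two directions (local chart computation; reduction to $\ol{M}_g$), neither of which is carried out; note also that the reduction via Remark \ref{rem:gmcoin}(3) is not available here, since that remark concerns comparing two definitions of the Gauss--Manin connection, not computing either of them. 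Moreover the Mayer--Vietoris sequence you write is incorrect: for an unmarked semistable curve one has $\dim H^1_{\dR}(X/S)=2g=2\tilde g+2b$ (with $\tilde g$ the total genus of the normalization and $b$ the first Betti number of $\Gamma_X$), so $H^1_{\dR}(X/S)$ cannot be an extension of $H^1(\Gamma_X,k)$ (dimension $b$) by $H^1_{\dR}(\wt{X}/S)$; the correct structure is a three-step weight filtration with $H^1(\Gamma_X,k)$ appearing as a \emph{sub}object, $H^1$ of the normalization in the middle, and a second copy of the combinatorial part on top, with $N$ mapping top to bottom. So both the structural statement and the computation of $N$ on it would have to be redone before your argument closes.

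The paper's proof also works on the abelianization, but it sidesteps any computation of the Gauss--Manin residue on cohomology. It takes path-torsor elements $\alpha_i$ along the components of the loop, uses the tangential sections at the double points and the local monodromy computations of Propositions \ref{prop:comp1} and \ref{prop:comp2} to show that the monodromy changes the composite loop element, in the abelianization, by $\eta_n\cdots\eta_1$, and then exhibits one explicit object of $\NfMIC(X/S)$, namely $\bigl(\cO_X^2,\, d+\bigl(\begin{smallmatrix}0&\gamma\\ 0&0\end{smallmatrix}\bigr)\bigr)$ with $\gamma$ a global relative log $1$-form having residues $+1$ and $-1$ at the two branches of each loop node (residue theorem plus Riemann--Roch on each component), on which this product acts by $\exp\bigl(\begin{smallmatrix}0&2n\\ 0&0\end{smallmatrix}\bigr)\neq\mathrm{id}$. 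That explicit rank-two connection is precisely the algebraic substitute for the vanishing-cycle/Picard--Lefschetz input your sketch leaves unproven; supplying something of that kind is what you would need to complete your approach.
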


In fact, we can prove the non-triviality of the monodromy action 
on the abelianization $\pi_1^{\dR}(X/S,\iota)^{\rm ab}$. 
Although there  exists a proof which uses only the cohomology theory 
(see \cite{morrison}, for example), 
we give a proof using our theory of  the relative de Rham fundamental groups. 

\begin{proof}
We may assume that $k$ is algebraically closed by Proposition \ref{prop:!!bc} 
and we may change the base point as we like by Proposition \ref{prop:HES_t}. 

Take a loop in the dual graph of $X$. Let $Y_1, ..., Y_n$ be 
the irreducible components which appear as vertices of the loop, 
and let $y_1, ..., y_n$ be 
the double points which appear as edges of the loop. 
When $n \geq 3$, we assume that $y_i \, (1 \leq i \leq n-1)$ is 
in $Y_i \cap Y_{i+1}$ and $y_n$ is $Y_n \cap Y_1$. 
Let $\wt{X} \lra X$ be the normalization of $X^{\circ}$ endowed with 
the pullback log structure of that on $X$, and let $X_i \,(1 \leq i \leq n)$ be 
the connected component of $\wt{X}$ corresponding to $Y_i$. 
Define $x_i, x'_i \, (1 \leq i \leq n)$ in the following way: 
When $n=1$, let $x_1$ be a point of the inverse image of $y_1$ in $X_1$ and 
let $x'_1$ be another point of the inverse image of $y_1$ in $X_1$. 
When $n \geq 2$, let $x_i \,(1 \leq i \leq n)$ be the inverse image of 
$y_i$ in $X_i$, let $x'_i \, (1 \leq i \leq n-1)$ be the inverse image of 
$y_i$ in $X_{i+1}$ and let $x'_n$ be the inverse image of 
$y_n$ in $X_1$. Let 
$\alpha_1$ be an element of $\pi_{\dR}(X_1/S, x'_n, x_1)$ 
and denote its image in $\pi_{\dR}(X/S, y_n, y_1)$ by the same letter. 
Let $\alpha_i \, (2 \leq i \leq n)$ be an element of 
 $\pi_{\dR}(X_i/S, x'_{i-1}, x_i)$ 
and denote its image in $\pi_{\dR}(X/S, y_{i-1}, y_i)$ by the same letter. 
(We may assume the existence of the elements $\alpha_i$ ($1 \leq i \leq n$) 
by extending $k$, which is allowed by Proposition \ref{prop:!!bc}.) 
We will calculate the monodromy action on the element 
\begin{equation}\label{eq:loop1}
\alpha_n \circ \alpha_{n-1} \circ \cdots \circ \alpha_1 
\in \pi_1^{\dR}(X/S,y_n). 
\end{equation}
Let $\omega_i, \omega'_i$ be the tangential section at $x_i, x'_i$ respectively 
and put $\epsilon_i := \omega_i^*(1), \epsilon'_i := {\omega'_i}^*(1), 
\eta_i := {\epsilon'_i}^{-1} \circ \epsilon_i$. Then, by applying the monodromy action 
with respect to ${\omega'_n}^*$ to the 
element \eqref{eq:loop1}, we obtain the element 
\begin{equation}\label{eq:loop2}
{\epsilon'}_n^{-1} \circ \alpha_n \circ \alpha_{n-1} \circ \cdots \circ \alpha_1 \circ 
\epsilon'_n. 
\end{equation}
It is equal to 
\begin{equation*} 
({\epsilon'}_n^{-1} \circ \epsilon_n) \circ (\epsilon_n^{-1} \circ \alpha_n \circ \epsilon'_{n-1}) \circ 
({\epsilon'}_{n-1}^{-1} \circ \epsilon_{n-1}) \circ \cdots \circ 
({\epsilon'}_1^{-1} \circ \epsilon_1) \circ (\epsilon_1^{-1} \circ \alpha_1 \circ \epsilon'_n), 
\end{equation*}
and by Proposition \ref{prop:comp1}(2), it is equal to 
\begin{equation}\label{eq:loop3}
\eta_n \circ \alpha_n \circ \eta_{n-1} \circ \cdots \circ \eta_1 \circ \alpha_1. 
\end{equation}
Since the difference of the elements \eqref{eq:loop1}, \eqref{eq:loop3} in 
the abelianization $\pi_1^{\dR}(X/S,y_n)^{\rm ab}$ is equal to 
\begin{equation}\label{eq:loop4}
\eta_n \cdot \eta_{n-1} \cdot \, \cdots\,  \cdot \eta_1, 
\end{equation}
it suffices to see that the element \eqref{eq:loop4} is nontrivial in $\pi_1^{\dR}(X/S,y_n)^{\rm ab}$. 

Let $\gamma_i$ be an element in $\Gamma(X_i, \Omega^1_{X_i/S})$ whose 
residue at $x_i$ is $1$, whose residue at $x'_{i-1}$ ($x'_n$ when $i=1$) 
is $-1$ and whose residues along other marked points and double points are zero. 
(Such an element exists by the residue theorem and Riemann--Roch.) 
Also, let $X_i \,(n+1 \leq i \leq n')$ be the connected component of $\wt{X}$ other than 
$X_i \,(1 \leq i \leq n)$, and for $n+1 \leq i \leq n'$, put 
$\gamma_i := 0 \in \Gamma(X_i, \Omega^1_{X_i/S})$. Then we see that $\gamma_i$'s glue 
and give an element $\gamma \in \Gamma(X,\Omega^1_{X/S})$. 
We consider the action of the element  \eqref{eq:loop4} on the object 
$$(E,\nabla) := \left(\cO_X^2, d + \begin{pmatrix} 0 & \gamma \\ 0 & 0 \end{pmatrix}\right) \in \NfMIC(X/S). $$
By definition of $\eta_i$ (and the concrete description given in the last subsection), 
it acts on $E|_{y_i} = k^2$ by the matrix $\exp \begin{pmatrix} 0 & 2  \\ 0 & 0 \end{pmatrix}$. 
Thus the action of \eqref{eq:loop4} is given by 
$\exp \begin{pmatrix} 0 & 2n  \\ 0 & 0 \end{pmatrix}$, and since it is not the identity, 
the monodromy action is nontrivial on $\pi_1^{\dR}(X/S,y_n)^{\rm ab}$. So the proof is finished. 
\end{proof}

\subsection{Calculation of monodromy (II)}

In this subsection, we reduce the proof of Theorem \ref{!!!} to the case where the dual graph 
of the geometric fiber is a line and all the components corresponding to non-terminal vertices are 
$\Pr_k^1$. 

To prove it, first we prove a proposition which allows us to reduce the proof of 
Theorem \ref{!!!} to the case with fewer irreducible components. Assume $k$ is 
algebraically closed, 
let $f: X \lra S$ be a semistable log curve, let $X_1, X_2$ be unions of 
some irreducible components of $X$ endowed with pullback log structure from that of $X$ 
and let $x$ be a double point of $X$ such that $X_1 \cup X_2 = X, X_1 \cap X_2 = \{x\}$. 
Etale locally around $x$, each $X_i$ has a log structure of the form 
\eqref{eq:chartN2}. We define the log scheme $X'_i$ by 
changing the log structure around $x$ to the one induced by 
the chart \eqref{eq:chartP}. As before, changing the log structure in this way 
has the effect that $x$ is changed to a marked point. Also, 
let $Q$ be the submonoid of $\N^2$ generated by $(1,1)$ and 
we define the log scheme $\ol{X}_i$ by 
changing the log structure around $x$ to the one induced by 
the chart
\begin{align*}
& Q \to \cO_{X_i}, \quad 
\N \to k, \quad \N \to Q. \\ 
& \! (1,1) \mapsto 0 \quad \,\,\, 1 \mapsto 0 \quad 1 \mapsto 
(1,1) \nonumber  
\end{align*} 
Changing the log structure in this way 
has the effect that $x$ is changed to a smooth point. 
We denote the structure morphisms $X_i \lra S, X'_i \lra S, \ol{X}_i \lra S$ by 
$f_i, f'_i, \ol{f}_i$ respectively. 
There are canonical morphisms $X_i \lra X'_i \lra \ol{X}_i$ over $S$ and, as before, 
we have the equivalences 
\begin{equation}\label{eq:xx'2}
{\mathrm{N}}_{f_i}\MIC(X_i/S) = 
{\mathrm{N}}_{f'_i}\MIC(X'_i/S), \quad {\mathrm{N}}_{f_1}\MICn(X_i/k) = 
{\mathrm{N}}_{f'_i}\MICn(X'_i/k). 
\end{equation}
Let $\omega$ be a tangential section at $x$. 
As in the proof of Proposition \ref{prop:comp1}, we can prove the existence of the 
split exact sequence 
\begin{equation*}
\xymatrix {
1 \ar[r] & \pi_1^{\dR}(X_i/S,x) \ar[r] & 
\pi^{\dR}_1(X_i,x) \ar[r]^{f_{i,*}} & \pi^{\dR}_1(S,s) = \G_{a,k} \ar[r] 
\ar @/^6mm/[0,-1]^{\omega^*} & 1. 
} 
\end{equation*}
by using the equivalence \eqref{eq:xx'2}. Also, since $\ol{f}_i: \ol{X}_i \lra S$ is a 
semistable log curve, we have the split exact sequence 
\begin{equation*}
\xymatrix {
1 \ar[r] & \pi_1^{\dR}(\ol{X}_i/S,x) \ar[r] & 
\pi^{\dR}_1(\ol{X}_i,x) \ar[r]^{\ol{f}_{i,*}} & \pi^{\dR}_1(S,s) = \G_{a,k} \ar[r] 
\ar @/^6mm/[0,-1]^{\omega^*} & 1. 
} 
\end{equation*}
Hence the monodromy action on 
$\pi_1^{\dR}(X_i/S,x)$,  $\pi_1^{\dR}(\ol{X}_i/S,x)$ are defined as 
the conjugate action by $\epsilon := \omega^*(1)$. 
Then we have the following proposition. 

\begin{prop}\label{discard} 
Let the notations be as above. Then, if the monodromy action on 
$\pi_1^{\dR}(\ol{X}_1/S,x)$ is nontrivial as an element in ${\rm Out}(\pi_1^{\dR}(\ol{X}_1/S,x))$, 
that on $\pi_1^{\dR}(X/S,x)$ is nontrivial as an element in ${\rm Out}(\pi_1^{\dR}(X/S,x))$. 
\end{prop}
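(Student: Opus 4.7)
The strategy is to construct an exact, fully faithful tensor functor $\iota\colon \mathrm{N}_{\ol{f}_1}\MIC(\ol{X}_1/S)\hookrightarrow \NfMIC(X/S)$, together with its absolute analogue $\iota^{(k)}\colon \mathrm{N}_{\ol{f}_1}\MICn(\ol{X}_1/k)\hookrightarrow \NfMICn(X/k)$, both compatible with the tangential fiber functor at $x$. Via Tannakian duality (and the homotopy exact sequence of Proposition \ref{prop:HES_t}), this yields a $\pi_1^{\dR}(S,s)$-equivariant surjection $\pi_1^{\dR}(X/S,x)\twoheadrightarrow \pi_1^{\dR}(\ol{X}_1/S,x)$. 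Any such equivariant surjection sends inner automorphisms to inner ones, so nontriviality of the monodromy in $\mathrm{Out}(\pi_1^{\dR}(\ol{X}_1/S,x))$ forces nontriviality in $\mathrm{Out}(\pi_1^{\dR}(X/S,x))$, which is the claim.

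The functor $\iota$ is defined by gluing. Given $(\ol{E},\ol{\nabla})\in\mathrm{N}_{\ol{f}_1}\MIC(\ol{X}_1/S)$, I will pull back along the natural morphism $X_1\to X_1'\to\ol{X}_1$ and use the equivalence \eqref{eq:xx'2} to obtain $(E_1,\nabla_1)\in \mathrm{N}_{f_1}\MIC(X_1/S)$. On the other component, set $(E_2,\nabla_2):=f_2^*(\ol{E}|_x)$, the pullback along $f_2\colon X_2\to S$ of the fiber of $\ol{E}$ at the smooth point $x\in\ol{X}_1$. A direct local computation shows that, since $\Omega^1_{\ol{X}_1/S}$ has no log contribution at the smooth point $x$, the pullback $\nabla_1$ has zero residue at $x$, while $\nabla_2$ is trivial by construction. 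Thus $(E_1,\nabla_1)|_x$ and $(E_2,\nabla_2)|_x$ are both trivial log-connections on $x$ with log structure pulled back from $X$, and glue canonically along $E_1|_x\cong\ol{E}|_x\cong E_2|_x$ to an object of $\NfMIC(X/S)$. The absolute version $\iota^{(k)}$ is constructed identically: one verifies that both residues of the $X/k$-connection at $x$ (along the two branches of the node) equal the $S$-monodromy on $\ol{E}|_x$, and match on the two sides.

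Full faithfulness and closure under subquotients of $\iota$ reduce to the parallel statements for its two components. The composite with restriction to $X_1$ recovers the pullback along $X_1'\to\ol{X}_1$, which is fully faithful onto the subcategory of objects with zero residue at $x$ by the residue computation above; subobjects on the $X_2$-side take the form $f_2^*(V')$ for $V'\subseteq \ol{E}|_x$ by connectedness of $X_2$ (which gives $H^0(X_2,\cO_{X_2})=k$). The gluing compatibility at $x$ then forces any morphism $\varphi\colon\iota(\ol{E})\to\iota(\ol{E}')$, or subobject of $\iota(\ol{E})$, to descend to $\ol{X}_1$. Tensor compatibility and compatibility with the tangential section $\omega$ at $x$ are immediate, since by construction both residues of $\iota^{(k)}(\ol{E})$ at $x$ coincide, so either choice of tangential section picks out the same object $(\ol{E}|_x,N)\in\MICn(S/k)$, which agrees with the fiber of $\ol{E}$ at the smooth point $x\in\ol{X}_1$.

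The main technical obstacle is to execute the local residue computations rigorously at the level of log structures: one must carefully track the charts $\N^2$, $P$, and $Q$ through the morphisms $X_1\to X_1'\to\ol{X}_1$ and $X_i\hookrightarrow X$, and verify that the single $S$-relative residue at $x$ restricted from $X$ matches the prescribed residues on each $X_i$. Once these identifications are in place, the surjectivity of the induced map of fundamental groups follows from the standard Tannakian criterion that an exact fully faithful tensor functor closed under subquotients induces a surjection on Tannakian duals, and the $\pi_1^{\dR}(S,s)$-equivariance is ensured by the functoriality of the homotopy exact sequence.
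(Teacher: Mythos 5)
Your proof is correct in strategy and reaches the same key object as the paper — a surjection $\pi_1^{\dR}(X/S,x)\twoheadrightarrow\pi_1^{\dR}(\ol{X}_1/S,x)$ compatible with the monodromy action, from which the ${\rm Out}$-nontriviality transfer is immediate — but it gets there by a genuinely different route. The paper never constructs a functor: it passes to the associated pro-nilpotent Lie algebras and observes that, because an object of $\MIC(X/S)$ is a pair of objects on $X_1$ and $X_2$ matching at $x$, both $L(X/S)$ and $L(\ol{X}_1/S)$ are push-outs (of $L(X_1/S)$ and $L(X_2/S)$, resp.\ $L(X_1/S)$ and $0$, over $L(x/S)$) compatibly with the monodromy, and the surjection then drops out of the universal property of the push-out, with no full-faithfulness or subobject analysis needed. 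You instead build the Tannakian dual of that surjection explicitly — the ``pull back to $X_1$, extend by the constant object $f_2^*(\ol{E}|_x)$ on $X_2$'' embedding $\mathrm{N}_{\ol{f}_1}\MIC(\ol{X}_1/S)\hookrightarrow\NfMIC(X/S)$ — and invoke the Deligne--Milne criterion (fully faithful, essential image closed under subobjects) to get surjectivity, handling equivariance through an absolute version $\iota^{(k)}$ and the residue computation $(V,N,N)$ at the node. This buys a concrete description of the map (your functor is exactly the dual of the paper's Lie-algebra map, which sends $L(X_1/S)$ naturally and kills $L(X_2/S)$), but at the cost of verifications the paper's formal argument avoids: besides the local residue bookkeeping you flag, the subobject-closure step needs (a) that a subobject of a trivial object in the unipotent category over $X_2$ is trivial, (b) that the $X_1$-part of the subobject, having zero residue at $x$ (which follows from local freeness of the quotient, via $0\to\Omega^1_{\ol{X}_1/S}\to\Omega^1_{X_1/S}\to k_x\to 0$), is moreover unipotent \emph{as an object over} $\ol{X}_1$, not just over $X_1$ — this last point is glossed in your sketch, though it follows by induction using the canonical filtration; and the implicit gluing lemma that an object of $\MICn(X/k)$ is determined by its restrictions to $X_1$, $X_2$ together with an isomorphism of their restrictions in $\MICn(x/k)$ also deserves a word. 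None of these would fail, so your argument is a valid alternative; the paper's push-out argument is simply shorter and avoids them.
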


\begin{proof}
Note that the monodromy action on the de Rham fundamental groups induces that on 
the associated Lie algebras. By functoriality, we have the following commutative diagrams  
of pro-nilpotent Lie algebras compatible with the monodromy action: 
\begin{equation}\label{eq:liemono}
\begin{CD}
L(x/S) @>>> L(X_1/S) \\ 
@VVV @VVV \\ 
L(X_2/S) @>>> L(X/S), 
\end{CD}
\qquad 
\begin{CD}
L(x/S) @>>> L(X_1/S) \\ 
@VVV @VVV \\ 
0 @>>> L(\ol{X}_1/S).  
\end{CD}
\end{equation}
Recall that we indicated by $\NfMIC(X/S)$ 
(resp. $\NfMIC(X_1/S)$, $\NfMIC(X_2/S)$, $\MICn(x/S)$) 
the subcategory of $\MIC(X/S)$ 
(resp. $\MIC(X_1/S)$, $\MIC(X_2/S)$, $\MIC(x/S)$) consisting of 
iterated extensions of trivial objects.  Since to give an object in $\MIC(X/S)$ is equivalent to 
give a pair of objects in $\MIC(X_1/S)$ and $\MIC(X_2/S)$ 
which are compatible in $\MIC(x/S)$, then  we see that the first diagram 
in \eqref{eq:liemono} is a push-out diagram in the category of 
pro-nilpotent Lie algebras compatible with the monodromy action. By the same reason, 
the second diagram 
in \eqref{eq:liemono} is also 
a push-out diagram in the category of 
pro-nilpotent Lie algebras compatible with the monodromy action.
Hence the diagrams in \eqref{eq:liemono} induces the surjective 
morphism of pro-nilpotent Lie algebras 
$$ L(X/S) \lra L(\ol{X}_1/S) $$
which is compatible with the monodromy action. 
Indeed, it is induced by the map between the subdiagrams 
\begin{equation*}
\begin{CD}
L(x/S) @>>> L(X_1/S) \\ 
@VVV \\ 
L(X_2/S), @.
\end{CD}
\qquad 
\begin{CD}
L(x/S) @>>> L(X_1/S) \\ 
@VVV \\ 
0, @. 
\end{CD}
\end{equation*}
which is compatible with the monodromy action. 
Hence we obtain the 
surjective morphism of group schemes 
$$ \pi_1^{\dR}(X/S,x) \lra \pi_1^{\dR}(\ol{X}_1/S,x) $$ 
which is compatible with the monodromy action. Therefore, 
if the monodromy action on 
$\pi_1^{\dR}(\ol{X}_1/S,x)$ is nontrivial as an element in 
${\rm Out}(\pi_1^{\dR}(\ol{X}_1/S,x))$, 
that on $\pi_1^{\dR}(X/S,x)$ is nontrivial 
as an element in ${\rm Out}(\pi_1^{\dR}(X/S,x))$. 
\end{proof}

When $k$ is algebraically closed and $f:X \lra S$ is a semistable log curve, 
an irreducible component of $X$ is called a {\it terminal component} if 
the corresponding vertex in the dual graph of $X$ is terminal 
(meets only one edge). 
Then the following theorem is the main result in this subsection: 

\begin{thm}\label{thm:line}
To prove Theorem \ref{!!!}, it suffices to prove it in the following case$:$ 
$k$ is algebraically closed, the dual graph of $X$ is a line and 
the non-terminal components of $X$ are isomorphic to $\Pr^1_k$ with no marked points. 
\end{thm}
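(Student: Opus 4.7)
The plan is to chain three successive reductions. By Proposition \ref{prop:!!bc}, extending scalars does not affect the conclusion, so I may assume $k$ is algebraically closed; by Theorem \ref{thm:loop}, the case in which the dual graph $\Gamma$ of $X$ contains a loop is already settled, so I may further assume $\Gamma$ is a tree. Since $X$ is non-smooth, $\Gamma$ has at least one edge, hence at least two vertices.

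The remaining work is to use Proposition \ref{discard} iteratively to cut features off $X$ until what remains matches the prescribed shape. The procedure splits naturally into two phases. In a \emph{pruning phase}, as long as $\Gamma$ contains a vertex $v$ of degree $\geq 3$, I would pick one of the branches hanging off $v$, let $y$ be the double point joining $v$ to that branch, split $X = X_1 \cup X_2$ with $X_1$ equal to the union of the components in that branch, and apply Proposition \ref{discard} to replace $X$ by $\ol{X}_2$; each such step strictly decreases the degree of $v$ (and the total number of vertices of $\Gamma$), so the iteration terminates with $\Gamma$ a line. In a \emph{trimming phase}, if the resulting line $v_1, v_2, \ldots, v_n$ has an internal vertex $v_i$ whose component is not $\Pr_k^1$ or carries a marked point, I would cut at one of the two double points incident to $v_i$ and apply Proposition \ref{discard} to shorten the line on one side of $v_i$; repeating on both sides of every offending internal vertex eventually leaves a line whose non-terminal components are all $\Pr_k^1$ without marked points (in the extreme case, a line with just two vertices, where the non-terminal condition is vacuous).

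The main obstacle I anticipate is ensuring that the curve left at the end of this procedure still satisfies the minimality hypothesis demanded by Theorem \ref{!!!}. The difficulty is that turning a double point into a smooth point, which is the effect of Proposition \ref{discard}, can demote a $\Pr_k^1$-component that met other components in precisely two points in $X$ (hence was unconstrained by the minimality condition) into one meeting other components in a single point; the minimality condition then forces it to carry at least two extra marked points, which need not be present. My plan to cope with this is to interleave the pruning and trimming with a corrective step: whenever a Proposition \ref{discard} move creates such a non-minimal $\Pr_k^1$-leaf, I immediately prune it off by a further application of Proposition \ref{discard}, strictly decreasing the number of components. The iteration thus terminates in a minimal semistable log curve of the prescribed special shape, and chaining Proposition \ref{discard} along the whole sequence of cuts transfers its nontrivial outer monodromy back to that of the original $X$.
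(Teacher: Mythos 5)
Your opening reductions (algebraically closed $k$ via Proposition \ref{prop:!!bc}, tree-shaped dual graph via Theorem \ref{thm:loop}) and your pruning phase are sound and essentially the paper's: cutting off a branch at a vertex of degree $\geq 3$ is exactly an application of Proposition \ref{discard}, and since that vertex retains degree $\geq 2$ the minimality condition is never disturbed (the paper organizes the same pruning around a line $L$ joining two terminal vertices and cuts only at internal vertices of $L$, but the effect is the same). Likewise, eliminating an internal component that is not isomorphic to $\Pr^1_k$ by cutting the line on one side of it is fine, because a non-$\Pr^1_k$ component may become terminal without violating minimality.

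The genuine gap is in your trimming of an internal component that is a $\Pr^1_k$ carrying exactly one marked point, and your corrective step does not repair it. Cutting at a double point adjacent to such a component either turns it into a terminal $\Pr^1_k$ with one double point and one marked point (non-minimal), or discards it and makes its neighbour terminal; your corrective prune then removes the offending leaf, but the newly exposed terminal component may itself be a $\Pr^1_k$ with no marked points -- precisely the components you are trying to keep -- and the cascade continues. It can consume the whole curve: take the minimal semistable log curve whose dual graph is a chain of three vertices, all three components isomorphic to $\Pr^1_k$, carrying $2$, $1$, $2$ marked points respectively. Keeping the whole curve leaves an internal component with a mark; keeping any two adjacent components leaves a terminal $\Pr^1_k$ with only two special points (non-minimal); keeping a single component yields a smooth curve, to which Theorem \ref{!!!} does not apply (and whose monodromy is in fact outer-trivial by Proposition \ref{not_good_trivial}), so no nontriviality can be transported back to $X$. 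The underlying difficulty is that Proposition \ref{discard} alone cannot remove a marked point from an internal component. The paper uses a different tool at this step: it erases the marked points on the internal components by replacing the log structure there with the pullback of the log structure of $S$, and notes that the induced map $\pi_1^{\dR}(X/S,\iota) \to \pi_1^{\dR}(\ol{X}/S,\iota)$ is a surjection compatible with the monodromy action; since nothing is cut, minimality and non-smoothness are preserved, and only afterwards does it perform the final Proposition \ref{discard} cut at the first internal component not isomorphic to $\Pr^1_k$. Without an argument of this kind your reduction does not terminate in a curve to which the special case of Theorem \ref{!!!} applies.
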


\begin{proof}
Let $\Gamma$ be the dual graph of $X$. We may assume that $\Gamma$ is a tree 
by Theorem \ref{thm:loop}. Then there exist at least two terminal vertices:   
connect  two of them  by (the unique) line $L$. Then the line $L$ consists of vertices 
$v_1, ..., v_n$ and the edges $e_1, ..., e_{n-1}$ such that $e_i \, (1 \leq i \leq n-1)$ 
connects $v_i$ and $v_{i+1}$. Denote the irreducible component 
corresponding to $v_i$ by $X_i$ and the double point corresponding to $e_i$ by $x_i$. 

Suppose that there exists an edge $e$ not in $L$ which touches some vertex $v_i$
in $L$. (Because such a vertex $v_i$ cannot be terminal, we have $2 \leq i \leq n-1$.) 
Since $\Gamma$ is a tree, $\Gamma \setminus \{e\}$ is the disjoint union 
of subgraphs $\Gamma_1 \sqcup \Gamma_2$ with $L \subseteq \Gamma_1$. 
Let $Y_1$ (resp. $Y_2$) be the union of irreducible components of $X$ corresponding to 
the graph $\Gamma_1$ (resp. $\Gamma_2 \cup \{e, v_i\}$). Then we have 
$X = Y_1 \cup Y_2, Y_1 \cap Y_2 = \{x_i\}$. If we define $\ol{Y}_1$ from $Y_1$ as 
before Proposition \ref{discard}, we see by Proposition \ref{discard} that 
the theorem for $X$ is reduced to the theorem for $\ol{Y}_1$. Note that 
$\ol{Y}_1$ remains to be minimal: The only irreducible component in $\ol{Y}_1$ 
which is changed is $X_i$, and since $X_i$ intersects with $X_{i-1}$ and $X_{i+1}$, 
this change does not violate the minimality. Thus we could reduce to the case 
with fewer irreducible components. Repeating this procedure, we can reduce the 
theorem to the case that the dual graph of $X$ is equal to the line $L$. 

Next, let $\ol{X}$ be the log scheme which we obtain by changing the log structure 
around the marked points on $X_i \, (2 \leq i \leq n-1)$ to the pullback log structure 
from that on $S$. (This change has the effect to erase the marked points from 
$X_i \, (2 \leq i \leq n-1)$.) Then, since $\pi_1^{\dR}(X/S,\iota) \lra \pi_1^{\dR}(\ol{X}/S,\iota)$ 
is surjective morphism compatible with monodromy action, we can reduce the theorem 
for $X$ to the theorem for $\ol{X}$. So we may assume that 
$X_i \, (2 \leq i \leq n-1)$ has no marked points ( Proposition \ref{discard}).

Finally, if there is an index $i \, (2 \leq i \leq n-1)$ 
with $X_i$ nonisomorphic to $\Pr^1_k$, we take the minimal $i$ with this condition, 
let $L_1$ (resp. $L_2$) be the subgraph of $L$ whose vertices are 
$v_1, ..., v_i$ (resp. $v_{i+1}, ..., v_n$) and let $Y_1$ (resp. $Y_2$) be 
the union of irreducible components of $X$ corresponding to 
the graph $L_1$ (resp. $L_2 \cup \{e_i, v_i\}$). Then, 
if we define $\ol{Y}_1$ from $Y_1$ as 
before Proposition \ref{discard}, we see by Proposition \ref{discard} that 
the theorem for $X$ is reduced to the theorem for $\ol{Y}_1$ and that 
$\ol{Y}_1$ is a minimal semistable log curve satisfying 
the condition described in the statement of the theorem. So we are done. 
\end{proof}

\subsection{Calculation of monodromy (III)}

In this subsection, assume that $k$ is algebraically closed. Let 
$f:X \lra S$ be a minimal semistable log curve satisfying the condition stated in 
Theorem \ref{thm:line}, namely, a minimal semistable log curve 
such that the dual graph of $X$ is a line and 
the non-terminal components of $X$ are isomorphic to $\Pr^1_k$ with no marked point. 
In this subsection, we calculate the monodromy action on certain elements in 
the de Rham fundamental group of $X$ over $S$. 

Let us fix some notations. Let $Y_i \,(0 \leq i \leq n)$ be 
the irreducible components of $X$ and let $y_i \,(0 \leq i \leq n-1)$ be 
the double points of $X$ such that $Y_i \cap Y_{i+1} = \{y_i\}$. 
Note that, by our assumption, $Y_i \,(1 \leq i \leq n-1)$ are isomorphic to 
$\Pr_k^1$. We assume that $Y_i$'s, $y_i$'s are endowed with the pullback log structure of that on $X$. 
Let $\wt{X} \lra X$ be the normalization of $X$ endowed with the pullback log 
structure of that on $X$ and let $X_i \, (0 \leq i \leq n)$ be 
the connected component of $\wt{X}$ corresponding to $Y_i$. 
Let $x_i \,(0 \leq i \leq n-1)$ be the inverse image of $y_i$ in $X_i$ and 
let $x'_i \, (0 \leq i \leq n-1)$ be the inverse image of $y_i$ in $X_{i+1}$. 
Let $\alpha_0$ (resp. $\alpha_n$) 
be an element of $\pi_1^{\dR}(X_0/S,x_0)$ 
(resp. $\pi_1^{\dR}(X_n/S, x'_{n-1})$) and denote its image in 
$\pi_1^{\dR}(X/S,y_0)$ 
(resp. $\pi_1^{\dR}(X/S, y_{n-1})$) by the same letter. 
Also, let $\alpha_i \,(1 \leq i \leq n-1)$ be the element 
of $\pi_1^{\dR}(X_i/S, x'_{i-1},x_i)$ defined by 
$$ E|_{x'_{i-1}} \os{\cong}{\lla} \Gamma(Y_i, E) \os{\cong}{\lra} 
E|_{x_i} \quad ((E,\nabla) \in {\mathrm{N}}_{f_i}\MIC(X_i/S), f_i:X_i \lra S), $$
and denote its image in $\pi_1(X/S,y_{i-1},y_i)$ by the same letter. 
(The well-definedness of the elements $\alpha_i \,(1 \leq i \leq n-1)$ 
follows from Remark \ref{rem:20190306}.) 
We put $\alpha := \alpha_0$, 
$\beta = 
\alpha_1^{-1} \circ \cdots \circ \alpha_{n-1}^{-1} \circ \alpha_n \circ 
\alpha_{n-1} \circ \cdots \circ \alpha_1$ and 
calculate the monodromy action on the element $\beta \circ \alpha$. 

Let $\omega_i, \omega'_i$ be the tangential section at $x_i, x'_i$ respectively 
and put $\epsilon_i := \omega_i^*(1), \epsilon'_i := {\omega'_i}^*(1), 
\eta_i := {\epsilon'_i}^{-1} \circ \epsilon_i$. Then we have the following: 

\begin{prop}\label{prop:linemono}
Let the notations be as above. If we apply the monodromy action with respect to 
$\omega_0$ to $\beta \circ \alpha$, we obtain the element 
$\eta_0^{-n} \circ \beta \circ \eta_0^n \circ \alpha$. 
\end{prop}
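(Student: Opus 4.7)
The strategy is to reduce the claim to conjugation identities in the groupoid picture, exploiting the relations between tangential-section elements and path elements provided by Propositions \ref{prop:comp1} and \ref{prop:comp2}. The first observation is that since $\alpha = \alpha_0$ comes from $\pi_1^{\dR}(X_0/S,x_0)$ and $\omega_0$ is the tangential section at $x_0 \in X_0$, Proposition \ref{prop:comp1}(1) immediately gives $\epsilon_0^{-1} \circ \alpha \circ \epsilon_0 = \alpha$. So the proposition reduces to the identity $\epsilon_0^{-1} \circ \beta \circ \epsilon_0 = \eta_0^{-n} \circ \beta \circ \eta_0^n$, after which multiplying on the right by $\alpha$ yields the claim.

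Writing $\gamma := \alpha_{n-1} \circ \cdots \circ \alpha_1 \in \pi_1^{\dR}(X/S, y_0, y_{n-1})$, one has $\beta = \gamma^{-1} \circ \alpha_n \circ \gamma$. The core of the argument is an explicit transport formula
\[
\gamma \circ \epsilon_0 \circ \gamma^{-1} \,=\, \epsilon_{n-1} \circ \eta_{n-1}^{\,n-1},
\]
which I would prove by induction on the partial products $\gamma_i := \alpha_i \circ \cdots \circ \alpha_1$, establishing $\gamma_i \circ \epsilon_0 \circ \gamma_i^{-1} = \epsilon_i \circ \eta_i^{\,i}$. The inductive step combines three ingredients for each $\Pr^1$-component $X_{i+1}$: the definitional relation $\epsilon_i = \epsilon'_i \circ \eta_i$; the identity $\alpha_{i+1} \circ \epsilon'_i = \epsilon_{i+1} \circ \alpha_{i+1}$ coming from Proposition \ref{prop:comp1}(2) (with tangential sections $\omega'_i, \omega_{i+1}$ at the source and target of $\alpha_{i+1}$); and the commutation $\alpha_{i+1} \circ \eta_i = \eta_{i+1} \circ \alpha_{i+1}$ coming from Proposition \ref{prop:comp2}, which applies precisely because $X_{i+1} \cong \Pr^1_k$ with no marked points and only the two nodes $x'_i, x_{i+1}$. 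Combining these yields $\alpha_{i+1} \circ \epsilon_i \circ \alpha_{i+1}^{-1} = \epsilon_{i+1} \circ \eta_{i+1}$, and after conjugating $\eta_i^{\,i}$ through $\alpha_{i+1}$ one picks up $\eta_{i+1}^{\,i}$ as a further factor, producing $\eta_{i+1}^{\,i+1}$.

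Telescoping the same commutation relation $\alpha_{i+1} \circ \eta_i = \eta_{i+1} \circ \alpha_{i+1}$ also gives $\gamma \circ \eta_0 = \eta_{n-1} \circ \gamma$, and by iteration $\gamma^{-1} \circ \eta_{n-1}^{\,k} = \eta_0^{\,k} \circ \gamma^{-1}$ for every $k$. It remains to evaluate $\epsilon_{n-1}^{-1} \circ \alpha_n \circ \epsilon_{n-1}$. Since $\omega_{n-1}$ is tangential at $x_{n-1} \in X_{n-1}$ rather than at a point of $X_n$, Proposition \ref{prop:comp1}(1) does not apply directly; instead I would rewrite $\epsilon_{n-1} = \epsilon'_{n-1} \circ \eta_{n-1}$ and apply Proposition \ref{prop:comp1}(1) to $\alpha_n$ together with $\epsilon'_{n-1}$ (whose underlying tangential section $\omega'_{n-1}$ lies at $x'_{n-1} \in X_n$), giving $\epsilon_{n-1}^{-1} \circ \alpha_n \circ \epsilon_{n-1} = \eta_{n-1}^{-1} \circ \alpha_n \circ \eta_{n-1}$. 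Inserting the transport formula into
\[
\epsilon_0^{-1} \circ \beta \circ \epsilon_0 \,=\, (\gamma \epsilon_0)^{-1} \circ \alpha_n \circ (\gamma \epsilon_0) \,=\, \gamma^{-1} \eta_{n-1}^{-(n-1)} (\epsilon_{n-1}^{-1} \alpha_n \epsilon_{n-1}) \eta_{n-1}^{\,n-1} \gamma
\]
and then using the $\eta$-commutation to push all the $\eta_{n-1}$'s past $\gamma^{\pm 1}$ collapses the expression to $\eta_0^{-n} \circ \beta \circ \eta_0^{\,n}$, as required.

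The main obstacle I anticipate is bookkeeping: each commutation must be applied on the correct side of each node, consistently distinguishing $\omega_i$ from $\omega'_i$ according to which component of the normalization carries the tangential direction, and the rewriting $\epsilon_i = \epsilon'_i \circ \eta_i$ must be inserted in exactly the right order so as not to miscount the exponent of $\eta_i$ (the $n$ versus $n-1$ distinction in $\eta_{n-1}^{\,n-1}$ versus the final $\eta_0^{\,n}$ is where an off-by-one error would be easy to make).
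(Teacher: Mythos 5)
Your proposal is correct and uses essentially the same ingredients as the paper's proof: both arguments reduce the conjugation by $\epsilon_0$ to repeated applications of Propositions \ref{prop:comp1} and \ref{prop:comp2} (equivalently the relations $\alpha_i\circ\epsilon'_{i-1}=\epsilon_i\circ\alpha_i$, $\eta_i\circ\alpha_i=\alpha_i\circ\eta_{i-1}$ and $\epsilon_i=\epsilon'_i\circ\eta_i$), the paper doing so by inserting $\epsilon_i\epsilon_i^{-1}$ and $\epsilon'_i{\epsilon'}_i^{-1}$ into the long word while you package the same cancellations into the inductive transport formula $\gamma_i\circ\epsilon_0\circ\gamma_i^{-1}=\epsilon_{i+0}\circ\eta_i^{\,i}$. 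The difference is purely one of bookkeeping, and your handling of the terminal factor via $\epsilon'_{n-1}$ matches the paper's.
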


\begin{proof}
By definition of the monodromy action, the element we obtain is 
$$ 
\epsilon_0^{-1} \circ 
\alpha_1^{-1} \circ \cdots \circ \alpha_{n-1}^{-1} \circ \alpha_n \circ 
\alpha_{n-1} \circ \cdots \circ \alpha_1 \circ \alpha_0 \circ \epsilon_0, 
$$
which can be  rewritten as 
\begin{align*}
& 
(\epsilon_0^{-1} \circ \epsilon'_0) \circ 
({\epsilon'}_0^{-1} \circ \alpha_1^{-1} \circ \epsilon_1) \circ 
(\epsilon_1^{-1} \circ \epsilon'_1) \circ 
\cdots \circ ({\epsilon'}_{n-2}^{-1} \circ \alpha_{n-1}^{-1} \circ \epsilon_{n-1}) 
\circ (\epsilon_{n-1}^{-1} \circ \epsilon'_{n-1}) \label{eq:linemono1} \\ & \circ 
({\epsilon'}_{n-1}^{-1} \circ \alpha_n \circ \epsilon'_{n-1}) 
\circ ({\epsilon'}_{n-1}^{-1} \circ \epsilon_{n-1}) \circ  
(\epsilon_{n-1}^{-1} \circ \alpha_{n-1} \circ \epsilon'_{n-2}) \circ 
\cdots \circ (\epsilon_1^{-1} \circ \alpha_1\circ \epsilon'_0) \nonumber \\ & \circ 
({\epsilon'}_0^{-1} \circ \epsilon_0) \circ 
(\epsilon_0^{-1} \circ \alpha_0 \circ \epsilon_0).  \nonumber 
\end{align*}
By Proposition \ref{prop:comp1}, it is equal to 
$$ 
\eta_0^{-1} \circ \alpha_1^{-1} \circ \eta_1^{-1} \circ \cdots \circ 
\alpha_{n-1}^{-1} \circ \eta_{n-1}^{-1} \circ \alpha_n 
\circ \eta_{n-1} \circ \alpha_{n-1} \circ \cdots \circ 
\alpha_1 \circ \eta_0 \circ \alpha_0
$$
and by Proposition \ref{prop:comp2}, it is further equal to 
$$ 
\eta_0^{-n} \circ \alpha_1^{-1} \circ \cdots \circ \alpha_{n-1}^{-1} 
\circ \alpha_n \circ \alpha_{n-1} \circ \cdots \circ \alpha_1 \circ \eta_0^n \circ \alpha_0 
= \eta_0^{-n} \circ \beta \circ \eta_0^n \circ \alpha. 
$$
(Here we apply Proposition \ref{prop:comp2} by replacing 
$X_1, x_1, y_1$ with  $X_i, x'_{i-1},x_i$ respectively, 
and the elements $\eta_x, \eta_y$ in the proposition 
are now  $\eta_{i-1}^{-1}, \eta_i$ respectively. We then obtain the equality $\eta_i \circ \alpha_i = \alpha_i \circ \eta_{i-1}$.) 
\end{proof}

\begin{rem}\label{rem:pi1-yz}
Let the notation be as above and 
$Y := Y_0, Z := \bigcup_{i=1}^n Y_i, y := y_0$ (endowed with the pullback log structure 
of that on $X$). Then the pro-nilpotent Lie algebra $L(X/S)$ fits into the following 
push-out diagram in the category of pro-nilpotent Lie algebras: 
\begin{equation*}
\begin{CD}
L(y/S) @>>> L(Y/S) \\ 
@VVV @VVV \\ 
L(Z/S) @>>> L(X/S).  
\end{CD}
\end{equation*}
To finish the proof of Theorem \ref{!!!} (which will be given in the  next subsection) we need to have a 
 concrete description of the maps $L(y/S) \lra L(Y/S), 
L(y/S) \lra L(Z/S)$. This is the subject of this remark. 

First, if we construct the log scheme $Y'$ from $Y$ using  the methods we  introduced at the beginning  of Subsection 6.5 ($y$ becomes a marked point), 
we have $L(Y/S) \cong L(Y'/S)$. But, since  $Y'$ has $y$ as a marked point, 
we conclude that 
$L(Y/S)$ is a free pro-nilpotent Lie algebra by Proposition \ref{propr>0}. 
We denote its rank by $m$ in this remark. 
(Note that, by minimality assumption on $X$, $m \geq 2$.) 
Next, if we construct the log scheme 
$\ol{Y}$ from $Y$ (again following the notation and recipe of Subsection 6.5),  we have the push-out diagram 
of pro-nilpotent Lie algebras 
\begin{equation*}
\begin{CD}
L(y/S) @>>> L(Y/S) \\ 
@VVV @VVV \\ 
0 @>>> L(\ol{Y}/S),   
\end{CD}
\end{equation*}
namely, $L(\ol{Y}/S)$ is the quotient of $L(Y/S)$ by the ideal generated by 
the image of $1 \in k = L(y/S) \lra L(Y/S)$. 

If $Y$ has a marked point, so does $\ol{Y}$ and the number of marked points 
in $\ol{Y}$ is one less than that in $Y'$. Hence, 
by Proposition \ref{propr>0}, $L(\ol{Y}/S)$ is a free pro-nilpotent Lie algebra 
whose rank is one less than that of $L(Y/S)$. Thus we see that 
there exists an isomorphism  of free pro-nilpotent Lie algebras
\begin{equation}\label{eq:concrete1}
L(Y/S) \cong L(v_1, ..., v_m) 
\end{equation}
(we use the notation as before Proposition \ref{prope}) 
 and we may suppose that   the image of $1 \in k = L(y/S) \lra L(Y/S)$ is $v_1$. 
On the other hand, if $Y$ does not have a marked point, 
neither does $\ol{Y}$. Then we have $m = 2g$ with $g (\geq 1)$ the genus of $\ol{Y}$ 
and there exist compatible isomorphisms  
\begin{equation}\label{eq:concrete2}
L(Y/S) \cong L(Y'/S) \cong L(v_1, ..., v_{2g}), \quad 
L(\ol{Y}/S) \cong L(v_1, ..., v_{2g})/\langle \sum_{i=1}^g[v_{2i-1},v_{2i}] \rangle 
\end{equation}
with the image of $1 \in k = L(y/S) \lra L(Y/S)$
equal to $\sum_{i=1}^g[v_{2i-1},v_{2i}]$, 
by the proof of Proposition \ref{prope}. 

Next, note that we have an isomorphism 
$L(Y_n/S) \os{\cong}{\lra} L(Z/S)$.  Indeed, if we put 
$Z_i := \bigcup_{j=i}^n Y_j$ (in particular:  $Z_n = Y_n, Z_1 = Z$), we have 
the push-out diagram 
of pro-nilpotent Lie algebras 
\begin{equation*}
\begin{CD}
L(y_i/S) @>>> L(Y_i/S) \\ 
@VVV @VVV \\ 
L(Z_{i+1}/S) @>>> L(Z_i/S), 
\end{CD}
\end{equation*}
and since we see easily that the upper horizontal arrow is an isomorphism 
(both are equal to $k$: indeed in the projective line 
$Y_i$ with the induced log-structure we have two marked points and we can use Proposition \ref{propr>0}), we see that the lower horizontal arrow is also 
an isomorphism and  we conclude with an isomorphism 
$L(Y_n/S) \os{\cong}{\lra} L(Z/S)$. 
Using this, we can see that the map 
$L(y/S) \lra L(Z/S)$ may be  described in a similar way as  the map 
$L(y/S) \lra L(Y/S)$. In fact:   
if  $Z$ has a marked point (i.e. so does $Y_n$), there exists an isomorphism  of free pro-nilpotent Lie algebras
\begin{equation}\label{eq:concrete3}
L(Z/S) \cong L(w_1, ..., w_{m'}) 
\end{equation}
for some $m' \geq 2$ 
such that the image of $1 \in k = L(y/S) \lra L(Z/S)$ can be given equal to $w_1$. 
When $Z$ does not have a marked point, 
there exist compatible isomorphisms  
\begin{equation}\label{eq:concrete4}
L(Z/S) \cong L(w_1, ..., w_{2g'}), \quad 
L(\ol{Z}/S) \cong L(w_1, ..., w_{2g'})/\langle \sum_{i=1}^g[w_{2i-1},w_{2i}] \rangle 
\end{equation}
for some $g' \geq 1$  (here $g'$ is the genus of $Y_n$ and $\ol{Z}$ is the log scheme we obtain from $Z$ by the recipe 
in Subsection 6.5) 
with the image of $1 \in k = L(y/S) \lra L(Z/S)$
equal to $- \sum_{i=1}^g[w_{2i-1},w_{2i}]$. (We can adjust the generators 
$w_1, \cdots, w_{2g}$ suitably so that the image of $1 \in L(y/S)$ is 
$- \sum_{i=1}^g[w_{2i-1},w_{2i}]$, not $\sum_{i=1}^g[w_{2i-1},w_{2i}]$.) 
\end{rem}

\subsection{End of the proof}

In this subsection, we finish the proof of the `only if' part of Theorem \ref{!!} and 
Theorem \ref{!!!}. 

\begin{proof}[Proof of  the `only if' part of Theorem \ref{!!} and 
Theorem \ref{!!!}]
By Proposition \ref{!!bu}, the `only if' part of Theorem \ref{!!} follows from 
Theorem \ref{!!!}. Then, by Theorem \ref{thm:line}, it suffices to prove 
Theorem \ref{!!!} in the case where $k$ is algebraically closed,  
the dual graph of $X$ is a line and 
the non-terminal components of $X$ are isomorphic to $\Pr_k^1$ with 
no marked point. In the following, we assume that $X$ satisfies these assumptions. 

Let the notations be as in Subsection 6.6 and, in particular,  the decomposition as  in Remark \ref{rem:pi1-yz}. 
Then to such a decomposition we apply  Proposition \ref{prop:linemono}:  
the monodromy action on $\pi_1^{\dR}(X/S,y)$ sends an element of the particular  form  as in the proposition
$\beta \circ \alpha \, (\alpha \in \pi_1^{\dR}(Y/S,y), \beta \in \pi_{\dR}(Z/S,y))$ to  
$\eta_0^{-n} \circ \beta \circ \eta_0^n \circ \alpha$, where 
$\eta_0$ is a nontrivial element of $\pi_1^{\dR}(y/S,y) \cong \G_{a,k}$. 
(Note that the isomorphism $L(Y_n/S) \os{\cong}{\lra} L(Z/S)$ in Remark \ref{rem:pi1-yz} 
induces the isomorphism 
$$ \pi_1^{\dR}(Y_n/S,y_{n-1}) \os{\cong}{\lra} \pi_{\dR}(Z/S, y); 
\,\,\,\, \alpha_n \mapsto 
\alpha_1^{-1} \circ \cdots \circ \alpha_{n-1}^{-1} \circ \alpha_n \circ 
\alpha_{n-1} \circ \cdots \circ \alpha_1. $$
So the particular  element $\beta$ in Proposition \ref{prop:linemono} 
can be understood as any   element of $\pi_1^{\dR}(Z/S,y)$!) 

We assume that the monodromy action on 
$\pi_1^{\dR}(X/S,y)$ is trivial as an element in  
${\rm Out}(\pi_1^{\dR}(X/S,y))$ and deduce a contradiction. By assumption, 
there exists an element $\delta \in \pi_1^{\dR}(X/S,y)$ such that the monodromy 
action is given by $\gamma \mapsto \delta^{-1} \circ \gamma \circ \delta \, 
(\gamma \in \pi_1^{\dR}(X/S,y))$. By comparing it with the description 
of the monodromy action for $\beta \circ \alpha$ given above, we obtain 
the equality 
$$ 
\eta_0^{-n} \circ \beta \circ \eta_0^n \circ \alpha = 
\delta^{-1} \circ \beta \circ \alpha \circ \delta$$ 
for any $\alpha \in \pi_1^{\dR}(Y/S,y), \beta \in \pi_1^{\dR}(X/S,y)$. 
By considering the case $\beta = \id$ and the case $\alpha = \id$, we obtain 
the following: 
\begin{align}
& \text{For any $\alpha \in \pi_1^{\dR}(Y/S,y), \,\, \alpha = \delta^{-1} \circ \alpha \circ \delta$.} 
\label{eq:final1} \\ 
& \text{For any $\beta \in \pi_1^{\dR}(Z/S,y), \,\, \eta_0^{-n} \circ \beta \circ \eta_0^n = \delta^{-1} \circ \beta \circ \delta$.}
\label{eq:final2} 
\end{align}

We write these assertions in the Lie algebra $L(X/S)/\Fil^4L(X/S)$. 
Let $\{\Gamma^n\}_n$ be the lower central series of $\pi_1^{\dR}(X/S,y)$ (with 
$\Gamma^1 = \pi_1^{\dR}(X/S,y)$) and consider the exponential isomorphism 
$$ \exp: L(X/S)/\Fil^4L(X/S) \os{\cong}{\lra} \pi_1^{\dR}(X/S,y)/\Gamma^4. $$
By Baker--Campbell--Hausdorff formula, we have 
$$ \exp(v)\exp(w) = \exp(v + w + (1/2)[v,w] + (1/12)([v,[v,w]]+[w,[w,v]])) $$ 
for $v, w \in L(X/S)/\Fil^4L(X/S)$. 
In the following, for $n \in \N$, 
we denote the equality in $L(X/S)/\Fil^nL(X/S)$ by $\equ{n}$. 
If we put $\alpha = \exp(a), \delta = \exp(d)$, the equality 
$\alpha = \delta^{-1} \circ \alpha \circ \delta$ implies the equality 
\begin{align*}
a + d + (1/2)[a,d] + & (1/12)([a,[a,d]]+[d,[d,a]]) \\ & \equ{4} 
d + a + (1/2)[d,a] + (1/12)([d,[d,a]]+[a,[a,d]]),  
\end{align*}
hence the equality $[a,d] \equ{4} 0$. Also, if we put 
$\beta = \exp(b), \eta_0^n = \exp(e)$, the equality 
$\eta_0^{-n} \circ \beta \circ \eta_0^n = \delta^{-1} \circ \beta \circ \delta$ 
implies the equality 
$$ \exp(d) \exp(-e) \exp (b) = \exp(b) \exp(d) \exp (-e), $$
which is rewritten as 
$$ \exp(d-e-(1/2)[d,e]+ \cdots ) \exp(b) = \exp(b) \exp(d-e-(1/2)[d,e] + \cdots ) $$
in $\pi_1^{\dR}(X/S,y)/\Gamma^4$, and this implies the equality 
$[b,d] \equ{4} [b,e + (1/2)[d,e]].$ Thus the assertions \eqref{eq:final1}, \eqref{eq:final2} 
imply the following assertions: 
\begin{align}
& \text{For any $a \in L(Y/S), \,\, [a,d] \equ{4} 0$.} 
\label{eq:final3} \\ 
& \text{For any $b \in L(Z/S), \,\, [b,d] \equ{4} [b,e + (1/2)[d,e]]$.}
\label{eq:final4} 
\end{align}
We will prove that these assertions imply a contradiction. 
Here we give a proof suggested by the referee which is shorter than our original proof.  
First we prove that there exists a Lie algebra map 
$h_Y: L(Y/S) \longrightarrow L({\bar v}_1,{\bar v}_2)$ such that 
${\bar v}_1$ is in the image of $h_Y$ and that 
$e \in L(y/S)$  is sent to $[{\bar v}_1,{\bar  v}_2]$, 
where $L({\bar v}_1,{\bar v}_2)$ is the free pro-nilpotent Lie algebra generated by ${\bar v}_1, {\bar v}_2$. 
Indeed, if $Y$ has a marked point, we have an isomorphism 
$L(Y/S) \cong L(v_1, ..., v_{m})$ for some $m \geq 2$ such that $e$ is sent to $v_1$, by 
Remark \ref{rem:pi1-yz} and the nontriviality of $e \in L(y/S)$. 
Then we can define a required map $h_Y$ by   defining 
$h_Y(v_1) := [{\bar v}_1,{\bar v}_2]$ and $h_Y(v_i) := {\bar v}_1 \, (i \geq 2)$. 
If $Y$ does not have a marked point, we have an isomorphism 
$L(Y/S) \cong L(v_1, ..., v_{2g})$ for some $g \geq 1$ such that $e \in L(y/S)$ is sent to $\sum_{i=1}^g[v_{2i-1},v_{2i}]$, 
again by Remark \ref{rem:pi1-yz} and the nontriviality of $e \in L(y/S)$. 
Then we can define a required map $h_Y$ in this case by
$h_Y(v_i) := {\bar v}_i \, (i=1,2)$, $h_Y(v_i) := 0 \, (i \geq 3)$.  

In a similar way, we see that there exists a Lie algebra map $h_Z: L(Z/S) \lra L({\bar w}_1,{\bar w}_2)$
such that ${\bar w}_1$ is in the image of $h_Z$ and $e \in L(y/S)$  is sent to $-[{\bar w}_1,{\bar w}_2]$, 
where $L({\bar w}_1,{\bar w}_2)$ is the free pro-unipotent Lie algebra 
generated by  ${\bar w}_1,{\bar w}_2$. 
It then follows that there exists a Lie algebra map 
$$
h_X: L(X/S) \rightarrow  L := L( {\bar v}_1,{\bar v}_2, {\bar w}_1,{\bar w}_2)/ ([{\bar v}_1,{\bar v}_2] + [{\bar w}_1,{\bar w}_2])
$$
such that ${\bar v}_1$ and ${\bar w}_1$ are in the image of $h_X$. 
If we denote again by $d$ the image of $d$ by $h_X$, 
then the equations \eqref{eq:final3} and \eqref{eq:final4} imply the following equations in $L$: 
\begin{align}
& [{\bar v}_1,d] \equ{4} 0, 
\label{eq:final5} \\ 
&  [{\bar w}_1,d] \equ{4} [{\bar w}_1, [{\bar v}_1, {\bar v}_2] + (1/2)[d,[{\overline v}_1, {\bar v}_2]] \equ{4} [{\bar w}_1, [{\bar v}_1, {\bar v}_2]]. 
\label{eq:final6} 
\end{align}

By \eqref{eq:final5},  it follows in particular that $[{\bar v}_1,d] \equ{3} 0$. Hence, if we write $d\equ{2}C_1{\bar v}_1+  C_2{\bar v}_2+ C_3{\bar w}_1+ C_2{\bar w}_2$, we conclude that $d \equ{2} C_1{\bar v}_1$. But  then \eqref{eq:final6} says that $[{\bar w}_1,d] \equ{3} 0$, and then it implies that $C_1=0$ as well. Hence $d\equ{2}0$, that is, 
$ d \in  \Fil^2L$. Then we consider the map 
$$[v_1, -]: \Fil^2L/\Fil^3L 
\lra \Fil^3L/\Fil^4L. $$ 
 By \cite[Proposition 4]{labute}, 
the dimension of $\Fil^2L/\Fil^3L$ (resp. $\Fil^3L/\Fil^4L$) is 
$5$ (resp. $16$), and a basis is given by 
$$ [{\bar v}_1,{\bar v}_2], \quad [{\bar v}_1,{\bar w}_1], \quad [{\bar v}_1,{\bar w}_2], \quad [{\bar v}_2,{\bar w}_1], \quad [{\bar v}_2,{\bar w}_2] $$
\begin{align*}
(\text{resp. } \, 
& [{\bar v}_1,[{\bar v}_1,{\bar v}_2]], \quad [{\bar v}_1,[{\bar v}_1,{\bar w}_1]], \quad [{\bar v}_1,[{\bar v}_1,{\bar w}_2]], \quad [{\bar v}_1,[{\bar v}_2, {\bar w}_1]], \\ 
& [{\bar v}_1,[{\bar v}_2,{\bar w}_2]], \quad [{\bar v}_2,[{\bar v}_1,{\bar v}_2]], \quad [{\bar v}_2,[{\bar v}_1,{\bar w}_2]], \quad [{\bar v}_2,[{\bar v}_2,{\bar w}_1]], \\ 
& [{\bar v}_2,[{\bar v}_2,{\bar w}_2]], \quad [{\bar w}_1,[{\bar v}_1,{\bar v}_2]], \quad [{\bar w}_1,[{\bar v}_1,{\bar w}_1]], \quad [{\bar w}_1,[{\bar v}_1,{\bar w}_2]], \\ 
& [{\bar w}_1,[{\bar v}_2,{\bar w}_1]], \quad [{\bar w}_1,[{\bar v}_2, {\bar w}_2]], 
\quad [{\bar w}_2,[{\bar v}_1,{\bar w}_2]], \quad [{\bar w}_2,[{\bar v}_2,{\bar w}_2]]).  
\end{align*}
Thus the map 
$$[{\bar v}_1, -]: \Fil^2L/\Fil^3L 
\lra \Fil^3L/\Fil^4L$$ 
is injective and so we see that $d \equ{3} 0$, because $[{\bar v}_1,d] \equ{4} 0$.  

Then, by \eqref{eq:final6}, we have  $0  \equ{4} [{\bar w}_1,d] \equ{4} [{\bar w}_1, [{\bar v}_1, {\bar v}_2]]$. 
But this last element cannot be zero in $\Fil^3L/\Fil^4L$ because it is an element of the basis of $\Fil^3L/\Fil^4L$ 
listed above. This is a contradiction. 
Thus we have proven that the assertions \eqref{eq:final1}, \eqref{eq:final2} 
imply a contradiction. So the proof of the theorems is finished. 
\end{proof}

\subsection{Application to Andreatta--Iovita--Kim theorem} 

In this final part, we review the theorem of Andreatta--Iovita--Kim \cite{aik}
on $p$-adic good reduction criterion for proper hyperbolic curves and 
we give an outline of its proof. Then we give an alternative, purely algebraic proof 
for the transcendental part of their theorem: this will be done by  using the results of the  previous 
subsections. 

First we recall the setting of their article \cite{aik} (with different notation). 
Let $K$ be a complete discrete valuation field of characteristic zero, 
with valuation ring $O_K$ and perfect residue field $F$. 
We fix a uniformizer $\pi$ of $O_K$ and 
an algebraic closure $\ol{K}$ of $K$. 
Let $$f_K: X_K \lra \Spec K$$ 
be a proper smooth geometrically irreducible 
curve of genus $g \geq 2$, and assume that it admits 
a regular semistable model 
$$f_{O_K}^{\circ}: X_{O_K}^{\circ} \lra \Spec O_K. $$
We may and do assume that the special fiber does not have 
$(-1)$-curve. With this setting, we see that $f_K$ has good reduction 
if and only if $f_{O_K}^{\circ}$ is smooth. 
We endow the source and the target of $f_{O_K}^{\circ}$ with
the log structure associated to the  special fiber. Then we obtain the 
proper log smooth morphism of log schemes, which we denote by 
$$ f_{O_K}: X_{O_K} \lra S_{O_K}. $$
Also, let $\iota_{O_K}: S_{O_K} \lra X_{O_K}$ be a section of $f_{O_K}$. 
Let 
$$ f_{\ol{K}}: X_{\ol{K}} \lra \Spec \ol{K}, \quad \iota_{\ol{K}}: \Spec \ol{K} \lra 
X_{\ol{K}} $$ 
be the scalar extension of $f_{O_K}, \iota_{O_K}$ to $\ol{K}$. 
We denote by $\pi_1^{\rm et}(X_{\ol{K}}, \iota_{\ol{K}})$ the Tannaka dual of 
the category of unipotent smooth $\Q_p$-adic etale sheaves on $X_{\ol{K}}$ 
(with respect to the fiber functor of pullback by $\iota_{\ol{K}}$). 
Then  it has been proved in  \cite{aik}  that   $\pi_1^{\rm et}(X_{\ol{K}}, \iota_{\ol{K}})$ is 
semistable as a proalgebraic group over $\Q_p$ endowed with the action of 
${\rm Gal}(\ol{K}/K)$ (see \cite[Theorem 1.4]{aik}), and the main theorem 
of  that  article is  \cite[Theorem 1.6]{aik}: 

\begin{thm}\label{thm:aikmain}
$f_K: X_K \lra \Spec K$ has good reduction if and only if 
$\pi_1^{\rm et}(X_{\ol{K}}, \iota_{\ol{K}})$ is crystalline. 
\end{thm}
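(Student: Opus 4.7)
The plan is to follow the strategy of Andreatta--Iovita--Kim \cite{aik} but to replace their transcendental invocation of Oda's criterion (1) by the purely algebraic monodromy computation of Theorem \ref{!!}. The forward direction (good reduction implies $\pi_1^{\rm et}(X_{\ol{K}},\iota_{\ol{K}})$ is crystalline) is standard: it follows from the crystalline comparison theorem for proper smooth varieties together with the compatibility of unipotent completions with realizations. So the real content is the converse, and the shape of the argument is to pass from \emph{crystalline} on the \'etale side to \emph{trivial outer monodromy} on the log de Rham side, and then from that to \emph{smooth special fiber}.

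For the converse, I would keep intact the $p$-adic Hodge theoretic part of \cite{aik}: the construction of a semistable family over $R[[z]]$ containing $f_{O_K}$ as the fiber $z=\pi$, and the relative $p$-adic Hodge comparison relating $\pi_1^{\rm et}(X_{\ol{K}},\iota_{\ol{K}})$ to the relatively unipotent log de Rham fundamental group of the log special fiber. Here one has to be careful that the log de Rham object appearing in \cite{aik} coincides with the group studied in this paper; but this is precisely the content of Theorems \ref{thm:1and2}, \ref{prop4_2} and \ref{15}, and of Remarks \ref{AIK1}, \ref{AIK2}, which show that all four definitions agree and that the projective system $(W_n,e_n)$ of Andreatta--Iovita--Kim coincides with ours. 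Granting this identification, the $p$-adic Hodge input shows that $\pi_1^{\rm et}(X_{\ol{K}},\iota_{\ol{K}})$ crystalline forces the monodromy operator on $\pi_1^{\dR}(X_s/s,\iota_s)$ to be trivial as an outer automorphism, where $s$ is the standard log point over $F$ and $X_s\to s$ is the log special fiber endowed with the induced section $\iota_s$.

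Next I would apply the algebraic output of Section 6. By hypothesis the regular semistable model has no $(-1)$-curve, so $X_s\to s$ is a minimal semistable log curve. Theorem \ref{!!} (equivalently Theorem \ref{!!!}, via Proposition \ref{!!bu}) then asserts that if the outer monodromy action on $\pi_1^{\dR}(X_s/s,\iota_s)$ is trivial then the underlying scheme of $X_s$ is smooth over $F$. Combined with the previous step, we conclude that the special fiber of $f_{O_K}^{\circ}$ is smooth, which gives good reduction of $f_K$.

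The main obstacle, and really the only novel point in the algebraic proof, is bookkeeping rather than geometry: one must check that under the chain of comparison isomorphisms the monodromy operator delivered by the $p$-adic Hodge theoretic argument of \cite{aik} agrees with the monodromy action defined Tannakianly in Section 2 and computed in Section 6. Concretely this requires tracking how the Frobenius--monodromy structure on the unipotent log de Rham fundamental group, built via the projective system $\{(W_n,e_n)\}$, corresponds to the $\G_{a,k}$-action coming from the stratification in Definition \ref{def1}; this compatibility is essentially built into our construction because $\{(W_n,e_n)\}$ has been shown to be functorial and to agree with the Andreatta--Iovita--Kim system. Once this bookkeeping is in place, Theorem \ref{!!} plugs in as a drop-in replacement for Oda's criterion and completes a proof of Theorem \ref{thm:aikmain} whose sole transcendental ingredient---the appeal to the topological classification of degenerations---has been removed.
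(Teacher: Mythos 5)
Your overall strategy is indeed the one the paper follows: retain the non-transcendental $p$-adic Hodge part of \cite{aik}, use the identification of their projective system $\{(W_n,e_n)\}$ with the one constructed in Section 3 (Remarks \ref{AIK1}, \ref{AIK2}, \ref{rem:wn-compati}), and substitute the algebraic monodromy theorem for Oda's transcendental criterion. However, there is a concrete gap in where you locate the de Rham monodromy computation. You apply it to ``$\pi_1^{\dR}(X_s/s,\iota_s)$, where $s$ is the standard log point over $F$'', i.e.\ to the log special fiber $X_F \to S_F$. The residue field $F$ has characteristic $p$, and every de Rham construction in this paper --- the categories $\MICn$, the Tannakian formalism, the system $(W_n,e_n)$, and in particular Theorems \ref{!!} and \ref{!!!} --- is developed only over a base field of characteristic zero, so none of it applies to $X_F/S_F$. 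What the paper actually does is apply the monodromy theorem to $f_k\colon X_k \to S_k$, the fiber at $z=0$ of the lifted family over $W[[z]]$ base changed to $k=\mathrm{Frac}(W(F))$ (equivalently, the fiber at $x=0$ of $f_{k[[x]]}$): by \cite[Proposition 6.2]{aik}, crystallinity is equivalent to the vanishing, for all $n$, of the residue of $\iota_{k[[x]],\dR}^*W_{n,k[[x]]}$ at $x=0$, and by the base-change compatibility of Remark \ref{rem:wn-compati} this residue is precisely the nilpotent operator on $\iota_{k,\dR}^*W_{n,k}$, i.e.\ the monodromy on $\pi_1^{\dR}(X_k/S_k,\iota_k)$, viewed in $\mathrm{Aut}$.

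Because of this, your chain ``crystalline $\Rightarrow$ trivial outer monodromy $\Rightarrow$ smooth'' yields smoothness of $X_k$, not of the special fiber $X_F$, and your proposal omits the step that transfers it: the paper's analysis of the normalization $X^{(0)}_W$ and of the double-point locus $X^{(1)}_W$ (smooth, respectively \'etale, over $W$) shows that $X_k$ and $X_F$ have canonically isomorphic dual graphs with matching component genera, so that $f_k$ is again a minimal semistable log curve and its underlying scheme is smooth if and only if that of $f_F$ is; only then does good reduction follow. Two smaller points. First, the statement to invoke is Theorem \ref{!!!} (minimal semistable log curves), since the special fiber of a regular semistable model without $(-1)$-curves need not be stable, so Theorem \ref{!!} does not apply literally. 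Second, the forward direction needs no separate appeal to a crystalline comparison theorem: since $X_k$ has no marked points, $\iota_k$ is automatically a good section, so Proposition \ref{not_good_trivial}(1) gives triviality of the monodromy in $\mathrm{Aut}$, hence vanishing of all residues, hence crystallinity, by the very same criterion of \cite{aik}; this is how the paper closes both implications with a single argument.
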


For the definition of crystalline-ness, see \cite[Definition 1.5]{aik}.  In order to achieve such a result some geometric constructions have been made.   Let $W := W(F)$ be the Witt ring of $F$ 
and let $k$ be its fraction field (the field $k$ is denoted by $K_0$ in \cite{aik}, which is also the standard notation in $p$-adic Hodge theory). 
We denote the special fiber of $f_{O_K}$ 
(endowed with the pullback log structure) by 
$$ f_F: X_F \lra S_F. $$
Then, by the absence of $(-1)$-curves in $X_F$, 
$f_F$ is a minimal semistable log curve in the sense of 
Section 6.1. We denote the $p$-adic completion of 
$f_{O_K}$ by 
$$ \wh{f}_{O_K}: \wh{X}_{O_K} \lra \wh{S}_{O_K}. $$
Let $S_{W[[z]]}$ be the scheme $\Spec W[[z]]$ endowed with the log structure 
associated to $\N \lra W[[z]]; \, 1 \mapsto z$, and let 
$\wh{S}_{W[[z]]}$ be its $(p,z)$-adic completion. 
We define the exact closed immersion $\wh{S}_{O_K} \hra \wh{S}_{W[[z]]}$ by 
$W[[z]] \lra O_K; z \mapsto \pi$. 
By the unobstructedness in the deformation theory for $\wh{f}_{O_K}$ 
\cite[Proposition 3.14]{ka}, there exists 
a proper log smooth integral lift 
$$ \wh{f}_{W[[z]]}: \wh{X}_{W[[z]]} \lra \wh{S}_{W[[z]]} $$ 
of $\wh{f}_{O_K}$, and we can algebraize it to the proper log smooth integral morphism 
$$ f_{W[[z]]}: X_{W[[z]]} \lra S_{W[[z]]}. $$ 
Let 
$$  f_{k[[z]]}: X_{k[[z]]} \lra S_{k[[z]]} $$
be the scalar extension of $f_{W[[z]]}$ to $k[[z]]$. 

At a non-smooth point of $\wh{X}_{O_K}$, the formal local ring has the form 
$\Spf O'_K[[x,y]]/\allowbreak (xy-\pi)$ for some finite etale extension 
$O'_K$ of $O_K$. By the unicity of the local lift, 
the formal local ring at this point in $X_{W[[z]]}$ has the form 
$\Spf W'[[x,y,z]]/(xy-z)$ for some finite etale extension $W'$ of $W$ 
(see \cite[p.2604]{aik}). 
If we denote 
the fiber at `$z=0$' of the morphisms $f_{W[[z]]}, f_{k[[z]]}$ by 
$$ f_W: X_W \lra S_W, \quad f_k: X_k \lra S_k $$ 
respectively, we have the Cartesian diagram 
\begin{equation*}
\begin{CD}
X_F @>>> X_W @<<< X_k \\ 
@V{f_F}VV @V{f_W}VV @V{f_k}VV \\ 
S_F @>>> S_W @<<< S_k,  
\end{CD}
\end{equation*}
and the formal local ring at a non-smooth point 
of $X_W$ in the closed fiber has the form 
$\Spf W'[[x,y]]/(xy)$ for some finite etale extension $W'$ of $W$. 
Also, the formal local ring at a smooth point 
of $X_W$ in the closed fiber has the form 
$\Spf W'[[x]]$ for some finite extension $W'$ of $W$. 
Thus there exists an etale covering $\sqcup_{i \in I} \wt{X}_{W,i} \lra X_W$ of $X_W$ 
as a scheme such that, for each $i \in I$, $\wt{X}_{W,i}$ is etale (as a scheme) over 
$\Spec W[x,y]/(xy)$ or $\Spec W[x]$. 


Let $X^{(0)}_W$ be the normalization of $X_W$ and put $X^{(1)}_W :=  (X^{(0)}_W \times_{X_W} X^{(0)}_W \setminus \Delta (X^{(0)}_W)) / {\frak S}_2$, where $\Delta: X^{(0)}_W \longrightarrow  X^{(0)}_W \times_{X_W} X^{(0)}_W$ is the diagonal map and the second symmetric group ${\frak S}_2$ acts on $X^{(0)}_W \times_{X_W} X^{(0)}_W$ by permutation of 
the two factors $X^{(0)}_W$.  Then, by the etale local description of ${X_W}$ above, we have 
the following  properties: 
\medskip 

\noindent
$(1)$ \, $X^{(0)}_W$ is smooth over $W$ and $X^{(1)}_W$ is etale over $W$. \\ 
$(2)$ \, The special (resp.  generic)  fiber $(X^{(0)}_W)_F$ (resp. $(X^{(0)}_W)_k$) of $X^{(0)}_W$ is the normalization 
of $X_F$ (resp. $X_k$). \\ 
$(3)$ \,  The special (resp. generic) fiber $(X^{(1)}_W)_F$ (resp. $(X^{(1)}_W)_k$ ) of $X^{(1)}_W$ is made by 
the intersection points (including self-intersection) of the irreducible components of $X_F$ (resp. $X_k$). 
\medskip 

\noindent
Then we see that the connected components of $(X^{(i)}_W)_k$ ($i=0,1$) correspond bijectively to those of $X^{(i)}_W$ and then to those of $(X^{(i)}_W)_F$. Thus we have a canonical bijection between the dual graph of $X_k$ and that of $X_F$. Moreover, this bijection preserves the genus of the normalization of each irreducible component. 
%
%
Therefore, $f_k: X_k \lra S_k$ is again a minimal semistable log curve 
and the underlying morphism of schemes of $f_k$ is smooth if and only if 
so is that of $f_F$. 

Now let  $S_{W[[x]]}$ be the scheme $\Spec W[[x]]$ endowed with the log structure 
associated to $\N \lra W[[x]]; \, 1 \mapsto px$ and let $S_{W[[x]]} \lra S_{W[[z]]}$ 
be the morphism defined by $z \mapsto px$. Denote the pullback of $f_{W[[z]]}$ 
by this morphism by 
$$ f_{W[[x]]}: X_{W[[x]]} \lra S_{W[[x]]}, $$
and let 
$$  f_{k[[x]]}: X_{k[[x]]} \lra S_{k[[x]]} $$
be the scalar extension of $f_{W[[x]]}$ to $k[[x]]$. Note that the 
the fiber at `$x=0$' of the morphism $f_{k[[x]]}$ is the same as the morphism 
 $$ f_k: X_k \lra S_k $$ 
we already defined. Also, beginning from $\iota_{O_K}$, 
we also obtain a section $\iota_{k[[x]]}, \iota_k$ of $f_{k[[x]]}, f_k$. 
For $f_{k[[x]]}$, Andreatta--Iovita--Kim defined 
a projective system $\{(W_n, e_n)\}_n$ of connections, 
which we denote here by  $\{(W_{n,k[[x]]}, e_{n,k[[x]]})\}_n$, 
by the method explained in  Remark \ref{AIK2}. We proved in that remark that their 
definition coincides with our definition (which we gave in Section 3).  Recall also that, by our methods in Section 3, we are able to define a projective system of connections
 $\{(W_n, e_n)\}_n$  for $f_k$, which we denote here by 
$\{(W_{n,k}, e_{n,k})\}_n$, and by Remark \ref{rem:wn-compati}, we see that 
$\{(W_{n,k}, e_{n,k})\}_n$ is the restriction of $\{(W_{n,k[[x]]}, e_{n,k[[x]]})\}_n$ to 
$X_k$. 

By \cite[Proposition 6.2]{aik}, 
$\pi_1^{\rm et}(X_{\ol{K}}, \iota_{\ol{K}})$ is crystalline
if and only if the residue of $\iota_{k[[x]],\dR}^*W_{n,k[[x]]}$ at the locus $x=0$
is zero for every $n$. Until this point, they never use transcendental method. After that, 
to prove the triviality/nontriviality of the residue according to having good/bad  
reduction of $f_{k[[x]]}$, they use transcendental methods  reducing  the claim 
to Oda's result \cite{oda2}, hence giving a trascendental proof of  Theorem \ref{thm:aikmain}. 

Now we give an alternative, purely algebraic,  proof of Theorem \ref{thm:aikmain}. 

\begin{proof}[An algebraic proof of Theorem \ref{thm:aikmain}]
It suffices to give an algebraic proof of the transcendental part of their proof. 
We use the notations given above. 
Since $\{W_{n,k}\}_n$ is the restriction of $\{W_{n,k[[x]]}\}_n$ to 
$X_k$,  the residue of $\iota_{k[[x]],\dR}^*W_{n,k[[x]]}$ at the locus $x=0$ is nothing but 
the nilpotent map on $\iota_{k,\dR}^*W_{n,k}$ which occurs as a part of data 
as an object in $\MICn(S_k/k)$. Because $\pi_1^{\dR}(X_k/S_k,\iota_k)$ is 
defined as the spectrum of $\varinjlim_n (\iota_{k,\dR}^*W_{n,k})^{\vee}$, 
the triviality/nontriviality of the residue is equivalent to 
the triviality/nontriviality of monodromy action on $\pi_1^{\dR}(X_k/S_k,\iota_k)$
as an element in ${\rm Aut}(\pi_1^{\dR}(X_k/S_k,\iota_k))$. 
Because our $X_k$ does not have a marked point (because we started with a 
proper smooth curve $f_K:X_K \lra \Spec K$), $\iota_k$ is necessarily a good section 
by Remark \ref{good_notgood_section}. Hence, if the underlying morphism of $f_k$ is 
smooth, the monodromy action on $\pi_1^{\dR}(X_k/S_k,\iota_k)$ is trivial 
as an element in ${\rm Aut}(\pi_1^{\dR}(X_k/S_k,\iota_k))$ by Proposition 
\ref{not_good_trivial}(1). If the underlying morphism of $f_k$ is 
not smooth, the monodromy action on $\pi_1^{\dR}(X_k/S_k,\iota_k)$ is nontrivial 
as an element in ${\rm Out}(\pi_1^{\dR}(X_k/S_k,\iota_k))$ by Theorem  
\ref{!!!}, and so nontrivial as an element in 
${\rm Aut}(\pi_1^{\dR}(X_k/S_k,\iota_k))$. 
Since we saw, in the construction before the algebraic proof of Theorem \ref{thm:aikmain}, 
that the underlying morphism of schemes of $f_k$ is smooth if and only if 
so is that of $f_F$, this finishes the proof. 
\end{proof}

\begin{rem}
In this remark, we explain how our definition of the monodromy action on 
$\pi_1^{\dR}(X/S, \iota)$, its calculation and our proof of Theorems  
\ref{!!!}, \ref{thm:aikmain} can be  regarded as an algebraic version of the topological 
definition, calculation and proofs as it can be found in  previous (classical) works \cite{oda2}.

Let $f: X \lra S$ be a minimal semistable log curve over the complex numbers $\C$ with a section $\iota$. 
Then, by Proposition \ref{prop:abcde-rs} and Remark \ref{rem:nilpnonclosed2}, we 
are in the situation of Remarks \ref{rem:2020Aug-1} and \ref{rem:2020Aug-2}. 
So we have the canonical commutative diagram 
$$ 
\xymatrix{
X^{\log}_{\an} \ar[r]^{f^{\log}_{\an}} \ar[d]^{\tau_X} & 
S^{\log}_{\an} \ar[d]^{\tau_S} \\ 
X_{\an} \ar[r]^{f_{\an}} & S_{\an}, 
}
$$
as explained in Remark \ref{rem:2020Aug-1}, and $f^{\log}_{\an}$ admits a section 
$\iota^{\log}_{\an}$. Also, for a point $s$ in $S^{\log}_{\an}$ 
and $x := \iota^{\log}_{\an}(s)$, $X^{\log}_{\an, s} := (f^{\log}_{\an})^{-1}(s)$, 
we have the split homotopy exact sequence 
\begin{equation}\label{eq:2020Aug-3-1}
\xymatrix {
1 \ar[r] & \pi_1^{\rm top}(X^{\log}_{\an, s} ,s) \ar[r] & 
\pi^{\rm top}_1(X^{\log}_{\an} ,x) \ar[r]^-{f^{\log}_{\an *}} & \pi^{\rm top}_1(S^{\log}_{\an},s) \ar[r] 
\ar @/^6mm/[0,-1]^{\iota^{\log}_{\an *}} & 1. 
}
\end{equation}
of topological fundamental groups (Remark \ref{rem:2020Aug-2}). 

The space $S^{\log}_{\an}$ is homeomorphic to the unit circle ${\mathbb{S}}^1$ and for a 
double point $y$ in $X_{\an}$, the fiber 
$$ \tau_X^{-1}(y) \lra \tau_S^{-1}(S_{\an}) = S^{\log}_{\an} = {\mathbb{S}}^1 $$ 
is described as 
\begin{equation}\label{eq:2020Aug3-2}
{\mathbb{S}}^1 \times  {\mathbb{S}}^1 \lra  {\mathbb{S}}^1; \quad (a,b) \mapsto ab. 
\end{equation}

The monodromy action in this topological situation is the action 
of $\pi_1^{\rm top}(S^{\log}_{\an}, s) \cong \Z \ni 1$ (the conjugation by 
$\iota^{\log}_{\an *}(1)$). So the monodromy action defined in Section 6.1 
is actually an algebraic version of this topological monodromy action. 

In the topological situation, the tangential sections at a double point $y \in X_{\an}$ 
are given by the following sections of $f^{\log}_{\an}$: 
\begin{align}
& S^{\log}_{\an} = {\mathbb{S}}^1 \lra {\mathbb{S}}^1 \times  {\mathbb{S}}^1 = \tau_X^{-1}(y); 
\quad a \mapsto (a,1), \label{eq:2020Aug3-3} \\ 
& S^{\log}_{\an} = {\mathbb{S}}^1 \lra {\mathbb{S}}^1 \times  {\mathbb{S}}^1 = \tau_X^{-1}(y); 
\quad a \mapsto (1,a). \nonumber 
\end{align}
So, for example, the topological version of the element $\epsilon_y \in \pi_1^{\dR}(X,y)$ 
in Proposition \ref{prop:comp1} is a loop in $X^{\log}_{\an}$ given by the image of 
one of the sections in \eqref{eq:2020Aug3-3}. Considering the illustration  of the map 
$ \tau_X^{-1}(y) \lra \tau_S^{-1}(S_{\an}) = S^{\log}_{\an} = {\mathbb{S}}^1 $ given in 
\eqref{eq:2020Aug3-2}, we see that the local description of the monodromy action in 
Proposition \ref{prop:comp1} is an algebraic version of the Dehn twist with respect to the 
loop around the double point in consideration. Also, we may see how  the monodromy action 
as described in  Sections 6.4, 6.6  can be understood as  an algebraic version of  the topological monodromy action 
of $\pi_1^{\rm top}(S^{\log}_{\an},s)$ on $\pi_1^{\rm top}(X^{\log}_{\an,s},x)$ 
essentially given in \cite[Lemma 1.7]{oda2}: it  is given by certain products of Dehn twists. 

Once we have the description of the monodromy action, we notice that our proof of Theorem \ref{!!!}
by a direct  calculation in Lie algebras is a modified version of the proof given in 
\cite[Proposition 1.10]{oda2}. Then, by replacing Oda's result by our purely algebraic 
result, we can complete a purely algebraic proof of Andreatta-Iovita-Kim's theorem 
(Theorem \ref{thm:aikmain}). 
\end{rem}

\end{document}